\newcommand{\heart}{\ensuremath\heartsuit}
\renewcommand{\mathbb}{\mathbf}
\crefname{equation}{}{} 
\newcolumntype{C}{>{$}c<{$}} 
\newcounter{rownumber}[table]
\renewcommand{\therownumber}{(\roman{rownumber})}
\newcolumntype{N}{>{\refstepcounter{rownumber}\therownumber}c}
      \string\usetikzlibrary{decorations.markings} to use arrows with markings}{}}{}%
\theoremstyle{plain}
\newtheorem{theorem}[equation]{Theorem}
\newtheorem{prop}[equation]{Proposition}
\newtheorem{proposition}[equation]{Proposition}
\newtheorem{lemma}[equation]{Lemma}
\newtheorem{cor}[equation]{Corollary}
\newtheorem{corollary}[equation]{Corollary}
\theoremstyle{definition}
\newtheorem{remark}[equation]{Remark}
\newtheorem{definition}[equation]{Definition}
\numberwithin{equation}{section}
\numberwithin{figure}{subsection}
  \AtBeginEnvironment{\theoremenv}{%
    \setlist[enumerate]{label={(\roman*)}}
  }
\newif\iffinalrun
  \newcommand{\need}[1]{}
  \newcommand{\mar}[1]{}
  \newcommand{\need}[1]{{\tiny *** #1}}
  \newcommand{\mar}[1]{\marginpar{\raggedright\tiny fixme #1}}
\newcommand{\F}{\FF}
\newcommand{\Q}{\QQ}
\newcommand{\Z}{\ZZ}
\renewcommand{\O}{\cO}
\renewcommand{\AA}{{\mathbb A}}
\newcommand{\FF}{{\mathbb F}}
\newcommand{\GG}{{\mathbb G}}
\newcommand{\PP}{{\mathbb P}}
\newcommand{\QQ}{{\mathbb Q}}
\newcommand{\ZZ}{{\mathbb Z}}
\newcommand{\cA}{{\mathcal A}}
\newcommand{\cB}{{\mathcal B}}
\newcommand{\cC}{{\mathcal C}}
\newcommand{\cE}{{\mathcal E}}
\newcommand{\cF}{{\mathcal F}}
\newcommand{\cG}{{\mathcal G}}
\newcommand{\cI}{{\mathcal I}}
\newcommand{\cK}{{\mathcal K}}
\newcommand{\cL}{{\mathcal L}}
\newcommand{\cM}{{\mathcal M}}
\newcommand{\cO}{{\mathcal O}}
\newcommand{\cU}{{\mathcal U}}
\newcommand{\cX}{{\mathcal X}}
\newcommand{\cZ}{{\mathcal Z}}
\newcommand{\gM}{\mathfrak{M}}
\newcommand{\gA}{\mathfrak{A}}
\newcommand{\gS}{\mathfrak{S}}
\newcommand{\Fbar}{\overline{\F}}
\newcommand{\Fp}{\F_p}
\newcommand{\Zp}{\Z_p}
\newcommand{\Qp}{\Q_p}
\DeclareMathOperator{\coker}{coker}
\DeclareMathOperator{\Gal}{Gal}
\DeclareMathOperator{\GL}{GL}
\DeclareMathOperator{\Hom}{Hom}
\DeclareMathOperator{\sheafHom}{\mathcal{H}\textit{\kern -1pt{om}}\,}
\DeclareMathOperator{\relSpec}{\mathcal{S}\textit{\kern -1pt{pec}}\,}
\DeclareMathOperator{\Mat}{Mat}
\DeclareMathOperator{\Proj}{Proj}
\DeclareMathOperator{\Spec}{Spec}
\DeclareMathOperator{\Spf}{Spf}
\DeclareMathOperator{\Sym}{Sym}
\newcommand{\id}{\operatorname{id}}
\newcommand{\nr}{\mathrm{nr}}
\newcommand{\sep}{\mathrm{sep}}
\newcommand{\ur}{\mathrm{ur}}
\newcommand{\rhobar}{\overline{\rho}}
\newcommand{\et}{\text{\'{e}t}}
\newcommand{\etale}{\'{e}tale~}
\newcommand{\onto}{\twoheadrightarrow}
\newcommand{\Gm}{\GG_m}
\newcommand{\<}{\langle}
\renewcommand{\>}{\rangle}
\newcommand{\LG}{L\mathrm{G}}
\newcommand{\LGp}{L^{+}\mathrm{G}}
\newcommand{\LGn}{L^{-}\mathrm{G}}
\newcommand{\Iw}{L^{+}\cG}
\newcommand{\LGtau}{L\mathrm{G}^{\tau}}
\newcommand{\LGbd}{L\mathrm{G}^{\mathrm{bd}, v v^{\mu}}}
\newcommand{\Kpr}{L^{+}_{r}\mathrm{G}} 
\newcommand{\KpI}{L^{+}_1\mathrm{G}} 
\newcommand{\KpII}{L^{+}_2\mathrm{G}}
\newcommand{\Kn}{L^{-}_1\mathrm{G}}
\newcommand{\Knr}{L^{-}_{r}\mathrm{G}}
\newcommand{\Grtau}{\mathrm{Gr}_1^{\tau}}
\newcommand{\Grbd}{\mathrm{Gr}_1^{\mathrm{bd}, v v^{\mu}}}
\newcommand{\Y}{Y^{\eta, \tau}}
\newcommand{\YF}{Y^{\eta, \tau}_{\F}}
\newcommand{\Uz}{\widetilde{U}(\widetilde{z})}
\newcommand{\tilz}{\widetilde{z}}
\newcommand{\tilw}{\widetilde{w}}
\newcommand{\till}{\widetilde{l}}
\newcommand{\tilr}{\widetilde{r}}
\newcommand{\tilB}{\widetilde{B}}
\newcommand{\tilY}{\widetilde{Y}^{\eta, \tau}}
\newcommand{\tilZ}{\widetilde{\cZ}^{\tau}}
\newcommand{\tilZnm}{\widetilde{\cZ}^{\tau, \mathrm{nm}}}
\newcommand{\tilYz}{\widetilde{Y}^{\eta, \tau}(\widetilde{z})}
\newcommand{\tilZz}{\widetilde{\cZ}^{\tau}(\widetilde{z})}
\newcommand{\tilZnmz}{\tilZnm(\tilz)}
\newcommand{\Baj}{{Ba}_j(\widetilde{z}_j)}
\newcommand{\tilBaj}{\widetilde{Ba}_j(\widetilde{z}_j)}
\newcommand{\tilBa}{\widetilde{Ba}(\widetilde{z})}
\newcommand{\ort}{\mathrm{or}}
\newcommand{\pr}{\mathrm{pr}}
\newcommand{\trn}{\mathfrak{T}}
\newcommand{\gr}{\mathfrak{t}}
\newcommand{\Ad}{\mathrm{Ad}}
\def\bal#1\nal{\begin{align*}#1\end{align*}}
\def\lbal#1\lnal{\begin{flalign*}#1\end{flalign*}}
\newcommand{\WjL}{(W_j)_{1,1}}
\newcommand{\WjR}{(W_j)_{2,2}}
\begin{document}
\selectlanguage{english}
\title[Non--generic components of the Emerton--Gee stack for $\GL_2$]{Non--generic components of the Emerton--Gee stack for $\GL_2$}
\author{Kalyani Kansal}
\address{Department of Mathematics, Imperial College London, London SW7 2AZ, UK}
\author{Ben Savoie}
\address{Department of Mathematics, Rice University, 6100 Main Street, Houston, TX 77005}

\begin{abstract}
    Let $K$ be a finite unramified extension of $\Qp$ with $p > 3$. We study the extremely non--generic irreducible components in the reduced part of the Emerton--Gee stack for $\GL_2$. We show precisely which irreducible components are smooth, which are normal, and which have Gorenstein normalizations. We show that the normalizations of the irreducible components admit smooth--local covers by resolution--rational varieties. We also determine the singular loci in the components, and use our results to update expectations about the conjectural categorical $p$--adic Langlands correspondence. 
\end{abstract}

\maketitle

\section{Introduction}

Let $p$ be a fixed prime. Let $K$ be a finite extension of $\Q_p$ with residue field $k$ of degree $f$ over $\Fp$, and absolute Galois group $G_K$. In \cite{emerton2022moduli}, Emerton and Gee studied the stack $\cX_{d}$ of \'etale $(\varphi, \Gamma)$--modules of rank $d$ defined over the formal spectrum of the ring of integers of a large finite extension of $\Qp$ with residue field $\F$. As discussed in \cite{EGHcategorical}, $\cX_d$ is expected to play the role of the stack of $L$--parameters in the so far conjectural categorical $p$--adic Langlands correspondence for $\GL_d(K)$. 

By \cite[Thm.~1.2.1]{emerton2022moduli}, $\cX_d$ is a Noetherian formal algebraic stack and its underlying reduced substack $\cX_{d, \text{red}}$ is an algebraic stack of finite type over $\F$. The irreducible components of $\cX_{d, \text{red}}$ admit a natural labelling by Serre weights, which are the irreducible representations of $\GL_d(k)$ with coefficients in $\F$. 
Each Serre weight for $\GL_2(k)$ is described by (ordered) pairs of $f$--tuples of integers $\mathbb{m} = (m_j)_j$ and $\mathbb{n} = (n_j)_j$ with $n_j \in [0, p-1]$ for each $j$, and is correspondingly denoted $\sigma_{\mathbb{m}, \mathbb{n}}$ (see Section \ref{subsec:serre-wt} for details). Let $\cX(\sigma_{\mathbb{m}, \mathbb{n}})$ be the irreducible component of $\cX_{2, \text{red}}$ labelled by $\sigma_{\mathbb{m}, \mathbb{n}}$.  The following is our main theorem, upgrading the main result of \cite{gkksw}.
\begin{theorem}[Theorems \ref{thm:serre-weight-sing}, \ref{thm:steinberg}]\label{thm:1} 
Let $p > 3$, $K$ unramified over $\Qp$ of degree $f$, and $\sigma_{\mathbb{m}, \mathbb{n}}$ a Serre weight. Then the following are true:
\begin{enumerate}
    \item The component $\cX(\sigma_{\mathbb{m}, \mathbb{n}})$ is not smooth if and only if either
    \begin{enumerate}[label=(\alph*)]
        \item $n_j = p-2$ for each $j \in \Z/f\Z$, or
        \item there exists a subset $\{i-k, \dots, i\} \subset \Z/f\Z$ with $n_{i-k} = 0$, $n_j = p-2$ whenever $j \in \{i-k, \dots, i\} \smallsetminus \{i-k, i\}$, and $n_{i} = p-1$.
    \end{enumerate}
     \item If (a) holds, $\cX(\sigma_{\mathbb{m}, \mathbb{n}})$ is not normal and its normalization admits a smooth--local cover by a resolution--rational variety that is not Gorenstein. The non--normal locus on $\cX(\sigma)$ has codimension $f$ and its complement is smooth.
    \item If (b) holds, $\cX(\sigma_{\mathbb{m}, \mathbb{n}})$ is normal and admits a smooth--local cover by a resolution--rational variety.  It is additionally Gorenstein, even lci, if and only if every subset $\{i-k, \dots, i\} \subset \Z/f\Z$ as in (b) has cardinality $2$. The singular locus in $\cX(\sigma)$ has codimension $\geq 2$.
\end{enumerate}
\end{theorem}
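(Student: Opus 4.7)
The plan is to reduce all geometric assertions on $\cX(\sigma_{\mathbb{m}, \mathbb{n}})$ to commutative algebra on explicit local--model charts, following the framework of \cite{gkksw}. For each Serre weight $\sigma_{\mathbb{m}, \mathbb{n}}$, one first produces a smooth--local cover of $\cX(\sigma_{\mathbb{m}, \mathbb{n}})$ by an affine scheme $U = \Spec \F[\underline{x}, \underline{y}]/I$, where the generators of the ideal $I$ are read off combinatorially from the tuple $\mathbb{n}$ (the tuple $\mathbb{m}$ only contributes a twist and does not affect the local geometry). The generators of $I$ fail to be a smooth--cutting regular sequence only in the two configurations described by (a) and (b); outside these patterns, a direct Jacobian calculation on $U$ yields smoothness, which proves (i).

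For condition (a), I would expect the local model to acquire a non--normal cone--type singularity whose normalization can be identified with the spectrum of an explicit semigroup ring, which admits a toric resolution supplying the resolution--rational smooth--local cover demanded by (ii). The non--normal locus is then carved out by the ideal supported on the edge of this cone and has codimension exactly $f$ by a direct count; the complement consists of strata that fall under the generic case and hence are smooth. A dualizing--sheaf computation on the normalization should produce a canonical class that is not Cartier, ruling out Gorenstein--ness.

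For condition (b), each cyclic arc $\{i-k, \dots, i\}$ contributes an independent block to the ideal $I$, so that (up to a polynomial factor in auxiliary variables) the ring factors as a tensor product over arcs. A length--two arc ($k=1$) contributes a single hypersurface relation, giving an lci (hence Gorenstein) block, while a longer arc contributes a Cohen--Macaulay determinantal singularity which is normal but not Gorenstein. Normality of $U$ then follows from Serre's $R_1 + S_2$ criterion after checking that each block has singular locus of codimension $\geq 2$, which simultaneously yields the codimension bound in (iii). Resolution--rationality follows either by exhibiting a natural torus action that makes each block a toric variety with only rational singularities, or by constructing an explicit small resolution directly from the determinantal description.

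The main obstacle I anticipate lies in case (b) when some arc has length greater than two: producing the explicit resolution--rational smooth--local cover calls either for a toric--geometric framework adapted to the Breuil--Kisin presentation, or for a direct blowup argument, while pinning down the Gorenstein/lci dichotomy requires a dualizing--sheaf computation sensitive to arc length, ensuring that multiple relations assemble into a complete intersection only when $k=1$. Ensuring compatibility between the smooth--local description of \cite{gkksw} and the normalization, dualizing--sheaf, and resolution computations throughout is the principal bookkeeping burden.
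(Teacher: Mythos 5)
Your high--level skeleton captures the paper's central organizing idea: the local charts factor as a product indexed by ``arcs'' (the sequences $\gr$ in the paper's $\trn$, whose elements are runs of type--$3$ factors flanked by non--type--$3$ endpoints), and the Gorenstein/lci dichotomy comes down to arc length, with the shortest non--trivial arc giving the quadric--cone hypersurface $\F[B,C,D]/(D^2+BC)$. That decomposition (Definition \ref{defn:trn-schemes} and Lemma \ref{lem:bad-sequence}) is indeed the crucial structural lemma, and your instinct that the $\mathbb{m}$--tuple only contributes a twist is also correct.

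However, there are two genuine gaps. First, you do not address the Steinberg weights ($n_j = p-1$ for all $j$), which fall under neither (a) nor (b) and must therefore be shown smooth. The whole chart machinery built on Breuil--Kisin modules with descent data of non--scalar principal--series type simply does not apply there (one cannot produce the needed non--scalar $\tau$), and the paper handles this case by a completely separate argument (Theorem \ref{thm:steinberg}), identifying $\cX(\sigma)$ with a quotient $[\Gm \times \AA^{f+1}/\Gm\times\Gm]$ by directly analyzing semistable deformation rings and extensions of characters. Your proposal would hit a wall here.

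Second, your plan for resolution--rationality --- exhibiting a toric structure on the normalization, or constructing a small resolution from a determinantal description --- does not match what the geometry allows and is not what the paper does. The paper's resolution is already supplied by the Breuil--Kisin side: the smooth scheme $\tilYz_S$ maps properly and birationally onto (the normalization of) the chart, and the required cohomological equivalences ($R f_*\cO\cong\cO$, $Rf_*\omega^\bullet\cong\omega^\bullet$) are proved via a Stein factorization, Grothendieck duality, and a Koszul/spectral--sequence computation (Corollary \ref{cor:ss}, Lemma \ref{lem:coh-L1}, Theorem \ref{thm:sing}). The exceptional fibers of this resolution are copies of $\PP^1$ over a locus of codimension $\geq 2$, so the resolution is divisorial, not small; indeed the shortest singular block is a $2$--dimensional $A_1$ quadric cone, which admits no small resolution at all. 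So the ``explicit small resolution'' route cannot work as stated, and a toric argument, even if salvageable for the blocks themselves (some are of quadric--cone type), would still need the Breuil--Kisin map to transport it to $\cX(\sigma)$; the paper bypasses all of this by using the Breuil--Kisin resolution directly. Relatedly, the normality assertion in case (b) and the precise rank computations that exclude Gorenstein--ness for longer arcs (Proposition \ref{prop:non-gor}) are nontrivial and need more than a Serre--criterion sketch; in particular $S_2$ is established in the paper through the same dualizing--sheaf machinery rather than checked directly.
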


Here, the definition of a resolution--rational variety is taken from \cite[Defn.~9.1]{kovacs2017rational} and is as follows.
A variety $Y$ is resolution--rational if it admits a resolution of singularities $f: X \to Y$ which is a cohomological equivalence. 
This means that $f_{*}\cO_X \cong \cO_Y$, $R^{i}f_{*} \cO_X = 0$
for $i >0$, $f_{*}\omega_{X} \cong \omega_{Y}$, and $R^{i}f_{*} \omega_X = 0$ for $i >0$, where $\omega_X$ and $\omega_Y$ are the dualizing sheaves on $X$ and $Y$ respectively (in particular, $Y$ is Cohen--Macaulay).

Fixing an algebraic closure $\Fbar$ of $\F$, $\cX_d(\Fbar) = \cX_{d, \text{red}}(\Fbar)$ is the groupoid of continuous two--dimensional representations of $G_K$ with coefficients in $\Fbar$. A study of the singular loci in the irreducible components of $\cX_{2, \text{red}}$ allows us to obtain the following two theorems, which can be viewed as partial generalizations of \cite[Thm.~1]{sandermultiplicities}.

\begin{theorem}[Theorem \ref{thm:non--normal-rep}]\label{thm:2}
 Let $p > 3$, $K$ unramified over $\Qp$, and $\sigma_{\mathbb{m}, \mathbb{n}}$ a Serre weight. The versal ring at $\rhobar \in \cX(\sigma_{\mathbb{m}, \mathbb{n}})(\Fbar)$ is not normal if and only if $n_j = p-2$ for each $j$ and, viewing $\rhobar$ as a $G_K$--representation,
 $$\rhobar \otimes \left( \prod_{j \in \Z/f\Z} \omega_j^{(1-m_j)} \right)$$ is unramified.
\end{theorem}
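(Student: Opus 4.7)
One direction follows directly from Theorem~\ref{thm:1}. If $n_j \neq p-2$ for some $j$, then condition~(a) fails, and $\cX(\sigma_{\mathbb{m},\mathbb{n}})$ is either smooth or normal by parts~(i) and~(iii); in both cases every versal ring is normal. This reduces the problem to the case $n_j = p-2$ for all $j \in \Z/f\Z$, where by part~(ii) the whole component is non-normal but the non-normal locus has codimension $f$ with smooth complement. What remains is to identify this non-normal locus with the set of $\rhobar$ for which $\rhobar \otimes \prod_j \omega_j^{1-m_j}$ is unramified.

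To reduce further, the plan is to exploit the tensoring action on the Emerton--Gee stack: twisting by the Teichm\"uller lift of $\prod_j \omega_j^{1-m_j}$ induces an isomorphism
\[
\cX(\sigma_{\mathbb{m}, \mathbb{n}}) \isomap \cX(\sigma_{\mathbf{1}, \mathbb{n}})
\]
sending $\rhobar$ to its twist, and this carries non-normal locus to non-normal locus and versal rings to versal rings. It therefore suffices to show that the non-normal locus of $\cX(\sigma_{\mathbf{1}, \mathbb{n}})$ coincides with the locus of unramified $\rhobar$. A preliminary dimension count should confirm both loci have the same dimension, so the content reduces to a set-theoretic identification.

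The technical heart of the argument is to establish this identification via the smooth affine atlas of $\cX(\sigma_{\mathbf{1}, \mathbb{n}})$ provided by the Breuil--Kisin-module local models developed earlier in the paper and in \cite{gkksw}. The non-normal stratum of the local model should be cut out by the vanishing of a distinguished set of coordinates, readable from its explicit defining equations, and the final step is to translate this vanishing through the dictionary between Breuil--Kisin modules and $(\varphi, \Gamma)$-modules: a Kisin module in the vanishing locus must descend to an unramified $G_K$-representation, and conversely.

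The main obstacle is precisely this last translation. One must identify, chart by chart, which monomial data on the local model encode the wild ramification of $\rhobar$, and verify that their simultaneous vanishing forces the associated $(\varphi, \Gamma)$-module to be unramified. The required explicit Kisin-module computations closely parallel and extend those already used in proving Theorem~\ref{thm:1}(ii), so the bulk of the new work is representation-theoretic rather than geometric.
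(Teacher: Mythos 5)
Your reduction via Theorem \ref{thm:1} is correct: when some $n_j \neq p-2$, the versal rings are all normal because $\cX(\sigma)$ is either smooth or normal, so one may assume $n_j = p-2$ throughout. The proposed twist to reduce to $\mathbb{m} = \mathbf{1}$ is also fine in principle, and the plan to identify the non-normal locus inside an explicit Breuil--Kisin-module local model matches the paper's overall strategy (the paper works with $V(N) \subset \tilZz_S$ and Corollary \ref{cor:loc-non--normal}, keeping $\mathbb{m}$ general rather than untwisting). Your remark that a dimension count reduces the problem to a set-theoretic identification is not quite enough on its own, however: both loci do have the same dimension, but you still must establish at least one containment, and that containment \emph{is} the nontrivial part.

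The genuine gap is in the step you describe as ``the main obstacle.'' Reading off the vanishing locus in the chart, one finds (as in the paper's computation with the matrices $A_{\gM,\beta}^{(j)}$) that the relevant Breuil--Kisin modules are upper-triangular extensions of rank-one modules $\gM'$, $\gM''$ whose associated characters are unramified twists of each other. It does \emph{not} follow formally that the corresponding $G_K$-representation $\rhobar$ (after untwisting by $\prod_j \omega_j^{1-m_j}$) is unramified: a priori $\rhobar$ is merely an extension of two unramified characters, and such extensions are generically ramified. The paper resolves this by (i) a change-of-basis computation showing the extension class of the Breuil--Kisin module vanishes when the two unramified eigenvalues $\lambda'_0, \lambda''_0$ differ, and lands in a specific one-dimensional subspace when $\lambda'_0 = \lambda''_0$; (ii) a nontrivial argument that this one-dimensional space consists of unramified extensions, carried out by base-changing the $\etale$ $\varphi$-module to the degree-$p$ unramified extension $L/K$, exhibiting a splitting of $\cM_L$, and then invoking \cite[Cor.~3.2]{DDR} to identify the one-dimensional space of extensions vanishing on $I_K$. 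Your proposal doesn't anticipate this subtlety: ``simultaneous vanishing forces the associated $(\varphi,\Gamma)$-module to be unramified'' is precisely what needs an argument, and the required Galois-cohomological input (the restriction to $G_L$ plus the peu ramifi\'e / unramifi\'e dichotomy) is not a straightforward extension of the computations in Theorem \ref{thm:1}(ii), which are purely geometric. Without this step the proof is incomplete.
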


\begin{theorem}[Theorem \ref{thm:non-CM-rep}]\label{thm:3}
    Let $p > 3$, $K$ unramified over $\Qp$, and $\sigma_{\mathbb{m}, \mathbb{n}}$ a Serre weight. Suppose there exists $i \in \Z/f\Z$ such that $n_{i+1} = 0$, 
$n_{i} = p-1$, and $n_j=p-2$ for $j \in \Z/f\Z \smallsetminus \{i, i+1\}$. The versal ring at $\rhobar \in \cX(\sigma_{\mathbb{m}, \mathbb{n}})(\Fbar)$ is not smooth if and only if, viewing $\rhobar$ as a $G_K$--representation, 
 $$\rhobar \otimes \left(\omega_{i+1}^{-m_{i+1}} \otimes \prod_{j \neq i+1} \omega_j^{1-m_j}\right)$$ is unramified.
In this case, the versal ring is normal and Cohen--Macaulay. It is Gorenstein, even lci, if and only if $f=2$.
\end{theorem}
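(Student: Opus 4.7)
Most of Theorem~\ref{thm:3} is deduced from Theorem~\ref{thm:1}(iii) once the singular locus on $\cX(\sigma_{\bm,\bn})$ has been identified. Under the hypothesis of Theorem~\ref{thm:3}, the only subset of $\Z/f\Z$ as in condition~(b) of Theorem~\ref{thm:1} is the full cycle $\{i+1,i+2,\dots,i\}$ of cardinality $f$; so the Gorenstein/lci clause reduces to $f=2$. Normality and Cohen--Macaulayness of the versal ring at any $\rhobar$ follow by descent from the smooth-local cover of the normalization, since both properties are preserved by smooth pullback and both hold on a resolution-rational variety.

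For the core "if and only if" statement, the plan is to pin down the singular locus of $\cX(\sigma_{\bm,\bn})$ explicitly using the resolution produced in Theorem~\ref{thm:1}(iii). That resolution is a cohomological equivalence but fails to be an isomorphism over a closed substack, which must coincide with the singular locus of codimension $\geq 2$ supplied by Theorem~\ref{thm:1}. On the Breuil/Kisin module side, this locus should be cut out by the simultaneous vanishing of the shape-coordinates indexed by the "long" string $\{i+1,\dots,i\}$ singled out by the hypothesis, and should emerge directly from the construction of the cover underlying Theorem~\ref{thm:1}(iii).

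The final step is to translate this combinatorial description into a condition on $\rhobar$ via the dictionary between Breuil/Kisin modules and continuous $G_K$--representations. Generic (smooth) points of the component correspond to representations whose associated Breuil module has non-degenerate shape, and are typically irreducible; the deepest stratum consists of semisimple representations whose tame inertia types are pinned down by $(\bm,\bn)$, and after twisting by $\omega_{i+1}^{-m_{i+1}} \otimes \prod_{j \neq i+1} \omega_j^{1-m_j}$ these become unramified. The exponent $-m_{i+1}$ at $j = i+1$, as opposed to $1 - m_{i+1}$ elsewhere, records the distinguished role of the index where $n_{i+1} = 0$, which is the endpoint of the "long" string dictating the degenerate shape. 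The main obstacle is making this dictionary sufficiently explicit to read off the twist character precisely, in the presence of the mixed boundary behaviour ($n_{i+1} = 0$ on one side, $n_i = p-1$ on the other); this is analogous to, but more delicate than, the corresponding computation behind Theorem~\ref{thm:2}. Once the identification is made, the "only if" direction matches the codimension of the unramified-twist locus with that of the singular locus, while the "if" direction follows from the explicit failure of smoothness on the cover at the deepest stratum.
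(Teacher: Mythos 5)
Your high-level structure is aligned with the paper's: reduce to Corollary~\ref{cor:loc-non-CM}, identify the singular locus as $V(N(\gr))$ for the single long string $\gr$, and then translate to Galois representations via the Breuil--Kisin module dictionary. Your observation that the only subset in condition~(b) is the full cycle $\{i+1,\dots,i\}$ of cardinality $f$, forcing $f=2$ for the Gorenstein/lci clause, is correct.

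However, there is a genuine gap: your proposal explicitly defers the core computation ("The main obstacle is making this dictionary sufficiently explicit to read off the twist character precisely") and so does not actually establish the "if and only if". The paper's proof requires several non-trivial steps that you have not carried out. First, one must pin down the class tuple $(T_j)_j$ and the element $\tilz$ via Lemmas~\ref{lem:mu-combinatorics} and~\ref{lem:type-combinatorics}, then write out the matrices $A^{(j)}_{\gM,\beta}$ for Breuil--Kisin modules over $V(N(\gr))$ and convert to $C^{(j)}_{\gM,\beta}$. Second, a crucial case analysis is missing entirely from your proposal: when the block $b_0 \in \GL_2(\Fbar)$ is not upper triangular (its lower-left entry $W$ is a unit), one must show via an explicit change-of-basis argument, involving solving a quadratic equation for $\alpha_0$ and propagating it around the cycle, that no new $G_K$--representations arise beyond those already obtained from the upper-triangular case. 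Without this, the characterization of the singular locus on the Galois side is not established.

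In addition, your assertion that "the deepest stratum consists of semisimple representations" is incorrect. After the stated twist, the representations on the singular locus are exactly the \emph{unramified} two-dimensional ones, which include non-split extensions (a Jordan block for Frobenius). Indeed, the proof of Theorem~\ref{thm:non--normal-rep} shows explicitly that when the two unramified characters coincide, a one-dimensional space of non-split unramified extensions survives; the same phenomenon occurs here.
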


Here, $\{\omega_j\}_{j}$ are choices of $G_K$--characters extending the $f$ distinct niveau $1$ fundamental characters of the inertia subgroup (see Section \ref{sec:notation} for details).
Note that the hypothesis in the statement of Theorem \ref{thm:3} does not encapsulate all normal but non--smooth components unless $f=2$.

\subsection{Categorical \texorpdfstring{$p$}{p}--adic Langlands} In \cite[Conj.~6.1.14]{EGHcategorical}, Emerton, Gee and Hellmann conjecture the existence of an exact fully faithful functor 
$\gA$ from a certain derived category of so--called smooth representations of $\GL_d(K)$ to a certain derived category of quasicoherent sheaves on $\cX_d$, satisfying various properties. Without going into the details of what the appropriate categories are and the properties $\gA$ is expected to satisfy, we focus our attention on a few consequences of their conjecture laid out in Sections 6 and 7 of \textit{loc. cit.}, restricting to the case $d=2$. Let $\sigma_{\mathbb{m}, \mathbb{n}}$ be a Serre weight viewed as a representation of $\GL_2(\cO_K)$ via inflation, where $\cO_K$ is the ring of integers of $K$. Let $\cL(\sigma_{\mathbb{m}, \mathbb{n}})$ denote the conjectural sheaf 
$$ \mathfrak{A}\left(\text{c-Ind}_{\GL_2(\cO_K)}^{\GL_2(K)} \sigma_{\mathbb{m}, \mathbb{n}} \right).$$ This conjectural sheaf interpolates certain (actual, and not conjectural) patched modules (see \cite[Rmks.~6.1.26, 6.1.30]{EGHcategorical}). Some of the expectations about $\cL(\sigma_{\mathbb{m}, \mathbb{n}})$ are as follows:

\begin{enumerate}
    \item It is a coherent sheaf concentrated in degree $0$ by \cite[Rmk.~6.1.26]{EGHcategorical} and maximal Cohen--Macaulay on its support. The latter essentially follows from exactness of $\gA$ and Cohen--Macaulay nature of sheaves associated to locally algebraic types (see \cite[Rmk.~6.1.34]{EGHcategorical}). When $\sigma_{\mathbb{m}, \mathbb{n}}$ is non--Steinberg, i.e. $n_j < p-1$ for some $j$, expected compatibility with the geometric Breuil--M\'ezard conjecture implies that the support of $\cL(\sigma_{\mathbb{m}, \mathbb{n}})$ is  $\cX(\sigma_{\mathbb{m}, \mathbb{n}})$. 
    \item Ignoring possible shifts of complexes, $\cL(\sigma_{\mathbb{m}, \mathbb{n}})$ is Grothendieck--Serre self--dual by \cite[Rmk.~6.1.35]{EGHcategorical}. Thus, when $K$ is an unramified non--trivial extension of $\Qp$, and $\sigma_{\mathbb{m}, \mathbb{n}}$ is non--Steinberg, Corollary \ref{cor:max-CM-normal} implies that $\cL(\sigma_{\mathbb{m}, \mathbb{n}})$ is the pushforward of a self--dual maximal Cohen--Macaulay sheaf on the normalization of $\cX(\sigma_{\mathbb{m}, \mathbb{n}})$.
    \item When $K$ is an unramified non--trivial extension of $\Qp$, $p>5$, and $\sigma_{\mathbb{m}, \mathbb{n}}$ is non--Steinberg, $\cL(\sigma_{\mathbb{m}, \mathbb{n}})$ has rank $1$ generically on $\cX(\sigma_{\mathbb{m}, \mathbb{n}})$. This follows from combining the data on codimension of non--normal locus in potentially Barsotti--Tate deformation rings given in the proof of \cite[Thm.~4.6.10]{lhmm}, conjecture about the rank of the generic fiber of sheaves corresponding to locally algebraic types in \cite[Rmk.~6.1.34]{EGHcategorical}, and compatibility with geometric Breuil--M\'ezard conjecture. The key point is that one can always find a non--scalar tame inertial type $\tau$ such that $\sigma_{\mathbb{m}, \mathbb{n}}$ appears in the Jordan--Holder decomposition of the $\GL_2(\cO_K)$--representation associated to $\tau$ by inertial local Langlands, and such that the potentially Barsotti--Tate deformation rings of type $\tau$ are regular in codimension $1$.
\end{enumerate}

Motivated by these expectations, we obtain the following theorem, whose proof uses the statement about the codimension of the singular locus in Theorem \ref{thm:1}.

\begin{theorem}[Corollary \ref{cor:line-bundle-pushfwd}]\label{thm:4}
    Let $p>3$, $K$ an unramified non--trivial extension of $\Qp$, and $\sigma_{\mathbb{m},\mathbb{n}}$ a Serre weight. Let $\iota: \cU \hookrightarrow \cX(\sigma_{\mathbb{m},\mathbb{n}})$ be the smooth open locus in $\cX(\sigma_{\mathbb{m},\mathbb{n}})$. Suppose $\cF$ is a finite type maximal Cohen--Macaulay sheaf on $\cX(\sigma_{\mathbb{m},\mathbb{n}})$ generically of rank $1$. The following are true:
    \begin{enumerate}
        \item The sheaf $\cF$ is isomorphic to the pushforward along $\iota$ of the invertible sheaf  $\iota^{*} \cF$ on $\cU$.
        \item If $\sigma_{\mathbb{m}, \mathbb{n}}$ is non--Steinberg and there does not exist $i$ such that $(n_{i-1}, n_i) = (0, p-1)$, then $\cF$ is the pushforward of a unique invertible sheaf on a smooth algebraic stack of Breuil--Kisin modules admitting a proper, birational map onto $\cX(\sigma_{\mathbb{m},\mathbb{n}})$.
    \end{enumerate}
\end{theorem}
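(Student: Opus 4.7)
I deduce both parts from Theorem~\ref{thm:1} together with the smooth Breuil--Kisin resolutions constructed earlier in the paper, combined with standard reflexive/MCM arguments.

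\emph{Part (i).} The key input is that $\cX(\sigma_{\mathbb{m},\mathbb{n}}) \smallsetminus \cU$ has codimension at least $2$. If the component is smooth this is vacuous; otherwise we fall into case (a) or (b) of Theorem~\ref{thm:1}. In case (a), the non--smooth locus coincides with the non--normal locus (since its complement is smooth) and has codimension $f$, and $f \geq 2$ because $K$ is a non--trivial unramified extension of $\Qp$; in case (b) the bound is asserted directly. Theorem~\ref{thm:1} also guarantees $\cX(\sigma_{\mathbb{m},\mathbb{n}})$ is Cohen--Macaulay via its resolution--rational cover. Since $\cF$ is maximal Cohen--Macaulay on a CM ambient stack, it satisfies Serre's $S_2$ property, so the unit $\cF \to \iota_*\iota^*\cF$ is an isomorphism: kernel and cokernel are supported on the codimension $\geq 2$ locus $\cX \smallsetminus \cU$, where any MCM sheaf has depth $\geq 2$ and therefore no sections and no $H^1_{\text{loc}}$. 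On the smooth open $\cU$, the pullback $\iota^*\cF$ is MCM of generic rank one on a smooth stack, hence locally free of rank one, i.e.\ invertible.

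\emph{Part (ii).} The combinatorial hypothesis on $\mathbb{n}$ is precisely what is needed for the paper's construction to produce a smooth algebraic stack $\tilde{\cX}$ of Breuil--Kisin modules and a proper, birational morphism $\pi\colon \tilde{\cX} \to \cX(\sigma_{\mathbb{m},\mathbb{n}})$ which is an isomorphism over an open substack containing $\cU$ and whose preimage has complement of codimension $\geq 2$ in $\tilde{\cX}$. Given this, set $\cL := (\pi^*\cF)^{**}$; as a reflexive rank--one sheaf on a smooth algebraic stack, it is a line bundle. On the open where $\pi$ is an isomorphism, $\pi_*\cL$ agrees with $\cF$, so $\pi_*\cL$ and $\cF$ are MCM rank-one sheaves on $\cX(\sigma_{\mathbb{m},\mathbb{n}})$ agreeing on an open of complement codimension $\geq 2$. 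Applying part~(i) to both sheaves forces $\cF \cong \pi_*\cL$. Uniqueness of $\cL$ follows from the same principle in reverse: any two line bundles on $\tilde{\cX}$ pushing forward to $\cF$ are canonically isomorphic on the locus where $\pi$ is an isomorphism, and such an isomorphism extends uniquely across the complementary codimension $\geq 2$ locus on the smooth stack $\tilde{\cX}$.

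\emph{Main obstacle.} The sheaf-theoretic skeleton above is essentially formal once Theorem~\ref{thm:1} is granted; the genuine work is hidden in invoking a smooth Breuil--Kisin resolution $\pi\colon \tilde{\cX} \to \cX(\sigma_{\mathbb{m},\mathbb{n}})$ with the right properties (smoothness of the source, properness and birationality of $\pi$, and isomorphism over a sufficiently large open containing $\cU$). The exclusion of the pair $(n_{i-1},n_i)=(0,p-1)$ is exactly the combinatorial condition under which such a resolution is available in the earlier parts of the paper. Verifying that the resolution used for Theorem~\ref{thm:1} meets these precise requirements is the step where care is needed; everything else is a routine application of $S_2$-extension and reflexive-hull formalism on smooth stacks.
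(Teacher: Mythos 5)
Your proof is essentially correct and follows the same core idea as the paper — $S_2$-extension of MCM sheaves across codimension $\geq 2$ closed loci, plus Hartogs on a smooth resolution — but packages it differently. You work directly at the stack level with the unit map $\cF \to \iota_*\iota^*\cF$ and depth computations, and with reflexive hulls $(\pi^*\cF)^{**}$ for part (ii). The paper instead pulls back to the explicit smooth-local cover $\tilZ_S$ (a scheme), applies the reflexive pushforward theorem of Hassett--Kov\'acs there, then handles the descent to the stack by hand, noting explicitly that descent data on $j^*F$ extends uniquely because $\cF$ is MCM with no embedded primes and the complement has codimension $\geq 2$. Your route is cleaner in presentation but sweeps the stack-level foundational issues and descent under the rug; the paper's is more concrete and self-contained.

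Two smaller points. First, your remark that ``Theorem~\ref{thm:1} also guarantees $\cX(\sigma_{\mathbb{m},\mathbb{n}})$ is Cohen--Macaulay via its resolution--rational cover'' is not accurate in case (a): there the component is non-normal and it is only the \emph{normalization} that carries the resolution-rational smooth-local cover. This does not damage your argument, since the $S_2$-extension step uses only the MCM property of the sheaf $\cF$ and the codimension bound on the complement of $\cU$, not any Cohen--Macaulay property of the ambient stack — but the claim as written is a misstatement. Second, for part (ii) you correctly identify that the real content is showing that, when no $(n_{i-1}, n_i) = (0, p-1)$ occurs, the Breuil--Kisin stack $\Y_S$ maps properly and birationally onto $\cX(\sigma)$ and the complement of the preimage of $\cU$ has codimension $\geq 2$; you acknowledge this as deferred. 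In the paper this is exactly the content of Corollaries \ref{cor:R1} and \ref{cor:smooth-away-from-nonCM} combined with the combinatorics of Lemma~\ref{lem:type-combinatorics}(iii) — the codimension of the preimage in $\Y_S$ of the singular locus is $d-1$ where $d \geq 3$ precisely when no $\heart$-sequence of length $2$ (equivalently no pair $(n_{i-1},n_i) = (0,p-1)$) exists. Once that is cited, your reflexive-hull construction of $\cL = (\pi^*\cF)^{**}$ and the Hartogs uniqueness argument coincide with the paper's use of the Weil divisor $D$ and $\cO(D)$.
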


In the setup of Theorem \ref{thm:4} above and assuming a reasonable notion of a dualizing complex on an algebraic stack, note that if $\cF$ is (Grothendieck--Serre) self--dual, then so is $\iota^{*} \cF$. As we will see later in the proof, $\cU$ is isomorphic to its preimage in the aforementioned stack of Breuil--Kisin modules. If the hypothesis in (ii) holds, then the codimension of the complement of the preimage of $\cU$ turns out to be $\geq 2$. Thus, by an argument involving the algebraic Hartog's Lemma on the dual of a line bundle on a smooth variety, $\cF$ is seen to be the (non--derived) pushforward of a self--dual invertible sheaf on the stack of Breuil--Kisin modules we are considering. 

When $p>5$, we therefore expect that one can uniquely characterize $\cL(\sigma_{\mathbb{m}, \mathbb{n}})$ for all non--Steinberg $\sigma_{\mathbb{m}, \mathbb{n}}$ as the pushforward of the unique self--dual invertible sheaf on $\cU$ (equivalently, on an appropriate stack of Breuil--Kisin modules when the hypothesis in (ii) holds), and indeed use this characterization as a key ingredient towards constructing $\gA$ (\textit{c.f.} \cite[Sec.~7.6.10]{EGHcategorical}).


\subsection{Strategy and outline} 
We begin in Section \ref{sec:notation} by setting up some standard notation and definitions. In Section \ref{sec:charts}, we review many of the constructions from \cite{lhmm} that we use essentially. These constructions pertain to smooth--local charts on various closed substacks of $\cX_{2, \text{red}}$, as well as explicit auxiliary schemes through which maps from certain stacks of Breuil--Kisin modules to $\cX_{2, \text{red}}$ factor locally in the smooth topology. The constraints on $p$ and the requirement for $K$ to be unramified over $\Qp$ appear in this section, most critically in the proof of Proposition \ref{prop:factorization-through-bd}. We use the results of \cite{bellovin2024irregular} to impose ``shape" conditions that cut out irreducible components in these charts, thus setting up smooth--local charts for the non--Steinberg irreducible components of $\cX_{2, \text{red}}$.

In Section \ref{sec:geometry}, we undertake a detailed study of the geometry of these charts. We start off by analyzing the additional relations on the auxiliary schemes that come from imposing shape conditions. Next, in Section \ref{subsec:Ba-product}, we make the crucial observation that the charts for the irreducible components of $\cX_{2, \text{red}}$ can be written as (typically non--trivial) products of varieties, with each factor in the product somewhat easier to study. Each factor admits a resolution of singularities by a smooth scheme, and we determine the locus where this resolution fails to be an isomorphism. The key argument is in Lemma \ref{lem:bad-sequence}. Next, by doing factor--wise computations, we obtain the cohomologies of the dualizing complex for the charts following the approach in \cite{lhmm}, which reduces the computations to those for the auxiliary schemes and Koszul resolutions. We also compute lower bounds on the ranks of these cohomology groups at the loci where the irreducible components of $\cX_{2, \text{red}}$ fail to be isomorphic to their resolutions of singularities. This allows us to infer that these loci are precisely the singular loci.


Finally, we prove Theorems \ref{thm:serre-weight-sing} and \ref{cor:line-bundle-pushfwd} in Section \ref{sec:combinatorics} by doing various combinatorial calculations and applying the results from Section \ref{sec:geometry} to the irreducible components of $\cX_{2, \text{red}}$ labelled by specific non--Steinberg Serre weights. We handle the Steinberg components separately in Theorem \ref{thm:steinberg} by directly analyzing the points. We also determine explicitly the Galois representations corresponding to the finite type points of the singular loci in the setup of Theorems \ref{thm:non--normal-rep} and \ref{thm:non-CM-rep}.

\begin{remark}
    We use the definition of resolution--rational as given in \cite[Defn.~9.1]{kovacs2017rational}, as well as prove Corollary \ref{cor:ss} using \cite[Prop.~6.10]{kovacs2017rational}. While the paper \cite{kovacs2017rational} has now been retracted because of errors in the main statements, the definition of resolution--rational is still meaningful and the proof of \cite[Prop.~6.10]{kovacs2017rational} is still correct.
\end{remark}

\subsection{Acknowledgements} We are grateful to Brandon Levin for his continuous mentorship, insights and ideas. We would also like to thank Toby Gee for many helpful conversations at different stages of this project, as well as Ariane M\'ezard, Stefano Morra and Fred Diamond for discussions that improved the exposition. Finally, it is a pleasure to acknowledge the debt this paper owes to the work of Bao V. Le Hung, Ariane M\'ezard and Stefano Morra. This work was done while K.K. was supported by the National Science Foundation under Grant No. DMS--1926686.


\section{Notation and background}\label{sec:notation} Fix a prime $p > 3$.
Let $K$ be a finite unramified extension of $\Q_p$ of degree $f$ with ring of integers $\cO_K$ and residue field $k$. 
Fix an algebraic closure $\overline{K}$ of $K$. For any algebraic extension $L$ of $K$ in $\overline{K}$, denote by $G_L$ the group $\Gal(\overline{K}/L)$. Denote by $I_L$ the inertia subgroup of $G_L$. Let $\pi' \in \overline{K}$ be a fixed $(p^{f}-1)$-th root of $p$. Let $K'$ be a tame extension of $K$ obtained by attaching $\pi'$.

Let $\F$ be a finite extension of $\Fp$ that is the residue field of the ring of integers $\cO$ of a finite field extension $E$ of $\Q_p$ with uniformizer $\varpi$. Denote by $\Fbar$ a fixed algebraic closure of $\F$. We take $\F$ to be large enough so that all embeddings $k \hookrightarrow \Fbar$ factor through $\F$. Fix an embedding $\sigma_0: k \hookrightarrow \F$ and let $$\sigma_{f-j} := \sigma_0^{p^{j}}.$$ The map $j \mapsto \sigma_j$ induces an identification of sets $$\Z/f\Z \xrightarrow{\sim} \Hom_{\Fp}(k, \F)= \Hom_{\Zp}(\cO_K, \cO).$$
For each $j \in \Z/f\Z$, let
$\omega_j: G_K \to \cO^{\times}$ be the character given by 
\begin{align*}
g \mapsto \sigma_j \left(\dfrac{g(\pi')}{\pi'}\right).\end{align*} 
Abusing notation, we will denote the mod $\varpi$ reduction of $\omega_j$ also by $\omega_j$ when it is clear that we are speaking of $\F$--coefficients. 
We will also denote the restriction $\omega_j|_{I_K}$ by $\omega_j$ when it is clear that we are speaking of $I_K$--representations. For $\lambda$ a nonzero element of $\Fbar$, let $$\ur_{\lambda}: G_K \to \Fbar^{\times}$$ be the unramified character mapping the geometric Frobenius element to $\lambda$.
\subsection{Tame inertial types}
A tame inertial type is the isomorphism class of a representation $\tau: I_K \to \GL_2(\cO)$ which has an open kernel, factors through the tame quotient of $I_K$, and extends to $G_K$. Such a representation is of the form $\tau \cong \eta_1 \oplus \eta_2$. We say that $\tau$ is a \textit{principal series} tame type if both $\eta_1$ and $\eta_2$ extend to characters of $G_K$, cuspidal otherwise. It is \textit{non--scalar} if $\eta_1 \neq \eta_2$. 
When $\tau$ is a principal series type, $\eta_1$ and $\eta_2$ factor through $I_K \onto \Gal(K'/K)$, see for e.g. \cite[Sec.~2]{breuil2007representations}.
In this article, it will suffice to restrict attention to principal series tame types, for which we now introduce notation from \cite{lhmm}.

Denote by $W = \{\id, w_0\}$ the Weyl group of $\GL_2$ (defined over $\Z$). Here $w_0 = (1\; 2)$ is the longest element of the Weyl group. Let
$B \subset \GL_2$ be the Borel subgroup of upper triangular matrices and $T \subset B$ the subgroup of diagonal matrices. We identify the group of its characters $X^{*}(T)$ with $\Z^{2}$ in the standard way. Let $\alpha$ denote the positive root of $\GL_2$, and let $\< \; , \; \>: X^{*}(T) \times X_{*}(T) \to \Z$ be the duality pairing where $X_{*}(T)$ is the group of cocharacters of $T$. The Weyl group $W$ acts naturally on $X^{*}(T)$. We extend this to a coordinate--wise action of $W^{\Z/f\Z}$ on $X^{*}(T)^{\Z/f\Z}$. Define 
\begin{align*}
\widetilde{W} &\stackrel{\text{def}}{=} X^{*}(T) \rtimes W
\end{align*}
Denote by $t_{\nu}$ the image of $\nu \in X^{*}(T)$ under the obvious inclusion $X^{*}(T) \hookrightarrow \widetilde{W}$. 

Suppose  $\mu = (\mu_j) \in X^{*}(T)^{\Z/f\Z}$ and $s = (s_j)_{j} \in W^{\Z/f\Z}$ satisfying $s_0 s_1 \dots s_{f-1}=1$. Let $\alpha_0 = \mu_0$ and $\alpha_j = s_{f-1}^{-1} s_{f-2}^{-1} \dots s_{f-j}^{-1}(\mu_{f-j})$ for $j \in \Z/f\Z \smallsetminus \{0\}$.
For each $j \in \Z/f\Z$, let $$\mathbb{a}^{(j)} = \sum_{i = 0}^{f-1} \alpha_{-j + i} p^{i} \in X^{*}(T)$$  where $\alpha_{l}$ for $0 \leq l \leq f-1$ is to be interpreted as $\alpha_{l \text{ mod } f}$. Viewing $\mathbb{a}^{(j)}$ as a cocharacter of the dual torus, we set $$\tau(s, \mu) \stackrel{\text{def}}{=} \mathbb{a}^{(j)} \omega_j$$ for any $j \in \Z/f\Z$, since this definition does not depend on $j$.
By \cite[Lem.~2.1.6]{lhmm}, for any principal series $\tau$, there exist $\mu$ and $s$ such that $\<\mu_j, \alpha^{\vee}\> \in [0, (p+1)/2]$ for each $j$, $s_j$ is $\id$ whenever $\<\mu_j, \alpha^{\vee}\>=0$, and $\tau \cong \tau(s, \mu)$. By Lem.~2.1.8 in \textit{loc. cit.}, whenever $\tau$ is non--scalar, there exists a unique $(s_{\ort, j})_j \in W^{\Z/f\Z}$
such that $\< s_{\ort, j}^{-1} \left(\mathbb{a}^{(j)}\right), \alpha^{\vee}\> > 0$. 

\begin{remark}
    Since our application does not require cuspidal types, we don't include notation needed to describe them. However, we note that cuspidal types can also be described using the data of suitable $s$ and $\mu$ where $s_0 s_1 \dots s_{f-1}$ equals $w_0$ (see for e.g. Section 2 in \cite{lhmm}). Furthermore, everything in Sections \ref{sec:notation}, \ref{sec:charts} and \ref{sec:geometry} can be generalized to include cuspidal types as well.
\end{remark}

\begin{definition}
    Let $\mu = (\mu_j)_j \in X^{*}(T)^{\Z/f\Z}$. We say $\mu$ is \textit{small} if for each $j$, $\<\mu_j, \alpha^{\vee}\> \in [0, (p+1)/2]$, and $s_j = \id$ whenever $\<\mu_j, \alpha^{\vee}\> = 0$.
\end{definition}

\begin{definition}
    Define $\eta \in X^{*}(T)$ to be the element $(1,0)$. Abusing notation, we also let $\eta \in X^{*}(T)^{\Z/f\Z}$ be the element that is $(1,0)$ in each coordinate.
\end{definition}
\subsection{Breuil--Kisin modules}
Let $\gS \stackrel{\text{def}}{=} W(k)[[u]]$, where $W(k)$ is the ring of Witt vectors of $k$. The ring $\gS_{K'}$ is equipped with a Frobenius endomorphism $\varphi$ that extends the usual arithmetic Frobenius on $W(k)$ lifted from the $p^{f}$-power map on $k$, and maps $u$ to $u^{p}$. It also admits an action of $\Gal(K'/K)$ extending the usual trivial action of $\Gal(K'/K)$ on $W(k)$, so that if $g \in \Gal(K'/K)$, then $$g(u) = \dfrac{g(\pi')}{\pi'} u.$$ Let $E(u)$ denote the minimal polynomial of $\pi'$ over $W(k)$. The subring $\gS^{0}$ of $\Gal(K'/K)$--invariants of $\gS$ is $W((k))[[v]]$ where $$v := u^{p^f - 1}.$$

For a $\cO/\varpi^{a}$--algebra $A$ where $a \geq 1$, let  $\gS_{A} \stackrel{\text{def}}{=} (W(k) \otimes_{\Zp} A)[[u]]$ and equip it with $A$--linear actions of $\varphi$ and $\Gal(K'/K)$ extended naturally from the $\varphi$ and $\Gal(K'/K)$ actions on $\gS$. The subring $\gS_{A}^{0}$ of $\Gal(K'/K)$--invariants of $\gS_{A}$ is $(W(k) \otimes_{\Zp} A)[[v]]$.
 Let $\tau$ be a principal series type.
\begin{definition}
    A Breuil--Kisin module $\gM$ of rank $2$ with $A$--coefficients and descent data of type $\tau$ is a rank $2$ projective $\gS_{A}$--module $\gM$ together with
    \begin{itemize}
    \item a $\varphi$--semilinear map $\varphi_{\gM}: \gM \to \gM$ whose linearization is an isomorphism after inverting $u$, and
    \item a semilinear action of $\Gal(K'/K)$ on $\gM$ commuting with $\varphi_{\gM}$ such that Zariski-locally on $\Spec A$
    $$\gM \otimes_{k, \sigma_j} A \mod u \cong \tau^{\vee} \otimes_{\cO} A$$ as $\Gal(K'/K)$--representations.
\end{itemize}
We say that $\gM$ has \textit{height at most $h$} if the cokernel of $\Phi_{\gM}$ is annihilated by $E(u)$.
\end{definition}
Let $\gM$ be a Breuil--Kisin module of rank $2$ with $A$--coefficients and descent data of type $\tau$. For each $\sigma_j: W(k) \to \cO$, there is a corresponding idempotent $\mathfrak{e}_j \in W(k) \otimes_{\Zp} \cO$ such that $x \otimes 1$ and $1 \otimes \sigma_j(x)$ have the same action on $\mathfrak{e}_j(W(k) \otimes_{\Zp} \cO)$, a rank $1$ $\cO$--module. Set $\gM_j = \mathfrak{e}_j \gM$, a module over $A[[u]]$, and let 
$$\Phi_{\gM, j}: \varphi^{*} (\gM_{j-1}) \to \gM_j$$ be the map induced by $\Phi_{\gM}$. Each $\gM_j$ is Zariski locally on $\Spec A$ free as an $A[[u]]$ module by \cite[Lem.~2.3]{bellovin2024irregular}. 

Suppose $\tau \cong \tau(s, \mu)$ is non--scalar. For $j \in \Z/f\Z$, let $\mathbb{a}^{(j)}_0, \mathbb{a}^{(j)}_1 \in \Z$ be such that $\mathbb{a}^{(j)} = (\mathbb{a}^{(j)}_0, \mathbb{a}^{(j)}_1)$. By the argument in \cite[Lem.~2.1.3]{gkksw} for e.g., Zariski locally on $\Spec A$ one can choose an ordered basis $\beta_j = (e_j, f_j)$ of $\gM_j$  so that $\Gal(K'/K)$ acts on $e_j, f_j$ via $$\omega_j^{-\mathbb{a}^{(j)}_0}, \omega_j^{-\mathbb{a}^{(j)}_1}$$ respectively. Following convention, we call a $\Z/f\Z$--tuple $\beta = (\beta_j)_j$ of such ordered bases an \textit{eigenbasis} of $\gM$. Given an eigenbasis $\beta$ of $\gM$, let $C_{\gM, \beta}^{(j)}$ be the matrix of $\Phi_{\gM, j}$ with respect to the bases $\varphi^{*} (\beta_{j-1})$ and $\beta_j$. For each $j \in \Z/f\Z$, set 
$$A_{\gM, \beta}^{(j)} \stackrel{\text{def}}{=} \Ad \left(s_{\ort, j}^{-1} u^{-\mathbb{a}^{(j)}}\right) (C_{\gM, \beta}^{(j)}).$$
Here, the notation $\Ad \; A (B)$ means $A B A^{-1}$ and if $\nu = (\nu_0, \nu_1) \in X^{*}(T)$ and $x \in \gS_{A}$, then $x^{\nu}$ is the diagonal matrix $\begin{psmallmatrix}
    x^{\nu_0} & 0 \\
    0 & x^{\nu_1}
\end{psmallmatrix}$.

\begin{proposition}\label{prop:change-of-basis-BKmod}\cite[Prop.~5.1.8]{LLLM-models}
    Let $\gM$ be a Breuil--Kisin module of rank $2$ with $A$--coefficients and descent data of principal series non--scalar type $\tau \cong \tau(s, \mu)$. Let $\beta_1, \beta_2$ be two eigenbases of $\gM$ related via
    $$\beta_{2,j} D^{(j)} = \beta_{1,j}$$ with $D^{(j)} \in \GL_2(A[[u]])$ for each $j \in \Z/f\Z$. Set $$I^{(j)} \stackrel{\text{def}}{=} \Ad \left(s_{\ort, j}^{-1} u^{-\mathbb{a}^{(j)}}\right) \left(D^{(j)}\right).$$ Then $I^{(j)} \in \GL_2(A[[v]])$ is upper triangular mod $v$, and
    $$A_{\gM, \beta_2}^{(j)} = I^{(j)} A_{\gM, \beta_1}^{(j)} \Ad \left(s_{j}^{-1} v^{\mu_j}\right) \left(\varphi\left(I^{(j-1)}\right)\right)^{-1}.$$
    Furthermore, if $I^{(j)} \in \GL_2(A[[v]])$ upper triangular mod $v$ for each $j \in \Z/f\Z$, then $\Ad \left(u^{\mathbb{a}^{(j)}} s_{\ort, j} \right)\left(I^{(j)}\right) = D^{(j)} \in \GL_2(A[[u]])$ and for any eigenbasis $\beta$, $\left(\beta^{(j)} D^{(j)}\right)_{j}$ is also an eigenbasis.
    \end{proposition}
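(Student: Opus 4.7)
The approach decomposes over the idempotent decomposition $\gM = \bigoplus_j \gM_j$: I would work component-by-component in $j \in \Z/f\Z$ and analyze how both the Galois descent data and the $\varphi$-semilinear Frobenius interact with the change-of-basis matrix $D^{(j)}$. The two halves of the proposition (the formula and its converse) are essentially the same computation run in opposite directions, so the core is to pin down one bijection.

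First I would derive constraints on $D^{(j)}$ from both bases being eigenbases. Applying $g \in \Gal(K'/K)$ to $\beta_{2,j} D^{(j)} = \beta_{1,j}$ and using that $g$ acts on the basis vectors by $\omega_j^{-\mathbb{a}^{(j)}_0}, \omega_j^{-\mathbb{a}^{(j)}_1}$ yields
\[
g(D^{(j)}) \;=\; \Ad\!\left(\mathrm{diag}(\omega_j(g)^{\mathbb{a}^{(j)}_0}, \omega_j(g)^{\mathbb{a}^{(j)}_1})\right)(D^{(j)}).
\]
On the $j$-th component $A[[u]]$ of $\gS_A$, $g$ acts by $u \mapsto \omega_j(g)\, u$, so reading this entrywise forces the diagonal entries of $D^{(j)}$ to lie in the invariant subring $A[[v]]$, and the off-diagonal entries to lie in $u^{r_0} A[[v]]$ and $u^{r_1} A[[v]]$ for residues $r_0, r_1$ congruent to $\pm(\mathbb{a}^{(j)}_0 - \mathbb{a}^{(j)}_1) \pmod{p^f - 1}$. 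Integrality $D^{(j)} \in \GL_2(A[[u]])$ forces the $r_i$ to be the minimal non-negative such residues. Conjugating by $s_{\ort,j}^{-1} u^{-\mathbb{a}^{(j)}}$ shifts each off-diagonal exponent by $\pm(\mathbb{a}^{(j)}_0 - \mathbb{a}^{(j)}_1)$; the orienting choice of $s_{\ort,j}$ is precisely what makes one off-diagonal land in $A[[v]]$ with no forced $v$-divisibility while the other lands in $v A[[v]]$, giving $I^{(j)} \in \GL_2(A[[v]])$ upper triangular mod $v$.

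Next I would combine this with the standard change-of-basis formula for the matrix of a $\varphi$-semilinear map,
\[
C^{(j)}_{\gM, \beta_2} \;=\; D^{(j)} \cdot C^{(j)}_{\gM, \beta_1} \cdot \varphi(D^{(j-1)})^{-1},
\]
the $\varphi(D^{(j-1)})$ arising from semilinearity across $\Phi_{\gM, j}: \varphi^{*}(\gM_{j-1}) \to \gM_j$. Conjugating by $s_{\ort,j}^{-1} u^{-\mathbb{a}^{(j)}}$ yields the claimed formula once one knows the key identity
\[
s_{\ort,j}^{-1} u^{-\mathbb{a}^{(j)}} \;=\; (s_j^{-1} v^{\mu_j}) \cdot s_{\ort, j-1}^{-1} \varphi\!\left(u^{-\mathbb{a}^{(j-1)}}\right)
\]
in the extended affine Weyl group (up to central factors). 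Unwinding $v^{\mu_j} = u^{(p^f-1)\mu_j}$ and $\varphi(u^{-\mathbb{a}^{(j-1)}}) = u^{-p\mathbb{a}^{(j-1)}}$, this reduces to a purely combinatorial identity that reads off from the recursive definition $\alpha_j = s_{f-1}^{-1} \cdots s_{f-j}^{-1}(\mu_{f-j})$, the resulting telescoping expression for $\mathbb{a}^{(j)}$, and the orienting characterization $\<s_{\ort,j}^{-1}(\mathbb{a}^{(j)}), \alpha^\vee\> > 0$.

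For the converse, given $I^{(j)} \in \GL_2(A[[v]])$ upper triangular mod $v$, I would check that $D^{(j)} \stackrel{\text{def}}{=} \Ad(u^{\mathbb{a}^{(j)}} s_{\ort,j})(I^{(j)})$ lies in $\GL_2(A[[u]])$: the adjoint scales the off-diagonal entries by $u^{\pm(\mathbb{a}^{(j)}_0 - \mathbb{a}^{(j)}_1)}$, and the entry acquiring a negative power is (by hypothesis) divisible by $v = u^{p^f - 1}$; the smallness of $\mu_j$ gives $|\mathbb{a}^{(j)}_0 - \mathbb{a}^{(j)}_1| \leq p^f - 1$, so the cancellation leaves a non-negative power. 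That $(\beta^{(j)} D^{(j)})_j$ is an eigenbasis then follows by reversing the Galois-equivariance computation above. The main obstacle will be the careful bookkeeping of orientations, signs, and $p$-power shifts in the key matrix identity---specifically, how $s_{\ort, j}$ interacts with the recursive formula for $\mathbb{a}^{(j)}$ and with the Frobenius $\varphi$---since everything else is essentially formal once that identity is in hand.
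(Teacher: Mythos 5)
The paper does not prove this proposition; it is quoted verbatim from \cite[Prop.~5.1.8]{LLLM-models}, so there is no in-paper argument to compare against. Your proposed proof is correct in structure and is the natural one: the $\Gal(K'/K)$-equivariance condition $g(D^{(j)}_{lk}) = \omega_j(g)^{\mathbb{a}^{(j)}_l - \mathbb{a}^{(j)}_k} D^{(j)}_{lk}$ pins down the $\omega_j$-eigenspace in $A[[u]]$ of each entry, and conjugation by $s_{\ort,j}^{-1} u^{-\mathbb{a}^{(j)}}$ then reorganizes the exponents so that $I^{(j)}$ lands in $\GL_2(A[[v]])$ upper triangular mod $v$. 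The formula relating $A^{(j)}_{\gM,\beta_1}$ and $A^{(j)}_{\gM,\beta_2}$ then follows from the semilinear change-of-basis identity $C^{(j)}_{\gM,\beta_2} = D^{(j)} C^{(j)}_{\gM,\beta_1}\varphi(D^{(j-1)})^{-1}$ combined with the group-theoretic identity you state, namely $s_{\ort,j}^{-1} u^{-\mathbb{a}^{(j)}} = c\cdot\bigl(s_j^{-1}v^{\mu_j}\bigr)\,s_{\ort,j-1}^{-1}\,u^{-p\mathbb{a}^{(j-1)}}$ for a central scalar $c$, and the converse runs these steps backwards.

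Two places deserve tightening in a final write-up. First, the forward direction needs the \emph{strict} inequality $|\langle\mathbb{a}^{(j)},\alpha^\vee\rangle| < p^f-1$ (you write $\leq p^f-1$ in the converse discussion); with equality, one off-diagonal entry of $\Ad(u^{-\mathbb{a}^{(j)}})(D^{(j)})$ would land in $v^{-1}A[[v]]$ rather than $A[[v]]$. Strictness does hold under the standing hypotheses: $\mu$ small and $p>3$ give $|\langle\mathbb{a}^{(j)},\alpha^\vee\rangle| \leq \tfrac{p+1}{2(p-1)}(p^f-1) < p^f-1$, and non-scalarity of $\tau$ makes it nonzero. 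Second, the key identity splits into a cocharacter part and a Weyl part. The cocharacter part is the telescoping $p\,\mathbb{a}^{(j-1)}-\mathbb{a}^{(j)} = (p^f-1)\,\alpha_{f-j}$, which is a one-line consequence of $\mathbb{a}^{(j)} = \sum_{i=0}^{f-1}\alpha_{-j+i}p^i$. The Weyl part is the pair of assertions $s_{\ort,j}^{-1}s_{\ort,j-1}=s_j^{-1}$ and $s_{\ort,j-1}^{-1}(\alpha_{f-j})\equiv\mu_j$ modulo the center. This is the one genuinely nontrivial step of the whole proposition; you attribute it to ``the orienting characterization $\langle s_{\ort,j}^{-1}(\mathbb{a}^{(j)}),\alpha^\vee\rangle > 0$'' without deriving it, and a complete argument should spell it out (it uses the relation between the signs of $\langle\mathbb{a}^{(j)},\alpha^\vee\rangle$, $\langle\mathbb{a}^{(j-1)},\alpha^\vee\rangle$ and $\langle\alpha_{f-j},\alpha^\vee\rangle$ coming from the telescoping identity, together with the convention that $s_j=\id$ whenever $\langle\mu_j,\alpha^\vee\rangle=0$). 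These are bookkeeping refinements rather than gaps; the overall strategy is sound and matches the argument in \cite{LLLM-models}.
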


\begin{definition}
    Following \cite{lhmm}, let  $\Y$ be the \textit{fppf} stack over 
$\Spf(\cO)$ that assigns to an $\cO/\varpi^{a}$--algebra $A$ the groupoid of Breuil--Kisin modules 
of rank $2$ with $A$--coefficients, descent data of type $\tau$ and height at most $1$ satisfying the additional \textit{determinant condition}
$$\det(\Phi_{\gM}) \in vA[[v]]^{\times}.$$
\end{definition}
 Let  $\YF$ denote the special fiber of this stack.
\begin{remark}\label{rmk:compare-stacks}By \cite[Prop.~2.7]{bellovin2024irregular} and \cite[Cor.~4.5.3(2)]{cegsB}, $$\YF=\cC^{\tau^{\vee}, \text{BT}, 1}$$ where the right hand side is the stack studied in \cite{cegsC}. 
\end{remark}
We have the following description of the irreducible components of $\YF$.

\begin{theorem}\label{thm:irred-comp-Y}
There exists a bijective correspondence between $\{L, R\}^{\Z/f\Z}$ and the set of irreducible components of $\YF$ given in the following way: If $$S = (S_j)_j \in \{L, R\}^{\Z/f\Z},$$ then the corresponding irreducible component $\Y_S$ is the closed substack of $\YF$ obtained by imposing the condition that if $S_j = L$ (resp. $S_j = R$), then $\gM \in \Y_S(A)$ for a local $\F$--algebra $A$ if and only if $v$ divides the top left (resp. bottom right) entry of $A_{\gM, \beta}^{(j)}$ for some, equivalently any, choice of eigenbasis $\beta$ of $\gM$.
\end{theorem}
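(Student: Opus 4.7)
The plan is to leverage the identification $\YF = \cC^{\tau^{\vee}, \text{BT}, 1}$ from Remark \ref{rmk:compare-stacks} and match the divisibility description to the known classification of irreducible components of the latter stack. For a non-scalar principal series type $\tau$, the components of $\cC^{\tau^{\vee}, \text{BT}, 1}$ are in natural bijection with a combinatorial set of cardinality $2^f$, so the content of the theorem is really to identify this set with $\{L, R\}^{\Z/f\Z}$ via the given divisibility conditions on the matrices $A_{\gM, \beta}^{(j)}$.

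\textbf{Step 1 (independence of eigenbasis).} First I would verify that the condition cutting out $\Y_S$ is intrinsic to $\gM$ and does not depend on the choice of eigenbasis $\beta$. Given a second eigenbasis $\beta_2$, Proposition \ref{prop:change-of-basis-BKmod} gives
\[ A_{\gM, \beta_2}^{(j)} = I^{(j)} A_{\gM, \beta_1}^{(j)} \Ad\!\left(s_j^{-1} v^{\mu_j}\right)\!\left(\varphi(I^{(j-1)})\right)^{-1}, \]
with $I^{(j)} \in \GL_2(A[[v]])$ upper triangular modulo $v$. Left multiplication by such an $I^{(j)}$ manifestly preserves $v$-divisibility of the top-left and bottom-right entries. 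A short case analysis on $s_j \in \{\id, w_0\}$, combined with the form of $A_{\gM, \beta_1}^{(j)}$ forced by height at most $1$ and the determinant condition, shows that right multiplication by the inverse of the conjugated term also preserves these divisibility conditions.

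\textbf{Step 2 (closedness and covering).} Each $\Y_S$ is a closed substack of $\YF$ because $v$-divisibility of a specified entry is a closed condition. To see that the collection $\{\Y_S\}$ covers $\YF$, observe that the height and determinant conditions force $\det(A_{\gM, \beta}^{(j)})$ to be $v$ times a unit in $A[[v]]$, so the reduction $A_{\gM, \beta}^{(j)} \bmod v$ has rank at most $1$ over $A$. At any $\Fbar$-point, at least one of the two diagonal entries must therefore vanish modulo $v$, producing some $S \in \{L, R\}^{\Z/f\Z}$ with $\gM \in \Y_S$; checking this locally on $\Spec A$ shows the $\Y_S$ set-theoretically cover $\YF$.

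\textbf{Step 3 (matching with irreducible components).} Finally I would identify each $\Y_S$ with an actual irreducible component of $\YF$. Since the $\Y_S$ are $2^f$ closed substacks covering $\YF$, matching the count of irreducible components from the classification in \cite{cegsC}, it suffices to show each $\Y_S$ is irreducible of the expected dimension. This can be done either by invoking that classification and matching the combinatorial index (Jordan–Hölder factors of the representation attached to $\tau^{\vee}$ via inertial local Langlands) with $\{L, R\}^{\Z/f\Z}$ via the generic shape of $A_{\gM, \beta}^{(j)}$; or by restricting the explicit smooth-local charts of \cite{LLLM-models} to the $\Y_S$-locus and reading off irreducibility directly. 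The main technical obstacle lies in Step 1: conjugation by $\Ad(s_j^{-1} v^{\mu_j})$ can in principle shift powers of $v$ between entries, and one must use the full force of the height and determinant constraints, not just the upper-triangularity of $I^{(j)}$ modulo $v$, to confirm that the divisibility pattern of the diagonal entries is preserved.
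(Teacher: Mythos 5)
The paper's proof of this theorem is a one-line citation to \cite[Thm.~3.16]{bellovin2024irregular}; the statement is taken as established there, with Remark \ref{rmk:compare-stacks} providing the dictionary to the stack $\cC^{\tau^\vee, \mathrm{BT},1}$. Your proposal attempts to reconstruct the argument from scratch, which is a genuinely different route. The high-level plan (well-definedness; closedness and covering; matching the count against the known classification) is reasonable, but there is a concrete gap in the part you flag as the main obstacle.

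In Step 1 you attribute the preservation of the divisibility pattern under $A_{\gM, \beta_1}^{(j)} \mapsto I^{(j)} A_{\gM, \beta_1}^{(j)} \Ad(s_j^{-1} v^{\mu_j})(\varphi(I^{(j-1)}))^{-1}$ to ``the full force of the height and determinant constraints'' on $A_{\gM, \beta_1}^{(j)}$. These constraints alone do not suffice, and a naive count suggests a potential failure: at a point where both diagonal entries of $A_{\gM, \beta_1}^{(j)}$ vanish mod $v$, the determinant condition forces $\beta := (A_{\gM, \beta_1}^{(j)})_{1,2}$ to be a unit, and the new top-left entry is then $\equiv -a''\beta c' \pmod v$ where $c'$ parametrizes the (shifted) lower-left entry of $\Ad(s_j^{-1}v^{\mu_j})(\varphi(I^{(j-1)}))^{-1}$; nothing in the height or determinant data of $A$ controls $c'$. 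What actually saves the argument is the Frobenius: since $I^{(j-1)}$ has lower-left entry in $vA[[v]]$, the matrix $\varphi(I^{(j-1)})^{-1}$ has lower-left entry divisible by $v^p$, and conjugation by $v^{\mu_j}$ shifts this by at most $v^{-\langle \mu_j, \alpha^\vee\rangle}$. Because $\mu$ is small, $\langle \mu_j, \alpha^\vee\rangle < p$, so the lower-left of the conjugated matrix is still divisible by $v$ (in fact by a positive power), and the problematic term drops out. This is the same mechanism the paper later invokes (see the remark after Lemma \ref{lem:straighten}) when it notes that the proofs of Proposition \ref{prop:factorization-through-bd} and Lemma \ref{lem:straighten} critically use $p-2 > \max_j\langle\mu_j,\alpha^\vee\rangle$. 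Your Step 3 is also only a sketch, deferring the irreducibility/count-matching to one of two unspecified routes; since that is precisely the content of the cited theorem in \cite{bellovin2024irregular}, a self-contained proof would need to carry one of these out in detail.
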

\begin{proof}
    Immediate from \cite[Thm.~3.16]{bellovin2024irregular}. 
\end{proof}

\subsection{\'Etale \texorpdfstring{$\varphi$}{phi}-modules and Galois representations}
Let $A$ be an $\cO/\varpi^{a}$--algebra for some $a \geq 1$. 
\begin{definition}

   An \'etale $\varphi$--module $\cM$ of rank $2$ with $A$--coefficients is a rank $2$ projective module over $\gS_A^{0}[1/v]$, together with a $\varphi$--semilinear map $\varphi_{\cM}: \cM \to \cM$ whose
    linearization $\Phi_{\cM}: \varphi^{*} \cM \to \cM$ is an isomorphism.
\end{definition}
Let $\cM$ be an \etale $\varphi$--module of rank 2 with $A$--coefficients. As in the setting of Breuil--Kisin modules, we can decompose $\cM \cong \oplus_{j \in \Z/f\Z} \cM_j$ by setting $\cM_j := \mathfrak{e}_j \cM$. The map $\Phi_{\cM}$ induces maps $\Phi_{\cM, j}: \varphi^{*}\cM_{j-1} \to \cM_j$. 

\begin{definition}
    Let $\Phi\text{-Mod}^{\et, 2}_{K}$ denote the \textit{fppf} stack over $\Spf(\cO)$ that assigns to an $\cO/\varpi^{a}$--algebra $A$ the groupoid of rank $2$ \etale $\varphi$--modules of rank $2$ with $A$ coefficients. 
\end{definition}

Define a map 
\begin{align*}
\varepsilon_{\tau}: \Y \to \Phi\text{-Mod}^{\et, 2}_{K}
\end{align*}
by setting $\varepsilon_{\tau}(\gM) = \gM[1/u]^{\Gal(K'/K)}$.

\begin{remark}
    The map $\varepsilon_{\tau}$ is proper by \cite[Thm.~5.1.2]{cegsB} and by \cite[Thm.~4.5]{bellovin2024irregular}, the scheme--theoretic image $\cZ^{\tau}$ of $\varepsilon_{\tau}$ is isomorphic to the Emerton--Gee stack of potentially Barsotti--Tate representations of type $\tau^{\vee}$, described in \cite[Defn.~4.8.8]{emerton2022moduli}. Denote this locus by $\cX^{\tau^{\vee}, \mathrm{BT}}$.
    The scheme--theoretic image of $\YF$ is the reduced stack $\cZ^{\tau, 1}$. Denote by $\pi$ the induced map $\YF \to \cZ^{\tau, 1}$.
\end{remark}

\begin{definition}
    For $S \in \{L, R\}^{\Z/f\Z}$, denote by $\cZ^{\tau}_S$ the scheme--theoretic image of $\Y_S$ under $\pi$.
\end{definition}

\begin{proposition}\label{prop:phi-module-frob}\cite[Prop.~5.4.2]{LLLM-models} Let $\gM \in \Y(A)$ and $\beta$ an eigenbasis of $\gM$. Let $\tau = \tau(s, \mu)$. Then there exists a basis $\mathfrak{b}$ for $\varepsilon_{\tau}(\gM)$ such that the matrix of $\Phi_{\cM, j}$ with respect to $\mathfrak{b}$ is given by $A_{\gM, \beta}^{(j)}s_j^{-1} v^{\mu_j}$.
    
\end{proposition}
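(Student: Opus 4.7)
The plan is to construct the basis $\mathfrak{b}$ explicitly by twisting the eigenbasis $\beta$ so as to absorb the Galois action, and then compute the Frobenius matrix by direct manipulation.

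First I would unpack what the Galois action looks like on each component $\gM_j = \mathfrak{e}_j \gM$. Since $\Gal(K'/K)$ acts trivially on $W(k) \otimes_{\Zp} A$ and sends $u \mapsto (g(\pi')/\pi')\, u$ in $\gS_A$, passing to the $j$-th idempotent component turns this into the scalar action $u \mapsto \omega_j(g)\, u$ on $A[[u]]$. Using that $e_j,f_j$ have Galois characters $\omega_j^{-\mathbb{a}^{(j)}_0},\omega_j^{-\mathbb{a}^{(j)}_1}$, the vectors $u^{\mathbb{a}^{(j)}_0} e_j,\ u^{\mathbb{a}^{(j)}_1} f_j$ are Galois-invariant, and since the invariants of $A[[u]][1/u]$ under $u \mapsto \omega_j(g) u$ are exactly $A((v))$ (as $\omega_j$ has order dividing $p^f-1 = \deg v$), these vectors form a basis of $\gM_j[1/u]^{\Gal(K'/K)}$ over the appropriate ring. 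Assembling these over $j$ gives a basis of $\varepsilon_\tau(\gM)$.

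Next I would define $\mathfrak{b}_j := \beta_j\, u^{\mathbb{a}^{(j)}}\, s_{\ort,j}$; the extra right-multiplication by the Weyl element $s_{\ort,j}$ is harmless for Galois-invariance (it simply permutes the two invariant vectors) but is precisely what aligns the computation with the normalization used in defining $A^{(j)}_{\gM,\beta} = \Ad(s_{\ort,j}^{-1} u^{-\mathbb{a}^{(j)}})(C^{(j)}_{\gM,\beta})$. To compute the Frobenius, I would use that Frobenius sends $u^k \mapsto u^{pk}$, giving $\varphi^{*}\mathfrak{b}_{j-1} = \varphi^{*}\beta_{j-1}\cdot u^{p\,\mathbb{a}^{(j-1)}}\, s_{\ort,j-1}$, and hence
\[
\Phi_{\cM, j}(\varphi^{*}\mathfrak{b}_{j-1}) \;=\; \beta_j\, C^{(j)}_{\gM,\beta}\, u^{p\,\mathbb{a}^{(j-1)}}\, s_{\ort,j-1}.
\]
Solving $\mathfrak{b}_j M^{(j)}$ equal to this expression gives $M^{(j)} = s_{\ort,j}^{-1} u^{-\mathbb{a}^{(j)}} C^{(j)} u^{p\,\mathbb{a}^{(j-1)}} s_{\ort,j-1}$.

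The remaining step, and the main obstacle, is showing this equals $A^{(j)}_{\gM,\beta}\, s_j^{-1}\, v^{\mu_j}$. This reduces to the identity
\[
u^{p\,\mathbb{a}^{(j-1)}}\, s_{\ort,j-1} \;=\; u^{\mathbb{a}^{(j)}}\, s_{\ort,j}\, s_j^{-1}\, v^{\mu_j},
\]
i.e.\ $v^{p\mathbb{a}^{(j-1)} - \mathbb{a}^{(j)}} = s_{\ort,j} s_j^{-1} v^{\mu_j} s_{\ort,j-1}^{-1}$ after moving everything to one side. The exponent on the left-hand side can be computed directly from the definition $\mathbb{a}^{(j)} = \sum_{i=0}^{f-1}\alpha_{-j+i}\, p^i$ to get $p\mathbb{a}^{(j-1)} - \mathbb{a}^{(j)} = (p^f-1)\, \alpha_{-j}$, so the coefficient is an integer multiple of $v$. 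The right-hand side must then be matched using the recursion $\alpha_j = s_{f-1}^{-1}\cdots s_{f-j}^{-1}(\mu_{f-j})$ together with the definition of $s_{\ort,j}$ as the unique Weyl element making $\langle s_{\ort,j}^{-1}\mathbb{a}^{(j)}, \alpha^{\vee}\rangle > 0$; this forces a compatibility $s_{\ort,j-1} = s_j^{-1} s_{\ort,j}$ (up to a diagonal factor coming from $\mu_j$), which is the combinatorial heart of the argument. Since the whole statement is pulled from \cite{LLLM-models}, at this stage one can either cite Prop.~5.4.2 of loc.\ cit.\ or verify this last Weyl-element identity by a case check on the two possible values of $s_j$, which is mechanical once the conventions are fixed.
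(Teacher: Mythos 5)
The paper itself does not prove Proposition \ref{prop:phi-module-frob}; it simply cites \cite[Prop.~5.4.2]{LLLM-models}. Your reconstruction is correct and is in fact the argument underlying the cited result: the twisted basis $\mathfrak{b}_j=\beta_j\,u^{\mathbb{a}^{(j)}}s_{\ort,j}$ is the right choice, the Galois-invariance and Frobenius computations are carried out as you describe, and the residual Weyl-element identity $u^{\mathbb{a}^{(j)}}s_{\ort,j}s_j^{-1}v^{\mu_j}=u^{p\,\mathbb{a}^{(j-1)}}s_{\ort,j-1}$ (equivalently $s_{\ort,j-1}=s_{\ort,j}s_j^{-1}$ together with $s_{\ort,j-1}(\mu_j)=\alpha_{-j}$, using your computation $p\,\mathbb{a}^{(j-1)}-\mathbb{a}^{(j)}=(p^f-1)\alpha_{-j}$) is exactly the combinatorial bookkeeping done in \emph{loc.\ cit.}; a finite case-check, as you say, closes it.
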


Finally, we describe how to assign a Galois representation to an \etale $\varphi$--module. Fix a compatible sequence $\{\pi_n\}_n$ of $p^{n}$-th roots of $p$ in $\overline{K}$, with $\pi_{n+1}^{p} = \pi_n$. Since $\gcd(e(K'/K), p)=1$, $\{\pi_n\}_{n=0}^{\infty}$ determines a compatible sequence $\{\pi'_n\}_{n=0}^{\infty}$ of $p^{n}$-th roots of $\pi'$ satisfying $(\pi'_n)^{e(K'/K)} = \pi_n$. Let $$K_{\infty} := \cup_n K(\pi_n), \text{ and} \quad K'_{\infty} := \cup_n K'(\pi'_n).$$ 
By Fontaine's theory of the field of norms, if $|A| < \infty$, then there exists a fully faithful functor $T$ from the \etale $\varphi$--modules with $A$--coefficients to $G_{K_{\infty}}$--representations with $A$--coefficients. To describe this functor, we first define $R := \lim_{x \mapsto x^{p}} \cO_{\overline{K}}/p$. Let $\underline{\pi'} = (\pi'_n)_n \in R$ and let $[\underline{\pi'}]$ be the canonical multiplicative lift of $\underline{\pi'}$ to the Witt vectors of $R$, $W(R)$. There exists a $\varphi$--equivariant inclusion $\gS \hookrightarrow W(R)$ over $W(k)$ that maps $u \to [\underline{\pi'}]$. This embedding extends to inclusions $$\cO_{\cE} \hookrightarrow W(\text{Frac}(R))$$ and $$\cE \hookrightarrow W(\text{Frac}(R))[1/p],$$ where $\cO_{\cE}$ is the $p$--adic completion of $\gS[1/u]$ and $\cE$ is its ring of fractions. The ring $\cO_{\cE}$ is a discrete valuation ring with uniformizer $p$ and residue field $k((u))$. Let $\cE^{\nr}$ be the maximal unramified extension of $\cE$ in $W(\text{Frac}(R))[1/p]$ with ring of integers $\cO_{\cE^{\nr}}$ and residue field $k((u))^{\sep}$, a separable closure of $k((u))$. Let $\cO_{\widehat{\cE^{\nr}}}$ be the $p$--adic completion of $\cO_{\cE^{\nr}}$.

\begin{definition}
    Let $|A| < \infty$. If $\cM \in \Phi\text{-Mod}^{\et, 2}_{K}(A)$, set
$$T(\cM) \stackrel{\text{def}}{=} \left(\cO_{\widehat{\cE^{\nr}}} \otimes_{\gS^{0}[1/v]} \cM \right)^{\varphi=1}$$ equipped with diagonal action of the group $G_{K_{\infty}}$.
We also define $T(\gM)$ for $\gM$ a Breuil--Kisin module with $A$--coefficients and descent data by setting 
$$T(\gM) \stackrel{\text{def}}{=} T\left(\varepsilon_{\tau}(\gM)\right).$$ 
\end{definition}
Using \cite[Thm.~2.4.1, 2.7.8]{emerton2022moduli} for e.g., we note that the right hand side above equals $$\left(\cO_{\widehat{\cE^{\nr}}} \otimes_{\gS[1/u]} \gM[1/u]\right)^{\varphi=1},$$ which agrees with the definition of $T(\gM)$ in \cite[Defn.~2.2.3]{cegsC}.

\subsection{Serre weights and the Emerton--Gee stack}\label{subsec:serre-wt}
A Serre weight is an isomorphism class of an irreducible $\F$--representation of $\GL_2(k)$. Such representations are precisely those of the form
$$\sigma_{\mathbb{m}, \mathbb{n}} := \bigotimes_{j \in \Z/f\Z} \left(\mathrm{det}^{m_j} \otimes \Sym^{n_j} k^{2} \right)\otimes_{k, \sigma_j} \F$$
where $k^{2}$ denotes the standard two--dimensional representation of $\GL_2(k)$ and $0 \leq n_j \leq p-1$ for each $j$. The representation is non--Steinberg if for some $j$, $n_j < p-1$.

If $\sigma_{\mathbb{m}, \mathbb{n}}$ is a Serre weight,
the finite type points of the corresponding irreducible component $\cX(\sigma_{\mathbb{m}, \mathbb{n}})$ of $\cX_{2, \text{red}}$ admit crystalline lifts with Hodge--Tate weights $\{-n_j-m_j, -m_j+1\}$ in the $j$-th embedding. When $\sigma_{\mathbb{m}, \mathbb{n}}$ is non--Steinberg, the main result of \cite{cegsA} shows that this description uniquely characterizes $\cX(\sigma_{\mathbb{m}, \mathbb{n}})$.


Here, we are normalizing Hodge--Tate weights so that all Hodge--Tate weights of the cyclotomic character are equal to $-1$. 



\section{smooth--local charts}\label{sec:charts}
\subsection{Loop groups and torsors over \texorpdfstring{$\YF$}{Y-eta-tau} and \texorpdfstring{$\cZ^{\tau, 1}$}{Z-tau-1}}

From now onwards, we will work entirely over $\F$--coefficients, although as described in detail in \cite{lhmm}, many of the descriptions below extend to $\cO$--coefficients. Our primary objective in this subsection is to construct smooth--local affine charts on $\cZ^{\tau, 1}$ following closely various constructions in \cite[Sec.~3]{lhmm}.

Let $\tau = \tau(s,\mu)$ be a fixed non--scalar principal series tame type with $\mu = (\mu_j)_j$ small. Assume that $p - 2 > \max_j \<\mu_j, \alpha^{\vee}\> $. This is true if $p>3$ and $\max_j \<\mu_j, \alpha^{\vee}\> \leq (p-1)/2$.
Let $\mu_j = (\mu_{j, 0}, \mu_{j, 1})$ for each $j$. Define a functor $\LG$ by setting for an $\F$--algebra $A$ $$\LG(A) := \GL_2(A((v)))$$ as well as various subfunctors
\begin{align*}
    \LGp(A)&:= &&\GL_2(A[[v]]) \\
    \LGn(A) &:= &&\GL_2(A[1/v]) \\
    \Iw(A) &:= &&\{P \in \LGp(A) \mid P \text{ is upper triangular mod } v\} \\
    \Kpr(A) &:= &&\{P \in \Iw(A) \mid P \equiv \id \text{ mod } v^{r} \} \\
    \Knr(A) &:= &&\{P \in \LGn(A) \mid P \equiv \id \text{ mod } 1/v^{r} \} \\
    \cA(\eta)(A) &:= &&\{P \in \Mat_2(A[[v]]) \mid P \text{ is upper triangular mod } v, \det P \in vA[[v]]^{\times}\}
\end{align*}
where $r$ is any positive integer in the definition of $\Kpr$ and $\Knr$.
Define $$\LGbd(A) \subset \LG(A)^{\Z/f\Z}$$ to be the set of $(P_j)_j \in \LG(A)^{\Z/f\Z}$ satisfying, for each $j$,
\begin{itemize}
    \item $P_j \in v^{\mu_{j, 1}} \Mat_2(A[[v]])$, and
    \item $\det P_j \in v^{\mu_{j, 0} + \mu_{j, 1} + 1} A[[v]]^{\times}$.
\end{itemize}
\begin{remark}
    We are allowing nonzero values of $\mu_{j, 1}$, and so, this definition of $\LGbd$ is slightly different from the mod $p$ version of the one in \cite{lhmm}. 
\end{remark}
Define 
$$\LGtau(A) := \{(W_j s_j^{-1}v^{\mu_j})_j \in \LG(A)^{\Z/f\Z} \; | W_j \in \cA(\eta)(A) \text{ for each } j\} \subset \LGbd.$$

\begin{definition}
    We define \textit{shifted $\varphi$--conjugation} to be a left action of $\LG^{\Z/f\Z}$ (and its various subfunctors) on itself denoted by $\cdot_{\varphi}$ and given by setting
$$(P_j)_j \cdot_{\varphi} (Q_j)_j \stackrel{\text{def}}{=} \left(P_j Q_j \varphi(P_{j-1})^{-1}\right).$$
\end{definition}

\begin{lemma}\cite[Lem.~3.2.1]{lhmm}\label{lem:identification-Gr}
    Let $\gM \in \YF(A)$ for an an $\F$--algebra $A$. Then, Zariski-locally on $\Spec A$, $\gM$ has an eigenbasis $\beta$. The assignment $$\gM \mapsto \left(A_{\gM, \beta}^{(j)} s_j^{-1}v^{\mu_j}\right)_{j \in \Z/f\Z}$$ defines an isomorphism of algebraic stacks over $\F$
    \begin{align}\label{eqn:presentation-Y}
    \YF \xrightarrow{\sim} \left[\LGtau \Big{/}_{\varphi} \left(\Iw^{\Z/f\Z}\right)\right] \end{align} and hence a morphism
    \begin{align}\label{eqn:map-to-bd}
        \YF \to \left[\LGbd \Big{/}_{\varphi} \left(\LGp^{\Z/f\Z}\right)\right]. 
    \end{align}
\end{lemma}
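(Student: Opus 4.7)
The plan is to assemble the Lemma from three ingredients that are essentially available in the excerpt: Zariski-local existence of an eigenbasis, the change-of-eigenbasis formula (Proposition \ref{prop:change-of-basis-BKmod}), and the height/determinant constraints that cut out $\LGtau$ inside $\LG^{\Z/f\Z}$.

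\textbf{Step 1: Zariski-local eigenbases.} For $\gM \in \YF(A)$, each component $\gM_j$ is Zariski-locally free of rank $2$ over $A[[u]]$ by \cite[Lem.~2.3]{bellovin2024irregular}. After passing to such a Zariski cover of $\Spec A$, the argument recalled in \cite[Lem.~2.1.3]{gkksw} produces, via Hensel-style lifting of the descent-data eigendecomposition modulo $u$, an ordered basis $\beta_j = (e_j, f_j)$ of $\gM_j$ on which $\Gal(K'/K)$ acts through $\omega_j^{-\mathbb{a}^{(j)}_0}$ and $\omega_j^{-\mathbb{a}^{(j)}_1}$. This uses non-scalarity of $\tau$ to separate the two eigenlines, and it is at this step that we need $\tau$ to be non-scalar (which we have assumed). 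The collection $\beta = (\beta_j)_j$ is the sought eigenbasis.

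\textbf{Step 2: Matrices land in $\LGtau$.} Given an eigenbasis $\beta$, form $C_{\gM, \beta}^{(j)}$ and then $A_{\gM, \beta}^{(j)} = \Ad(s_{\ort,j}^{-1} u^{-\mathbb{a}^{(j)}})(C_{\gM, \beta}^{(j)})$. I would check that $A_{\gM,\beta}^{(j)} \in \cA(\eta)(A)$; that is, it has entries in $A[[v]]$, is upper triangular modulo $v$, and has determinant in $v A[[v]]^{\times}$. The entry and triangularity conditions follow from the height $\leq 1$ hypothesis together with the specific form of the descent data shifts $\mathbb{a}^{(j)}$ for a small $\mu$ (this is essentially the reason for introducing the twist by $s_{\ort,j}^{-1} u^{-\mathbb{a}^{(j)}}$ in the definition of $A_{\gM,\beta}^{(j)}$). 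The determinant condition, $\det A_{\gM,\beta}^{(j)} \cdot \det(s_j^{-1} v^{\mu_j}) \in v A[[v]]^\times$, follows from the determinant condition built into the definition of $\YF$, namely $\det(\Phi_{\gM}) \in vA[[v]]^{\times}$. Combined with the shape $A_{\gM,\beta}^{(j)} s_j^{-1} v^{\mu_j}$, this exhibits the tuple in $\LGtau(A)$.

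\textbf{Step 3: Equivariance and the quotient presentation.} A change of eigenbasis $\beta_2 = \beta_1 \cdot D$ with $D^{(j)} \in \GL_2(A[[u]])$ corresponds, by Proposition \ref{prop:change-of-basis-BKmod}, precisely to $I^{(j)} \in \Iw(A)$, and the transformation of $A_{\gM,\beta}^{(j)} s_j^{-1} v^{\mu_j}$ is exactly shifted $\varphi$-conjugation by $(I^{(j)})_j$:
\[
A_{\gM,\beta_2}^{(j)} s_j^{-1} v^{\mu_j} \;=\; I^{(j)} \bigl(A_{\gM,\beta_1}^{(j)} s_j^{-1} v^{\mu_j}\bigr) \varphi(I^{(j-1)})^{-1}.
\]
Conversely, the second half of Proposition \ref{prop:change-of-basis-BKmod} says that any such $(I^{(j)})_j \in \Iw(A)^{\Z/f\Z}$ arises from a genuine change of eigenbasis. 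Thus the assignment $\gM \mapsto (A_{\gM,\beta}^{(j)} s_j^{-1} v^{\mu_j})_j$ descends to a well-defined morphism $\YF \to [\LGtau/_\varphi \Iw^{\Z/f\Z}]$ and is an isomorphism of stacks in fppf (in fact Zariski) topology: injectivity on objects comes from the change-of-basis equivalence, essential surjectivity from reading off a Breuil--Kisin module from any tuple in $\LGtau(A)$ using the specified $\Gal(K'/K)$-action dictated by the $\mathbb{a}^{(j)}$.

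\textbf{Step 4: The map to $\LGbd$.} Composing with the inclusion $\LGtau \hookrightarrow \LGbd$ and the inclusion $\Iw \hookrightarrow \LGp$ (both compatible with shifted $\varphi$-conjugation) yields \eqref{eqn:map-to-bd}. The main obstacle is Step 2, namely verifying that the adjoint twist $\Ad(s_{\ort,j}^{-1} u^{-\mathbb{a}^{(j)}})$ really clears $u$-denominators and enforces upper-triangularity mod $v$; this is where the small-$\mu$ hypothesis and the specific shape of $s_{\ort,j}$ relative to $\mathbb{a}^{(j)}$ are indispensable, and it is the step most likely to need a careful case analysis (on each block entry of $C_{\gM,\beta}^{(j)}$).
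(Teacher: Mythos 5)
The paper itself does not prove this statement; it simply cites \cite[Lem.~3.2.1]{lhmm}. Your reconstruction follows the expected outline — Zariski-local eigenbases, membership in $\cA(\eta)$, equivariance via Proposition \ref{prop:change-of-basis-BKmod}, and the inclusions $\LGtau \hookrightarrow \LGbd$, $\Iw \hookrightarrow \LGp$ — and, with one correction, it is the right proof.

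The correction is in Step 2. You claim the determinant condition on $\YF$ yields $\det A_{\gM,\beta}^{(j)} \cdot \det(s_j^{-1} v^{\mu_j}) \in v A[[v]]^\times$. This is neither what is needed nor what follows. Membership of $A_{\gM,\beta}^{(j)}$ in $\cA(\eta)$ requires $\det A_{\gM,\beta}^{(j)} \in v A[[v]]^\times$ with no extra factor, and that holds immediately because conjugation preserves determinants: $\det A_{\gM,\beta}^{(j)} = \det \bigl(\Ad(s_{\ort,j}^{-1} u^{-\mathbb{a}^{(j)}})(C_{\gM,\beta}^{(j)})\bigr) = \det C_{\gM,\beta}^{(j)} = \det \Phi_{\gM,j}$. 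The quantity you wrote, $\det A_{\gM,\beta}^{(j)} \cdot \det(s_j^{-1} v^{\mu_j}) = \pm v^{\mu_{j,0}+\mu_{j,1}}\det A_{\gM,\beta}^{(j)}$, is instead $\det \Phi_{\varepsilon_\tau(\gM),j}$ (see Proposition \ref{prop:phi-module-frob}); the determinant condition in the definition of $\YF$ constrains $\Phi_\gM$, not $\Phi_{\varepsilon_\tau(\gM)}$, and conflating them gives the wrong power of $v$ whenever $\mu_{j,0}+\mu_{j,1}\neq 0$. The remaining parts of Step 2 (entries landing in $A[[v]]$ after the $\Ad$-twist, upper triangularity mod $v$) are the genuinely computational part of \cite[Lem.~3.2.1]{lhmm}; you are right to flag these as requiring case analysis governed by the constraints on $\mathbb{a}^{(j)}$ coming from $\mu$ small. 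Steps 3 and 4 are fine, though note that ``injectivity on objects'' should really be full faithfulness of the functor to the quotient prestack, which is exactly what Proposition \ref{prop:change-of-basis-BKmod} provides before stackification.
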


Define a morphism
\begin{align}\label{eqn:map-to-phimod}
    \left[\LGbd \Big{/}_{\varphi} \left(\LGp^{\Z/f\Z}\right)\right] \to \Phi\text{-Mod}^{\et, 2}_{K}
\end{align}
by sending the class of $P = (P_j)_j$ to the \etale $\varphi$--module $\iota(P)$ which is free of rank $2$ and such that $\Phi_{\iota(P), j}: \varphi^{*} \left(\iota(P)_{j-1}\right) \to \iota(P)_{j}$ has matrix $P_j$ in the standard basis.

\begin{prop}\cite[Prop.~3.2.4]{lhmm}\label{prop:factorization-through-bd}
    The map (\ref{eqn:map-to-phimod}) is a closed immersion, and the map $\varepsilon_{\tau}|_{\YF}$ factors as
    \begin{align}\label{eqn:factor-I}
    \YF \xrightarrow{\pi} \cZ^{\tau, 1} \hookrightarrow \left[\LGbd \Big{/}_{\varphi} \left(\LGp^{\Z/f\Z}\right)\right] \xhookrightarrow{\text{(\ref{eqn:map-to-phimod})}} \Phi\text{-}\mathrm{Mod}^{\et, 2}_{K}
    \end{align}
    where the composite of the first two arrows is the map (\ref{eqn:map-to-bd}). 
\end{prop}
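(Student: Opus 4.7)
The plan is to establish the two assertions separately: first the closed immersion claim for (\ref{eqn:map-to-phimod}), and then deduce the factorization. Unwinding definitions, an $A$-point of the quotient stack $[\LGbd /_\varphi (\LGp)^{\Z/f\Z}]$ is the data of a free rank-$2$ $\gS^0_A[[v]]$-lattice $M = \oplus_j M_j$ equipped with $\varphi$-semilinear maps $\Phi_j$ whose linearizations become isomorphisms after inverting $v$ and whose matrices in any basis of $M$ lie in $\LGbd$; two such data are equivalent if they differ by a change of basis, i.e., by shifted $\varphi$-conjugation by an element of $\LGp^{\Z/f\Z}$. The map (\ref{eqn:map-to-phimod}) then forgets the lattice and retains only the underlying \etale $\varphi$-module $(M[1/v], \Phi)$.

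To prove this is a closed immersion, I would establish (a) that it is a monomorphism of stacks and (b) that its image is a closed substack. The content of (a) is a lattice-uniqueness statement: given an \etale $\varphi$-module $\cM$, if $M, M' \subset \cM$ are two $\gS^0_A[[v]]$-lattices whose matrices of $\Phi$ both lie in $\LGbd$, then the element of $\LG^{\Z/f\Z}$ sending $M$ to $M'$ must already lie in $\LGp^{\Z/f\Z}$. This is an analogue of the classical uniqueness of Breuil--Kisin or finite flat models with prescribed Hodge--Tate weights; it can be established by reconstructing $M$ directly from $\Phi$ via an ``optimal'' procedure (roughly, iterating $\Phi^{-1}(v^{\mu_{j,1}}(\cdot))$ and taking saturation). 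The hypotheses $p - 2 > \max_j \<\mu_j, \alpha^{\vee}\>$ and smallness of $\mu$ should enter critically here. For (b), the image in $\Phi\text{-Mod}^{\et, 2}_K$ is the substack cut out by existence of a $\gS^0_A[[v]]$-lattice $M$ satisfying $\Phi_j(\varphi^*M_{j-1}) \subset v^{\mu_{j,1}} M_j$ and $\det \Phi_j \in v^{\mu_{j,0}+\mu_{j,1}+1} A[[v]]^\times$, which is a standard closed condition coming from the affine Grassmannian presentation of $\LG^{\Z/f\Z}$ (bounded-below conditions on matrix entries together with a fixed $v$-adic valuation of the determinant).

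The factorization follows by chasing definitions. By Lemma \ref{lem:identification-Gr}, the map (\ref{eqn:map-to-bd}) sends (Zariski-locally on the source) a Breuil--Kisin module $\gM \in \YF(A)$ with eigenbasis $\beta$ to the class of $\left(A_{\gM, \beta}^{(j)} s_j^{-1} v^{\mu_j}\right)_{j}$. Post-composing with (\ref{eqn:map-to-phimod}) yields the \etale $\varphi$-module whose matrix for $\Phi_j$ in the standard basis is exactly $A_{\gM, \beta}^{(j)} s_j^{-1} v^{\mu_j}$, and by Proposition \ref{prop:phi-module-frob} this is $\varepsilon_\tau(\gM)$. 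Hence the composition of (\ref{eqn:map-to-bd}) with (\ref{eqn:map-to-phimod}) agrees with $\varepsilon_\tau|_{\YF}$. Since $\cZ^{\tau, 1}$ is by definition the scheme-theoretic image of $\pi$, the universal property of scheme-theoretic images together with the closed immersion established above forces the asserted factorization $\cZ^{\tau, 1} \hookrightarrow [\LGbd/_\varphi \LGp^{\Z/f\Z}]$, with the first two arrows composing to (\ref{eqn:map-to-bd}) by construction. The hardest step is the monomorphism assertion (a), which requires a careful $v$-adic analysis to pin down the unique bounded lattice in a given \etale $\varphi$-module.
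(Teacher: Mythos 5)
The paper itself offers no internal proof of this proposition --- it is cited directly from \cite[Prop.~3.2.4]{lhmm} --- so your proposal is being measured against the argument in that reference, which does proceed in the same two stages (closed immersion of the forgetful map, then factorization via the universal property of scheme--theoretic images). Your treatment of the factorization is correct and complete: the composition of (\ref{eqn:map-to-bd}) with (\ref{eqn:map-to-phimod}) agrees with $\varepsilon_\tau|_{\YF}$ by Lemma \ref{lem:identification-Gr} and Proposition \ref{prop:phi-module-frob}, and since $\cZ^{\tau,1}$ is by definition the scheme--theoretic image of $\YF$, the universal property forces $\cZ^{\tau,1}$ to sit inside the closed substack $[\LGbd /_\varphi \LGp^{\Z/f\Z}]$ once that is known to be closed. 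You also correctly flag that the hypothesis $p-2 > \max_j \langle\mu_j, \alpha^\vee\rangle$ is used precisely in the lattice--uniqueness step, consistent with the paper's remark.

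However, the closed immersion claim --- the real content --- is only sketched, and there is a conceptual gap in the way you split it. You propose to show (a) monomorphism and (b) closed image. But ``monomorphism with closed image'' is not the right criterion for a closed immersion; what you need is that the map is a \emph{proper} monomorphism (a proper finite--type monomorphism is a closed immersion). Asserting that ``existence of a bounded lattice is a standard closed condition coming from the affine Grassmannian'' does not substitute for properness: the closed structure of $\LGbd/\LGp$ inside the affine Grassmannian $\LG/\LGp$ says nothing directly about the image in $\Phi\text{-Mod}^{\et,2}_K$ under the forgetful map, and the closedness of that image is in practice deduced \emph{from} properness, not the other way around. Properness requires its own argument (e.g.\ a valuative criterion: extending a bounded lattice from a fraction field to a DVR, using the bound on $\mu$ to control the saturation), and it is not free. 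The monomorphism step --- that any element of $\LG^{\Z/f\Z}$ matching two $\LGbd$--bounded Frobenius matrices must already lie in $\LGp^{\Z/f\Z}$ --- you correctly identify as the heart of the matter, but the iteration/saturation sketch you give is only a heuristic; it is precisely at this step that the $v$--adic estimate on $\mu$ must be deployed carefully, which is what \cite[Prop.~3.2.4]{lhmm} actually does. As written, the two technical halves of the closed immersion claim are described but not proved.
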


\begin{definition}
    Define a quotient sheaf
\begin{align*}
    \Grbd := \quad &\left[\left(\KpI^{\Z/f\Z}\right) \Big{\backslash} \LGbd \right] \end{align*} and its closed subsheaf
\begin{align*}
\Grtau := \quad &\left[\left(\KpI^{\Z/f\Z}\right) \Big{\backslash} \LGtau \right],
\end{align*}
where the quotient is for action by left multiplication.
\end{definition}
\begin{remark}
    The sheaves $\Grbd$ and $\Grtau$ are represented by finite type schemes. Indeed, the sheaf $\Grbd$ is a torsor for an affine scheme over the quotient $$\left[ \left(\LGp^{\Z/f\Z}\right) \Big{\backslash} \LGbd \right], $$ which in turn is a closed subscheme of a finite type scheme by the argument in \cite[Lem.~1.1.5]{zhu2016introduction}. Therefore, by \cite[\href{https://stacks.math.columbia.edu/tag/0245}{Tag 0245}]{stacks-project}, $\Grbd$ is representable.
\end{remark}

\begin{lemma}\label{lem:straighten}
    There exists an isomorphism $$\Grbd \cong \left[\LGbd \Big{/}_{\varphi} \left(\KpI^{\Z/f\Z}\right)\right]$$ which induces an isomorphism
    $$Y^{\eta, \tau}_{\F} \cong \left[ \Grtau \Big{/}_{\varphi} \left(B^{\Z/f\Z}\right) \right]$$
\end{lemma}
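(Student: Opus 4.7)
The plan is to prove the first isomorphism $\Grbd \cong \bigl[\LGbd /_\varphi \KpI^{\Z/f\Z}\bigr]$ as the technical heart of the lemma, and then to deduce the second isomorphism by an iterated-quotient argument along the exact sequence $1 \to \KpI \to \Iw \to B \to 1$ coming from reduction modulo $v$. The second isomorphism then translates the $\Iw$-presentation of $\YF$ from Lemma \ref{lem:identification-Gr} into a $B$-presentation over the scheme $\Grtau$.

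For the first isomorphism I would exhibit an equivalence of groupoids that is the identity on objects. The objects of both groupoids are points of $\LGbd(A)$; the morphisms $(Q_j) \to (Q'_j)$ are parametrized by $\KpI^{\Z/f\Z}(A)$, acting by left multiplication on one side and by shifted $\varphi$-conjugation on the other. Given $(P_j)$ realizing $(Q'_j) = (P_j Q_j)$, I would solve for a corresponding $(R_j) \in \KpI^{\Z/f\Z}(A)$ satisfying $P_j Q_j = R_j Q_j \varphi(R_{j-1})^{-1}$; rewriting this as the cyclic recursion $R_j = P_j Q_j \varphi(R_{j-1}) Q_j^{-1}$ and composing around $\Z/f\Z$ yields a self-referential equation for $R_0$ in which $R_0$ appears once on the left and once inside $\varphi^f$. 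Since $\varphi$ sends $v$ to $v^p$, the operator $\varphi^f$ strictly raises $v$-adic valuations, so the equation is a contraction in the $v$-adic topology on $\KpI$ (whose elements have the form $\id + vX$), and Banach-style iteration produces a unique $R_0$, hence a unique $(R_j)$. Reversing the roles of $(P_j)$ and $(R_j)$ gives the inverse correspondence.

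For the second isomorphism, I would note that $\KpI^{\Z/f\Z}$ is normal in $\Iw^{\Z/f\Z}$ and preserved by $\varphi$, so the shifted $\varphi$-conjugation action of $\Iw^{\Z/f\Z}$ descends to one of $B^{\Z/f\Z} = \Iw^{\Z/f\Z}/\KpI^{\Z/f\Z}$ on the intermediate quotient, and the total quotient factors as a two-stage quotient. Applying the first isomorphism to the closed subfunctor $\LGtau \subset \LGbd$ identifies $\bigl[\LGtau /_\varphi \KpI^{\Z/f\Z}\bigr]$ with $\Grtau$, and combining with $\YF \cong \bigl[\LGtau /_\varphi \Iw^{\Z/f\Z}\bigr]$ from Lemma \ref{lem:identification-Gr} delivers the desired $\YF \cong \bigl[\Grtau /_\varphi B^{\Z/f\Z}\bigr]$. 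The main obstacle will be the fixed-point argument in the first isomorphism: the recursion conjugates $\varphi(R_{j-1})$ by the possibly pole-bearing $Q_j^{-1}$, and one must verify that the $v^p$-valuation produced by $\varphi$ dominates the poles of order at most $\<\mu_j, \alpha^\vee\> + 1$ introduced by $Q_j^{-1}$, so that the iterates remain in $\KpI^{\Z/f\Z}$. The smallness hypothesis on $\mu$ ensuring $\<\mu_j, \alpha^\vee\> < p$ should make this bookkeeping go through cleanly.
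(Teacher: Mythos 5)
The paper's ``proof'' of this lemma is a one-line citation to \cite[Lem.~3.3.7]{lhmm}, so there is no internal argument to compare against. Your proposal reconstructs what such a proof must look like, and the overall strategy --- converting left-multiplication morphisms into shifted $\varphi$-conjugation morphisms by a $v$-adic fixed-point argument, then descending the $\Iw^{\Z/f\Z}$-quotient through the normal subgroup $\KpI^{\Z/f\Z}$ to a $B^{\Z/f\Z}$-quotient --- is the standard Lang/Frobenius-contraction argument in this setting and is surely what underlies the cited lemma. The functoriality check (compatibility with composition, which makes the identity-on-objects assignment an equivalence of fibered categories) works because the recursion $R_j = P_j Q_j \varphi(R_{j-1}) Q_j^{-1}$ is multiplicative in $(P_j)$ and the fixed point is unique; and your two-stage quotient argument for the second isomorphism is fine once one notes that $\varphi(\KpI)\subset\KpI$ and that elements of $\Iw$ differing by $\KpI$ act identically on $\bigl[\LGtau/_\varphi\KpI^{\Z/f\Z}\bigr]$.

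The one substantive slip is in the final sentence, where you assert that the smallness hypothesis $\langle\mu_j,\alpha^\vee\rangle<p$ should suffice for the bookkeeping. It does not, and this is precisely why the paper assumes the stronger bound $p-2>\max_j\langle\mu_j,\alpha^\vee\rangle$ and flags (Remark after Lemma \ref{lem:straighten}) that this is used critically here. Writing $k_j=\langle\mu_j,\alpha^\vee\rangle$: conjugation $X\mapsto Q_jXQ_j^{-1}$ by $Q_j\in\LGbd$ can lower $v$-adic valuation by up to $k_j+1$ (not just by the pole order of $Q_j^{-1}$ alone; both factors contribute, and the net drop is $\mu_{j,0}+1-\mu_{j,1}=k_j+1$). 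So the single-step map $R_{j-1}\mapsto P_jQ_j\varphi(R_{j-1})Q_j^{-1}$ sends $\id+v\Mat_2(A[[v]])$ back into $\id+v\Mat_2(A[[v]])$ only if $p-(k_j+1)\geq1$, i.e.\ $k_j\leq p-2$; with your bound $k_j\leq p-1$ the iterate can escape $\KpI$ entirely. Moreover, for the contraction to be \emph{strict} (so that the iteration has a unique fixed point), one needs $pn-(k_j+1)>n$ already at $n=1$, which forces $k_j\leq p-3$, i.e.\ $\langle\mu_j,\alpha^\vee\rangle<p-2$. Equivalently, going once around $\Z/f\Z$, the net valuation gain is at least $p^f n-\sum_{i=0}^{f-1}p^i(k_{j(i)}+1)$, and this exceeds $n$ at $n=1$ exactly when the summands are bounded by $p-2$; with $k_j+1\leq p-1$ the total can saturate to $p^f-1$ and the gain is zero. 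So you should replace ``$\langle\mu_j,\alpha^\vee\rangle<p$'' with the paper's running hypothesis $p-2>\max_j\langle\mu_j,\alpha^\vee\rangle$; with that correction the fixed-point argument closes.
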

\begin{proof}
    By \cite[Lem.~3.3.7]{lhmm}.
\end{proof}

\begin{remark}
     The proofs of Proposition \ref{prop:factorization-through-bd} and Lemma \ref{lem:straighten}  critically use the assumption that $p-2 > \max_j \<\mu_j, \alpha^{\vee}\>$.
\end{remark}

\begin{remark}
    If $(P_j)_j$ represents a point of $\Grbd$ and $(g_j)_{j}$ of
$$\GL_2^{\Z/f\Z} \cong \left(\LGp/\KpI\right)^{\Z/f\Z},$$ then we have $$(g_j)_j \cdot_{\varphi} (P_j)_j = (g_j P_{j} g_{j-1}^{-1}).$$ Therefore, $\varphi$--action induces simply a \textit{shifted conjugation} action of $\GL_2^{\Z/f\Z}$ and its subgroups on $\Grbd$.
\end{remark}

\begin{lemma}\label{lem:smooth-irred}
    The stack $\Y_S$ is smooth.
\end{lemma}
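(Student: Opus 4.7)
The plan is to reduce, via the presentation established in Lemma \ref{lem:straighten}, to a smoothness statement about a simple finite type scheme. Under the isomorphism
$$\YF \;\cong\; \left[\Grtau \big{/}_\varphi B^{\Z/f\Z}\right],$$
the closed substack $\Y_S$ corresponds to $\left[\Grtau_S \big/_\varphi B^{\Z/f\Z}\right]$, where $\Grtau_S \subset \Grtau$ is the locally closed subscheme cut out by the shape conditions from Theorem \ref{thm:irred-comp-Y}. Since $B^{\Z/f\Z}$ is a smooth group acting on $\Grtau_S$, smoothness of the quotient stack follows from smoothness of $\Grtau_S$.

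Next, I would exploit the product structure. Under the identification
$\LGtau(A) \cong \cA(\eta)(A)^{\Z/f\Z}$ sending $(W_j s_j^{-1} v^{\mu_j})_j$ to $(W_j)_j$, the left action of $\KpI^{\Z/f\Z}$ is componentwise. Hence
$$\Grtau \;\cong\; \prod_{j \in \Z/f\Z} \big[\KpI \backslash \cA(\eta)\big],$$
and the shape condition $S_j$ at index $j$ affects only the $j$-th factor. Smoothness of $\Grtau_S$ therefore reduces to showing, for each $j$ and each $S_j \in \{L, R\}$, that the scheme $[\KpI \backslash \cA(\eta)^{S_j}]$ is smooth. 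Each of these factors is the intersection of an affine flag variety stratum for $\GL_2$ with the bounded locus cut out by the small coweight $\mu_j$, which is a union of Schubert cells; for $\GL_2$ at height $1$ this yields (an open subscheme of) a Schubert variety in the affine flag variety whose underlying combinatorics involves only the two--element Weyl group, and all such Schubert varieties are isomorphic to affine space or to $\mathbb{P}^1$.

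Concretely, I would write $W_j = \begin{pmatrix} va & b \\ vc & d \end{pmatrix}$ with $ad - bc \in A[[v]]^\times$ for the $S_j=L$ locus, and use the left $\KpI$--action to clear $b$ and normalize the upper--left entry, obtaining canonical representatives of the form $\begin{pmatrix} v & 0 \\ vc & d \end{pmatrix}$ on the open chart $d \in A[[v]]^\times$, and a symmetric normal form on the open chart where the complementary entry is a unit. Combined with the degree bounds from the smallness of $\mu_j$ and the height $1$ condition (which truncate the $v$--adic expansions in finitely many coordinates after the Iwahori quotient), this exhibits each factor as smooth in explicit affine coordinates, with the glueing between charts matching the standard atlas on $\mathbb{P}^1$.

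The main subtlety is the closure structure: shape $S_j = L$ is a closed condition and so includes the degenerate locus where both $(W_j)_{1,1}$ and $(W_j)_{2,2}$ are divisible by $v$, corresponding to the boundary between Schubert cells. The substance of the argument is checking that the normal form above extends smoothly across this boundary, which amounts to the standard local model computation for $\GL_2$ carried out in \cite[Sec.~5]{LLLM-models} and reflected in \cite[Prop.~3.16]{bellovin2024irregular}; I would simply cite these rather than redo them.
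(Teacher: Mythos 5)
Your high-level strategy---exhibit $\Y_S$ as $[\Grtau_S /_\varphi B^{\Z/f\Z}]$ via Lemma \ref{lem:straighten}, decompose $\Grtau_S$ as a product over $j$ using the componentwise left $\KpI^{\Z/f\Z}$-action on $\LGtau \cong \cA(\eta)^{\Z/f\Z}$, and reduce to smoothness of each factor---is sound and genuinely different from what the paper does. The paper never shows that $\Grtau_S$ is smooth; instead it builds an explicit map $\prod_j (\KpII\backslash\LGp) \to \Y_S$ using the same underlying coset observation, checks representability and formal smoothness, and concludes from the fact that the domain is smooth and the map is a smooth surjection. Your route would also work, and it is more structural, but it does require knowing what the factors $[\KpI \backslash \cA(\eta)^{S_j}]$ actually are.

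That is where the concrete details go wrong. For $S_j = L$, imposing $v \mid (W_j)_{1,1}$ on $\cA(\eta)$ forces $W_j$ to factor uniquely as $A \cdot v^{\eta}$ with $A \in \LGp$, because the determinant condition $\det W_j \in vA[[v]]^\times$ becomes $\det A \in A[[v]]^\times$ once both entries of the first column carry the factor $v$. So $\cA(\eta)^{L} = \LGp v^{\eta}$, a single coset of $\LGp$, and $[\KpI \backslash \cA(\eta)^{L}] \cong [\KpI \backslash \LGp] \cong \GL_2$, which is four-dimensional. Similarly $\cA(\eta)^{R} = v^{w_0(\eta)}\LGp$, whose $\KpI$-quotient is the quotient of $\LGp$ by the smooth conjugate subgroup $v^{-w_0(\eta)}\KpI v^{w_0(\eta)}$, again four-dimensional and smooth. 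These are not ``affine space or $\PP^1$''; that heuristic applies to the $\Iw$-quotient (the actual affine flag variety), while the $\KpI$-quotient carries an extra $B$-torsor factor. Relatedly, your normal-form computation does not work: elements of $\KpI$ are $\equiv \id \bmod v$, so left multiplication can only clear the $v$-divisible part of the $(1,2)$-entry $b$, never its constant term, and the claimed representative $\begin{pmatrix} v & 0 \\ vc & d \end{pmatrix}$ has too few parameters (two rather than four). Finally, there is no ``degenerate boundary'' to extend across: each $\cA(\eta)^{S_j}$ is a single $\LGp$-coset (not the closure of a union of cells), so its $\KpI$-quotient is smooth for free, and the coset identities $\cA(\eta)^L = \LGp v^\eta$, $\cA(\eta)^R = v^{w_0(\eta)}\LGp$---exactly what underlies the map (\ref{eqn:smoothcover-step-YS}) in the paper---are both what makes your reduction close and what makes the paper's direct covering argument work. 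Two smaller points: $\Grtau_S$ is closed in $\Grtau$, not merely locally closed (the shape conditions are $v$-divisibility conditions), and the cited sources do not literally prove the smoothness statement you would be deferring to, so at minimum the coset identities above need to be written out.
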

\begin{proof}
 The inclusion $$\Ad \begin{pmatrix}
     1 &  \\
            & v
 \end{pmatrix} \left(\KpII\right) \subset \KpI$$ allows the construction of a map
\begin{align}\label{eqn:smoothcover-step-YS}
\prod_{j \in \Z/f\Z} \left(\KpII \backslash \LGp\right) \to \Grtau
\end{align}
given in the following way: If $(A_j)_j \in \LGp^{\Z/f\Z}$ 
represents an object on the left, then its image is the object represented by $(B_j)_j \in \LGtau$ where 
\begin{align*}
     B_j = 
     \begin{cases}
        A_j  \begin{pmatrix}
            v &  \\
            & 1
        \end{pmatrix}  &\text{ if } S_j = L, \vspace{0.2cm}\\
        \begin{pmatrix}
            1 &  \\
            & v
        \end{pmatrix} A_j  &\text{ if } S_j = R.
     \end{cases}
 \end{align*}

 By Theorem \ref{thm:irred-comp-Y} and \cite[(3.14)]{bellovin2024irregular}, one can specify a point $\gM$ of $\Y_S$ with an eigenbasis $\beta$ by specifying
 \begin{align*}
     A_{\gM, \beta}^{(j)} \in 
     \begin{cases}
        \LGp  \begin{pmatrix}
            v &  \\
            & 1
        \end{pmatrix}  &\text{ if } S_j = L, \vspace{0.2cm}\\
        \begin{pmatrix}
            1 &  \\
            & v
        \end{pmatrix} \LGp  &\text{ if } S_j = R.
     \end{cases}
 \end{align*}
Therefore, there exists a map 
\begin{align}\label{eqn:smoothcover-YS}
    \prod_{j \in \Z/f\Z} \left(\KpII \backslash \LGp\right) \to \Y_S
\end{align} fitting into a commutative diagram
\begin{equation*}
\begin{tikzcd}
    \prod_{j \in \Z/f\Z} \left(\KpII \backslash \LGp\right) \arrow[r, "\text{(\ref{eqn:smoothcover-step-YS})}"] \arrow[d, "\text{(\ref{eqn:smoothcover-YS})}"]&\Grtau \arrow[d] \\
    \Y_S \arrow[r, hook, "\text{closed}"] & \Y_{\F}    
\end{tikzcd}
\end{equation*}
where the right vertical arrow is induced from the second isomorphism in Lemma \ref{lem:straighten}. The top horizontal and right vertical arrows are representable, and so the same is true for the map in (\ref{eqn:smoothcover-YS}). Since an eigenbasis always exists Zariski--locally, the map in (\ref{eqn:smoothcover-YS}) is surjective on points valued in local rings and therefore, is formally smooth. In particular, it is a smooth surjective map with smooth domain, and so, the codomain is also smooth.
\end{proof}

Let $\tilz = (\tilz_j)_j \in \widetilde{W}^{\Z/f\Z}$. As described in \cite[Sec.~3.3]{lhmm}, there exists an open immersion 
$$\widetilde{U}(\tilz) := \left[ \prod_{j \in \Z/f\Z}\LGn \; \tilz_j \right] \cap \Grbd \hookrightarrow \Grbd.$$ 

The scheme $\widetilde{U}(\tilz)$ has a natural structure as a product of schemes $\prod_j \widetilde{U}(\tilz_j)$. 

Next, we construct the following commutative diagram for $S \in \{L, R\}^{\Z/f\Z}$ and $\tilz \in \widetilde{W}^{\Z/f\Z}$:

\begin{equation}\label{diagram:define-covers}
\begin{tikzcd}[row sep = 4ex, column sep=1.7ex]
    \tilYz_S \arrow[rr] \arrow[dd, open] \arrow[dr, hook, "\mathit{cl.}"] & &\tilZz_S \arrow[hook, open]{dd} \arrow[dr, hook, "\mathit{cl.}"]  \\
     &\widetilde{Y}^{\eta, \tau}(\tilz) \arrow[rr]  \arrow[dd, open]  & & \widetilde{\cZ}^{\tau, 1}(\tilz) \arrow[dd, open] \arrow[r, hook, "\mathit{cl.}"] \arrow[ddr, phantom, "\square"] & \widetilde{U}(\tilz) \arrow[dd, open] \\
    \tilY_S \arrow[rr, "\widetilde{\pi}_S" {xshift=15pt}] \arrow[dd] \arrow[dr, hook, "\mathit{cl.}"] & &\tilZ_S \arrow[dd] \arrow[dr, hook, "\mathit{cl.}"]  \\
     &\widetilde{Y}^{\eta, \tau} \arrow[rr,"\widetilde{\pi}" {xshift=-11pt}]  \arrow[dd]  & & \widetilde{\cZ}^{\tau, 1} \arrow[dd] \arrow[r, hook, "\mathit{cl.}"] \arrow[ddr, phantom, "\square"]  & \Grbd \arrow[dd] \\
     \Y_S \arrow[rr, "{\pi}_S" {xshift=15pt}] \arrow[dr, hook, "\mathit{cl.}"]  & &\cZ^{\tau}_S \arrow[dr, hook, "\mathit{cl.}"]  \\
     &\YF \arrow{rr}{\pi} & & \cZ^{\tau, 1} \arrow[r, hook, "\mathit{cl.}"] & \left[\LGbd \Big{/}_{\varphi} \left(\LGp^{\Z/f\Z}\right)\right]
\end{tikzcd}
\end{equation}
where 
\begin{itemize}
    \item the bottom two arrows are respectively the first two arrows in (\ref{eqn:factor-I});
    \item the vertical arrow in the bottom right 
    $$\Grbd \to \left[\LGbd \Big{/}_{\varphi} \left(\LGp^{\Z/f\Z}\right)\right]$$ is the composition of the first isomorphism in Lemma \ref{lem:straighten} with the quotient under shifted conjugation by $\GL_2^{\Z/f\Z}$;
    \item all hooked arrows annotated with \textit{cl.} are closed immersions, while all those marked with a circle are open immersions;
    \item the schemes $\widetilde{\cZ}^{\tau, 1}$ and $\widetilde{\cZ}^{\tau, 1}$ are defined so that the right most squares (marked with a square symbol in the center) are pullback squares; and
    \item the stacks  $\tilYz_S$, $\tilYz$, $\widetilde{Y}^{\eta, \tau}_S$ and $\widetilde{Y}^{\eta, \tau}$, and the schemes $\tilZz_S$ 
     and $\widetilde{\cZ}^{\tau}_S$, are defined so that the front, back and side faces (but not necessarily the top and bottom faces!) of the two cubes are pullback squares.
\end{itemize}

\begin{remark}
We make the following note for translation between the paper \cite{lhmm} and this article. In the paper \cite{lhmm}, there are various pairs of different but related notions that are identified mod $p$, namely, $Y^{\mathrm{mod}, \eta, \tau}$ and $\Y$, $\widetilde{Y}^{\mathrm{mod}, \eta, \tau}$ and $\tilY$, and $\widetilde{Z}^{\mathrm{mod}, \tau}$ and $\tilZ$. Since we are working entirely mod $p$, we do not define $Y^{\mathrm{mod}}$ ,  $\widetilde{Y}^{\mathrm{mod},\eta, \tau}$ and $\widetilde{\cZ}^{\mathrm{mod}, \tau}$ at all.
\end{remark}

\begin{prop}\label{prop:irred-Gr}
    \begin{enumerate}
        \item The stack $\tilY$ is a scheme identifying with the closed subscheme of $$\Grbd \times \left(B \backslash \GL_2\right)^{\Z/f\Z}$$ consisting of pairs $\left((P_j)_j, (l_j)_j\right)$ such that if $\till_j$ is a lift of $l_j$ to $\GL_2$ and $\widetilde{P}_j$ a lift of $P_j$ to $\LG$, then $$\left(\till_j \widetilde{P}_j \till_{j-1}^{-1}\right)_{j} \in \LGtau.$$ Equivalently, for each $j$, $$\till_j \widetilde{P}_j \till_{j-1}^{-1} v^{-\mu_j}s_j \in \cA(\eta).$$
        \item For $\tilz = (\tilz_j)_j \in \widetilde{W}^{\Z/f\Z}$, the open subscheme $\tilYz \subset \tilY$ identifies with the open subscheme consisting of those pairs $\left((P_j)_j, (l_j)_j\right)$ which, in addition to satisfying (1) above, have the property that for each $j$, $P_j$ is represented (uniquely) by a matrix of the form \begin{align*}
            \kappa_j X_j \tilz_j
        \end{align*} where $\kappa_j$ is a point of $\GL_2$ and $X_j$ of $\Kn$.
        \item For $S = (S_j)_j \in \{L, R\}^{\Z/f\Z}$, the closed subscheme $\tilYz_S \subset \tilYz$ further identifies with the subscheme consisting of those pairs $$\left((P_j)_j, (l_j)_j\right)$$ which, in addition to satisfying (2) above, have the property that if $\till_j$ is any lift of $l_j$ to $\GL_2$ and $\widetilde{P}_j$ is a matrix lifting $P_j$, then $$\widetilde{l}_j \widetilde{P}_j \till_{j-1}^{-1} v^{-\mu_j}s_j$$ has top left (resp. bottom right) entry divisible by $v$ if $S_j = L$ (resp. $S_j = R$) for each $j$.
    \end{enumerate}
   
\end{prop}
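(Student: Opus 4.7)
The plan is to systematically unpack the fiber-product definitions encoded in the commutative diagram \eqref{diagram:define-covers}, using Lemma \ref{lem:straighten} to replace each quotient stack by its more explicit loop-group presentation. Specifically, I will substitute $\YF$ by $[\Grtau/_{\varphi}B^{\Z/f\Z}]$ and $[\LGbd/_{\varphi}\LGp^{\Z/f\Z}]$ by $[\Grbd/\GL_2^{\Z/f\Z}]$ (where the residual $\GL_2^{\Z/f\Z}$-action on $\Grbd$ is the shifted conjugation noted in the remark following Lemma \ref{lem:straighten}), and then identify the resulting fiber product with a contracted product of the form $\Grtau \times^{B^{\Z/f\Z}} \GL_2^{\Z/f\Z}$, where $B^{\Z/f\Z}$ acts on the first factor via shifted conjugation and on the second by right multiplication.

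For (i), after the above identification, I define a map
$$\Grtau \times^{B^{\Z/f\Z}} \GL_2^{\Z/f\Z} \;\longrightarrow\; \Grbd \times (B \backslash \GL_2)^{\Z/f\Z}, \qquad [P, g] \;\longmapsto\; \bigl(g \cdot P,\; g^{-1} \bmod B^{\Z/f\Z}\bigr),$$
where $g \cdot P$ denotes the shifted-conjugation action $(g_j P_j g_{j-1}^{-1})_j$. I verify it is a closed immersion whose image is the locus described in (i) by taking the lift $\till_j := g_j^{-1}$, under which $\till_j (g_j P_j g_{j-1}^{-1}) \till_{j-1}^{-1} = P_j \in \LGtau$. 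Well-definedness of the membership condition under changes of lifts $\till_j \rightsquigarrow b_j \till_j$ (for $b_j \in B$) and $\widetilde{P}_j \rightsquigarrow k_j \widetilde{P}_j$ (for $k_j \in \KpI$) follows from the stability of $\LGtau$ under left multiplication by $\KpI^{\Z/f\Z}$ (a consequence of $\KpI \cdot \cA(\eta) \subset \cA(\eta)$) and under shifted $B^{\Z/f\Z}$-conjugation. The equivalent formulation in terms of $\cA(\eta)$ is then immediate from the definition $\LGtau = \{(W_j s_j^{-1} v^{\mu_j})_j : W_j \in \cA(\eta)\}$.

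For (ii), I pull back the open immersion $\widetilde{U}(\tilz) \hookrightarrow \Grbd$ along the projection $\tilY \to \Grbd$ from (i). The existence of a representation $P_j = \kappa_j X_j \tilz_j$ follows from the fact that, by definition of $\widetilde{U}(\tilz_j)$, every element lifts to an element of $\LGn \tilz_j \cap \LGbd$, combined with the Iwasawa-type factorization $\LGn = \Kn \cdot \GL_2$ on the relevant locus. Uniqueness uses the elementary computation that $\Kn \cap \LGp = \{1\}$: if $\kappa X \tilz = \kappa' X' \tilz$ as elements of $\LGbd$, then $X' X^{-1} = \kappa'^{-1} \kappa$ lies in both $\Kn$ and in the constant matrices $\GL_2 \subset \LGp$, forcing it to equal the identity.

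For (iii), I pull back the closed immersion $\Y_S \hookrightarrow \YF$ along the projection $\tilYz \to \YF$. By Theorem \ref{thm:irred-comp-Y}, $\Y_S$ is cut out inside $\YF$ by the divisibility condition on the top-left or bottom-right entry of $A_{\gM,\beta}^{(j)}$ corresponding to $S_j$. Via Lemma \ref{lem:identification-Gr} together with the identification in (i), the element of $\LGtau$ assigned to a pair $(P, l)$ is $(\till_j \widetilde{P}_j \till_{j-1}^{-1})_j$, so that
$$A_{\gM,\beta}^{(j)} \;=\; \till_j \widetilde{P}_j \till_{j-1}^{-1} v^{-\mu_j} s_j,$$
and the shape condition on this matrix translates directly to the condition in (iii). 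The main obstacle is the careful bookkeeping required throughout (i) and (ii): one must verify at each step that every claimed condition is independent of the chosen lifts, which amounts to tracking precisely how the subgroups $\KpI^{\Z/f\Z}$, $B^{\Z/f\Z}$, $\Kn$, and $\GL_2$ interact under multiplication, conjugation, and passage between $\LG^{\Z/f\Z}$ and its subquotients.
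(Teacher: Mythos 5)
Your proposal is correct and follows essentially the same route as the paper: for (i) the paper simply cites \cite[Prop.~3.3.1]{lhmm}, and your contracted-product description $\Grtau \times^{B^{\Z/f\Z}} \GL_2^{\Z/f\Z}$ is exactly the mechanism behind that reference; for (ii) and (iii) the paper likewise reduces to the definition of $\widetilde{U}(\tilz)$ and Theorem~\ref{thm:irred-comp-Y}. Two small points worth tightening: you write $\LGn = \Kn\cdot\GL_2$, but the factorization you actually need to produce $\kappa_j X_j$ is $\LGn = \GL_2 \cdot \Kn$ (both orderings are valid since $\Kn = \ker(\mathrm{ev}_\infty)$ is normal, but it should match the claim); and in the uniqueness argument for (ii) you only treat the case where $\kappa X\tilz_j$ and $\kappa'X'\tilz_j$ are literally equal in $\LGbd$, whereas two representatives of the same point of $\Grbd$ a priori differ by an element $k\in\KpI$ on the left --- one should first observe that $k = \kappa X (X')^{-1}(\kappa')^{-1}\in \LGn\cap\KpI = \{1\}$ (using $\LGn\cap\LGp = \GL_2$ and $\GL_2\cap\KpI=\{1\}$) before invoking your argument.
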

\begin{proof}
    The first part of the statement is proven in \cite[Prop.~3.3.1]{lhmm}. The second part follows from the definition of $\widetilde{U}(\tilz)$. The third part is immediate from Theorem \ref{thm:irred-comp-Y}.
\end{proof}

\begin{lemma}
The proper maps $\widetilde{\pi}$ and $\widetilde{\pi}_S$ are scheme--theoretically dominant.
\end{lemma}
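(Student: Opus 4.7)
The plan is to reduce both assertions to the scheme-theoretic dominance of $\pi:\YF\to\cZ^{\tau,1}$ and $\pi_S:\Y_S\to\cZ^{\tau}_S$, which hold by the very definitions of $\cZ^{\tau,1}$ and $\cZ^{\tau}_S$ as scheme-theoretic images. The key general ingredient will be that, for a quasi-compact quasi-separated morphism, the formation of the scheme-theoretic image commutes with flat base change (since $f_\ast\cO$ does, and flat pullback is exact).

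For $\widetilde{\pi}$, the back face of the lower cube in diagram~(\ref{diagram:define-covers}) is by construction a pullback square
\begin{equation*}
\begin{tikzcd}
\widetilde{Y}^{\eta,\tau} \arrow[r, "\widetilde{\pi}"] \arrow[d] & \widetilde{\cZ}^{\tau,1} \arrow[d] \\
\YF \arrow[r, "\pi"] & \cZ^{\tau,1}.
\end{tikzcd}
\end{equation*}
The right vertical arrow is, via the marked pullback square on the right of~(\ref{diagram:define-covers}), the base change to $\cZ^{\tau,1}$ of the quotient map $\Grbd\to\bigl[\LGbd/_{\varphi}(\LGp^{\Z/f\Z})\bigr]$. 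Using Lemma~\ref{lem:straighten}, this quotient is a torsor under the smooth group $(\LGp/\KpI)^{\Z/f\Z}\cong\GL_2^{\Z/f\Z}$, hence smooth, so $\widetilde{\cZ}^{\tau,1}\to\cZ^{\tau,1}$ is smooth and in particular flat and quasi-compact. Flat base change then shows that the scheme-theoretic image of $\widetilde{\pi}$ is the preimage of that of $\pi$, which is all of $\widetilde{\cZ}^{\tau,1}$; properness of $\widetilde{\pi}$ comes from proper base change applied to the same square.

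The argument for $\widetilde{\pi}_S$ is structurally identical. Now one uses the front face of the lower cube
\begin{equation*}
\begin{tikzcd}
\tilY_S \arrow[r, "\widetilde{\pi}_S"] \arrow[d] & \tilZ_S \arrow[d] \\
\Y_S \arrow[r, "\pi_S"] & \cZ^{\tau}_S,
\end{tikzcd}
\end{equation*}
together with the facts that $\pi_S$ is scheme-theoretically dominant by definition of $\cZ^{\tau}_S$ and proper as a closed base change of $\pi$, and that the right vertical arrow is smooth, being a base change of $\widetilde{\cZ}^{\tau,1}\to\cZ^{\tau,1}$ along the closed immersion $\cZ^{\tau}_S\hookrightarrow\cZ^{\tau,1}$ (the Z-side face of the cube, which is also a pullback by hypothesis).

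The only non-formal ingredient is flat base change for scheme-theoretic images, which in the algebraic-stack setting reduces to the corresponding scheme-theoretic fact after passing to smooth presentations. I do not foresee any geometric obstacle; the main work amounts to correctly identifying which faces of the cubes are pullbacks and tracking smoothness along the smooth-cover direction.
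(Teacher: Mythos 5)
Your argument is correct, but it takes a genuinely different route from the paper. The paper observes that $\widetilde{\cZ}^{\tau,1}$ and $\tilZ_S$ are smooth over the reduced stacks $\cZ^{\tau,1}$ and $\cZ^{\tau}_S$, hence themselves reduced, that $\widetilde{\pi}$ and $\widetilde{\pi}_S$ are proper surjections as pullbacks of $\pi$ and $\pi_S$, and then invokes the universal property: a proper surjection onto a reduced Noetherian stack is scheme-theoretically dominant. You instead invoke the compatibility of scheme-theoretic image with flat (here smooth) base change, using the same pullback faces and the same identification of $\widetilde{\cZ}^{\tau,1}\to\cZ^{\tau,1}$ as a $\GL_2^{\Z/f\Z}$-torsor. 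Both ingredients (smoothness of the right vertical map, the pullback structure of the front/back faces) are shared; the difference is whether smoothness is used to propagate reducedness from the target or to apply flat base change directly. Your argument is fine and perhaps slightly more mechanical; the paper's is marginally shorter because, once reducedness of $\widetilde{\cZ}^{\tau,1}$ and $\tilZ_S$ is in hand, surjectivity immediately forces the scheme-theoretic image to be everything without tracking the image through a base change.
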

\begin{proof}
    Being smooth torsors over the reduced stacks $\cZ^{\tau, 1}$ and $\widetilde{\cZ}^{\tau}_S$ respectively, $\widetilde{\cZ}^{\tau, 1}$ and $\tilZ_S$ are reduced algebraic stacks. Being pullbacks of $\pi$ and $\pi_S$ respectively, the maps $\widetilde{\pi}$ and $\widetilde{\pi}_S$ are proper and surjective. The universal property of scheme--theoretic images finishes the proof.
\end{proof}

\begin{lemma}
    A Zariski open cover of $\tilY$ (resp. $\tilZ$) is given by the set of all $\tilYz$ (resp. $\tilZz$) such that $\tilz = (\tilz_j)_{j\in \Z/f\Z}$ with $\tilz_j = \tilw_j s_j^{-1}v^{\mu_j}$ where $\tilw_j \in \{w_0 t_{\eta}, t_{w_0 (\eta)}\}$ for each $j$.
\end{lemma}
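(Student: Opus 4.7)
The plan is to establish the asserted cover first for $\tilY$ by checking geometric points, and then to descend to $\tilZ$ using the pullback square $\tilYz = \widetilde{\pi}^{-1}(\tilZz)$ (from diagram \ref{diagram:define-covers}) together with the surjectivity of $\widetilde{\pi}: \tilY \to \tilZ$, inherited from that of $\pi$.

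By pulling back the decomposition of Theorem \ref{thm:irred-comp-Y}, any geometric point of $\tilY$ lies in some $\tilY_S$ for $S \in \{L,R\}^{\Z/f\Z}$. Fix such a point and lifts $\till_j, \widetilde{P}_j$, and let $W_j := \till_j \widetilde{P}_j \till_{j-1}^{-1} v^{-\mu_j} s_j \in \cA(\eta)$. By Proposition \ref{prop:irred-Gr}(iii), $v \mid (W_j)_{11}$ when $S_j = L$ and $v \mid (W_j)_{22}$ when $S_j = R$. Correspondingly set $\tilw_j := w_0 t_\eta$ in the first case and $\tilw_j := t_{w_0\eta}$ in the second, so $\tilz_j := \tilw_j s_j^{-1} v^{\mu_j}$ is of the claimed form.

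The crux is the factorization $W_j \tilw_j^{-1} \in \KpI \cdot \LGn$ in each case. When $v \mid W_{11}$, writing $W = \begin{pmatrix} va' & b \\ vc & d \end{pmatrix}$ with $a'd - bc \in A[[v]]^{\times}$ (from $\det W \in v A[[v]]^{\times}$), a direct computation yields $W(w_0 t_\eta)^{-1} = \begin{pmatrix} b & a' \\ d & c \end{pmatrix} \in \GL_2(A[[v]])$, whose mod-$v$ reduction $n$ is invertible. Hence $W(w_0 t_\eta)^{-1} = \bigl( W(w_0 t_\eta)^{-1} n^{-1} \bigr) \cdot n$, with the first factor in $\KpI$ and $n \in \GL_2(A) \subset \LGn$. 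When $v \mid W_{22}$, the analogous assertion for $W t_{w_0\eta}^{-1}$ follows by first right-multiplying with an explicit unipotent element of $\LGn$ to cancel the $v^{-1}$-pole in the $(1,2)$-entry (when present), obtaining an element of $\Iw$; then the decomposition $\Iw = \KpI \cdot B(A)$ together with $B(A) \subset \GL_2(A) \subset \LGn$ completes the argument.

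Finally, translating $W_j \tilw_j^{-1} \in \KpI \cdot \LGn$ into $\widetilde{P}_j \tilz_j^{-1} \in \KpI \cdot \LGn$ (equivalently, $[\widetilde{P}_j] \in \widetilde{U}(\tilz_j)$) requires absorbing the conjugation factors $\till_j^{-1}$ and $s_j^{-1} v^{\mu_j} \till_{j-1} v^{-\mu_j} s_j$ on either side. This can be handled by a judicious choice of the lifts $\till_j, \till_{j-1} \in \GL_2(A)$ (using their ambiguity under left $B(A)$-multiplication) combined with the standard Iwasawa-type decompositions of $\LG$; this bookkeeping is the main technical obstacle of the proof, though it requires no deep input. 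With this, the chosen point of $\tilY_S$ lies in $\tilYz$ for $\tilz$ of the specified form, establishing the cover for $\tilY$, and hence the cover for $\tilZ$ as explained above.
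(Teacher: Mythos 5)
The paper's proof is a one‑liner: it cites \cite[Lem.~3.3.5]{lhmm} for the cover of $\tilY$ by $\tilYz$ with $\tilw_j\in\{t_\eta,w_0t_\eta,t_{w_0(\eta)}\}$, and then observes that $\widetilde{U}(t_\eta s_j^{-1}v^{\mu_j})=\widetilde{U}(w_0t_\eta s_j^{-1}v^{\mu_j})$ since $w_0\in\GL_2(\F)\subset\LGn$, so the $t_\eta$ option is redundant. You instead attempt a from‑scratch, point‑by‑point argument; this is a genuinely different route, and the reduction from $\tilY$ to $\tilZ$ at the end is fine. However, the middle of your argument has a real gap.

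The step you explicitly defer as ``the main technical obstacle'' --- passing from $W_j\tilw_j^{-1}\in\KpI\cdot\LGn$ to $\widetilde{P}_j\tilz_j^{-1}\in\KpI\cdot\LGn$ --- is not mere bookkeeping. Writing $\widetilde{P}_j\tilz_j^{-1}=\till_j^{-1}(W_j\tilw_j^{-1})\bigl(\tilz_j\,\till_{j-1}\,\tilz_j^{-1}\bigr)$, the left factor $\till_j^{-1}\in\GL_2(A)$ normalizes $\KpI$ and lies in $\LGn$, so it is harmless; but the right factor $\tilz_j\,\till_{j-1}\,\tilz_j^{-1}$ is the conjugate of a constant matrix by $\tilz_j=\tilw_j s_j^{-1}v^{\mu_j}$, and when $\langle\mu_j,\alpha^\vee\rangle>0$ this generically has entries with both positive and negative powers of $v$, so it lies in neither $\LGn$ nor $\KpI$. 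The ambiguity of $\till_{j-1}$ under left $B(A)$‑multiplication does not cure this for all points of $\PP^1$: for instance, to kill the offending positive‑degree entry one would need $\till_{j-1}$ lower‑triangular, which is impossible exactly when $l_{j-1}=[1:0]$. Controlling all this is the content of the Iwahori/Bruhat decomposition for the affine flag variety, which is precisely what the paper delegates to \cite[Lem.~3.3.5]{lhmm}. Relatedly, your choice of $\tilw_j$ depending only on $S_j$ is too rigid: it would force $\tilY_S\subset\tilYz$ for a \emph{single} $\tilz$ determined by $S$, whereas the later arguments in the paper (e.g.\ Lemma \ref{lem:type-combinatorics} and Corollaries \ref{cor:loc-non--normal}, \ref{cor:loc-non-CM}) rest on the fact that several different $\tilz$ with the same $S$ are genuinely needed to cover $\tilZ_S$. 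There is also a secondary soft spot in the $S_j=R$ factorization: the unipotent you multiply by to cancel the $v^{-1}$‑pole in the $(1,2)$‑entry requires the $(1,1)$‑entry of $W_j\tilw_j^{-1}$ to be a unit mod $v$, which need not hold; you would have to treat that degenerate case separately.
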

\begin{proof}
    The statement is true if $\tilw_j \in \{t_{\eta}, w_0 t_{\eta}, t_{w_0, \eta}\}$ for each $j$ by \cite[Lem.~3.3.5]{lhmm}. We claim that $$\widetilde{U}(t_{\eta}s_j^{-1} v^{\mu_j}) = \widetilde{U}(w_0 t_{\eta}s_j^{-1} v^{\mu_j}).$$ Indeed, this is immediate because $\LGn t_{\eta} s_j^{-1} v^{\mu_j} = \LGn w_0 t_{\eta} s_j^{-1} v^{\mu_j}$.
\end{proof}

\subsection{Auxiliary schemes}
Next, we construct certain auxiliary schemes through which the map $\tilY_S \to \tilZ_S$ factors and that will make it easier to study the geometry of $\tilZ_S$ later. The constructions in this section closely follow those in \cite[Sec.~4.1]{lhmm} with some variants that allow a detailed study of various irreducible components of $\tilZ$.

We fix $\tau = \tau(s, \mu)$ non--scalar principal series tame type with $\mu = (\mu_j)_j$ small; $\tilz = (\tilz_j)_j \in \widetilde{W}^{\Z/f\Z}$ with each $\tilz_j = \tilw_j s_j^{-1}v^{\mu_j}$ for some $\tilw_j \in \{w_0 t_{\eta}, t_{w_0 (\eta)}\}$; and $S = (S_j)_j \in \{L, R\}^{\Z/f\Z}$. Assume $p-2 > \max_j \<\mu_j, \alpha^{\vee} \>$.
\begin{definition}
    Define $\tilBaj$ to be the closed $\F$--subscheme of $\PP^{1} \times \GL_2 \times \Kn \times \PP^{1}$ parameterizing tuples $$(l_j, \kappa_j, X_j, r_j) \in \PP^{1} \times \GL_2 \times \Kn \times \PP^{1}$$ such that  if $\till_j, \tilr_j$ are lifts to $\GL_2$ of $l_j, r_j \in \PP^{1}$ respectively (under the obvious map $\GL_2 \to B\backslash \GL_2 \cong \PP^{1}$) and \begin{align}\label{eqn:W_j}
    W_j \stackrel{\text{def}}{=} \widetilde{l}_j \kappa_j X_j \tilw_j s_j^{-1} v^{\mu_j} \widetilde{r}_j^{-1} v^{-\mu_j} s_j,
\end{align}then $W_j \in \cA(\eta)$. 

Define a closed subscheme $\tilBaj_{S_j}$ of $\tilBaj$ by further requiring that the top left entry of $W_j$ is divisible by $v$ if $S_j = L$ and the bottom right entry is divisible by $v$ if $S_j=R$.
Since $\mu_j$ is dominant, these criteria are independent of the choice of lifts $\till_j$ and $\tilr_j$.
\end{definition}

\begin{definition}
  Define $\Baj$ to be the closed subscheme of $\tilBaj$ obtained by setting $\kappa_j = 1$. The scheme $\Baj$ naturally admits a monomorphism to $\PP^{1} \times \Kn \times \PP^{1}$.
  Define a closed subscheme $\Baj_{S_j}$ of $\Baj$ as the fiber product $$\tilBaj_{S_j} \times_{\tilBaj} \Baj.$$ 
\end{definition}

We define a map
\begin{align*}
    \widetilde{\pr}_j: \tilBaj \to \widetilde{U}(\tilz_j)
\end{align*}
by mapping $(l_j, \kappa_j, X_j, r_j) \mapsto \kappa_j X_j \tilz_j$. Denote by $\pr_j$ the restriction of $\widetilde{\pr}_j$ to $\Baj$. Denote by $p_j, q_j: \Baj \to \PP^{1}$ the obvious projections to the first and last coordinates, respectively.

\begin{lemma}\label{lem:projective-pr}
    The maps $\widetilde{\pr}_j$ and $\pr_j$ are projective.
\end{lemma}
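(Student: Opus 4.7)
The plan is to exhibit $\widetilde{\pr}_j$ as the restriction of an evidently projective morphism to a closed subscheme, and then deduce the statement for $\pr_j$ by a further closed-immersion argument.

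First, I would observe that by the definition of $\widetilde{U}(\tilz_j)$ and the uniqueness of the representation $\kappa_j X_j \tilz_j$ from Proposition \ref{prop:irred-Gr}(ii), the assignment $(\kappa_j, X_j) \mapsto \kappa_j X_j \tilz_j$ identifies $\widetilde{U}(\tilz_j)$ with a scheme built out of $\GL_2$ and a finite-type truncation of $\Kn$ (compatible with its description as an affine open Schubert cell). Under this identification, the morphism $\widetilde{\pr}_j$ factors as the composition of the closed immersion $\tilBaj \hookrightarrow \PP^1 \times \GL_2 \times \Kn \times \PP^1$ (coming from the defining conditions on $W_j$) with the projection
\[
\PP^1 \times \GL_2 \times \Kn \times \PP^1 \;\longrightarrow\; \GL_2 \times \Kn \;\cong\; \widetilde{U}(\tilz_j)
\]
that forgets the two $\PP^1$ factors.

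Now the outer projection is obtained by pulling back the structure morphism $\PP^1 \times \PP^1 \to \Spec \F$ along $\widetilde{U}(\tilz_j) \to \Spec \F$, so via the Segre embedding $\PP^1 \times \PP^1 \hookrightarrow \PP^3$ it factors through $\PP^3_{\widetilde{U}(\tilz_j)} \to \widetilde{U}(\tilz_j)$ and is therefore projective. Since closed immersions are projective and the composition of two projective morphisms is projective, $\widetilde{\pr}_j$ is projective.

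For $\pr_j$, I would use that $\Baj$ is a closed subscheme of $\tilBaj$ (cut out by $\kappa_j = 1$), so $\pr_j$ is the composition of a closed immersion with the projective morphism $\widetilde{\pr}_j$, hence projective. The only mild subtlety worth checking is that the identification of $\widetilde{U}(\tilz_j)$ with a scheme of the form $\GL_2 \times (\text{bounded piece of } \Kn)$ is valid so that the target of the projection actually agrees with $\widetilde{U}(\tilz_j)$; this is already implicit in the constructions reviewed above, and is the main (minor) point one must verify carefully before concatenating the morphisms.
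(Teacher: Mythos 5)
Your proof has the right high-level structure, but it hinges on a false identification. You write
\[
\PP^1 \times \GL_2 \times \Kn \times \PP^1 \;\longrightarrow\; \GL_2 \times \Kn \;\cong\; \widetilde{U}(\tilz_j),
\]
but $\widetilde{U}(\tilz_j)$ is \emph{not} isomorphic to $\GL_2 \times \Kn$. Rather, $\GL_2 \times \Kn \cong \LGn$ (as used later in the proof of Lemma~\ref{lem:im-is-product}), and $\widetilde{U}(\tilz_j)$ is a proper locally closed subscheme of $\LGn\,\tilz_j$: membership of $P \tilz_j$ in $\Grbd$ imposes the closed conditions $P \tilz_j \in v^{\mu_{j,1}}\Mat_2(A[[v]])$ and $\det(P\tilz_j) \in v^{\mu_{j,0}+\mu_{j,1}+1}A[[v]]^\times$, which a generic $P = \kappa_j X_j \in \GL_2 \times \Kn$ does not satisfy. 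You acknowledge that something along these lines ``needs to be checked,'' but it is not a minor point to patch afterward — it is the crux. Without it, the projection $\PP^1 \times \GL_2 \times \Kn \times \PP^1 \to \GL_2 \times \Kn$ does not have $\widetilde{U}(\tilz_j)$ as its target, so the factorization you want does not exist.

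The paper's proof handles exactly this by replacing the spurious isomorphism with a cancellation argument. It forms the commutative square with the projective surjection $\PP^1 \times \GL_2 \times \Kn \times \PP^1 \to \LGn$ (sending $(l_j,\kappa_j,X_j,r_j) \mapsto \kappa_j X_j$) on one side and the \emph{monomorphism} $\widetilde{U}(\tilz_j) \hookrightarrow \LGn$ (sending $P\tilz_j \mapsto P$) on the other. The composite $\tilBaj \to \LGn$ is projective (closed immersion followed by a projective map), and since the second leg $\widetilde{U}(\tilz_j) \hookrightarrow \LGn$ is separated (being a monomorphism), cancellation gives that $\widetilde{\pr}_j$ itself is projective. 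Your argument becomes correct if you replace the claimed isomorphism with the monomorphism into $\LGn$ and invoke cancellation, which is essentially what the paper does; as written, the gap is genuine. The final step deducing projectivity of $\pr_j$ from that of $\widetilde{\pr}_j$ via the closed immersion $\Baj \hookrightarrow \tilBaj$ is fine.
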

\begin{proof}
It suffices to prove the statement for $\widetilde{\pr_j}$.
The map $\widetilde{\pr}_j$ sits in the following commutative diagram
\begin{equation}\label{diag:projective-prj}
    \begin{tikzcd}[row sep = 4ex, column sep=4ex]
     &\tilBaj \arrow[r, hook, "\mathit{cl.}"] \arrow[d, "\widetilde{\pr}_j"] &\PP^{1} \times \GL_2 \times \Kn \times \PP^{1} \arrow{d} \\
    &\widetilde{U}(\tilz_j) \arrow[hook]{r} &\LGn 
\end{tikzcd}
\end{equation}
where the bottom arrow is the map sending $(P\tilz_j) \mapsto P$ and is a monomorphism; the rightmost projective surjection is given by mapping $(l_j, \kappa_j, X_j, r_j) \mapsto \kappa_j X_j$; and the top horizontal arrow is a closed immersion. Therefore, the composition of the top horizontal arrow with the right vertical arrow is projective, and so is $\widetilde{\pr}_j$ by cancellation. 
\end{proof}
Define an isomorphism
    \begin{align}\label{eqn:Baj-decomp-std}
        \tilBaj &\xrightarrow{\sim} \Baj \times \GL_2 \\
    (l_j, \kappa_j, X_j, r_j) &\mapsto (l_j\kappa_j, X_j, r_j), \kappa_j. \nonumber
    \end{align} Here, $l_j \kappa_j$ is to be understood as the image of $\till_j \kappa_j$ in $B \backslash \GL_2 \cong \PP^{1}$ for some lift $\till_j \in \GL_2$ of $l_j$. The isomorphism in (\ref{eqn:Baj-decomp-std}) induces an isomorphism
    \begin{align}\label{eqn:tilBaj-decom}
        \tilBaj_{S_j} \cong \Baj_{S_j} \times \GL_2.
    \end{align}

\begin{lemma}\label{lem:im-is-product}
    The scheme--theoretic image of $\tilBaj$ (resp. $\tilBaj_{S_j}$) under $\widetilde{\pr}_j$ is isomorphic to the product of $\GL_2$ with the scheme--theoretic image of $\Baj$ (resp. of $\Baj_{S_j}$). 
\end{lemma}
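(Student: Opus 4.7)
The plan is to exhibit canonical product decompositions of $\tilBaj$ (already given in (\ref{eqn:Baj-decomp-std})) and of $\widetilde{U}(\tilz_j)$ that intertwine $\widetilde{\pr}_j$ with $\pr_j \times \id_{\GL_2}$ up to a swap of factors; the lemma then reduces to a formal statement about scheme-theoretic images of products.

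The key observation is that $\LGn$ factors as a product $\GL_2 \times \Kn$. Indeed, for any $\F$-algebra $A$ and $P \in \LGn(A) = \GL_2(A[1/v])$, the requirement that $\det P$ be a unit in $A[1/v]$ forces its constant term $\det(P_0)$ to be a unit in $A$, so the constant-term map $P \mapsto P_0$ lands in $\GL_2(A)$; the assignment $P \mapsto (P_0, P_0^{-1} P)$ then provides an inverse to multiplication $\GL_2 \times \Kn \to \LGn$. Next I would observe that left multiplication by $\GL_2$ on $\LG$ preserves both $\LGn\tilz_j$ (since $\GL_2 \subset \LGn$) and the bounded conditions cutting out $\Grbd$ (a constant $\kappa \in \GL_2(A)$ has unit determinant, so it preserves $v^{\mu_{j,1}}\Mat_2(A[[v]])$ and scales $\det$ by a unit). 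Together these yield a scheme isomorphism
\begin{align*}
\widetilde{U}(\tilz_j) \xrightarrow{\sim} \GL_2 \times V_j, \qquad V_j := \widetilde{U}(\tilz_j) \cap (\Kn \cdot \tilz_j),
\end{align*}
sending $P\tilz_j$ to $(P_0, (P_0^{-1} P)\tilz_j)$, with $\GL_2$ acting on the first factor.

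Under the two decompositions, I would then check that $\widetilde{\pr}_j$ corresponds to the map sending $((l_j', X_j, r_j), \kappa)$ to $(\kappa, X_j\tilz_j)$, using that $X_j \in \Kn$ forces $(\kappa X_j)_0 = \kappa$ and $(\kappa X_j)_0^{-1}(\kappa X_j) = X_j$. Up to the swap of factors, this is exactly the product map $\pr_j \times \id_{\GL_2}\colon \Baj \times \GL_2 \to V_j \times \GL_2$, where $\pr_j$ has been corestricted to $V_j$. Since scheme-theoretic images commute with taking a product against $\id_{\GL_2}$, the scheme-theoretic image of $\widetilde{\pr}_j$ is $\GL_2 \times \pr_j(\Baj)^{\mathrm{sch}}$, as required. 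The argument for $\tilBaj_{S_j}$ is identical, replacing (\ref{eqn:Baj-decomp-std}) by (\ref{eqn:tilBaj-decom}) and noting that the $S_j$-condition is imposed on the $\Baj_{S_j}$ factor and is unaffected by left $\GL_2$-action on the other factor.

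The main technical point I expect is rigorously upgrading the constant-term decomposition to a scheme-level isomorphism and verifying that $\widetilde{U}(\tilz_j)$ is $\GL_2$-invariant with the $\Kn$-slice $V_j$ cleanly cut out; however this is routine once one observes that $P \mapsto P_0$ is a morphism of functors of $\F$-algebras, and that $\widetilde{U}(\tilz_j)$ is a finite-type open inside $\LGn\tilz_j$.
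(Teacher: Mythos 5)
Your proof is correct and uses essentially the same approach as the paper: factor $\LGn \cong \GL_2 \times \Kn$ (via the constant term, since $\Kn = L^-_1\mathrm{G}$ has constant term $\id$), hence $\widetilde U(\tilz_j) \cong \GL_2 \times V_j$, and then observe via (\ref{eqn:Baj-decomp-std}) that $\widetilde{\pr}_j$ becomes $\id_{\GL_2} \times \pr_j$, so its scheme-theoretic image is the corresponding product. The paper's proof is a terser version of the same argument (it labels the slice $U(\tilz)$ rather than $V_j$ and doesn't spell out the constant-term map, but the mechanism and reduction to a product of maps are identical).
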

 \begin{proof}
     Since $\LGn \cong \GL_2 \times \Kn$, pullback along the bottom arrow of (\ref{diag:projective-prj}) induces an isomorphism $$\widetilde{U}(\tilz) \cong U(\tilz) \times \GL_2 $$ for an appropriate closed subscheme $U(\tilz) \subset \widetilde{U}(\tilz)$. Via (\ref{eqn:Baj-decomp-std}) and the isomorphism above, the map $\widetilde{\pr}_j$ can be viewed as a map
     $$\Baj \times \GL_2 \to U(\tilz) \times \GL_2$$ given by sending 
     $\left((l_j\kappa_j, X_j, r_j), \kappa_j\right) \mapsto \left(X_j \tilz_j, \kappa_j\right)$. Therefore $\widetilde{\pr}_j$ is the product of a map $\pr'_j: \Baj \to U(\tilz)$ and $\id: \GL_2 \to \GL_2$, implying that the scheme--theoretic image of $\widetilde{\pr}_j$ is isomorphic to a product of the scheme--theoretic image of $\pr'_j$ with $\GL_2$. Finally, the observation that the scheme--theoretic image of $\pr'_j$ is isomorphic to that of $$\pr_j: \Baj \xrightarrow{\widetilde{\pr}'_j} U(\tilz) \xhookrightarrow{\text{closed}}  \widetilde{U}(\tilz)$$ finishes the proof. \end{proof}

We let $Z_j$ denote the scheme--theoretic image of $\Baj_{S_j}$ under $\pr_j$.
Define
$$\tilBa \stackrel{\text{def}}{=} \prod_{j \in \Z/f\Z} \tilBaj$$ and let $\tilBa_S \subset \tilBa$ be the closed subscheme $\prod_{j \in \Z/f\Z} \tilBaj_{S_j}$. 
\begin{align*}
    \pr_{\tilB}: \tilBa \longrightarrow \widetilde{U}(\tilz)
\end{align*}
by mapping $(l_j, \kappa_j, X_j, r_j)_j \mapsto (\kappa_j X_j \tilz_j)_j$. Denote by $\mathrm{Im}(\pr_{\tilB})$ the scheme--theoretic image of $\pr_{\tilB}$, admitting a closed immersion into $\widetilde{U}(\tilz)$. By Lemma \ref{lem:im-is-product}, the scheme--theoretic image of $\tilBa_S$ under $\pr_{\tilB}$ is isomorphic to $$\prod_j \left(\GL_2 \times Z_j\right),$$ which therefore admits a closed immersion into $\mathrm{Im}(\pr_{\tilB})$.
\begin{lemma}
    Setting $l_{j-1} = r_{j}$ for each $j \in \Z/f\Z$ cuts out closed subschemes $\tilYz \xrightarrow{\Delta} \tilBa$ and $\tilYz_S \xhookrightarrow{\Delta_S} \tilBa_S$.
\end{lemma}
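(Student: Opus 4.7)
The plan is to realize the two desired closed subschemes as fiber products over products of $\PP^1$'s, and then to identify them with $\tilYz$ and $\tilYz_S$ by matching the explicit parametrizations.

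\textbf{Step 1: closedness.} Each projection $p_j: \tilBaj \to \PP^1$ to the first coordinate and $q_j: \tilBaj \to \PP^1$ to the last coordinate is a morphism to a separated scheme. Hence, for each $j$, the locus $\{l_{j-1} = r_j\}$ inside $\tilBa = \prod_j \tilBaj$ is the preimage of the diagonal $\PP^1 \hookrightarrow \PP^1 \times \PP^1$ under $(p_{j-1}, q_j)$, and is therefore a closed subscheme. Intersecting over all $j \in \Z/f\Z$ still gives a closed subscheme, which we denote $\Delta \subset \tilBa$, and $\Delta_S := \Delta \cap \tilBa_S$ is closed in $\tilBa_S$.

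\textbf{Step 2: identification with $\tilYz$.} On $\Delta$, a point is a tuple $(l_j, \kappa_j, X_j, r_j)_j$ with $r_j = l_{j-1}$, and can be recorded as $\bigl((\kappa_j, X_j)_j, (l_j)_j\bigr)$. Define a morphism $\Delta \to \tilYz$ by sending this data to the pair $\bigl((P_j)_j, (l_j)_j\bigr)$ with $P_j := \kappa_j X_j \tilz_j$. Under $r_j = l_{j-1}$ the matrix $W_j$ of (\ref{eqn:W_j}) becomes
\[
W_j = \till_j \kappa_j X_j \tilw_j s_j^{-1} v^{\mu_j} \till_{j-1}^{-1} v^{-\mu_j} s_j = \till_j \widetilde P_j \till_{j-1}^{-1} v^{-\mu_j} s_j,
\]
so the defining condition $W_j \in \cA(\eta)$ for $\tilBaj$ is precisely the condition in Proposition~\ref{prop:irred-Gr}(i) as reformulated in Proposition~\ref{prop:irred-Gr}(ii), and the uniqueness of the expression $P_j = \kappa_j X_j \tilz_j$ in $\widetilde U(\tilz_j)$ (via $\LGn \cong \GL_2 \times \Kn$) supplies the inverse. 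This yields an isomorphism $\Delta \isomap \tilYz$.

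\textbf{Step 3: matching the shape conditions.} Passing to $\Delta_S = \Delta \cap \tilBa_S$, the additional constraint is that for each $j$ the top-left (resp. bottom-right) entry of $W_j$ is divisible by $v$ when $S_j = L$ (resp. $S_j = R$). Under the identification $W_j = \till_j \widetilde P_j \till_{j-1}^{-1} v^{-\mu_j} s_j$ obtained in Step~2, this is exactly the condition imposed in Proposition~\ref{prop:irred-Gr}(iii) cutting out $\tilYz_S$ inside $\tilYz$. Hence the isomorphism of Step~2 restricts to $\Delta_S \isomap \tilYz_S$, finishing the proof.

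The main bookkeeping point, and the only thing that is not literally a rewriting of definitions, is verifying that the choices of lifts $\till_j, \tilr_j$ to $\GL_2$ are compatible: replacing $\till_j$ by $b_j \till_j$ with $b_j \in B$ multiplies $W_j$ on the left by $b_j$ and on the right by $\Ad(v^{-\mu_j} s_j)(b_{j-1}^{-1})$, which, since $\mu_j$ is dominant, preserves membership in $\cA(\eta)$ as well as the $S_j$-shape condition. Thus the identifications in Steps~2 and~3 are well-defined.
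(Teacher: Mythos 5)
Your proof is correct and takes the same route as the paper, which simply declares the claim ``obvious from the definitions of $\tilBa$ and $\tilBa_S$, and Proposition \ref{prop:irred-Gr}''; you have unpacked exactly that, matching the $\cA(\eta)$ condition on $W_j$ against the condition $\till_j \widetilde P_j \till_{j-1}^{-1}v^{-\mu_j}s_j \in \cA(\eta)$ of Proposition \ref{prop:irred-Gr} after imposing $r_j = l_{j-1}$, with the closedness coming from the separatedness of $\PP^1$. One cosmetic slip in the final aside: changing the lift $\till_{j-1} \mapsto b_{j-1}\till_{j-1}$ multiplies $W_j$ on the right by $\Ad(s_j^{-1}v^{\mu_j})(b_{j-1}^{-1})$, not $\Ad(v^{-\mu_j}s_j)(b_{j-1}^{-1})$; since these two conjugations are inverse to each other and $b_{j-1}$ ranges over all of $B$, this does not affect the argument, and the well-definedness was in any case already recorded in the paper's definition of $\tilBaj_{S_j}$.
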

\begin{proof}
    Obvious from the definitions of $\tilBa$ and $\tilBa_S$, and Proposition \ref{prop:irred-Gr}.
\end{proof}

We obtain the following commutative diagram
\begin{equation}\label{diagram:define-covers-z}
\begin{tikzcd}[row sep = 4ex, column sep=1.7ex]
\tilYz_S \arrow[hook]{rr}[xshift=11pt]{\Delta_S}[swap]{\mathit{cl.}} \arrow[hook]{dd}{\mathit{cl.}} \arrow[dr, two heads] & &\tilBa_S \arrow[hook]{dd}[yshift=12pt]{\mathit{cl.}} \arrow[dr, two heads]  \\
     &\tilZz_S \arrow[hook]{rr}[swap,xshift=14pt]{\mathit{cl.}}  \arrow[hook]{dd}[yshift=12pt]{\mathit{cl.}}  & & \prod_j \left(\GL_2 \times Z_j \right) \arrow[hook]{dd}{\mathit{cl.}} \\
\tilYz \arrow[hook]{rr}[xshift=17pt]{\Delta}[swap,{xshift=14pt}]{\mathit{cl.}} \arrow[dr, two heads] & &\tilBa \arrow[dr, two heads]  \\
     &\tilZz \arrow[hook]{rr}{\mathit{cl.}}  & & \mathrm{Im}(\pr_{\tilB}) \arrow[hook]{r}{\mathit{cl.}} &\widetilde{U}(\tilz) 
\end{tikzcd}
\end{equation}
where
\begin{itemize}
    \item except for the nodes $\tilBa_S$, $\tilBa$, $\prod_j \left(\GL_2 \times Z_j \right)$ and $\mathrm{Im}(\pr_{\tilB})$, this diagram is a subdiagram of (\ref{diagram:define-covers});
    \item all two-headed arrows are proper and scheme--theoretically dominant; and
    \item all hooked arrows annotated with \textit{cl.} are closed immersions.
\end{itemize}

\section{Local geometry}\label{sec:geometry}

We fix $\tau = \tau(s, \mu)$ non--scalar principal series tame type with $\mu = (\mu_j)_j$ small; $\tilz = (\tilz_j)_j \in \widetilde{W}^{\Z/f\Z}$ with each $\tilz_j \in \{w_0 t_{\eta}, t_{w_0 (\eta)}\} s_j^{-1}v^{\mu_j}$; and $S = (S_j)_j \in \{L, R\}^{\Z/f\Z}$. Assume $p - 2 > \max_j \<\mu_j, \alpha^{\vee}\> $.

The schemes $\Baj$ are described in detail in \cite[Table~3]{lhmm} (in \textit{loc. cit.}, everything is defined over $\cO$, and so, after setting $p=0$, we obtain the $\Baj$'s considered in this article). We now add to those descriptions to further describe the closed subschemes $\Baj_{S_j}$ explicitly.

\begin{lemma}\label{lemma:shape-conditions}
    For each $j$, the ideals cutting out $\Baj_{L}$ and $\Baj_{R}$ in $\Baj$ as well as the scheme--theoretic images of $\Baj_{L}$ and $\Baj_{R}$ under $\pr_j$ are given by Tables \ref{table:shape-L} and \ref{table:shape-R} respectively. Here, 
    $[x:y]$ are the coordinates of $l_j \kappa_j$, $[x':y']$ are the coordinates of $r_j$, and the rest of the variables are in terms of \cite[Table~3]{lhmm}.
\end{lemma}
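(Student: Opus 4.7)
The plan is a direct case-by-case matrix calculation building on the existing parametrization of $\Baj$ recorded in \cite[Table 3]{lhmm}, now enhanced to record the additional ``shape'' conditions that cut out $\Baj_{S_j}$ and to read off the projection to $\Uzj$.

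For fixed $j$ and for each of the (at most) eight combinations of $\tilw_j \in \{w_0 t_\eta, t_{w_0(\eta)}\}$, $s_j \in W$, and $S_j \in \{L, R\}$, I would begin by writing the matrix $W_j$ from (\ref{eqn:W_j}) explicitly. Concretely, I substitute the parametrization of $X_j \in \Kn$ from \cite[Table 3]{lhmm}, lift $l_j$ and $r_j$ to representatives $\till_j, \tilr_j \in \GL_2$ with first columns $(x,y)^{t}$ and $(x',y')^{t}$ respectively, and expand the product
\[
W_j \;=\; \till_j X_j \tilw_j s_j^{-1} v^{\mu_j} \tilr_j^{-1} v^{-\mu_j} s_j.
\]
The entries of $W_j$ become polynomials in $v^{\pm 1}$ with coefficients polynomial in $x, y, x', y'$ and the parameters of $X_j$; dominance of $\mu_j$ (noted right after the definition of $\tilBaj_{S_j}$) ensures independence of the chosen lifts, so each entry is well defined up to the obvious $\mathbb{G}_m$--scaling on $\PP^1 \times \PP^1$.

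Next, to identify the ideal cutting out $\Baj_{S_j} \subset \Baj$, I extract the $(1,1)$-entry (when $S_j = L$) or $(2,2)$-entry (when $S_j = R$) of $W_j$ and isolate its constant term as a power series in $v$. Setting that constant term equal to zero yields a single polynomial relation (on each affine chart of $\PP^1 \times \PP^1$) that generates the ideal, and Tables \ref{table:shape-L} and \ref{table:shape-R} record the resulting polynomial after factoring out the contributions coming from $[x:y]$, $[x':y']$, and the $\Kn$-coordinates separately. To obtain the scheme-theoretic image $Z_j$ of $\Baj_{S_j}$ under $\pr_j$, I use that $\pr_j$ forgets $l_j$ and $r_j$ and retains only $X_j \tilz_j$ (since $\kappa_j = 1$ on $\Baj$); properness of $\pr_j$ from Lemma \ref{lem:projective-pr} guarantees the image is closed, and the image ideal is computed by eliminating $x, y, x', y'$ from the defining relation. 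Because that relation is affine (or biaffine) in the two sets of projective coordinates in every case, the elimination is mechanical and produces an ideal generated by products or individual entries of $X_j$, matching the tables.

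The only real obstacle is bookkeeping: correctly handling the different conventions across all cases (the shift $v^{\mu_j}$ on the right, the $s_j^{-1}$ and $\tilw_j$ twists, the two choices $\tilw_j \in \{w_0 t_\eta, t_{w_0(\eta)}\}$, and whether $s_j = \id$ or $s_j = w_0$), and correctly isolating the $v$--constant term when the matrix $\tilw_j s_j^{-1} v^{\mu_j}$ has entries of nontrivial $v$-order so that several $v$-expansion cross-terms contribute. Once these conventions are fixed in parallel with those of \cite[Table 3]{lhmm}, each case reduces to a short computation whose output is transcribed into Tables \ref{table:shape-L} and \ref{table:shape-R}.
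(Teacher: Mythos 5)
Your plan is essentially the paper's proof: a case‑by‑case matrix computation of $W_j$ using the parametrization of $\Baj$ from \cite[Table~3]{lhmm}, followed by reading off the $v$‑divisibility of the $(1,1)$ or $(2,2)$ entry and then identifying the scheme‑theoretic image via properness of $\pr_j$.

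Two points are glossed over, though neither is fatal. First, the ``constant term as a power series in $v$'' is not a single global polynomial relation: it is an expression in the auxiliary entries $u,z,s,t$ of the lifts $\till_j\kappa_j$ and $\tilr_j$, which must be fixed chart by chart on $D(x,x')$, $D(y,x')$, $D(x,y')$, $D(y,y')$ (the paper makes explicit choices for $(u,z,s,t)$ on each). Comparing across charts, together with the pre‑existing relations on $\Baj$, is what turns a single constant‑coefficient expression like $tuB+szC+(tz-su)D$ into the ideal $(B,C,D)$ in the $T_j=1$ case; so the ideal can genuinely need several generators. Second, the claim that the elimination for $Z_j$ ``is mechanical and produces an ideal generated by products or individual entries of $X_j$'' fails in the $T_j=3$ case: there the relations on $\Baj_{S_j}$ have the form $Dx-Cy=Bx+Dy=0$ with $[x:y]\in\PP^1$, and eliminating the projective variables produces the determinantal relation $D^2+BC=0$, which is quadratic rather than a product of entries. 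One does need a small extra argument — either the resultant/determinant computation (the $2\times 2$ system has a nonzero kernel iff $D^2+BC=0$) together with reducedness and irreducibility of the source, or, as the paper actually does, the computation of $\Gamma(\Baj_{S_j})$ in Lemma~\ref{lem:Rpr-dualizing}(iii) — to identify the scheme‑theoretic image as $\Spec\F[B,C,D]/(D^2+BC)$. Your strategy is sound; these are the two spots where ``bookkeeping'' hides genuine content.
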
 

\begin{center}
 \begin{table}[ht]
 \begin{threeparttable}
 \resizebox{\textwidth}{!}{%
{\renewcommand{\arraystretch}{2}
\begin{tabular}{|c|c|| c|c|}
\hline
\multicolumn{1}{|c|}{\backslashbox{$\langle \mu_j, \alpha^{\vee} \rangle$}{$\tilw_j$}} & \multicolumn{1}{c||}{} & 
\multicolumn{1}{c|}{$w_0 t_{\eta}$} & \multicolumn{1}{c|}{$t_{w_0 (\eta)}$}\\
\hline
& $s_j$ &  & \\
\hline
\hline
\multirow{2}{*}{$>1$} & $w_0$ & 
{\color{Brown}\begin{tabular}{l}
 $I = (B)$ \\
$\Baj_L = \Proj \F [x,y] \times \Spec \F[C']$\\
$Z_j= \Spec \F[C']$
\end{tabular}} & \begin{tabular}{l}
$I = (1)$ \\
$\Baj_L = Z_j =\varnothing$
\end{tabular} \\
\cline{2-4}
  & $\id$ & 
{\color{Brown}\begin{tabular}{l}
 $I = (C)$ \\
$\Baj_L = \Proj \F [x,y] \times \Spec \F[C']$\\
$Z_j= \Spec \F[C']$
\end{tabular}} & 
\begin{tabular}{l}
 $I = (1)$ \\
$\Baj_L = Z_j= \varnothing$
\end{tabular}\\
\hline
\hline

\multirow{2}{*}{$=1$} & $w_0$ & 
{\color{Plum}\begin{tabular}{l}
$I = (B, C, D)$ \\
$\Baj_L = \Proj \F [x,y] \times \Proj \F[x', y']$\\
$Z_j= \Spec \F$
\end{tabular}} & \begin{tabular}{l}
$I = (1)$\\
$\Baj_L = Z_j = \varnothing$
\end{tabular} \\
\cline{2-4}
 & $\id$ & 
{\color{Brown}\begin{tabular}{l}
$I = (C)$ \\
$\Baj_L = \Proj \F[x, y] \times \Spec \F[C']$ \\
$Z_j = \Spec \F[C']$ 
\end{tabular}} & \begin{tabular}{l}
$I = (1)$ \\
$\Baj_L = Z_j= \varnothing$
\end{tabular} \\
\hline
\hline

$=0$ & $\id$ &
{\color{Brown}\begin{tabular}{l}
$I = (x'-y'C)$\\
$\Baj_L = \Proj \F[x,y] \times \Spec \F[C]$ \\
$Z_j= \Spec \F[C]$
\end{tabular}} & {\color{Brown}\begin{tabular}{l}
$I = (y' - x'B)$ \\
$\Baj_L = \Proj \F[x, y] \times \Spec \F[B]$ \\
$Z_j= \Spec \F[B]$
\end{tabular}} \\
\hline
\hline

\end{tabular}}
}
\caption{Ideals $I$ cut out $\Baj_L$ in $\Baj$ and the scheme $Z_j$ is the scheme--theoretic image of $\Baj_L$ under $\pr_j$. Different font colors correspond to different isomorphism classes of the tuple $(\Baj_L, p_j|_{\Baj_L}, q_j|_{\Baj_L}, \pr_j|_{\Baj_L})$.}\label{table:shape-L}
\end{threeparttable}
\end{table}
\end{center}

\begin{center}
 \begin{table}[ht]
 \begin{threeparttable}
 \resizebox{\textwidth}{!}{%
 {\renewcommand{\arraystretch}{2}
\begin{tabular}{|c|c|| c|c|}
\hline
\multicolumn{1}{|c|}{\backslashbox{$\langle \mu_j, \alpha^{\vee} \rangle$}{$\tilw_j$}} & \multicolumn{1}{c||}{} & 
\multicolumn{1}{c|}{$w_0 t_{\eta}$} & \multicolumn{1}{c|}{$t_{w_0 (\eta)}$}\\
\hline
& $s_j$ & & \\
\hline
\hline
\multirow{2}{*}{$>1$} & $w_0$ & 
{\color{Magenta}\begin{tabular}{l}
$I = (x)$ \\
$\Baj_R =  Z_j= \Spec \F[B, C']$ \\
\end{tabular}} & {\color{Magenta} \begin{tabular}{l}
$I = (0)$ \\
$\Baj_R =  Z_j= \Spec \F[C, C']$ \\
\end{tabular}} \\
\cline{2-4}
 & $\id$ & 
{\color{Magenta} \begin{tabular}{l}
$I = (x)$ \\
$\Baj_R =  Z_j= \Spec \F[C, C']$ \\
\end{tabular}} & {\color{Magenta} \begin{tabular}{l}
$I = (0)$ \\
$\Baj_R =  Z_j= \Spec \F[C, C']$ \\
\end{tabular}} \\
\hline
\hline

\multirow{2}{*}{$=1$} & $w_0$ &
{\color{ForestGreen} \begin{tabular}{l}
 $I = (xy' - yx')$\\
 $\Baj_R = $\\
 \quad $\Spec \F[B, C, D] $\\
 \quad $\times \Proj \F[x,y]/(Dx-Cy, Bx+Dy)$\\
 $Z_j= \Spec \F[B, C, D]/(D^2 + BC)$
\end{tabular}} & {\color{Magenta} \begin{tabular}{l}
$I = (0)$\\
$\Baj_R =Z_j= \Spec \F[C, C']$ \\
\end{tabular}} \\
\cline{2-4}
 & $\id$ & 
{\color{Magenta}\begin{tabular}{l}
$I = (x)$ \\
$\Baj_R = Z_j= \Spec \F[C, C']$
\end{tabular}} & {\color{ForestGreen} \begin{tabular}{l}
$I = (0)$ \\
$\Baj_R = $\\
\quad $\Spec \F[B, C, D]$\\
 \quad $\times \Proj \F[x,y]/(Dx-Cy, Bx+Dy)$\\
$Z_j= \Spec \F[B, C, D]/(D^2 + BC)$
\end{tabular}} \\
\hline
\hline

$=0$ & $\id$ & 
{\color{MidnightBlue} \begin{tabular}{l}
$I = (x)$ \\
$\Baj_R = \Spec \F[C] \times \Proj \F[x', y']$ \\
$Z_j= \Spec \F[C]$
\end{tabular}} & {\color{MidnightBlue} \begin{tabular}{l}
$I = (x)$ \\
$\Baj_R = \Spec \F[B] \times \Proj \F[x', y']$ \\
$Z_j= \Spec \F[B]$
\end{tabular}} \\
\hline
\hline
\end{tabular}}}
\caption{Ideals $I$ cut out $\Baj_R$ in $\Baj$ and the scheme $Z_j$ is the scheme--theoretic image of $\Baj_R$ under $\pr_j$. Different font colors correspond to different isomorphism classes of the tuple $(\Baj_R, p_j|_{\Baj_R}, q_j|_{\Baj_R}, \pr_j|_{\Baj_R})$. 
}\label{table:shape-R}
\end{threeparttable}
\end{table}
\end{center}

\begin{proof} Let $(l_j\kappa_j, X_j, r_j)$ be a point of $\Baj$ and let $\till_j$ and $\tilr_j$ be lifts to $\GL_2$ of $l_j$ and $r_j$ respectively. Let
$$\till_j \kappa_j = \begin{pmatrix}
u & z \\
x & y
\end{pmatrix} \quad \text{ and } \quad \tilr_j =\begin{pmatrix}
t & -s \\
x' & y'
\end{pmatrix},$$ where $u, z, s, t$ can be freely chosen subject to the restriction that $\till_j, \tilr_j$ are invertible. Upon restricting to the distinguished opens $D(x, x'), D(y, x'), D(x, y'), D(y, y')$, we will make the following choices for $u, z, s, t$: 
\begin{align*}
    u=0, \quad z=1, \quad s=1, \quad t=0 \quad &\text{ on } D(x, x'), \\
    u=1, \quad z=0, \quad s=1, \quad t=0 \quad &\text{ on } D(y, x'), \\
    u=0, \quad z=1, \quad s=0, \quad t=1 \quad &\text{ on } D(x, y'), \\
    u=1, \quad z=0, \quad s=0, \quad t=1 \quad &\text{ on } D(y, y').
\end{align*}
We now deal with the different cases separately, taking the descriptions of $$s_j w_j^{-1} X_j w_j s_j^{-1}$$ along with the relations the various variables appearing in $X_j$ satisfy from \cite[Tables~2,\;3]{lhmm}. In the following, let $k_j := \langle \mu_j, \alpha^{\vee}\rangle$.

\begin{enumerate}

\item Let $k_j > 1, (s_j, \tilw_j) = (w_0, w_0 t_\eta).$ Then
$$\hspace{1cm} X_j = w_j s_j^{-1} \begin{pmatrix}
    1 + BC'v^{-k_j} & Bv^{-1} \\
    C' v^{-k_j + 1} & 1
\end{pmatrix} s_j w_j = \begin{pmatrix}
    1 + BC'v^{-k_j} & Bv^{-1} \\
    C' v^{-k_j + 1} & 1
\end{pmatrix}$$ 
with the variables $B, C'$ satisfying $x' - y'C'  = xB = 0$. Thus, $\Baj = D(y')$, $(s, t) = (0,1)$ and
    $$\hspace{1cm} (\det \tilr_j) W_j = 
            \begin{pmatrix}
                u & z \\
                x & y
            \end{pmatrix} 
            \begin{pmatrix}
                1 + BC'v^{-k_j} & Bv^{-1} \\
                C' v^{-k_j + 1} & 1
            \end{pmatrix}
            \begin{pmatrix}
                0 & 1 \\
                v & 0
            \end{pmatrix} \begin{pmatrix}
                 1 & -x' v^{-k_j} \\
                 0 & y'
            \end{pmatrix}.$$ 
This shows that $v \mid \WjL$ if and only if $u B = 0$, and $v \mid \WjR$ if and only if $x y' =0$. Note that $u B = 0$ if and only if $B = 0$, since $u=1$ on $D(y,y')$ and $B=0$ on $D(x,y')$. We have $x y'=0$ if and only if $x=0$ since we are on $D(y')$.

\item Let $k_j \geq 1, (s_j, \tilw_j) = (w_0, t_{w_0(\eta)}).$ Then
$$\hspace{1cm} X_j = w_j s_j^{-1} \begin{pmatrix}
    1  & 0 \\
    Cv^{-1} + C' v^{-k_j - 1} & 1
\end{pmatrix} s_j w_j = \begin{pmatrix}
    1 & Cv^{-1} + C' v^{-k_j - 1} \\
    0 & 1
\end{pmatrix}$$ with the variables $C, C'$ satisfying $x' - y'C' = x = 0$. Thus $\Baj = D(y, y')$, $(u, z, s, t) = (1, 0, 0, 1)$ and
   $$\hspace{1cm} (\det \tilr_j) W_j =  \begin{pmatrix}
                1 & 0 \\
                0 & y
            \end{pmatrix} 
             \begin{pmatrix} 
             1 &  C v^{-1} + C' v^{-(k_j+1)}\\
             0& 1
             \end{pmatrix}
            \begin{pmatrix}
                1 & 0 \\
                0 & v
            \end{pmatrix} \begin{pmatrix}
                 1 & -x' v^{-k_j} \\
                 0 & y'
            \end{pmatrix}.$$ 
This shows that $v \nmid \WjL$ and $v \mid \WjR$.


\item Let $k_j \geq 1, (s_j, \tilw_j) = (\id , w_0 t_\eta).$ Then
$$\hspace{1cm} X_j = w_j s_j^{-1} \begin{pmatrix}
    1  & 0 \\
    Cv^{-1} + C' v^{-k_j - 1} & 1
\end{pmatrix} s_j w_j = \begin{pmatrix}
    1 & Cv^{-1} + C' v^{-k_j - 1} \\
    0 & 1
\end{pmatrix}$$ with the variables $C, C'$ satisfying $x' - y'C' = xC = 0$. Thus, $\Baj = D(y')$, $(s,t) =(0,1)$ and 
$$ \hspace{1cm} (\det \tilr_j) W_j = \begin{pmatrix}
                u & z \\
                x & y
            \end{pmatrix} 
             \begin{pmatrix} 
              1 & Cv^{-1} + C' v^{-k_j - 1} \\
                0 & 1
             \end{pmatrix}
            \begin{pmatrix}
                0 & 1 \\
                v & 0
            \end{pmatrix} \begin{pmatrix}
                 y' & 0 \\
                 -x' v^{-k_j} & 1
            \end{pmatrix}.$$
This shows that $v \mid \WjL$ if and only if $u y' C = 0$, and $v \mid \WjR$ if and only if $x =0$. The equality $u y' C = 0$ holds if and only if $u C =0$ since we are on $D(y')$. Moreover, $u C = 0$ if and only if $C = 0$, since $u = 1$ on $D(y,y')$ and $C=0 $ on $D(x,y')$.

\item Let $k_j > 1, (s_j, \tilw_j) = (\id ,  t_{w_0(\eta)}).$ Then 
$$\hspace{1cm} X_j = w_j s_j^{-1} \begin{pmatrix}
    1 + BC'v^{-k_j} & Bv^{-1} \\
    C' v^{-k_j + 1} & 1
\end{pmatrix} s_j w_j = \begin{pmatrix}
    1 + BC'v^{-k_j} & Bv^{-1} \\
    C' v^{-k_j + 1} & 1
\end{pmatrix}$$ with the variables $B, C'$ satisfying $x' - y'C' = x = 0$. Therefore, $\Baj = D(y,y')$, $(u, z, s, t) = (1, 0, 0, 1)$
and
$$\hspace{1cm} (\det \tilr_j) W_j =  \begin{pmatrix}
                1 & 0 \\
                0 & y
            \end{pmatrix} 
            \begin{pmatrix}
                1 + BC'v^{-k_j} & Bv^{-1} \\
                C' v^{-k_j + 1} & 1
            \end{pmatrix}
            \begin{pmatrix}
                1 & 0 \\
                0 & v
            \end{pmatrix} \begin{pmatrix}
                 y' & 0 \\
                 -x' v^{-k_j} & 1
            \end{pmatrix}$$
Hence, $v \nmid \WjL$ and $v \mid \WjR$. 


\item Let $k_j =1, (s_j, \tilw_j) = (w_0, w_0 t_{\eta}).$ Then 
$$\hspace{1cm} X_j = w_j s_j^{-1} \begin{pmatrix}
    1 - Dv^{-1} & Bv^{-1} \\
    Cv^{-1}& 1 + Dv^{-1}
\end{pmatrix} s_j w_j = \begin{pmatrix}
    1 - Dv^{-1} & Bv^{-1} \\
    Cv^{-1}& 1 + Dv^{-1}
\end{pmatrix}$$ with the variables $B, C, D$ satisfying
$x'D - y'C = x'B + y'D = xD - yC = xB + yD = 0$. Thus, 
    $$\hspace{1cm} (\det \tilr_j) W_j = \begin{pmatrix}
                u & z \\
                x & y
            \end{pmatrix} 
            \begin{pmatrix}
                1 - Dv^{-1} & Bv^{-1} \\
                Cv^{-1}& 1 + Dv^{-1}
            \end{pmatrix}
            \begin{pmatrix}
                0 & 1 \\
                v & 0
            \end{pmatrix} \begin{pmatrix}
                 t & -x' v^{-1} \\
                 s v & y'
            \end{pmatrix}$$
    This shows that $v \mid \WjL$ if and only if $t u B + s z C + (t z - s u )D = 0$ and $v \mid \WjR$ if and only if $ x y' - y x' = 0$. Checking on each of the charts, we find that $t u B + s z C + (t z - s u )D  = 0$ if and only if $B = C = D = 0$.

\item Let $k_j=1, (s_j, \tilw_j) = (\id, t_{w_0(\eta)}).$ Then 
$$\hspace{1cm} X_j = w_j s_j^{-1} \begin{pmatrix}
    1 - Dv^{-1} & Bv^{-1} \\
    Cv^{-1}& 1 + Dv^{-1}
\end{pmatrix} s_j w_j = \begin{pmatrix}
    1 - Dv^{-1} & Bv^{-1} \\
    Cv^{-1}& 1 + Dv^{-1}
\end{pmatrix}$$ with the variables $B, C, D$ satisfying
$x'D - y'C = x'B + y'D = xy' - yx' = 0$. Thus,
$$ \hspace{1cm} (\det \tilr_j) W_j = \begin{pmatrix}
                u & z \\
                x & y
            \end{pmatrix} 
           \begin{pmatrix}
            1 - Dv^{-1} & Bv^{-1} \\
            Cv^{-1}& 1 + Dv^{-1}
           \end{pmatrix}
            \begin{pmatrix}
                1 & 0 \\
                0 & v
            \end{pmatrix} \begin{pmatrix}
                 y' & s v \\
                -x' v^{-1}  & t
            \end{pmatrix}.$$
This shows that $v \mid \WjL$ if and only if $u y' - z x'= 0$, and $v \mid \WjR$ if and only if $t x B + s y C + ( t y - s x ) D= 0$. The relation $x y' -y x' = 0$ implies that $\Baj = D(x,x') \cup D(y,y')$. Since $u y' - z x' = -x'$ on $D(x,x')$ and $u y' - z x' = y'$ on $D(y,y')$, $u y' - z x'$ vanishes nowhere on $\Baj$. Further, since $t x B + s y C + ( t y - s x ) D$  equals $y C - x D$ on $D(x,x')$ and $x B + y D$ on $D(y, y')$, $t x B + s y C + ( t y - s x ) D$ vanishes on $\Baj$.

\item Let $k_j=0, (s_j, \tilw_j) = (\id, w_0 t_\eta).$ Then 
$$X_j = w_j s_j^{-1} \begin{pmatrix}
    1 & 0 \\
    Cv^{-1}& 1
\end{pmatrix} s_j w_j = \begin{pmatrix}
    1 & Cv^{-1} \\
    0 & 1
\end{pmatrix}$$ with the variable $C$ satisfying $xx' - xy'C = 0$.
Thus,
    $$(\det \tilr_j) W_j = \begin{pmatrix}
                u & z \\
                x & y
            \end{pmatrix} 
           \begin{pmatrix}
           1 & C v^{-1}\\ 
           0& 1 
           \end{pmatrix}
            \begin{pmatrix}
                0 & 1 \\
                v & 0
            \end{pmatrix} \begin{pmatrix}
                 y' & s  \\
                -x' & t
            \end{pmatrix}.$$
    Hence, $v \mid \WjL$ if and only if $u(x' - y'C)= 0$, and $v \mid \WjR$ if and only if $ x(t + s C)= 0$. Note that $u(x' - y'C)=0$ if and only if $x'-  y'C=0$, since $u = 1$ on $D(y)$ and we already know that $x' - y'C = 0$ on $D(x)$. Further, $x(t+s C)  = 0$ if and only if $x = 0$, since $x(t+sC) = x$ on $D(y')$, whereas on $D(x')$, $x$ is a multiple of $x(t+sC) = xC$. 

\item Let $k_j=0, (s_j, \tilw_j) = (\id, t_{w_0(\eta)}).$ Then 
$$X_j = w_j s_j^{-1} \begin{pmatrix}
    1 & Bv^{-1} \\
    0& 1
\end{pmatrix} s_j w_j = \begin{pmatrix}
    1 & Bv^{-1} \\
    0& 1
\end{pmatrix}$$ with the variable $B$ satisfying $xx'B - xy' = 0$. Thus,
$$ (\det \tilr_j) W_j = \begin{pmatrix}
                u & z \\
                x & y
            \end{pmatrix} 
           \begin{pmatrix}
           1 & B v^{-1}\\ 
           0& 1 
           \end{pmatrix}
            \begin{pmatrix}
                1 & 0 \\
                0 & v
            \end{pmatrix} \begin{pmatrix}
                 y' & s  \\
                -x' & t
            \end{pmatrix}.$$
        Hence, $v \mid \WjL$ if and only if $u(y' - x'B)= 0$, and $v \mid \WjR$ if and only if $ x(s + t B)= 0$. As in the previous case, $u(y' - x'B)=0$ if and only if $y'-x'B=0$, and $x(s + t B)  = 0$ if and only if $x = 0$.
\end{enumerate} 
In order to compute the scheme--theoretic image $Z_j$ of $\Baj_{S_j}$ under $\pr_j$, we note that if a reduced scheme $Z$ fits into a commutative diagram
\begin{equation*}
    \begin{tikzcd}
    \Baj_{S_j} \arrow[rr, "\pr_j"] \arrow[two heads]{dr}[swap,yshift=3]{\text{surjection}} && \widetilde{U}(\tilz) \\
    &Z \arrow[hook]{ur}[swap,yshift=2]{\text{monomorphism}}
\end{tikzcd} \quad ,
\end{equation*}
then Lemma \ref{lem:projective-pr} implies that the map $Z \hookrightarrow \widetilde{U}(\tilz)$ is a closed immersion, and so $Z_j =Z$. The descriptions of $Z_j$ are now immediate except possibly the relations satisfied by $B, C, D$ when $S_j=R$, $k_j = 1$ and $(s_j, \tilw_j) \in \{(w_0, w_0 t_{\eta}), (\id, t_{w_0(\eta)})$. The proof of Lemma \ref{lem:Rpr-dualizing}(iii) shows that the only relations $B, C, D$ satisfy with respect to each other are given by setting $(D^{2} + BC)$ equal to $0$.
\end{proof}

\subsection{Classification of \texorpdfstring{$\Baj_{S_j}$}{Baj}}\label{subsec:classfn}
We will henceforth restrict attention to the schemes $\Baj_{S_j}$ for $S_j \in \{L, R\}$ and their scheme--theoretic images. To simplify notation, we will denote the maps $p_j|_{\Baj_{S_j}}, q_j|_{\Baj_{S_j}}, \pr_j|_{\Baj_{S_j}}$ simply as $p_j, q_j, \pr_j$ respectively. Using Lemma \ref{lemma:shape-conditions}, we find that when non--empty, $\Baj_{S_j}, p_j, q_j, \pr_j$ admit one of the following descriptions up to isomorphism:

\begin{enumerate}
    \item\label{Baj-P1XP1} The scheme $\Baj_{S_j}$ is isomorphic to $\Proj \F[x,y] \times \Proj \F[x',y']$. The maps $p_j$ and $q_j$ are projections to the $[x:y]$ and $[x':y']$ coordinates respectively, while $\pr_j$ is a constant map. This happens when
    \begin{center}$S_j = L, \quad \tilw_j = w_0 t_\eta, \quad \text{and}\quad (\langle \mu_j, \alpha^{\vee}\rangle, s_j) = (1, w_0).$\end{center}
    \item\label{Baj-P1XA1} The scheme $\Baj_{S_j}$ is isomorphic to $\Proj \F[x,y] \times \AA^1$. The map $p_j$ is projection to the $[x:y]$ coordinate, the map $q_j$ is projection to the $\AA^1$ factor followed by inclusion into $\PP^{1}$ given by mapping $C \mapsto [C:1]$, and the map $\pr_j$ is the projection to the $\AA^1$ factor. This happens when
    \begin{itemize}
    \item $S_j = L, \quad \tilw_j = w_0 t_\eta, \quad \hspace{0.15cm}\text{and}\quad (\langle \mu_j, \alpha^{\vee}\rangle, s_j) \neq (1, w_0)$; or
    \item $S_j = L, \quad \tilw_j = t_{w_0(\eta)}, \quad \text{and}\quad (\langle \mu_j, \alpha^{\vee}\rangle, s_j) = (0, \id)$.
    \end{itemize}

    \item\label{Baj-blowup} The scheme $\Baj_{S_j}$ is isomorphic to $\Spec \F[B, C, D] \times \Proj \F[x,y]/(Dx-Cy, Bx+Dy)$. The maps $p_j$ and $q_j$ are the same and given by projection to the $[x:y]$ coordinates, while $\pr_j$ extracts the variables $B, C, D$. This happens when
    \begin{itemize}
    \item $S_j = R, \quad \tilw_j = w_0 t_\eta, \quad \hspace{0.15cm}\text{and}\quad (\langle \mu_j, \alpha^{\vee}\rangle, s_j) = (1, w_0)$; or
    \item $S_j = R, \quad \tilw_j = t_{w_0(\eta)}, \quad \text{and}\quad (\langle \mu_j, \alpha^{\vee}\rangle, s_j) = (1, \id)$.    
    \end{itemize}

    \item \label{Baj-A2} The scheme $\Baj_{S_j}$ is isomorphic to $\AA^2$. The map $p_j$ is a constant map, $q_j$ is given by $(C, C') \mapsto [C': 1]$, and $\pr_j$ extracts the variables $C, C'$. This happens when
    \begin{itemize}
    \item $S_j = R, \quad \text{and } \langle \mu_j, \alpha^{\vee}\rangle >1$; or
    \item $S_j = R, \quad \tilw_j = w_0 t_\eta, \quad \hspace{0.15cm}\text{and}\quad (\langle \mu_j, \alpha^{\vee}\rangle, s_j) = (1, \id)$; or
    \item $S_j = R, \quad \tilw_j = t_{w_0(\eta)}, \quad \text{and}\quad (\langle \mu_j, \alpha^{\vee}\rangle, s_j) = (1, w_0)$.    
    \end{itemize}

    \item\label{Baj-P1XA1-constant-pj} The scheme $\Baj_{S_j}$ is isomorphic to $\AA^1 \times \Proj \F[x',y']$. The map $p_j$ is constant, the map $q_j$ is projection to $[x':y']$ coordinates, and the map $\pr_j$ is the projection to the $\AA^1$ factor. This happens when 
    \begin{center}$S_j = R,  \quad \text{and}\quad (\langle \mu_j, \alpha^{\vee}\rangle, s_j) = (0, \id).$\end{center}
\end{enumerate}

We use the above classification to define the class $T_j$ of $\Baj_{S_j}$. We will say that $T_j$ equals $1, 2, 3, 4 \text{ or } 5$ if $\Baj_{S_j}$ is of the form described in (\ref{Baj-P1XP1}), (\ref{Baj-P1XA1}), (\ref{Baj-blowup}), (\ref{Baj-A2}) or (\ref{Baj-P1XA1-constant-pj}) respectively.

\subsection{A different decomposition of \texorpdfstring{$\tilBa_S$}{Ba}}\label{subsec:Ba-product} In order to make certain cohomological computations easier, we now consider $\tilBa_S$ as a product of schemes in a slightly different way and set up related notations. For each $j$ with $T_j \neq 3$, define schemes $\tilBaj_{S_j}^{\mathrm{I}}$ and $\tilBaj_{S_j}^{\mathrm{II}}$ via the following isomorphism:
\begin{align}\label{eqn:Baj-IandII}
    \tilBaj_{S_j} &\xrightarrow{\sim} \tilBaj_{S_j}^{\mathrm{I}} \times \tilBaj_{S_j}^{\mathrm{II}} \\
    (l_j, \kappa_j, X_j, r_j) &\mapsto (l_j\kappa_j, \kappa_j), (X_j, r_j). \nonumber
\end{align}
The map $p_j: \Baj_{S_j} \to \PP^{1}$ (resp. $q_j$) induces a map $\tilBa_S \to \PP^{1}$ via projection to the $\Baj_{S_j}$--factor under the isomorphism in (\ref{eqn:tilBaj-decom}). We denote this map by $p_j$ (resp. $q_j$) as well. Abusing notation further, via (\ref{eqn:Baj-IandII}), we let $p_j$ also denote the map $\tilBaj_{S_j}^{\mathrm{I}} \to \PP^{1}$ and $q_j$ the map $\tilBaj_{S_j}^{\mathrm{II}} \to \PP^{1}$.

\begin{definition}
    Given a class tuple $T = (T_j)_{j \in \Z/f\Z}$, let $\trn$ be the set of sequences $\gr = (i-k, i-k+1, \dots, i)$ of elements in $\Z/f\Z$ where 
    \begin{itemize}
        \item the length of the sequence, denoted $l(\gr)$, is $\geq 2$,
        \item $T_{i-k}, T_{i} \neq 3$, and
        \item whenever $l(\gr) \geq 3$, $T_{i-k+1} = \dots = T_{i-1} = 3$.
    \end{itemize}  
    
    
\end{definition}
It is evident from the definition that if $T_j \neq 3$ for some $j \in \Z/f\Z$, then there exists a sequence in $\trn$ that starts with $j$ and a sequence that ends with $j$. Furthermore, whenever $\trn \neq \varnothing$, every $j \in \Z/f\Z$ shows up in at least one sequence in $\trn$.
Note that $\trn$ depends on the data of $\tilz, s, \mu, S$ since it depends on the the class tuple.

\begin{definition}\label{defn:trn-schemes}
    Suppose $\gr = (i-k, \dots, i) \in \trn$.
    \begin{enumerate}
        \item Define $B(\gr)$ to be the scheme 
        \begin{multline*}
        \qquad \quad \widetilde{Ba}_{i-k}(\widetilde{z}_{i-k})_{S_{i-k}}^{\mathrm{I}} \times \widetilde{Ba}_{i-k+1}(\widetilde{z}_{i-k+1})_{S_{i-k+1}} \times \\
        \dots \times \widetilde{Ba}_{i-1}(\widetilde{z}_{i-1})_{S_{i-1}} \times \widetilde{Ba}_{i}(\widetilde{z}_{i})_{S_{i}}^{\mathrm{II}}.
        \end{multline*}
        \item Define $Y(\gr)$ to be the closed subscheme of $B(\gr)$ obtained by setting $l_{j-1} = r_{j}$ for each $j \in \{i-k+1, \dots, i\}$. Denote by $\Delta(\gr)$ the map $Y(\gr) \hookrightarrow B(\gr)$.
        \item Let 
$$Z_{\tilB}(\gr) \stackrel{\text{def}}{=} \GL_2 \times (Z_{i-k+1} \times \GL_2) \times \dots \times (Z_{i-1} \times \GL_2) \times Z_{i}.$$ Recall that $Z_j$ is the scheme--theoretic image of $\Baj_{S_j}$ under $\pr_j$, and by Lemma \ref{lem:im-is-product}, $Z_j \times \GL_2$ is isomorphic to the the scheme--theoretic image of $\tilBaj_{S_j}$ under $\widetilde{\pr}_j$.
    \end{enumerate}
    \end{definition}
    
    It is clear from the definitions that as long as $\trn$ is non--empty, there exist obvious isomorphisms
\begin{align}
    \tilBa_S \xrightarrow{\sim} \prod_{\gr \in \trn} B(\gr), \\
    \tilYz_S \xrightarrow{\sim} \prod_{\gr \in \trn} Y(\gr) \label{eqn:Y-as-prod}
\end{align}
which induce an identification of the scheme--theoretic image of $\tilBa_S$ under $\pr_{\tilB}$, with the scheme $\prod_{\gr \in \trn} Z_{\tilB}(\gr)$. Thus, the map $\pr_{\tilB}$ induces proper and scheme--theoretically dominant maps $\pr(\gr): B(\gr) \to Z_{\tilB}(\gr)$. Define $\cZ(\gr)$ to be the scheme--theoretic image of $Y(\gr)$ under $\pr(\gr)$. There exists an obvious identification of $\prod_{\gr \in \trn} \cZ(\gr)$ with $\tilZz_S$.

\begin{definition}\label{defn:I}
    For $j$ with $T_j = 3$, let $N_j$ be the ideal of $\Gamma(\tilBaj_{S_j})$ generated by the functions $B, C, D$. There exists an obvious inclusion $$\Gamma(\tilBaj_{S_j}) \hookrightarrow \Gamma(\tilBa_S)$$ and if $\gr = (i-k, \dots, i) \in \trn$ and $j \in \{i-k, \dots, i\} \smallsetminus \{i-k, i\}$, then also an inclusion $$\Gamma(\tilBaj_{S_j}) \hookrightarrow \Gamma(B(\gr)).$$
    Abusing notation, we let the ideals generated by the image of $N_j$ under these inclusions also be denoted $N_j$.
    
    Let $N$ (resp. $N(\gr)$) be the minimal ideal of $\Gamma(\tilBa_S)$ (resp. of $\Gamma(B(\gr))$) containing $N_j$ for each $j$ with $T_j=3$ (resp. for each $j \in \{i-k, \dots, i\} \smallsetminus \{i-k, i\}$). We also let the image of $N(\gr)$ under the obvious inclusion $$\Gamma(B(\gr)) \hookrightarrow \Gamma(\tilBa_S)$$ be denoted by $N(\gr)$.    
\end{definition}


\begin{lemma}\label{lem:bad-sequence}
    Let $\gr = (i-k, \dots, i) \in \trn$. 
    \begin{enumerate}
        \item If $(T_{i-k}, T_i) \not\in \{(1,1), (1, 5), (2, 1), (2, 5)\}$, then the map $\pr(\gr)$ induces an isomorphism of $Y(\gr)$ with $\cZ(\gr)$. 
    \item Suppose $T_{i-k} \in \{1, 2\}$ and $T_{i} \in \{1, 5\}$. If $l(\gr) = 2$, then the map $Y(\gr) \to \cZ(\gr)$ induced by $\pr(\gr)$ is a $\PP^{1}$--torsor. On the other hand, if $l(\gr) > 2$, then $\dim \Y(\gr) = \dim \cZ(\gr)$, the restriction of $\pr(\gr)$ to $\pr(\gr)^{-1} \left(\cZ(\gr) \smallsetminus V(N(\gr))\right)$ is a monomorphism, and $\pr(\gr)$ induces a birational map from $Y(\gr)$ to $\cZ(\gr)$. 
    \end{enumerate} 
\end{lemma}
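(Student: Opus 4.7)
The plan is first to describe the fiber of $\pr(\gr)$ concretely. The map retains $\kappa_{i-k}$, the data $(B_j, C_j, D_j, \kappa_j)$ for each middle $j$, and $\pr_i(X_i)$, while discarding the $\PP^{1}$-coordinates $l_{i-k}$, each $l_j$ and $r_j$ for middle $j$, and $r_i$. In $Y(\gr)$, the gluing condition $l_{j-1}=r_j$ together with the class-$3$ identity $l_j\kappa_j=r_j$ holding on each middle $\tilBaj_j$ yields a chain of identifications that expresses every one of these projective coordinates in terms of a single parameter $\ell := l_{i-k}\in \PP^{1}$ and the middle $\kappa_j$'s. Consequently the fiber of $\pr(\gr)$ over a point of $\cZ(\gr)$ is parametrized by the admissible values of $\ell$.

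Next, using the classification of Section \ref{subsec:classfn}, I would enumerate the three possible sources of constraint on $\ell$: (a) if $T_{i-k}\in\{4,5\}$, $p_{i-k}$ is constant on $\Baj_{S_{i-k}}$ and $l_{i-k}\kappa_{i-k}$ is pinned, making $\ell$ an algebraic function of $\kappa_{i-k}$; (b) if $T_i\in\{2,4\}$, $q_i$ restricts to the open immersion $\AA^{1}\hookrightarrow\PP^{1}$ on $\pr_i(X_i)$, recovering $r_i$ and hence $\ell$ from the $Z_i$-data; (c) at each middle $T_j=3$ position, the relations $(D_j x - C_j y,\;B_j x + D_j y)=0$ --- precisely the blow-up relations presenting $\Baj_{S_j}\to Z_j=\Spec\F[B,C,D]/(D^2+BC)$ at the singular origin --- uniquely determine $l_j\kappa_j$ (and hence $\ell$) whenever $(B_j,C_j,D_j)\neq 0$. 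When none of (a)--(c) is active, $\ell$ is free in $\PP^{1}$.

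Part (i) follows by noting that the hypothesis $(T_{i-k},T_i)\notin\{(1,1),(1,5),(2,1),(2,5)\}$ guarantees that either (a) or (b) is in force, so $\ell$ is a regular function on $\cZ(\gr)$. Writing down the resulting algebraic section $\cZ(\gr)\to Y(\gr)$ of $\pr(\gr)$ --- whose compatibility with the middle relations (c) is automatic because $\cZ(\gr)$ is the scheme-theoretic image of $Y(\gr)$ --- inverts $\pr(\gr)$, producing the desired isomorphism.

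For (ii), neither (a) nor (b) applies and $\ell$ is controlled solely through (c). When $l(\gr)=2$ there are no middle positions, $\ell$ is free, and a direct coordinate unwinding exhibits $Y(\gr)\to\cZ(\gr)$ as the asserted $\PP^{1}$-torsor with fiber parameter $\ell$. When $l(\gr)>2$, the open $\cZ(\gr)\smallsetminus V(N(\gr))$ is exactly the locus where some middle triple $(B_j,C_j,D_j)\neq 0$, on which (c) pins $\ell$, and the same section argument gives an isomorphism there --- hence a monomorphism on its preimage --- while density of the open yields birationality and the equality $\dim Y(\gr)=\dim\cZ(\gr)$. The one technical point requiring care is that when several middle positions each force a value of $\ell$, these values must agree on $\cZ(\gr)$; this compatibility holds automatically from $\cZ(\gr)$ being defined as the image of $Y(\gr)$, rather than needing an independent verification.
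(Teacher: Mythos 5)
Your proposal is essentially correct and follows the same route as the paper: the key content in both is the case analysis of how the $\GL_2 \times Z_j$--data retained by $\pr(\gr)$ does or does not determine the discarded $\PP^1$--coordinates via the chain $l_{j-1}=r_j$ together with the class--$3$ relation $l_j\kappa_j=r_j$ in the middle, and the identification of the obstruction locus as $V(N(\gr))$. Your bullets (a), (b), (c) are exactly the observations the paper tabulates for $T_j=4,5$ (constant $p_j$), $T_j=2,4$ ($q_j$ factors through the $Z_j$--data via $\AA^1\hookrightarrow\PP^1$), and $T_j=3$ (blow-up relations pin $l_j=r_j$ off the origin).

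The one place where you take a slightly different (and slightly bumpier) route is part~(i). The paper observes that the chain forces $\pr(\gr)|_{Y(\gr)}$ to be a monomorphism, and then concludes immediately: a proper monomorphism is a closed immersion, and a scheme-theoretically dominant closed immersion onto $\cZ(\gr)$ is an isomorphism. You instead propose to \emph{construct a section} $\cZ(\gr)\to Y(\gr)$. That works, but it carries two small burdens you acknowledge only in passing: you must check the candidate section actually lands in $Y(\gr)\subset B(\gr)$ (your ``compatibility holds automatically because $\cZ(\gr)$ is the scheme-theoretic image'' remark), and you must check it is a two-sided inverse, which again reduces to the monomorphism observation ($\pr(\gr)\circ\sigma=\id$ together with $\pr(\gr)$ mono forces $\sigma\circ\pr(\gr)=\id$). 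So your argument ultimately re-proves the monomorphism fact anyway; invoking ``proper $+$ mono $=$ closed immersion'' directly, as the paper does, is the cleaner packaging. Part~(ii) in your write-up matches the paper's reasoning, including the nonemptiness/density of $\cZ(\gr)\smallsetminus V(N(\gr))$ when $l(\gr)>2$ giving birationality and equality of dimensions, and the $\PP^1$--bundle description when $l(\gr)=2$.
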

\begin{proof}
Since $\pr(\gr)$ is proper, it suffices to show that it restricts to a monomorphism on $Y(\gr)$ unless $T_{i-k} \in \{1, 2\}$ and $T_{i} \in \{1, 5\}$. Unpacking the definitions, we find that $\pr(\gr)$ restricts to a monomorphism if and only if given the data of $\{X_j\}_{j = i-k+1}^{i}, \{\kappa_j\}_{j=i-k}^{i-1}$, the tuples $\{l_j\}_{j = i-k}^{i-1}$ and $\{r_j\}_{j = i-k+1}^{i}$ are uniquely determined after imposing the conditions $l_{j-1} = r_{j}$ for $j \in \{i-k+1, \dots, i\}$. We make the following observations:
\begin{itemize}
    \item When $T_j = 1$, the data of $X_j, \kappa_j$ imposes no constraints on $l_j, r_j$, which are unrelated to each other.
    \item When $T_j = 2$, the data of $X_j, \kappa_j$ imposes no constraints on $l_j$, but completely determines $r_j$.
    \item When $T_j=3$, $X_j, \kappa_j$ do not completely determine $l_j$ or $r_j$ (except away from the vanishing locus of $N_j$), but $l_j$ and $r_j$ determine each other completely.
    \item When $T_j = 4$, $\kappa_j$ determines $l_j$ and $X_j$ determines $r_j$.
    \item When $T_j = 5$, $\kappa_j$ determines $l_j$ but the data of $X_j, \kappa_j$ imposes no constraints on $r_j$.
\end{itemize}
From these, the first part of the statement of the Lemma follows immediately. For the second part, suppose $\gr = (i-k, \dots, i)$ with $(T_{i-k}, T_{i}) \in \{(1,1), (1,5), (2, 1), (2,5)\}$. If $l(\gr)=2$, then since $l_{i-k} = r_{i} \in \PP^{1}$ can take any value, $Y(\gr)$ is a $\PP^{1}$--torsor over $\cZ(\gr)$. Finally, the third bullet above implies that the restriction of $\pr(\gr)$ to $\pr(\gr)^{-1}\left(\cZ(\gr) \smallsetminus V(N(\gr))\right)$ is a monomorphism. If $l(\gr) > 2$, $\cZ(\gr) \smallsetminus V(N(\gr))$ is readily seen to be non--empty, implying that $\pr(\gr))$ induces a birational map from $Y(\gr)$ to $\cZ(\gr)$.
\end{proof}

By the same argument as above, we also obtain the following lemma.
\begin{lemma}\label{lem:birational-T3}
    Suppose $T_j =3$ for each $j \in \Z/f\Z$. The map $\widetilde{\pi}_S$ restricts to a monomorphism on $\widetilde{\pi}^{-1}_S\left(\tilZz_S \smallsetminus V(N) \right)$.
\end{lemma}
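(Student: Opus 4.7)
The plan is to reuse the ``same argument'' the author has just invoked for Lemma \ref{lem:bad-sequence}, specializing the third bullet of that proof to the uniform case $T_j = 3$ for every $j \in \Z/f\Z$. That bullet records that when $T_j = 3$, the data $(X_j, \kappa_j)$ alone determines the pair $(l_j, r_j)$, \emph{except} on the vanishing locus of $N_j$. Because we now have $T_j = 3$ at every index and we are restricting to the complement of $V(N) = \bigcup_j V(N_j)$, this local determinacy holds simultaneously at every $j$, and the diagonal compatibility $l_{j-1} = r_j$ that defines $\tilYz_S$ inside $\tilBa_S$ becomes automatic rather than cutting anything out. Consequently the fiber over any $T$-valued point is a single $T$-point, which is the definition of monomorphism.

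To implement this, I would first translate the assertion into $T$-valued points. A $T$-point of $\tilZz_S \smallsetminus V(N)$ is a tuple $(\kappa_j, B_j, C_j, D_j)_{j \in \Z/f\Z}$ with $D_j^{2} + B_j C_j = 0$ for each $j$, subject additionally to the fiberwise nonvanishing condition that $(B_j, C_j, D_j)$ generate the unit ideal at every point of $T$. A $T$-point of $\widetilde{\pi}_S^{-1}(\tilZz_S \smallsetminus V(N))$ refines this datum by also specifying pairs $(l_j, r_j) \in \PP^{1}(T)^{2}$ satisfying $l_{j-1} = r_j$ and $(l_j \kappa_j, X_j, r_j) \in \Baj_{S_j}(T)$ for every $j$.

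Next I would use the explicit description of $\Baj_{S_j}$ when $T_j = 3$ supplied by Table \ref{table:shape-R}: it is cut out inside $\Spec \F[B_j, C_j, D_j] \times \PP^{1}$ by the two linear forms $D_j x - C_j y$ and $B_j x + D_j y$, and the equation $D_j^{2} + B_j C_j = 0$ satisfied on the image $Z_j$ forces these two linear forms to be $\F$-proportional. Away from $V(N_j)$ their common zero in $\PP^{1}(T)$ is therefore a single well-defined point, given Zariski-locally on $T$ by $[C_j : D_j]$ or by $[-D_j : B_j]$, the two expressions agreeing on overlaps by the relation $D_j^{2} + B_j C_j = 0$. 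Since the classification records that $p_j = q_j$ is precisely projection to this $\PP^{1}$, the common zero equals simultaneously $l_j \kappa_j$ and $r_j$, so both $l_j$ and $r_j$ are recovered uniquely from the target data on $\tilZz_S \smallsetminus V(N)$. The diagonal condition $l_{j-1} = r_j$ is then an identity of quantities already pinned down at indices $j-1$ and $j$, not an extra constraint. Hence the fiber of $\widetilde{\pi}_S$ over every $T$-point of $\tilZz_S \smallsetminus V(N)$ is a single $T$-point, giving the desired monomorphism.

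The main obstacle, exactly as in Lemma \ref{lem:bad-sequence}, is to verify that the apparently two-dimensional linear condition $(D_j x - C_j y,\, B_j x + D_j y) = 0$ really collapses to a single point away from $V(N_j)$. Once this is seen to follow from proportionality of the two forms under $D_j^{2} + B_j C_j = 0$, no further complication is introduced by the cyclic $\Z/f\Z$ structure, since the determinacy of $(l_j, r_j)$ is entirely local at $j$ and the compatibility across indices is merely a consequence of being a point of $\tilZz_S$.
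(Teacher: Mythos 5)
Your argument is correct and takes essentially the same route as the paper, which merely points back to the observations in the proof of Lemma \ref{lem:bad-sequence}: away from $V(N_j)$ the linear forms $D_j x - C_j y$ and $B_j x + D_j y$, subject to $D_j^2 + B_j C_j = 0$, cut out a unique section of $\PP^1$, which then pins down $l_j \kappa_j$, $r_j$, and (via the $(B_j,C_j,D_j)\leftrightarrow X_j$ dictionary) $X_j$ uniquely from the target data, giving injectivity on $T$-points. One caution on phrasing: a monomorphism only requires that the fiber over a $T$-point have \emph{at most} one $T$-point, not that it be nonempty, and the compatibility $l_{j-1}=r_j$ is genuinely a constraint on a $T$-point of $\prod_j (\GL_2 \times Z_j) \smallsetminus V(N)$ rather than something automatic — but since your argument only invokes uniqueness of the lift when it exists, these overstatements do not affect the conclusion.
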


\begin{definition} Let $\trn^{*} \subset \trn$ be the set of those $\gr = (i-k, \dots, i) \in \trn$ satisfying $T_{i-k} \in \{1, 2\}$ and $T_{i} \in \{1, 5\}$.
\end{definition}

\begin{lemma}\label{lem:Baj}(Version of \cite[Lem.~4.2.3]{lhmm}) Let $S \in \{L, R\}^{\Z/f\Z}$ and $\gr \in \trn$.
\begin{enumerate}
        \item  The schemes $\Baj_{S_j}$ and $B(\gr)$ are local complete intersections over $\F$. Whenever non--empty, $\Baj_{S_j}$ has dimension $2$. 
    \item When $T_j = 3$, the dualizing sheaf of $\Baj_{S_j}$ is $\cO_{\Baj_{S_j}}$. Thus, if $\gr = (i-k, \dots, i)$, the dualizing sheaf of $B(\gr)$ is $$p_{i-k}^{*} \cO(-2) \otimes q_{i}^{*} \cO(-2).$$
    
\end{enumerate}
\end{lemma}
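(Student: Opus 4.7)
The plan is to verify both assertions by working through the explicit classification of $\Baj_{S_j}$ from Section~\ref{subsec:classfn}. For part (i), in classes 1, 2, 4 and 5 the scheme $\Baj_{S_j}$ is manifestly a product of copies of $\PP^1$ and affine spaces, hence smooth (in particular lci) of dimension two. In class 3, where
\[ \Baj_{S_j} \cong \Spec \F[B,C,D] \times \Proj \F[x,y]/(Dx - Cy,\; Bx + Dy), \]
I will cover by the two standard $\PP^1$-charts: on $D(x)$ with local coordinate $t = y/x$, the two defining relations force $D = Ct$ and $B = -Ct^2$, identifying the chart with $\Spec \F[C,t] \cong \AA^2$; symmetrically on $D(y)$ with $s = x/y$. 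This exhibits $\Baj_{S_j}$ as a smooth surface, hence lci of dimension two. The corresponding claim for $B(\gr)$ then follows from Definition~\ref{defn:trn-schemes}, because $B(\gr)$ is a product of the pieces $\tilBaj_{S_j}^{\mathrm{I}}$ and $\tilBaj_{S_j}^{\mathrm{II}}$ at the two endpoints (each smooth in every class by direct inspection of~(\ref{eqn:Baj-IandII})) together with, for middle indices with $T_j = 3$, the schemes $\widetilde{Ba}_j(\widetilde{z}_j)_{S_j} \cong \Baj_{S_j} \times \GL_2$ (lci by the above); a product of lci schemes over $\F$ is lci.

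For part (ii), the key input is that $\omega_{\Baj_{S_j}} \cong \cO$ in class 3. I plan to verify this directly from the two charts: the top forms $dC \wedge dt$ on $D(x)$ and $dB \wedge ds$ on $D(y)$ should glue on the overlap. Indeed, from the transitions $s = 1/t$ and $C = -Bs^2$, a short calculation gives $dC \wedge dt = dB \wedge ds$, so the two local trivializations patch into a global nowhere-vanishing section of $\omega_{\Baj_{S_j}}$. Alternatively, one can identify $\Baj_{S_j}$ with a crepant (in fact minimal) resolution of the $A_1$ singularity $\Spec \F[B,C,D]/(D^2 + BC)$, whose canonical sheaf is trivial by adjunction in $\AA^3$; crepancy then yields the desired triviality.

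With the class-3 computation in hand, the formula for $\omega_{B(\gr)}$ becomes a bookkeeping exercise via the exterior tensor product: the dualizing sheaf of a finite product of lci $\F$-schemes is the box product of the dualizing sheaves of the factors, so it suffices to identify the dualizing sheaf of each factor. Direct inspection of the four possibilities for an endpoint factor $\tilBaj_{S_{i-k}}^{\mathrm{I}}$ (classes 1 or 2 give $\PP^1 \times \GL_2$; classes 4 or 5 give $\GL_2$) yields $\omega \cong p_{i-k}^{*}\cO(-2)$ in every case: the $\PP^1 \times \GL_2$ cases follow from the K\"unneth formula, and the $\GL_2$ cases are reconciled by $\Pic(\GL_2) = 0$, which makes both $\omega_{\GL_2}$ and $p_{i-k}^{*}\cO(-2)$ trivial. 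The analogous statement holds for $\tilBaj_{S_i}^{\mathrm{II}}$ with $q_i^{*}\cO(-2)$, while the middle class-3 factors $\Baj_{S_j} \times \GL_2$ contribute trivially. Tensoring all factors gives the claimed formula.

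The main subtlety, I expect, lies in the chart-level gluing calculation for the dualizing sheaf in class 3: keeping careful track of signs and powers in the transition formulas is where a small error would most easily propagate. Everything else, including the case analysis for $\tilBaj_{S_j}^{\mathrm{I}}$ and $\tilBaj_{S_j}^{\mathrm{II}}$, is routine once the classification from Section~\ref{subsec:classfn} is in hand.
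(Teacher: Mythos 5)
Your proof is correct and follows essentially the same route as the paper; the only real variation is in the class-3 computation of $\omega_{\Baj_{S_j}}$, where you patch the top forms $dC\wedge dt$ and $dB\wedge ds$ directly (your calculation $s=1/t$, $C=-Bs^2 \Rightarrow dC\wedge dt = dB\wedge ds$ is correct), while the paper instead identifies $\bigwedge^2\iota^*\cI \cong p_j^*\cO(-2)$ from the regular sequence $(Dt-C,\;Bt+D)$ in $\PP^1\times\AA^3$ and applies adjunction --- two equivalent chart-level bookkeepings. The remaining steps you spell out (box product of dualizing sheaves, triviality of $\omega_{\GL_2}$ via $\Pic(\GL_2)=0$, and the case check for the endpoint factors $\tilBaj_{S_{i-k}}^{\mathrm{I}}$ and $\tilBaj_{S_i}^{\mathrm{II}}$) are exactly what the paper leaves to the reader as ``immediate.''
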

\begin{proof}
First, suppose $T_j = 3$. Let $\iota \colon \Baj_{S_j} \hookrightarrow \PP^1 \times \AA^3 $ be the embedding given by $([x:y], (B, C, D)) \mapsto ([x:y], (B, C, D))$. Set $t = x/y$. Below is an open cover of $\PP^1 \times \AA^3$ along with local generators of the ideal sheaf $\cI$ whose vanishing locus gives $\Baj_{S_j}$:
\begin{align*}
    &D(y) =  \Spec \F[B, C,D,t], &&\cI(D(y)) = (D t - C, B t + D);\\
    &D(x) = \Spec \F[B, C, D, t^{-1}], && \cI(D(x))=(D - C t^{-1}, B + D t^{-1}). 
\end{align*}

The given generators clearly describe a regular sequence on each chart, and so the first statement holds in this case. The sheaf $\bigwedge^{2} \iota^{*} \cI$ is an invertible sheaf freely generated on $D(y) \cap \Baj_{S_j}$ by $(D t - C) \wedge (B t + D)$, and on $D(x) \cap \Baj_{S_j}$ by $(D - C t^{-1})\wedge (B + D t^{-1})$. With these generators, the transition map from $D(y) \cap \Baj_{S_j}$ to $D(x) \cap \Baj_{S_j}$ is given by multiplication by $t^2$. Hence, $\wedge^{2} (\iota^{*} \cI)^{\vee} \cong p^*_j \O(2)$, implying that the dualizing sheaf of $\Baj_{S_j}$ is $p^*_j \O(2)  \otimes p^*_j \O(-2)  \cong \cO_{\Baj_{S_j}}$.


The rest of the first and second statements is now immediate.
\end{proof}

\begin{lemma}\label{lem:Rpr-dualizing}(Version of \cite[Lem.~4.2.5]{lhmm})
Let $\epsilon_j, \delta_j \in \Z$ and $$\cF :=  p_j^{*} \O_{\PP^1}(-1)^{\delta_j} \otimes_{\O_{\Baj_{S_j}}} q_j^{*} \O_{\PP^1}(-1)^{\epsilon_j} $$ be a sheaf on $\Baj_{S_j}$.

Suppose $T_j \neq 3$ and $\epsilon_j, \delta_j \in \{0, 1\}$. Then the cohomology groups of $\cF$ admit the following descriptions as $\Gamma(\Baj_{S_j})$--modules:
\begin{enumerate}
    \item If $T_j = 1$, then 
    \begin{align*}
        R^{i} \Gamma \cF \cong \begin{cases}
            \Gamma(\Baj_{S_j}) &\text{ if } i=0, \epsilon_j=\delta_j=0; \\
            0 &\text{ otherwise}.
        \end{cases}
    \end{align*}
    \item If $T_j = 2$, then
    \begin{align*}
        R^{i} \Gamma \cF \cong \begin{cases}
            \Gamma(\Baj_{S_j}) &\text{ if } i=0, \epsilon_j \in \{0, 1\}, \delta_j=0; \\
            0 &\text{ otherwise}.
        \end{cases}
    \end{align*}
    \item If $T_j = 4$, then
    \begin{align*}
        R^{i} \Gamma \cF \cong \begin{cases}
            \Gamma(\Baj_{S_j}) &\text{ if } i=0, \epsilon_j, \delta_j \in \{0, 1\}; \\
            0 &\text{ otherwise}.
        \end{cases}
    \end{align*}
        \item If $T_j = 5$, then
    \begin{align*}
        R^{i} \Gamma \cF \cong \begin{cases}
            \Gamma(\Baj_{S_j}) &\text{ if } i=0, \epsilon_j=0, \delta_j \in \{0, 1\}; \\
            0 &\text{ otherwise}.
        \end{cases}
    \end{align*}
\end{enumerate}
Suppose $T_j = 3$ and $\epsilon_j, \delta_j \in \{-1, 0, 1\}$. The following are true:
\begin{enumerate}
    \item If $\epsilon_j + \delta_j = 0$, then $R^{i}\Gamma\cF = 0$ for $i \neq 0$, and
        $$R^{0}\Gamma \cF \cong
            \Gamma(\Baj_{S_j}). $$
    \item  If $\epsilon_j + \delta_j \in \{-1, 1\}$, then $R^{i}\Gamma \cF = 0$ for  $i \neq 0$, and
 $$R^{0}\Gamma \cF \cong
            \Gamma(\Baj_{S_j})[e_1, e_2]/(De_1 - Ce_2, Be_1 + De_2).$$
    \item If $\epsilon_j + \delta_j =-2$, then $R^{i}\Gamma \cF = 0$ for  $i \neq 0$, and $R^{0}\Gamma \cF \cong$
    $$\Gamma(\Baj_{S_j})[e_1, e_2, e_3]/(De_1 - Ce_2, Be_1 + De_2, Be_1 + Ce_3).$$
    \item Finally, if $\epsilon_j + \delta_j =2$, then 
    \begin{align*}
        R^{i}\Gamma \cF \cong \begin{cases}
            \Gamma(\Baj_{S_j}) &\text{ if } i =0, \\
            \Gamma(\Baj_{S_j})/N_j &\text{ if } i =1, \\
            0 &\text{ if } i \not\in \{0,1\}.
        \end{cases}
    \end{align*}
\end{enumerate}

\end{lemma}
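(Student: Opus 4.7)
The plan splits into the cases $T_j \neq 3$ and $T_j = 3$, which have quite different geometric character.

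\textbf{The cases $T_j \in \{1, 2, 4, 5\}$.} By the classification in Section \ref{subsec:classfn}, each such $\Baj_{S_j}$ is a product of an affine space and at most one $\PP^1$-factor, and each of the maps $p_j, q_j$ is either a projection to that $\PP^1$-factor, a constant map, or a projection to an affine factor followed by the standard inclusion $\AA^1 \hookrightarrow \PP^1$. In each situation, the pullback $\cF = p_j^*\cO(-\delta_j) \otimes q_j^*\cO(-\epsilon_j)$ is either trivial or the pullback of $\cO_{\PP^1}(-1)$ from the $\PP^1$-factor. The verification of each row of the claim reduces via Künneth to the elementary facts that $H^i(\PP^1, \cO(-1)) = 0$ for all $i$, that $H^0(\PP^1, \cO) = \F$ with $H^1 = 0$, and that cohomology of any quasicoherent sheaf on affine space vanishes in positive degree; one then checks each of the four classes and each choice of $(\delta_j, \epsilon_j) \in \{0, 1\}^2$ directly.

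\textbf{The case $T_j = 3$.} Here $\iota \colon \Baj_{S_j} \hookrightarrow V := \AA^3 \times \PP^1 = \Spec \F[B, C, D] \times \PP^1$ is cut out by the regular sequence $(f_1, f_2) := (Dx - Cy, Bx + Dy)$ of sections of $\cO_V(1)$, so the Koszul complex gives a quasi-isomorphism $\iota_* \cO_{\Baj_{S_j}} \simeq K^\bullet := [\cO_V(-2) \to \cO_V(-1)^2 \to \cO_V]$ in degrees $-2, -1, 0$. Since $p_j = q_j$ agrees with the restriction of the projection $V \to \PP^1$, twisting by $\cO_V(-k)$ for $k := \delta_j + \epsilon_j$ resolves $\iota_*\cF$. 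Writing $\pi \colon V \to \AA^3$ and using that $\AA^3$ is affine, I then have $R^i\Gamma(\cF) = \Gamma(\AA^3, R^i\pi_*\iota_*\cF)$, which I compute via the spectral sequence $E_1^{p, q} = R^q\pi_*(K^p(-k)) \Rightarrow R^{p+q}\pi_*\iota_*\cF$. The key input is $R^q\pi_*\cO_V(m) = H^q(\PP^1, \cO(m)) \otimes_\F \F[B, C, D]$, which vanishes except when $q = 0$ with $m \geq 0$, or $q = 1$ with $m \leq -2$; so at most one row contributes for $k \in \{-1, 0, 1\}$, and for $k = \pm 2$ the two nonzero rows occur in disjoint columns, forcing $E_\infty = E_2$ with no hidden differentials.

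For each $k$, I would then identify the nonzero terms of $R^q\pi_*(K^\bullet(-k))$ in the monomial bases coming from Cech cohomology on $\PP^1$, and express the differentials as explicit $R$-linear maps given by multiplication by $f_1, f_2$. For $k \in \{-1, 1\}$ the complex has length two and the cokernel matches the claimed $R[e_1, e_2]/(De_1 - Ce_2, Be_1 + De_2)$ up to sign rescaling of one basis vector. For $k = 0$ the result is $R/(D^2+BC) = \Gamma(\Baj_{S_j})$ by combining the Koszul relations. For $k = 2$, the sole nonzero $R^1\Gamma\cF$ arises as the cokernel of the row $(D, -C, B, D)\colon R^4 \to R$, which equals $R/(B, C, D) = \F = \Gamma(\Baj_{S_j})/N_j$, matching part (iv).

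The main obstacle is the case $k = -2$, where $R^0\Gamma\cF$ appears as the cokernel of a $4 \times 3$ matrix of Koszul relations $De_1 - Ce_2,\; De_2 - Ce_3,\; Be_1 + De_2,\; Be_2 + De_3$, and must be matched against the three-relation presentation $\Gamma(\Baj_{S_j})[e_1, e_2, e_3]/(De_1 - Ce_2, Be_1 + De_2, Be_1 + Ce_3)$ in the statement. The strategy is: (i) show directly from the four Koszul relations that $(D^2 + BC)$ annihilates each $e_i$, so the cokernel is naturally a $\Gamma(\Baj_{S_j})$-module; (ii) check that $Be_1 + Ce_3$ equals an explicit $R$-linear combination of the Koszul relations, so its vanishing is a consequence; and (iii) use that $\pr_j$ is a birational resolution of the normal surface $\Spec \Gamma(\Baj_{S_j})$ to deduce $R^0\pr_{j*}\cF$ is torsion-free over $\Gamma(\Baj_{S_j})$, so the fact that both $C$ and $D$ annihilate the image of the remaining Koszul relation $Be_2 + De_3$ in the proposed quotient forces it to vanish, identifying the two presentations.
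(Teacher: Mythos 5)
Your overall strategy is sound, and for $T_j \neq 3$ it coincides with the paper's (K\"unneth plus elementary $\PP^1$/$\AA^1$ cohomology). For $T_j = 3$ your route is genuinely different: you resolve $\iota_*\cO_{\Baj_{S_j}}$ by the Koszul complex on $\AA^3 \times \PP^1$ and push forward along the projection to $\AA^3$, whereas the paper works directly with the two-chart \v{C}ech complex on $\Baj_{S_j}$ and reads off kernel and cokernel by hand. Either approach is viable, and your spectral-sequence argument for $\epsilon_j + \delta_j \in \{-1,0,1,2\}$ correctly identifies which rows survive.

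However, step (iii) of your analysis of the case $\epsilon_j + \delta_j = -2$ contains a genuine gap, and the conclusion you are trying to reach there is in fact false as stated. You correctly compute that the Koszul pushforward presents $R^0\Gamma\cF$ as the cokernel of the four relations $De_1 - Ce_2,\ De_2 - Ce_3,\ Be_1 + De_2,\ Be_2 + De_3$ over $\Gamma(\Baj_{S_j})$, and your steps (i)--(ii) are fine. But the claim that the leftover relation $Be_2 + De_3$ already vanishes in the three-relation module $\Gamma(\Baj_{S_j})[e_1,e_2,e_3]/(De_1-Ce_2,\ Be_1+De_2,\ Be_1+Ce_3)$ does not follow from torsion-freeness of $R^0\pr_{j*}\cF$: torsion-freeness of the target of a surjection says nothing about whether a torsion element of the source is zero, and in fact $Be_2 + De_3$ is a nonzero torsion element of that abstract module. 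One can see this by a graded Hilbert-function count in internal degree $1$ (degrees of $B,C,D$ equal to $1$, degrees of $e_i$ equal to $0$): the three-relation module has dimension $9 - 3 = 6$ in degree $1$, while the actual $R^0\Gamma\cF$, computed either by your Koszul argument or by the paper's \v{C}ech complex (whose degree-$1$ piece is spanned by $Ct^{n}$ with $-4 \le n \le 0$), has dimension $5$. Concretely, no $c \in \Gamma(\Baj_{S_j})$ satisfies $cC = D$, so $(0, B, D)$ is not in the $\Gamma(\Baj_{S_j})$-span of the three stated relations.

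What this reveals is that the statement of part (iii) is slightly incorrect: the presentation is missing the relation $Be_2 + De_3$ (equivalently $De_2 - Ce_3$). The paper's own proof has the same lacuna --- it only verifies that the generators $1, t^{-1}, t^{-2}$ satisfy the three listed relations, not that those three generate all relations. (This appears to be harmless downstream: everywhere the $\epsilon_j + \delta_j = -2$ case is used --- in Lemma \ref{lem:coh-L1} and Proposition \ref{prop:non-gor} --- the module is either only needed through its generators, or is specialized to points of $V(N_j)$ where $B = C = D = 0$ and the two presentations agree.) If you keep your Koszul approach, you should simply state the four-relation presentation; trying to reduce to three relations over $\Gamma(\Baj_{S_j})$ cannot work.
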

\begin{proof}
The proof for classes \ref{Baj-P1XP1}, \ref{Baj-P1XA1}, \ref{Baj-A2} and \ref{Baj-P1XA1-constant-pj} follows from the following observations:
\begin{itemize}
    \item $R\Gamma (\AA^{n}, \cO)$ is concentrated in degree $0$.
    \item $R\Gamma (\PP^{1}, \cO(-n))$ is zero if $n=1$ and free of rank $1$ over $\Gamma(\PP^{1}, \cO) = \F$ concentrated in degree $0$ if $n=0$.
\end{itemize}
In particular, by K\"unneth formula, we have: 
\begin{itemize}
    \item When $T_j = 1$, $
        R^{i} \Gamma \cF = \oplus_{m+n=i} H^{m}(\PP^{1}, \cO(-\delta_j)) \otimes H^{n}(\PP^{1}, \cO(-\epsilon_j))$.
    \item When $T_j = 2$, $
         R^{i} \Gamma \cF = \oplus_{m+n=i} H^{m}(\PP^{1}, \cO(-\delta_j)) \otimes H^{n}(\AA^{1}, \cO).$
    \item When $T_j = 4$, $
         R^{i} \Gamma \cF = \oplus_{m+n=i} H^{m}(\AA^{1}, \cO) \otimes H^{n}(\AA^{1}, \cO).$
    \item When $T_j = 5$,
         $R^{i} \Gamma \cF = \oplus_{m+n=i} H^{m}(\AA^{1}, \cO) \otimes H^{n}(\PP^{1}, \cO(-\epsilon_j)).$
\end{itemize}

Finally, suppose $T_j = 3$. Letting $t$ denote $x/y$, $\Baj_{S_j}$ admits an open cover by schemes $\Spec \F[t^{-1}, C]$ and $\Spec \F[t, B]$. 
Let $\iota_1: \F[t^{-1}, C] \hookrightarrow \F[t^{\pm}, C]$ be the obvious inclusion, and let $\iota_2: t^{\epsilon + \delta} \F[t, B] \hookrightarrow \F[t^{\pm}, C]$ be given by $t^{n} \mapsto t^{n}, B \mapsto -Ct^{-2}$.
Therefore, $R \pr_{j*}\cF \cong R \pr_{j*} (p_j^{*} \O_{\PP^1}(-1)^{\delta_j + \epsilon_j})$ is computed by the \v{C}ech complex
    $$\F[t^{-1}, C] \oplus t^{\delta_j + \epsilon_j}\F[t, B] \to \F[t^{\pm}, C]$$ where the differential maps $(f, g) \mapsto \iota_1(f) - \iota_2(g)$.  This complex is quasi-isomorphic to the following complex of $\F[B, C, D]/(D^{2} + BC)$ modules:
    \begin{align}\label{eqn:T3-coh-complex}
        t^{\delta_j + \epsilon_j} \F[t, Ct^{-2}] \to \F[t^{\pm}, C]/\F[t^{-1}, C],
    \end{align} where $B$ acts via multiplication by $-Ct^{-2}$, $C$ acts via multiplication by $C$, and $D$ acts via multiplication by $Ct^{-1}$. Note that, as $\F$--vector spaces, 
    \begin{align*}
        t^{\epsilon_j + \delta_j}\F[t, Ct^{-2}] &\cong \bigoplus_{\substack{m \geq 0, \\ n \geq -2m + \delta_j + \epsilon_j}} \F C^{m}t^{n}, \text{ and} \\
         \F[t^{\pm}, C]/\F[t^{-1}, C] &\cong \qquad \bigoplus_{\substack{m \geq 0, \\ n > 0}} \F C^{m}t^{n}.
    \end{align*}
We make the following observations about the kernel and cokernel of the map in (\ref{eqn:T3-coh-complex}).
\begin{itemize}
    \item  When $\delta_j + \epsilon_j = 0$, the kernel is free of rank $1$ over $\F[B, C, D]/(D^{2} + BC)$, which thus gives the global functions, while the cokernel is $0$. 
    \item  When $\delta_j + \epsilon_j = -1$, the kernel is generated over $\F[B, C, D]/(D^{2} + BC)$ by $\{1, t^{-1}\}$ which satisfy the relations $D \cdot 1 - C \cdot t^{-1} = 0$ and $B \cdot 1 + D \cdot t^{-1} = 0$. The cokernel is clearly $0$.
    \item  When $\delta_j + \epsilon_j = 1$, the kernel is generated over $\F[B, C, D]/(D^{2} + BC)$ by $\{C, Ct^{-1}\}$ which satisfy the relations $D \cdot C - C \cdot Ct^{-1} = 0$ and $B \cdot C + D \cdot Ct^{-1}$. The cokernel is clearly $0$. 
    \item  When $\delta_j + \epsilon_j = -2$,  the kernel is generated over $\F[B, C, D]/(D^{2} + BC)$ by $\{1, t^{-1}, t^{-2}\}$ which satisfy the relations $D \cdot 1 - C \cdot t^{-1} = 0$, $B \cdot 1 + D \cdot t^{-1} = 0$ and $B \cdot 1 + C \cdot t^{-2} = 0$. The cokernel is clearly $0$.
    \item When $\delta_j + \epsilon_j = 2$, the kernel is free of rank $1$ over $\F[B, C, D]/(D^{2} + BC)$ with generator $C$. A generator of the cokernel is given by the class of $t$ while $B \cdot t, C \cdot t, D \cdot t$ give the zero class in the cokernel. In particular, the cokernel is isomorphic to $\F$ and supported at the vanishing locus of $N_j$.
\end{itemize}
The assertions in the Lemma follow immediately.
\end{proof}

\begin{remark}\label{rem:identify-sections}
    In the proof of Lemma \ref{lem:Rpr-dualizing} above when $T_j=3$, one verifies immediately that when $\delta_j + \epsilon_j = -1$, the pullbacks of the sections $x$ and $y$ of $\cO_{\PP^{1}}(1)$ along $p_j$ ($= q_j$) are $1$ and $t^{-1}$ respectively. When $\delta_j + \epsilon_j = -2$, the pullbacks of the sections $x^2$, $xy$ and $y^{2}$ of $\cO_{\PP^1}(2)$ along $p_j$ are $1$, $t^{-1}$ and $t^{-2}$ respectively.
\end{remark}

\begin{lemma}\label{lem:sch-th-image}
    The scheme--theoretic image of $\tilBa_S$ is a closed subscheme of $\widetilde{U}(\tilz)$ that identifies naturally with $$\relSpec \pr_{\tilB *}(\cO_{\tilBa_S}) \cong \Spec \Gamma(\tilBa_S).$$ Similarly, for $\gr \in \trn$, $Z_{\tilB}(\gr)$ identifies naturally with $$\relSpec \pr(\gr)_{*} \cO_{B(\gr)} \cong \Spec \Gamma(B(\gr)).$$ 
\end{lemma}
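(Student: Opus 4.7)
Since $\pr_{\tilB}$ is the product of the projective morphisms $\widetilde{\pr}_j$ of Lemma \ref{lem:projective-pr}, it is itself projective; in particular $\pr_{\tilB *}\cO_{\tilBa_S}$ is coherent, and the scheme-theoretic image $\mathrm{Im}(\pr_{\tilB})$ is the closed subscheme of $\widetilde{U}(\tilz)$ cut out by the kernel of the canonical map $\cO_{\widetilde{U}(\tilz)} \to \pr_{\tilB *}\cO_{\tilBa_S}$. The plan is to identify this image with $\relSpec \pr_{\tilB *}\cO_{\tilBa_S}$ and then observe that the resulting affine scheme has global sections $\Gamma(\tilBa_S)$.

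I would first apply the product version of Lemma \ref{lem:im-is-product} to see that $\mathrm{Im}(\pr_{\tilB})$ coincides with the closed subscheme $\prod_j (\GL_2 \times Z_j) \subset \widetilde{U}(\tilz)$, which is affine since each $Z_j$ is affine by the descriptions in Tables \ref{table:shape-L} and \ref{table:shape-R}. Letting $\pr_{\tilB}': \tilBa_S \twoheadrightarrow \prod_j (\GL_2 \times Z_j)$ denote the induced surjection, the upgrade to the relative spec amounts to the assertion that the natural map $\cO_{\prod_j (\GL_2 \times Z_j)} \to \pr_{\tilB}'_* \cO_{\tilBa_S}$ is an isomorphism. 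Using the product decomposition $\tilBaj_{S_j} \cong \Baj_{S_j} \times \GL_2$ of (\ref{eqn:tilBaj-decom}), the map $\pr_{\tilB}'$ is a product of maps of the form $\pr_j \times \mathrm{id}_{\GL_2}$, so by flat base change and K\"unneth over $\F$ the assertion reduces to the factor-wise statement $\pr_{j*}\cO_{\Baj_{S_j}} = \cO_{Z_j}$.

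This factor-wise statement is verified by inspection using the classification of Section \ref{subsec:classfn}. In classes \ref{Baj-P1XP1}, \ref{Baj-P1XA1}, \ref{Baj-A2}, and \ref{Baj-P1XA1-constant-pj} the map $\pr_j$ is a combination of isomorphisms, trivial projections onto affine spaces, and constant maps from products of $\PP^{1}$'s onto a point, and the statement follows from $H^{0}(\PP^{1},\cO) = \F$ together with flat base change. In class \ref{Baj-blowup} the map $\pr_j$ is the natural projection from the incidence scheme $\Spec \F[B,C,D] \times \Proj \F[x,y]/(Dx - Cy, Bx + Dy)$ onto its image $Z_j = \Spec \F[B,C,D]/(D^{2}+BC)$, and the equality $\pr_{j*}\cO_{\Baj_{S_j}} = \cO_{Z_j}$ is precisely the $\epsilon_j + \delta_j = 0$ subcase already settled in the proof of Lemma \ref{lem:Rpr-dualizing}(i), where a direct \v{C}ech computation exhibits the kernel as a free rank-$1$ module over $\F[B,C,D]/(D^{2}+BC)$ and the cokernel as zero.

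Once the above holds, $\mathrm{Im}(\pr_{\tilB}) = \relSpec \pr_{\tilB *}\cO_{\tilBa_S}$ is affine, so taking global sections yields $\mathrm{Im}(\pr_{\tilB}) \cong \Spec \Gamma(\widetilde{U}(\tilz), \pr_{\tilB *}\cO_{\tilBa_S}) \cong \Spec \Gamma(\tilBa_S)$. The statement for $Z_{\tilB}(\gr)$ and $B(\gr)$ follows by the identical argument applied to $\pr(\gr)$ via the product decomposition in Definition \ref{defn:trn-schemes}(iii), since the additional factors $\widetilde{Ba}_{i-k}(\widetilde{z}_{i-k})_{S_{i-k}}^{\mathrm{I}}$ and $\widetilde{Ba}_i(\widetilde{z}_i)_{S_i}^{\mathrm{II}}$ contribute only extra $\GL_2$-type pieces on top of the $\Baj_{S_j}$'s, to which the same case analysis applies. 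The only genuinely geometric input is class \ref{Baj-blowup}, handling the small resolution of the $A_1$-singularity $D^{2}+BC=0$; everything else is formal consequence of flat base change and projectivity.
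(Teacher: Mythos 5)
Your proof is correct and follows essentially the same path as the paper: reduce via the product decomposition of Lemma \ref{lem:im-is-product} (and the tensor-factor structure) to the factor-wise identity $\pr_{j*}\cO_{\Baj_{S_j}} = \cO_{Z_j}$, and then verify this by comparing the explicit global-sections computations of Tables \ref{table:shape-L} and \ref{table:shape-R} with Lemma \ref{lem:Rpr-dualizing}. Yours is just written out more explicitly (in particular making the flat-base-change/K\"unneth reduction and the class-\ref{Baj-blowup} Čech computation fully visible) where the paper's proof is terse.
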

\begin{proof}
The only thing to check is that the global functions on the scheme--theoretic images of $\tilBa_S$ and $B(\gr)$ are the same as the global functions on $\tilBa_S$ and $B(\gr)$ respectively. Tables \ref{table:shape-L} and \ref{table:shape-R} give descriptions of global functions on the scheme--theoretic image of $\Baj_{S_j}$ under $\pr_j$ and Lemma \ref{lem:Rpr-dualizing} gives a description of $\Gamma({\Baj_{S_j}})$. The descriptions match and so, an application of Lemma \ref{lem:im-is-product} finishes the proof. 
\end{proof}

\begin{lemma}\label{lem:structure-sh-coh-vs-I}(Version of \cite[Cor.~4.2.10]{lhmm})
    Denoting the restriction of $\pr_{\tilB}$ to $\tilBa_S$ also by $\pr_{\tilB}$, consider the commutative diagram
\begin{center}
\begin{tikzcd}
    \tilYz_{S} \arrow[rd, "\pr"] \arrow[r, hook, "\Delta_S"] & \tilBa_S \arrow[d, "\pr_{\tilB}"] \\
    & \Uz
\end{tikzcd}
\end{center}
The following are true:
\begin{enumerate}
    \item Whenever $i>0$, $R^{i} \pr_{\tilB *} \O_{\tilBa_S} = 0$. 
    \item The map $\O_{\Uz} \to \pr_{\tilB*} \O_{\tilBa_S}$ is a surjection.
    \item If $\cI(\tilz)$ is the ideal sheaf defining the closed immersion $\Delta_S$, then
    \begin{itemize}
    \item $\coker(\O_{\Uz} \to \pr_{*}\O_{\tilYz_S}) = R^{1}\pr_{\tilB *} \cI(\tilz)$, and
    \item $R^{i}\pr_{*} \O_{\tilYz_S} = R^{i+1} \pr_{\tilB*} \cI(\tilz)$ for $i>0$.
\end{itemize}
\end{enumerate}


\end{lemma}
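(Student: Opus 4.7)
The plan is to establish the three parts in order, leveraging the product decomposition $\tilBa_S = \prod_{j \in \Z/f\Z} \tilBaj_{S_j}$ together with the factorwise computations in Lemma \ref{lem:Rpr-dualizing}. The target scheme $\widetilde{U}(\tilz)$ is affine and decomposes compatibly as $\prod_j \widetilde{U}(\tilz_j)$, so higher pushforwards can be read off from $R^{i}\Gamma$-type calculations.

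For (i), I would observe that $\pr_{\tilB}$ is the product of the maps $\widetilde{\pr}_j$, each of which factors through the isomorphism (\ref{eqn:tilBaj-decom}) as $\pr_j \times \id_{\GL_2}$. By K\"unneth, it suffices to show $R^{i}\pr_{j*}\cO_{\Baj_{S_j}} = 0$ for $i > 0$ for every $j$. This is precisely the specialization of Lemma \ref{lem:Rpr-dualizing} to $\epsilon_j = \delta_j = 0$: in each of the five classes $T_j$, the cohomology of the structure sheaf is concentrated in degree zero and equals $\Gamma(\Baj_{S_j})$. Since the targets $\widetilde{U}(\tilz_j)$ are affine, these $R\Gamma$ computations translate directly into the vanishing of $R^{i}\pr_{j*}\cO_{\Baj_{S_j}}$ for $i > 0$.

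For (ii), I would invoke Lemma \ref{lem:sch-th-image}, which identifies the scheme-theoretic image of $\pr_{\tilB}$ with $\Spec\Gamma(\tilBa_S)$ and matches its structure sheaf with $\pr_{\tilB*}\cO_{\tilBa_S}$ (using that $\tilBa_S$ is proper over the affine scheme $\widetilde{U}(\tilz)$). The natural map $\cO_{\widetilde{U}(\tilz)} \to \pr_{\tilB*}\cO_{\tilBa_S}$ then factors through and surjects onto the structure sheaf of the image by the defining universal property of scheme-theoretic image, giving the desired surjection.

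For (iii), I would apply $R\pr_{\tilB*}$ to the short exact sequence
\[
0 \to \cI(\tilz) \to \cO_{\tilBa_S} \to \Delta_{S*}\cO_{\tilYz_S} \to 0
\]
on $\tilBa_S$. Combining the resulting long exact sequence with the vanishing from (i) yields immediately
\[
R^{i}\pr_{*}\cO_{\tilYz_S} \cong R^{i+1}\pr_{\tilB*}\cI(\tilz) \quad \text{for } i \geq 1,
\]
together with
\[
\coker\bigl(\pr_{\tilB*}\cO_{\tilBa_S} \to \pr_{*}\cO_{\tilYz_S}\bigr) \cong R^{1}\pr_{\tilB*}\cI(\tilz).
\]
Finally, (ii) lets me replace $\pr_{\tilB*}\cO_{\tilBa_S}$ by the quotient sheaf $\cO_{\widetilde{U}(\tilz)}$ in the last identification. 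The main subtlety is the careful use of K\"unneth for relative higher direct images in (i); once that vanishing is in hand, parts (ii) and (iii) are essentially formal consequences of Lemma \ref{lem:sch-th-image} and the long exact sequence of a short exact sequence of sheaves.
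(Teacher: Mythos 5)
Your proposal is correct and follows essentially the same route as the paper: (i) is Lemma \ref{lem:Rpr-dualizing} applied factorwise (with K\"unneth to assemble the product), (ii) is Lemma \ref{lem:sch-th-image}, and (iii) is the long exact sequence of the ideal sheaf short exact sequence combined with (i) and (ii). Your unpacking of the K\"unneth step and of how the surjection in (ii) feeds into the cokernel identification in (iii) is just a more explicit rendering of what the paper's terse proof leaves implicit.
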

\begin{proof}
    Vanishing of $R^{i} \pr_{\tilB *} \O_{\tilBa_S} = 0$ if $i > 0$ follows from Lemma \ref{lem:Rpr-dualizing}. 
    The map $\O_{\Uz} \to \pr_{\tilB*} \O_{\tilBa_S}$ is a surjection by Lemma \ref{lem:sch-th-image}. Finally, the long exact sequence in cohomology obtained from the short exact sequence
    $$0 \to \cI(\tilz) \to \cO_{\tilBa_S} \to (\Delta_S)_{*}\cO_{\tilYz_S} \to 0$$ of sheaves on $\tilBa_S$ implies the remaining statements.
\end{proof}

\begin{lemma}\label{lem:koszul}(Version of \cite[Lem.~4.2.11]{lhmm})
 For each $j \in \Z/f\Z$, there exists a map $$\mathfrak{s}_j: q_j^{*} \O_{\PP^1}(-1) \otimes_{\O_{\tilBa_S}} p_{j-1}^{*} \O_{\PP^1}(-1) \longrightarrow \O_{\tilBa_S},$$
so that $\tilYz_S$ is a complete intersection defined by the zero locus of $\{\mathfrak{s}_j\}_{j \in \Z/f\Z}$.
If $\gr = (i-k, \dots, i) \in \trn$, $\mathfrak{s}_{j}$ for $j \in \{i-k+1, \dots, i\}$ can be viewed as a map $$q_j^{*} \O_{\PP^1}(-1) \otimes_{\O_{B(\gr)}} p_{j-1}^{*} \O_{\PP^1}(-1) \longrightarrow \O_{B(\gr)}$$ and $Y(\gr)$ is a complete intersection defined by the zero locus of $\mathfrak{s}_{i-k+1}, \dots, \mathfrak{s}_{i}$.
\end{lemma}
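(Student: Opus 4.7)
The plan is to construct each $\mathfrak{s}_j$ as the pullback of the canonical map cutting out the diagonal in $\PP^1 \times \PP^1$, and then to deduce the complete intersection property via a Cohen--Macaulay-plus-codimension argument.

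On $\PP^1 \times \PP^1$ with bihomogeneous coordinates $([x:y], [x':y'])$, the diagonal $\Delta$ is the zero locus of the canonical global section $xy' - yx'$ of $\cO_{\PP^1}(1) \boxtimes \cO_{\PP^1}(1)$, equivalently of a canonical morphism $\cO_{\PP^1}(-1) \boxtimes \cO_{\PP^1}(-1) \to \cO_{\PP^1 \times \PP^1}$ whose image is the ideal sheaf of $\Delta$. I would take $\mathfrak{s}_j$ to be the pullback of this morphism along $(p_{j-1}, q_j) : \tilBa_S \to \PP^1 \times \PP^1$. Its zero locus is then tautologically the closed subscheme of $\tilBa_S$ on which $l_{j-1} = r_j$, so $V(\mathfrak{s}_0, \dots, \mathfrak{s}_{f-1}) = \tilYz_S$ by the very definition of $\tilYz_S$.

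For the complete intersection claim, Lemma \ref{lem:Baj} gives that each $\Baj_{S_j}$ is lci of dimension $2$; via the splitting $\tilBaj_{S_j} \cong \Baj_{S_j} \times \GL_2$, the scheme $\tilBa_S$ is therefore lci (hence Cohen--Macaulay) of dimension $6f$. On a Cohen--Macaulay scheme, an $f$-tuple of sections of line bundles is a regular sequence precisely when its simultaneous vanishing locus has codimension exactly $f$, so it suffices to show $\dim \tilYz_S = 5f$. I would verify this using the classification of Section \ref{subsec:classfn} by checking that in each of the five local models $q_j$ is never constant and at least one of $p_{j-1}, q_j$ is surjective onto $\PP^1$; together these ensure that the equation $l_{j-1} = r_j$ is never automatically satisfied and cuts dimension down by exactly one in local rings. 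Summing over $j$ gives total codimension $f$. The same argument applies verbatim to $B(\gr) \supset Y(\gr)$ using the $l(\gr) - 1$ sections $\mathfrak{s}_{i-k+1}, \dots, \mathfrak{s}_i$, after noting that $B(\gr)$ is itself lci as a product of lci factors.

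The main obstacle will be the codimension bookkeeping in runs of consecutive $j$ with $T_j = 3$, where $p_j = q_j$ factors through the blowup projection $\mathrm{Bl}_0 \AA^3 \to \PP^1$ and several consecutive sections involve $\PP^1$-coordinates coming from adjacent blowup factors. One must unpack the blowup charts (in the spirit of the proof of Lemma \ref{lem:bad-sequence}) to confirm that these sections remain independent in local rings rather than collapsing to a vanishing locus of codimension less than $f$. This step is the mod~$p$ counterpart of \cite[Lem.~4.2.11]{lhmm}, whose argument can be followed essentially verbatim.
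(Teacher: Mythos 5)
Your construction of $\mathfrak{s}_j$ (pullback of the canonical section of $\cO_{\PP^1}(1) \boxtimes \cO_{\PP^1}(1)$ cutting out the diagonal) and the strategy of deducing the complete--intersection property from CM--ness of $\tilBa_S$ plus a codimension count is exactly the content of the cited \cite[Lem.~4.2.11]{lhmm}, which the paper also defers to for the first assertion. Where you diverge is the second assertion about $Y(\gr) \subset B(\gr)$: you propose to re-run the CM $+$ codimension argument on each factor $B(\gr)$ from scratch. The paper instead gets this essentially for free from the first assertion via dimension additivity: since $\tilYz_S \cong \prod_\gr Y(\gr)$ and $\tilBa_S \cong \prod_\gr B(\gr)$ with the $f$ sections partitioning into the $l(\gr)-1$ sections per factor, the codimension-$f$ statement for $\tilYz_S$ forces $\mathrm{codim}(Y(\gr) \subset B(\gr)) = l(\gr)-1$ for every $\gr$, and CM--ness of $B(\gr)$ (Lemma \ref{lem:Baj}(i)) then does the rest. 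Your route works too, but the paper's avoids repeating any case analysis and also records smoothness of each $Y(\gr)$ as a factor of the smooth $\tilYz_S$.

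A small but genuine inaccuracy in your sketch: the asserted sufficient condition ``at least one of $p_{j-1}, q_j$ is surjective onto $\PP^1$'' does \emph{not} hold for all class pairs. When $T_{j-1} \in \{4,5\}$ (so $p_{j-1}$ is the constant $[0:1]$) and $T_j \in \{2,4\}$ (so $q_j$ has image $\AA^1 = \{[C:1]\}$), neither map is surjective, yet the equation $l_{j-1} = r_j$ still imposes exactly one condition ($C=0$). So the conclusion of your per-$j$ codimension check is correct, but the stated criterion is not the right one; what actually matters is that $q_j$ is never constant and that $p_{j-1}^{-1}([0:1])$ is always a divisor (never all of $\Baj_{S_{j-1}}$) when $p_{j-1}$ is constant. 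And as you anticipate, in runs of consecutive $T_j = 3$ the equations are coupled through the shared coordinates $[x_j:y_j]$, so per-equation codimension bookkeeping cannot be summed naively; the codimension-$f$ claim for the simultaneous vanishing locus has to be argued (as in the lhmm reference) rather than inferred one equation at a time.
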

\begin{proof}
    Proof for $\tilYz_S$ is identical to that of \cite[Lem.~4.2.11]{lhmm}. Since $\dim \tilYz_S = \sum_{\gr \in \trn} \dim Y(\gr)$ whenever $\trn$ is non--empty, $Y(\gr)$ is forced to be a complete intersection (in fact, smooth by Lemma \ref{lem:smooth-irred}) as well.
\end{proof}

\begin{corollary}\label{cor:ss}
Let $$\cE \stackrel{\text{def}}{=}\bigoplus_{j \in \Z/f\Z} q_j^{*} \O_{\PP^1}(-1) \otimes_{\O_{\tilBa_S}} p_{j-1}^{*} \O_{\PP^1}(-1)$$ and for $\gr = (i-k, \dots, i) \in \trn$, let $$\cE(\gr) \stackrel{\text{def}}{=}\bigoplus_{j = i-k+1}^{i} q_j^{*} \O_{\PP^1}(-1) \otimes_{\O_{B(\gr)}} p_{j-1}^{*} \O_{\PP^1}(-1).$$
The Koszul resolutions $\mathrm{Kos}_{\bullet}\bigl(\cE, (\mathfrak{s}_j)_{j \in \Z/f\Z}\bigr)$ and $\mathrm{Kos}_{\bullet}\bigl(\cE(\gr), (\mathfrak{s}_j)_{j = i-k+1}^{i}\bigr)$ yield exact sequences 
\begin{align*}
&0 \to \bigwedge^{f} \cE \to \bigwedge^{f-1} \cE \dots \to \bigwedge^{1} \cE \to \cI(\tilz) \to 0,\\
&0 \to \omega_{\tilBa_S} \to \omega_{\tilBa_{S}} \otimes \bigwedge^{1} \cE^{\vee} \to \dots \to \omega_{\tilBa_S} \otimes \bigwedge^{f} \cE^{\vee} \to (\Delta_S)_{*}\omega_{\tilYz_S} \to 0,\\
&0 \to \omega_{B(\gr)} \to \omega_{B(\gr)} \otimes \bigwedge^{1} \cE(\gr)^{\vee} \to \dots \to \omega_{B(\gr)} \otimes \bigwedge^{l(\gr)-1} \cE(\gr)^{\vee} \to \Delta(\gr)_{*}\omega_{Y(\gr)} \to 0.\end{align*}
In particular, there exist three cohomological spectral sequences living in the second quadrant given by 
\begin{align*}
    &E_1^{p, q} = R^{q} \Gamma \left( 
        \bigwedge^{1-p} \cE \right) \Longrightarrow R^{p+q} \Gamma \left(\cI(\tilz)\right), \\
    &E_1^{p, q} = R^{q} \Gamma \left( 
        \omega_{\tilBa_S} \otimes 
        \bigwedge^{f+p} \cE^{\vee} \right) \Longrightarrow R^{p+q} \Gamma \left(\omega_{\tilYz_S}\right), \text{ and}\\
    &E_1^{p, q} = R^{q} \Gamma \left( 
        \omega_{B(\gr)} \otimes 
        \bigwedge^{l(\gr)-1+p} \cE(\gr)^{\vee} \right) \Longrightarrow R^{p+q} \Gamma \left(\omega_{Y(\gr)}\right).
\end{align*}


\end{corollary}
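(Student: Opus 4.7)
The plan is to derive all three exact sequences as truncations of Koszul resolutions of $(\Delta_S)_{*}\cO_{\tilYz_S}$, $(\Delta_S)_{*}\omega_{\tilYz_S}$, and $\Delta(\gr)_{*}\omega_{Y(\gr)}$, and then invoke the standard first hypercohomology spectral sequence in each case. The setup is pure formal consequence of Lemma \ref{lem:koszul} once the sections are viewed correctly.

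First I would translate the setup. Each section $\mathfrak{s}_j$ is a map from a line bundle into $\cO_{\tilBa_S}$, equivalently a global section of the dual line bundle; collecting them gives a global section of $\cE^{\vee}$ on $\tilBa_S$. By Lemma \ref{lem:koszul}, the vanishing locus of this section is exactly $\tilYz_S$, which (via that lemma together with Lemma \ref{lem:smooth-irred}) has the expected codimension $f$ and is smooth. Hence the section is regular, and the associated Koszul complex yields an exact sequence
$$0 \to \bigwedge^{f} \cE \to \cdots \to \bigwedge^{1} \cE \to \cO_{\tilBa_S} \to (\Delta_S)_{*}\cO_{\tilYz_S} \to 0.$$
Stripping off the last two terms and identifying $\cI(\tilz)$ with the kernel of the surjection $\cO_{\tilBa_S} \twoheadrightarrow (\Delta_S)_{*}\cO_{\tilYz_S}$ produces the first exact sequence.

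For the second sequence, I would apply $\mathcal{H}om(-,\omega_{\tilBa_S})$ to the Koszul resolution; term-by-term this produces $\omega_{\tilBa_S} \otimes \bigwedge^{\bullet}\cE^{\vee}$. By the standard duality for regular embeddings (equivalently, the adjunction formula $\omega_{\tilYz_S} \cong (\omega_{\tilBa_S} \otimes \det \cE^{\vee})|_{\tilYz_S}$ together with $\mathcal{E}xt^{f}((\Delta_S)_{*}\cO_{\tilYz_S}, \omega_{\tilBa_S}) \cong (\Delta_S)_{*}\omega_{\tilYz_S}$), the cohomology of the dualized complex is concentrated in top degree and equals $(\Delta_S)_{*}\omega_{\tilYz_S}$. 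The third sequence arises identically upon replacing $(\tilBa_S, \tilYz_S, \cE, \Delta_S, f)$ with $(B(\gr), Y(\gr), \cE(\gr), \Delta(\gr), l(\gr)-1)$; Lemma \ref{lem:koszul} supplies regularity and codimension in exactly the same way.

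For the spectral sequences, each of the three exact sequences can be regarded as a bounded complex $A^{\bullet}$ of locally free sheaves concentrated in non-positive cohomological degrees, quasi-isomorphic to the rightmost sheaf placed in degree $0$. The standard first hypercohomology spectral sequence $E_{1}^{p,q} = R^{q}\Gamma(A^{p}) \Rightarrow R^{p+q}\Gamma(A^{\bullet})$ then specializes, after matching indices, to the three spectral sequences stated. There is no serious obstacle; the main bookkeeping issue is tracking the shifts in the dualization step, which reduces to the adjunction formula $\omega_{\tilYz_S} \cong (\omega_{\tilBa_S} \otimes \det \cE^{\vee})|_{\tilYz_S}$ for the regular embedding $\tilYz_S \subset \tilBa_S$.
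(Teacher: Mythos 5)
Your argument is correct and arrives at the same three exact sequences, but the route you take for the second and third sequences is genuinely different from the paper's. The paper simply observes that each summand $q_j^{*}\cO_{\PP^1}(-1)\otimes p_{j-1}^{*}\cO_{\PP^1}(-1)$ is the ideal sheaf of an effective Cartier divisor, records that $\tilYz_S$ (hence $Y(\gr)$) is irreducible, and then cites \cite[Prop.~6.10(iii)]{kovacs2017rational} to obtain the dualized Koszul resolutions of $(\Delta_S)_{*}\omega_{\tilYz_S}$ and $\Delta(\gr)_{*}\omega_{Y(\gr)}$ directly. You instead carry out the dualization by hand: apply $\mathcal{H}om(-,\omega_{\tilBa_S})$ to the Koszul resolution and invoke concentration of $\mathcal{E}xt^{\bullet}((\Delta_S)_{*}\cO_{\tilYz_S},\omega_{\tilBa_S})$ in top degree plus the adjunction formula for the regular embedding. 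Your argument is complete --- it implicitly needs that $\tilBa_S$ is Gorenstein (so that $\omega_{\tilBa_S}$ is invertible and $R\mathcal{H}om$ behaves well), which is supplied by Lemma~\ref{lem:Baj}, and that $\tilYz_S$ and $Y(\gr)$ are Cohen--Macaulay, which follows from smoothness (Lemmas~\ref{lem:smooth-irred} and~\ref{lem:koszul}). What your route buys is self-containedness: it avoids citing the Kov\'acs reference, which the authors themselves flag as being from a retracted paper (though they note the proof of that specific proposition remains valid). The paper's citation is shorter on the page but carries the external dependency; your version is marginally longer but internal. The treatment of the first sequence and of the resulting hypercohomology spectral sequences is the same in both.
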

\begin{proof}
    The first exact sequence follows immediately from Lemma \ref{lem:koszul}. For the second and third, note that for each $j \in \Z/f\Z$, resp. each $j \in \{i-k+1, \dots, i\}$, $q_j^{*} \O_{\PP^1}(-1) \otimes p_{j-1}^{*} \O_{\PP^1}(-1)$ is the ideal sheaf of an effective Cartier divisor. Further, $\tilYz_S$ is irreducible being a $\GL_2^{\Z/f\Z}$--torsor over the irreducible component $\Y_S$ of $\Y_{\F}$, and therefore, so is $Y(\gr)$. Thus, \cite[Prop.~6.10(iii)]{kovacs2017rational} gives the remaining exact sequences. \end{proof}

\begin{lemma}\label{lem:coh-L1}
    The following are true:
    \begin{enumerate}
        \item (Version of \cite[Cor.~4.2.12]{lhmm}) For $0 < a \leq f$, $b \geq a$, $R^{b} \Gamma \left( 
        \bigwedge^{a} \cE \right) \neq 0$ if and only if $a=b=f$ and $T_j =3$ for each $j \in \Z/f\Z$. In this special case, $$R^{f} \Gamma \left( 
        \bigwedge^{f} \cE \right) \cong \Gamma(\tilBa_S)/N.$$

        \item Suppose $T_j = 3$ for each $j \in \Z/f\Z$. Then 
            \begin{align*}
                R^{b} \Gamma \left( 
        \omega_{\tilBa_S} \otimes 
        \bigwedge^{a} \cE^{\vee} \right) \neq 0 \iff 0 \leq a \leq f, b=0. \end{align*}
        In particular, $R^{0} \Gamma(\omega_{\tilYz_S})$ is the cokernel of the map
     \begin{align}\label{eqn:E1-map-T3}
        R^{0}\Gamma 
        \left( \bigwedge^{f-1} \cE^{\vee} \right) \longrightarrow R^{0}\Gamma
        \left( \bigwedge^{f} \cE^{\vee}\right)
    \end{align}
        in the $E_1$ page of the second spectral sequence in Corollary \ref{cor:ss}.
        
        \item Suppose $\gr = (i-k, \dots, i) \in \trn^{*}$ with $l(\gr) \geq 3$. Then 
            \begin{align*}
                R^{b} \Gamma \left( 
        \omega_{B(\gr)} \otimes 
        \bigwedge^{a} \cE(\gr)^{\vee} \right) \neq 0 \iff 0 \leq a \leq l(\gr) - 3, b=2.
        \end{align*}
        In particular, $R^{0} \Gamma(\omega_{Y({\gr})})$ is the cokernel of the map
     \begin{align}\label{eqn:E1-map}
        R^{2}\Gamma \left( \omega_{B(\gr)} \otimes 
        \bigwedge^{l(\gr)-4} \cE(\gr)^{\vee} \right) \longrightarrow R^{2}\Gamma \left( \omega_{B(\gr)} \otimes 
        \bigwedge^{l(\gr)-3} \cE(\gr)^{\vee} \right)
    \end{align}
        in the $E_1$ page of the third spectral sequence in Corollary \ref{cor:ss}.

        \end{enumerate}
\end{lemma}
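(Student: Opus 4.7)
The plan for all three parts is to apply the Künneth formula to express cohomology on $\tilBa_S$ (respectively $B(\gr)$) as a tensor product of cohomologies on the factors $\tilBaj_{S_j}$, and then to invoke Lemma \ref{lem:Rpr-dualizing} on each factor (using $\tilBaj_{S_j} \cong \Baj_{S_j} \times \GL_2$ together with the triviality of cohomology on $\GL_2$ in positive degrees). Since $\bigwedge^a \cE$ and $\bigwedge^a \cE^\vee$ are direct sums of line bundles indexed by subsets $J$ of size $a$, the proof amounts to tracking which $J$ can yield a nonzero contribution, and in what cohomological degree.

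For (i), the $J$-summand restricts to $\tilBaj_{S_j}$ as $p_j^{*}\cO(-1)^{\delta_j^J} \otimes q_j^{*}\cO(-1)^{\epsilon_j^J}$ with $\delta_j^J = \mathbf{1}[j+1 \in J]$ and $\epsilon_j^J = \mathbf{1}[j \in J]$. Lemma \ref{lem:Rpr-dualizing} shows that each such factor contributes in positive cohomological degree only when $T_j = 3$ and $\delta_j^J = \epsilon_j^J = 1$, in which case the contribution is $\Gamma(\Baj_{S_j})/N_j$ placed in degree $1$; otherwise the factor either vanishes or contributes in degree $0$. Requiring $b \geq a = |J|$ then forces every $j \in J$ to have $j+1 \in J$, so $J$ is closed under successor in $\Z/f\Z$, forcing $J = \varnothing$ or $J = \Z/f\Z$; the latter additionally requires $T_j=3$ for all $j$. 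Under these conditions, Künneth combined with the identification $\Gamma(\tilBa_S)/N \cong \bigotimes_j (\Gamma(\Baj_{S_j})/N_j) \otimes \Gamma(\GL_2)$ yields $R^f\Gamma(\bigwedge^f \cE) \cong \Gamma(\tilBa_S)/N$.

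For (ii), Lemma \ref{lem:Baj}(ii) with $T_j=3$ for all $j$ gives $\omega_{\tilBa_S} \cong \cO_{\tilBa_S}$. Each $J$-summand of $\bigwedge^a \cE^\vee$ restricts to $\tilBaj_{S_j}$ as a line bundle with $\delta_j + \epsilon_j \in \{-2, -1, 0\}$ in the conventions of Lemma \ref{lem:Rpr-dualizing}; cases (i)-(iii) of that lemma for $T_j = 3$ show that each factor yields a nonzero $R^0$ with vanishing higher cohomology. By Künneth, $R^b\Gamma(\omega_{\tilBa_S} \otimes \bigwedge^a \cE^\vee)$ is supported in $b = 0$ and nonzero for all $0 \leq a \leq f$. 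The second spectral sequence of Corollary \ref{cor:ss} is then concentrated in the row $q = 0$, so it degenerates at $E_2$, and the abutment $R^0\Gamma(\omega_{\tilYz_S})$ at $p = 0$ equals the cokernel of the $d_1$ differential into column $p = 0$, which is (\ref{eqn:E1-map-T3}).

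For (iii), using the decomposition (\ref{eqn:Baj-IandII}) together with the class descriptions of Section \ref{subsec:classfn}, I identify $\tilBa_{i-k}^I \cong \GL_2 \times \PP^1$ (for $T_{i-k} \in \{1,2\}$) and $\tilBa_i^{II}$ as either $\PP^1$ or $\AA^1 \times \PP^1$ (for $T_i \in \{1,5\}$), with $p_{i-k}$ and $q_i$ projecting to the respective $\PP^1$'s. Lemma \ref{lem:Baj}(ii) gives $\omega_{B(\gr)} \cong p_{i-k}^{*}\cO(-2) \otimes q_i^{*}\cO(-2)$, so the $J$-summand of $\omega_{B(\gr)} \otimes \bigwedge^a \cE(\gr)^\vee$ restricts to $p_{i-k}^{*}\cO(-2+\mathbf{1}[i-k+1 \in J])$ at the $i-k$ endpoint, $q_i^{*}\cO(-2+\mathbf{1}[i \in J])$ at the $i$ endpoint, and to $p_j^{*}\cO(\mathbf{1}[j \in J] + \mathbf{1}[j+1 \in J])$ on intermediate factors (where $p_j = q_j$). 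Standard $\PP^1$ cohomology yields a degree $1$ contribution at each endpoint exactly when the corresponding indicator is $0$ (and vanishing otherwise), while Lemma \ref{lem:Rpr-dualizing}(i)-(iii) for $T_j = 3$ (with $\delta_j + \epsilon_j \in \{-2,-1,0\}$) yields a degree $0$ contribution at each intermediate $j$. Hence nonzero cohomology requires $i-k+1 \notin J$ and $i \notin J$, i.e., $J \subset \{i-k+2, \ldots, i-1\}$, and sits in degree $b = 2$ with $a = |J| \leq l(\gr)-3$. The third spectral sequence of Corollary \ref{cor:ss} is then supported on the single row $q = 2$ with $p \in [1-l(\gr),-2]$; it degenerates at $E_2$, and the abutment $R^0\Gamma(\omega_{Y(\gr)})$ at total degree $0 = -2+2$ equals the cokernel of $d_1$ into column $p = -2$, which is precisely (\ref{eqn:E1-map}). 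The main bookkeeping obstacle is handling the endpoint calculation uniformly across the admissible class pairs $(T_{i-k}, T_i)$, but once the product structure of $\tilBa_{i-k}^I$ and $\tilBa_i^{II}$ is made explicit, this reduces to standard cohomology computations on products of $\PP^1$ with $\GL_2$ or $\AA^1$.
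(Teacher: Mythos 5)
Your proposal is correct and follows essentially the same route as the paper: decompose each $\bigwedge^a\cE$ (or its dual, twisted by $\omega$) into a direct sum of external tensor products of line bundles over the factors $\tilBaj_{S_j}$, apply Lemma \ref{lem:Rpr-dualizing} factor by factor to determine which pieces contribute and in what cohomological degree, and conclude via Künneth; the spectral-sequence bookkeeping at the end of (ii) and (iii) is handled identically. The one place you add detail the paper leaves implicit is the combinatorial step in (i) — noting that $b\ge a$ forces $J$ to be closed under successor in $\Z/f\Z$, hence $J=\Z/f\Z$ — which is a clean way of making precise what the paper compresses into "follows from the Künneth formula."
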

\begin{proof} 
First, note that
\begin{align*}
\bigwedge^{a} \cE = \bigoplus_{\underline{\epsilon}} \left( \bigotimes_{j\in \Z/f\Z} q_j^{*} \O(-1)^{\epsilon_j} \otimes_{\O_{\tilBa_S}} p_{j}^{*} \O(-1)^{\epsilon_{j+1}} \right)
\end{align*}
where the direct sum is over the set $\{\underline{\epsilon} \in \{0,1\}^{\Z/f\Z} \mid \sum \epsilon_j = a\}$.
By Lemma \ref{lem:Rpr-dualizing}, $$R\Gamma \left( q_j^{*} \O(-1)^{\epsilon_j} \otimes_{\O_{\tilBa_S}} p_{j}^{*} \O(-1)^{\epsilon_{j+1}} \right)$$ is concentrated in degree $0$ unless $T_j = 3$ and $\epsilon_j = \epsilon_{j+1} = 1$, when it is concentrated in degrees $0$ and $1$. The first statement then follows from the K\"unneth formula.

When $T_j = 3$ for all $j \in \Z/f\Z$, Lemma \ref{lem:Baj} shows that the dualizing sheaf of $\tilBa_S$ is trivial. Further, Lemma \ref{lem:Rpr-dualizing} shows that for each $j$, the cohomologies of $p_{j}^{*} \cO_{\PP^{1}}$, $p_{j}^{*} \cO_{\PP^{1}}(1)$ and $p_{j}^{*} \cO_{\PP^{1}}(2)$ are all concentrated in degree $0$. The same is therefore true for $\wedge^{a}\cE^{\vee}$ for any $a$. The second point in the statement of the Lemma thus holds true.

Finally, given $\gr$ as in the third point, Lemma \ref{lem:Baj} implies that $$\omega_{B(\gr)} \otimes \bigwedge^{a} \cE(\gr)^{\vee} \cong \bigoplus_{\underline{\epsilon}} \cK(\underline{\epsilon})$$
where 
\begin{align}\label{eqn:K-epsilon-defn} \cK(\underline{\epsilon}) := p_{i-k}^{*} \cO(-1)^{2-\epsilon_{i-k+1}} \otimes 
\left( \bigotimes_{j = i-k+1}^{i-1}  p_j^{*} \O(\epsilon_j + \epsilon_{j+1}) \right)
\otimes q_{i}^{*} \cO(-1)^{2-\epsilon_{i}}\end{align}
and the direct sum is over $\{\underline{\epsilon} = (\epsilon_j)_{j=i-k+1}^{i} \in \{0,1\}^{l(\gr) -1} \mid \sum_{j=i-k+1}^{i} \epsilon_j = a\}$. Since $T_{i-k} \in \{1, 2\}$ and $T_{i} \in \{1, 5\}$, 
\begin{center}$\widetilde{Ba}_{i-k}(\widetilde{z}_{i-k})_{S_{i-k}}^{\mathrm{I}} \cong \PP^{1} \times \GL_2$, and $\widetilde{Ba}_{i}(\widetilde{z}_{i})_{S_{i}}^{\mathrm{II}} \cong Z_{i} \times \PP^{1}$. \end{center}
Therefore, $R \Gamma \left( \cK(\underline{\epsilon}) \right)$ is non--vanishing if and only if $\epsilon_{i-k+1} = \epsilon_{i} = 0$. In particular,
$R\Gamma \left( 
        \omega_{B(\gr)} \otimes 
        \bigwedge^{a} \cE^{\vee} \right)$ is non--vanishing if and only if $a \leq l(\gr)-3$. Assume now that $R \Gamma \left( \cK(\underline{\epsilon}) \right)$ is non--vanishing. 
        Since the cohomologies of $p_{i-k}^{*} \cO(-2)$ and $q_{i}^{*} \cO(-2)$ are both concentrated in degree $1$, and that of $p_j^{*} \O(\epsilon_j + \epsilon_{j+1})$ for $j \in \{i-k+1, \dots, i-1\}$ is concentrated in degree $0$ by Lemma \ref{lem:Rpr-dualizing},
        $R^{b} \Gamma \left( \cK(\underline{\epsilon}) \right) \neq 0$ if and only if $b= 2$. 
\end{proof}   

\begin{cor}\label{cor:support-coh}
\begin{enumerate}
    \item If $i >0$, $R^{i}\Gamma(\cI(\tilz)) \neq 0$ if and only $i=1$ and $T_j = 3$ for each $j \in \Z/f\Z$. In this case, it is supported at $V(N) \subset \Spec \Gamma(\tilBa_S)$ and is free of rank $1$ over $\Gamma(\tilBa_S)/N$.
    \item Suppose $T_j = 3$ for each $j \in \Z/f\Z$. The group $R^{i} \Gamma(\omega_{\tilYz_S})$ vanishes when $i>0$ and has support $$\tilZz_S \subset \Spec \Gamma(\tilBa_S)$$ when $i=0$.
    \item  Suppose $\gr \in \trn^{*}$ and $l(\gr) \geq 3$. The group $R^{i} \Gamma(\omega_{Y(\gr)})$ vanishes when $i>0$ and has support $\cZ(\gr) \subset \Spec \Gamma(B(\gr))$ when $i=0$. 
    
\end{enumerate}
\end{cor}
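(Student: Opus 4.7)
The plan is to deduce all three parts from the three spectral sequences constructed in Corollary \ref{cor:ss}, combined with the non--vanishing calculations in Lemma \ref{lem:coh-L1}, and a birationality argument for the support claims in (ii) and (iii).

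For (i), I would analyze the first spectral sequence $E_1^{p,q} = R^q\Gamma(\wedge^{1-p}\cE) \Rightarrow R^{p+q}\Gamma(\cI(\tilz))$. Since $p \leq 0$, any term contributing to $R^i\Gamma(\cI(\tilz))$ with $i > 0$ requires $q > 0$, and, writing $a = 1-p$ and $b = q$, the relation $p+q = i > 0$ becomes $b \geq a$. By Lemma \ref{lem:coh-L1}(i) this forces $a = b = f$ with $T_j = 3$ for every $j$, so $i = 1$ is the only possibility. To identify $R^1\Gamma(\cI(\tilz))$ with $E_1^{1-f, f} \cong \Gamma(\tilBa_S)/N$, I would verify that all differentials touching this corner vanish: outgoing $d_r$ lands in a term with $a < f$ but still $q \geq f - r + 1$, which a K\"unneth refinement of the argument underlying Lemma \ref{lem:coh-L1}(i) rules out since at most $a-1$ tensor factors can contribute an $R^1$ when $a < f$; incoming differentials emanate from $\wedge^a\cE$ with $a > f$ and are hence zero. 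The support assertion is then immediate as $\Gamma(\tilBa_S)/N$ is annihilated by $N$.

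For (ii) and (iii), I would proceed in parallel. Applying the second spectral sequence and invoking Lemma \ref{lem:coh-L1}(ii) concentrates its $E_1$--page in $q = 0$; since $p \leq 0$, this precludes any contribution to $R^i\Gamma(\omega_{\tilYz_S})$ for $i > 0$. For the third spectral sequence, Lemma \ref{lem:coh-L1}(iii) localizes the $E_1$--page at $q = 2$ with $p \in [-l(\gr)+1, -2]$, so $p+q \leq 0$ and the positive higher cohomology vanishes. For the support claims, I would use that both $\tilYz_S$ and $Y(\gr)$ are smooth irreducible schemes --- smoothness via Lemma \ref{lem:smooth-irred} (together with the complete intersection structure of Lemma \ref{lem:koszul}) and irreducibility via the torsor remark in the proof of Corollary \ref{cor:ss} combined with the product decomposition \eqref{eqn:Y-as-prod} --- and that the proper morphisms $\tilYz_S \to \tilZz_S$ and $Y(\gr) \to \cZ(\gr)$ are birational (by Lemma \ref{lem:birational-T3} and Lemma \ref{lem:bad-sequence}(ii) respectively: a proper monomorphism onto a dense open is an isomorphism there). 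The pushforward of a line bundle along such a morphism is generically of rank one on the image, hence has full support $\tilZz_S$ (resp.\ $\cZ(\gr)$); Lemmas \ref{lem:sch-th-image} and \ref{lem:structure-sh-coh-vs-I}(ii) translate this correctly into a statement about modules over $\Gamma(\tilBa_S)$ (resp.\ $\Gamma(B(\gr))$).

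The principal obstacle I anticipate is the degeneration verification in (i): the K\"unneth argument underlying Lemma \ref{lem:coh-L1}(i) must be applied also to ``intermediate'' terms with $b < a$ to rule out higher differentials landing at or emerging from the $(1-f, f)$--spot. A secondary concern is to confirm that ``support equals the scheme--theoretic image'' --- rather than merely being contained in it --- for parts (ii) and (iii); this is where birationality combined with smoothness (ensuring the line bundle is generically non--zero) and irreducibility of the image is essential.
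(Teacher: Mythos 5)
Your proposal is correct and follows essentially the same route as the paper: all three parts are read off from the spectral sequences of Corollary \ref{cor:ss} together with the non--vanishing patterns of Lemma \ref{lem:coh-L1}, and the support claims in (ii) and (iii) are obtained from the birationality statements of Lemmas \ref{lem:birational-T3} and \ref{lem:bad-sequence}(ii). The main place where you add something beyond the paper's one--line ``follows immediately'' is the explicit degeneration check in (i), which is worth spelling out but is simpler than you fear. You flag as a principal obstacle that ruling out higher differentials touching $E_1^{1-f,f}$ needs a ``K\"unneth refinement'' of Lemma \ref{lem:coh-L1}(i). In fact, Lemma \ref{lem:coh-L1}(i) as stated already does the job with no refinement: the outgoing differential $d_r$ ($1\le r\le f-1$) lands in $E_r^{1-f+r,\,f-r+1}$, which is a subquotient of $R^b\Gamma(\bigwedge^a\cE)$ with $a = f-r$ and $b = f-r+1 = a+1 \ge a$, exactly the regime $0<a\le f$, $b\ge a$ covered by the lemma; since $a<f$, that group vanishes on the $E_1$ page and hence on every later page. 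Incoming differentials emanate from $\bigwedge^a\cE$ with $a>f$, which is zero. So $E_1^{1-f,f}$ survives to $E_\infty$ with no further K\"unneth analysis. Your treatment of the support in (ii) and (iii) is the same argument the paper uses, phrased via pushforward rather than restriction: the preimage of $\tilZz_S\smallsetminus V(N)$ (resp.\ $\cZ(\gr)\smallsetminus V(N(\gr))$) is smooth and isomorphic to it, so the dualizing sheaf is an honest line bundle there, giving a dense open on which $R^0\Gamma$ is non--vanishing; combined with the containment of the support in the irreducible scheme--theoretic image $\tilZz_S$ (resp.\ $\cZ(\gr)$) and the fact that supports of coherent sheaves are closed, the support is all of $\tilZz_S$ (resp.\ $\cZ(\gr)$). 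That is precisely the paper's argument.
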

\begin{proof}
  The assertions about vanishing of higher cohomologies and support of $R^{1}\Gamma(\cI(\tilz))$ follow immediately from Corollary \ref{cor:ss} and Lemma \ref{lem:coh-L1}.
  

  By Lemma \ref{lem:birational-T3}, resp. Lemma \ref{lem:bad-sequence}(ii), $\tilZz_S \smallsetminus V(N)$, resp. $\cZ(\gr) \smallsetminus V(N(\gr))$, is isomorphic to its smooth preimage in $\tilYz_S$, resp. in $Y(\gr)$. Therefore, the dualizing sheaf on $\tilZz_S \smallsetminus V(N)$, resp. $\cZ(\gr) \smallsetminus V(N(\gr))$, is non-zero and supported everywhere. Since this dualizing sheaf is obtained by restricting the quasicoherent sheaf associated to the module $$R^{0} \Gamma\left(\omega_{\tilYz_S}\right), \text{ resp. } R^{0} \Gamma(\omega_{Y(\gr)}),$$  we get the desired statement on the support of this module.
\end{proof}

\begin{prop}\label{prop:non-gor}
\begin{enumerate}
    \item Suppose $T_j =3$ for each $j \in \Z/f\Z$. The rank of $R^{0}\Gamma\left(\omega_{\tilYz_S}\right)$ at the point $(\tilz_j)_j \in \tilZz_S$ is $\geq 3$.
    \item Suppose $\gr = (i-k, \dots, i) \in \trn^{*}$ with $l(\gr) \geq 4$. The rank of $R^{0}\Gamma(\omega_{Y(\gr)})$ at every point in $V(N(\gr)) \subset \Spec \Gamma(B(\gr))$ is $\geq 2$.
\end{enumerate}

\end{prop}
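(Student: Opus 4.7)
The plan is to identify $R^0\Gamma(\omega_{\tilYz_S})$ (resp. $R^0\Gamma(\omega_{Y(\gr)})$) explicitly as the cokernel of the $E_1$-differential furnished by Lemma \ref{lem:coh-L1}(ii) (resp. (iii)), and then compute the dimension of this cokernel on the fiber at the specified point. Since taking fibers commutes with cokernels, that dimension equals the rank we want.

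For part (i), Lemma \ref{lem:coh-L1}(ii) combined with $\omega_{\tilBa_S} \cong \cO$ from Lemma \ref{lem:Baj}(ii) realizes $R^0\Gamma(\omega_{\tilYz_S})$ as the cokernel of a Koszul wedge-with-$\mathfrak{s}$ map $R^0\Gamma(\wedge^{f-1}\cE^\vee) \to R^0\Gamma(\wedge^f\cE^\vee)$. Using K\"unneth and Lemma \ref{lem:Rpr-dualizing} (the $\GL_2$-factors in the product structure of $\tilBa_S$ contributing trivially), the fiber of the target at $(\tilz_j)_j$ is $\bigotimes_{j \in \Z/f\Z} H^0(\PP^1_j, \cO(2))$, while each source summand (indexed by the unique $i \in \Z/f\Z$ with $\epsilon_i = 0$) maps in by multiplication by the section $\mathfrak{s}_i = x_{i-1}y_i - y_{i-1}x_i$, as can be read off from Lemma \ref{lem:koszul} together with Remark \ref{rem:identify-sections}. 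Consequently, the fibered cokernel is the multidegree $(2,\dots,2)$ piece of $\F[x_j, y_j : j \in \Z/f\Z]/(\mathfrak{s}_j : j)$. The ideal $(\mathfrak{s}_j)$ cuts out the diagonal $\PP^1 \hookrightarrow \prod_j \PP^1_j$, and restriction identifies this piece with $H^0(\PP^1, \cO(2f))$, of dimension $2f+1 \geq 3$.

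Part (ii) proceeds by the same strategy using Lemma \ref{lem:coh-L1}(iii). The two edge factors (with $T_{i-k} \in \{1,2\}$ and $T_i \in \{1,5\}$) each contribute $H^1(\PP^1, \cO(-2)) \cong \F$ via Serre duality, which forces $\epsilon_{i-k+1} = \epsilon_i = 0$ in every non-vanishing summand and removes the edges from the dimension count. The source is then indexed by $m^* \in \{i-k+2, \dots, i-1\}$, and each summand maps into the target by multiplication by $\mathfrak{s}_{m^*} = x_{m^*-1}y_{m^*} - y_{m^*-1}x_{m^*}$. Specializing to a point of $V(N(\gr))$, the fibered cokernel identifies with the multidegree $(1,2,\dots,2,1)$-piece, over the middle indices $\{i-k+1,\dots,i-1\}$, of the coordinate ring of the diagonal $\PP^1 \hookrightarrow \prod_{j=i-k+1}^{i-1} \PP^1_j$ cut out by the interior sections $\mathfrak{s}_{m^*}$. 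This piece is $H^0(\PP^1, \cO(2l(\gr)-6))$, of dimension $2l(\gr) - 5 \geq 3 \geq 2$ since $l(\gr) \geq 4$.

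The main technical subtlety will be verifying on fibers that the induced Koszul differentials on the $E_1$-page match multiplication by the sections $\mathfrak{s}_{m^*}$ as asserted; in part (ii) this in particular requires carefully tracking the Serre-dual identifications contributed by the edge factors $i-k$ and $i$, so that the middle $\PP^1_j$'s retain the structure needed to identify the cokernel with sections on a diagonal $\PP^1$.
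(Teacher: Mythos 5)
Your strategy is correct and, remarkably, both cleaner and strictly stronger than the paper's own argument. The paper also reduces to the cokernel of the $E_1$-differential mod $N$ (resp.\ mod $N(\gr)$) with the sections $\mathfrak{s}_j$ put in the normal form $x_{j-1}\otimes y_j - y_{j-1}\otimes x_j$ after the same change of coordinates, but then proceeds by exhibiting an explicit $3$-dimensional (resp.\ $2$-dimensional) $\F$-subspace and verifying it meets the image trivially through a fairly intricate ``cross-term'' elimination. Your observation --- that the multiplication map $\bigotimes_j \Gamma(\PP^1, \cO(d_j)) \to \Gamma(\PP^1, \cO(\sum d_j))$ is surjective and annihilates each $\mathfrak{s}_j$, so that the cokernel surjects onto $\Gamma(\PP^1, \cO(2f))$ (resp.\ $\Gamma(\PP^1, \cO(2\,l(\gr)-6))$) --- bypasses that combinatorics entirely and produces the lower bounds $2f+1\geq 3$ and $2\,l(\gr)-5\geq 3$, the latter improving the paper's stated $\geq 2$.

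Two points to tighten. First, you assert that the cokernel \emph{is} the degree-$(2,\dots,2)$ piece of the quotient by the cyclic ideal and that restriction ``identifies'' it with $H^0(\PP^1,\cO(2f))$; for this you would need to know that the cyclic minors saturate the full ideal of the diagonal in that multidegree. This is true (and not hard to prove by the standard Pl\"ucker-style descent on $|i-j|$), but for the lower bound you only need the \emph{surjection} onto $H^0(\PP^1,\cO(2f))$, which is immediate and is the only thing you should claim. Second, as you yourself flag, the fact that the $E_1$-differential becomes literal multiplication by $x_{j-1}\otimes y_j - y_{j-1}\otimes x_j$ after specialization is not a formal consequence of Lemma \ref{lem:koszul} and Remark \ref{rem:identify-sections} alone; it requires the explicit change of basis via the matrices $C_j$ carried out in the paper's proof (especially in part (ii), where one first needs to pass to the new coordinates $[x_j^{\text{new}}:y_j^{\text{new}}]$ before the description of $\mathfrak{s}_j$ is valid on all of $V(N(\gr))$ and not merely at one closed point). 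With those two items in place, your argument is complete, and it also uniformly covers the $f=1$ degenerate case (where $\mathfrak{s}_0$ specializes to $0$) without a separate check.
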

\begin{proof}
We first deal with the case when $T_j =3$ for each $j \in \Z/f\Z$. 
In this setting, the domain of the map (\ref{eqn:E1-map-T3}) admits a description as the global sections of the structure sheaf if $f=1$, and of
$$\bigoplus_{j \in \Z/f\Z} \left(\left(\bigotimes_{i \in \{j-1, j\}} p_{i}^{*} \cO_{\PP^1}(1) \right) \otimes \left(\bigotimes_{i \neq j-1, j} p_i^{*} \cO_{\PP^1}(2) \right)\right)$$ 
if $f>1$. The codomain admits a description as the global sections of $\bigotimes_{i \in \Z/f\Z} p_i^{*} \cO_{\PP^1}(2).$ By the explicit description of the Koszul complex, the map (\ref{eqn:E1-map-T3}) is given by mapping the function $1$ to $\mathfrak{s}_0 \in \Gamma(p_0^{*} \cO_{\PP^1}(2))$ if $f=1$, and tensoring a section of 
\begin{align}\label{eqn:domain-summand-E1}
\Gamma\left(\left(\bigotimes_{i \in \{j-1, j\}} p_{i}^{*} \cO_{\PP^1}(1) \right) \otimes \left(\bigotimes_{i \neq j-1, j} p_i^{*} \cO_{\PP^1}(2) \right)\right) \cong \nonumber \\\left(\bigotimes_{i \in \{j-1, j\}} \Gamma(p_{i}^{*} \cO_{\PP^1}(1)) \right) \otimes \left(\bigotimes_{i \neq j-1, j} \Gamma(p_i^{*} \cO_{\PP^1}(2)) \right),\end{align}
with $\pm \mathfrak{s_j} \in \Gamma\left(p_{j-1}^{*}\cO(1) \otimes q_j^{*}\cO(1)\right)$ if $f>1$. If $f>1$, denote by $s_j$ the restriction of the map (\ref{eqn:E1-map-T3}) to (\ref{eqn:domain-summand-E1}) above.

For each $j \in \Z/f\Z$, denote the pullback of the projective coordinates $[x:y]$ on $\Baj_{S_j}$ to $\tilBa_S$ by $[x_j: y_j]$.
In light of Remark \ref{rem:identify-sections}, for each $j \in \Z/f\Z$, we can (and do) describe the space of global sections of $p_j^{*} \cO(1)$ as a module generated by $\{x_j, y_j\}$, and that of $p_j^{*} \cO(2)$ as a module generated by $\{x_j^{2}, x_j y_j, y_j^{2}\}$.

The preimage in $\tilBa_S$ of the point $(\tilz_j)_j$ is cut out by setting $\kappa_j = \id$ and $X_j = \id$ for each $j \in \Z/f\Z$. Consider the map (\ref{eqn:E1-map-T3}) after specialization to $(\tilz_j)_j$, since cokernel commutes with base change. Using that the preimage of $(\tilz_j)_j$ in $\tilYz_S$ is cut out by setting $[x_{j-1}:y_{j-1}] = [x_j:y_j]$ for each $j$, we find that upon restriction to this preimage, the section $\mathfrak{s}_j$ is equivalent to 
\begin{align*}
    &x_0 y_0 - x_0 y_0 
&\text{ if } f=1, &\text{ and} \\
    &x_{j-1} \otimes y_j - y_{j-1} \otimes x_j &\text{ if } f>1.
\end{align*}
Suppose first that $f=1$. We have $\mathfrak{s}_0 \equiv 0$, and so, the rank of the cokernel is the rank of the codomain, which is $3$. Now, suppose $f>1$. For $j \in \Z/f\Z$, after specialization, the space in (\ref{eqn:domain-summand-E1}) and the codomain of (\ref{eqn:E1-map-T3}) can be identified with 
$$\left(\bigotimes_{i \in \{j-1, j\}} \F[x_i, y_i] \right) \otimes \left(\bigotimes_{i \neq j-1, j} \F[x_i^{2}, x_i y_i, y_i^2] \right) \text{ and } \bigotimes_{i \in \Z/f\Z} \F \left[x_{i}^2, x_{ i}y_{ i}, y_{i}^2\right]$$ respectively.
The image of the map $s_j$ is the span of those $\otimes_i a_i$ for which $a_{j-1} \otimes a_j$ lies in $\{\alpha_j, \beta_j, \gamma_j, \delta_j\}$, where 
\begin{align}\label{eqn:im-s}
&\alpha_j := x_{j-1}^2 \otimes x_j y_j - x_{j-1}y_{j-1} \otimes x_j^2, \nonumber\\
&\beta_j := x_{j-1}^2 \otimes y_j^2 -  x_{j-1} y_{j-1} \otimes x_j y_j, \nonumber \\
&\gamma_j := x_{j-1} y_{j-1} \otimes x_j y_j- y_{j-1}^2 \otimes x_j^2, \nonumber \\
&\delta_j := x_{j-1}y_{j-1} \otimes y_j^2 - y_{j-1}^2 \otimes x_j y_j.\end{align} 
We claim that the $\F$--span of $\{\otimes_i x_i^2, x_0^{2} \otimes y_1^{2} \otimes (\otimes_{i \not\in \{0, 1\}}x_i^2), \otimes_i y_i^{2}\}$, denoted $V$, intersects trivially with the image of (\ref{eqn:E1-map-T3}) (after specialization). Moreover, it remains $3$--dimensional after passing to the cokernel, showing that the cokernel has rank $\geq 3$. To see this, take for a basis of the codomain the set
$$\cB := \{\otimes_i a_i | a_i \in \{x_i^{2}, x_i y_i, y_i^{2}\}\}.$$ For $j \in \Z/f\Z$, we say that $b = \otimes_i a_i \in \cB$ involves cross terms in $j$ if $$(a_{j-1}, a_j) \not\in \{(x_{j-1}^2,x_j^2), (y_{j-1}^2, y_j^2)\}.$$ 
Note that the image of $s_j$ lies in the span of basis elements with cross terms in $j$.
Suppose $\sum_{b \in \cB} c_b b \in V$ is in the image. For $b \in \{\otimes_i x_i^2, \otimes_i y_i^{2}\}$, $c_b$ is $0$ because $b$ does not involve any cross terms. The only thing to check then is that $v = x_0^{2} \otimes y_1^{2} \otimes (\otimes_{i \not\in \{0, 1\}}x_i^2) \in \cB$ is not in the image. 

Assume otherwise. Suppose $v = u +w$ with $u \in \sum_{j \not\in \{1, 2\}} \text{Im } s_j$ and $w \in \text{Im } s_1 + \text{Im } s_2$. Since $\text{Im } s_j$ is written purely in terms of basis elements with cross terms in $j$ and $v$ does not involve any cross terms in $j \not\in \{1, 2\}$, $u$ can be taken to be $0$. When $f=2$, $\text{Im } s_1 = \text{Im } s_2$, and neither contains $v$, giving rise to a contradiction. When $f \geq 3$, the fact that $v \in \cB \cap  \text{Im } s_1 + \text{Im } s_2$ allows us to simply assume that $f=3$ and $v = x_0^2 \otimes y_1^2 \otimes x_2^2$. The space $\text{Im } s_1 + \text{Im } s_2$ is generated by elements in the set $\cG:= \{\alpha_1, \beta_1, \gamma_1, \delta_1\} \otimes \{x_2^2, x_2 y_2, y_2^2\} \cup \{x_0^2, x_0 y_0, y_0^2\} \otimes \{\alpha_2, \beta_2, \gamma_2, \delta_2\}$. Since $v$ involves cross terms in $j \in \{1, 2\}$, $v$ can be written as a linear combination of the elements of \begin{align*}\cG^{(1)}:= \cG \smallsetminus \{\alpha_1 \otimes x_2^2, \beta_1 \otimes y_2^2, \gamma_1 \otimes x_2^2, \delta_1 \otimes y_2^2, x_0^2 \otimes \alpha_2, x_0^2 \otimes \beta_2, y_0^2 \otimes \gamma_2, y_0^2 \otimes \delta_2\}.\end{align*} 
Using the relation $\beta_1 \otimes x_2^2 = x_0y_0 \otimes \alpha_2 + \alpha_1 \otimes x_2y_2 - x_0^2 \otimes \gamma_2$,
we can further write $v$ as a linear combination of the elements of $\cG^{(2)} := \cG^{(1)} \smallsetminus \{\beta_1 \otimes x_2^2\}$. Since the only element of $\cG^{(2)}$ involving the basis element $v$ is $x_0^2 \otimes \gamma_2$, we can write $v + x_0^2 \otimes \gamma_2 = x_0^2 \otimes x_1y_1 \otimes x_2y_2$ as a linear combination of elements in $\cG^{(3)} := \cG^{(2)} \smallsetminus \{x_0^2 \otimes \gamma_2\}$. The only element of $\cG^{(3)}$ involving the basis element $x_0^2 \otimes x_1y_1 \otimes x_2y_2$ is $\alpha_1 \otimes x_2y_2$. Therefore $x_0^2 \otimes x_1y_1 \otimes x_2y_2 - \alpha_1 \otimes x_2y_2 = x_0y_0 \otimes x_1^2 \otimes x_2y_2$ is a linear combination of elements in $\cG^{(4)} := \cG^{(3)} \smallsetminus \{\alpha_1 \otimes x_2y_2\}$. Repeating the procedure again, we find that $x_0y_0 \otimes x_1^2 \otimes x_2y_2 - x_0y_0 \otimes \alpha_2 = x_0y_0 \otimes x_1y_1 \otimes x_2^2$ is a linear combination of elements in $\cG^{(5)} := \cG^{(4)} \smallsetminus \{x_0y_0 \otimes \alpha_2\}$. But this is impossible as none of the elements in $\cG^{(5)}$ can be expressed as a linear combination of elements in $\cB$ with a non--trivial coefficient of $x_0y_0 \otimes x_1y_1 \otimes x_2^2$, since we have removed $\gamma_1 \otimes x_2^2$, $x_0y_0 \otimes \alpha_2$ and $\beta_1 \otimes x_2^2$ from the allowed set of generators.

Next, we move on to the case when $\gr = (i-k, \dots, i) \in \trn^{*}$ with $l(\gr) \geq 4$. 
The proof in this case is very similar to the previous one with slightly more care needed for describing explicitly the map (\ref{eqn:E1-map}), and for showing that the cokernel has rank $\geq 2$ over all of $V(N(\gr))$ instead of at one finite type point. 

Let $J = \{i-k+2, \dots, i-1\}$. For $l \in J$, define $$\underline{\epsilon}^l = (\epsilon^l_j)_{j=i-k+1}^{i}\in \{0, 1\}^{l(\gr)-1}$$ such that $\epsilon^l_{i-k+1} =\epsilon^l_{i}= \epsilon_l^l = 0$ and for $j \in J \smallsetminus \{l\}$, $\epsilon^l_j = 1$.
    Letting $\cK(\underline{\epsilon}^j)$ be the sheaf defined in (\ref{eqn:K-epsilon-defn}), we have
    \begin{align*}
            &\bigoplus_{j \in J}R\Gamma^{0} \left(\cK(\underline{\epsilon}^j) \otimes \omega_{B(\gr)}^{\vee}\right) \cong R^2\Gamma\left(\omega_{B(\gr)}\right) \otimes \left(\bigoplus_{j \in J}R\Gamma^{0} \left(\cK(\underline{\epsilon}^j) \otimes \omega_{B(\gr)}^{\vee}\right) \right) \\
            & \cong \bigoplus_{j \in J} R^2 \Gamma \left(\cK(\underline{\epsilon}^j)\right) \cong R^{2}\Gamma \left( \omega_{B(\gr)} \otimes 
        \bigwedge^{l(\gr)-4} \cE(\gr)^{\vee} \right),
        \end{align*} where the first isomorphism follows from $R^2\Gamma\left(\omega_{B(\gr)}\right) \cong \Gamma(B(\gr))$, the second by the K\"unneth formula, and the third is induced by the embedding
        $$\bigoplus_{j \in J} \cK(\underline{\epsilon}^j) \hookrightarrow \omega_{B(\gr)} \otimes 
        \bigwedge^{l(\gr)-4} \cE(\gr)^{\vee},$$ and uses the
        fact that all cohomology groups of $\cO(-1)$ on $\PP^{1}$ vanish.
    
 Next, let $\underline{\epsilon} = (\epsilon_j)_{j = i-k+1}^{i}$ be such that $\epsilon_{i-k+1} = \epsilon_{i} = 0$ and for $j \in \{i-k+2, \dots, i-1\}$, $\epsilon_j = 1$. As before, the inclusion $$\cK(\underline{\epsilon}) \hookrightarrow \omega_{B(\gr)} \otimes 
        \bigwedge^{l(\gr)-3} \cE(\gr)^{\vee}$$
and K\"unneth formula induce isomorphisms
\begin{align*}
            R\Gamma^{0} \left(\cK(\underline{\epsilon}) \otimes \omega_{B(\gr)}^{\vee}\right)
            \cong R^2\Gamma\left(\omega_{B(\gr)}\right) \otimes R\Gamma^{0} \left(\cK(\underline{\epsilon}) \otimes \omega_{B(\gr)}^{\vee}\right) \\ \cong R\Gamma^{2} \left(\cK(\underline{\epsilon})\right) \cong R^{2}\Gamma \left( \omega_{B(\gr)} \otimes 
        \bigwedge^{l(\gr)-3} \cE(\gr)^{\vee} \right).
        \end{align*}

By functoriality of the cup product underlying the K\"unneth isomorphism, the map $(\ref{eqn:E1-map})$ corresponds to a map
\begin{align}\label{eqn:E1-map-simplified}
    &\bigoplus_{j \in J}R\Gamma^{0} \left(\cK(\underline{\epsilon}^j) \otimes \omega_{B(\gr)}^{\vee}\right) \xrightarrow{(s_j)_{j \in J}} R\Gamma^{0} \left(\cK(\underline{\epsilon}) \otimes \omega_{B(\gr)}^{\vee}\right)
\end{align} where for each $j \in J$, the map $s_j$ is given, upto a sign, by tensoring with the section $$\mathfrak{s}_j \in \Gamma\left(p_{j-1}^{*}\cO(1) \otimes q_j^{*}\cO(1)\right)$$ considered in Lemma \ref{lem:koszul}.

As before, for each $j \in J \cup \{i-k+1\}$, we denote the pullback of the projective coordinates $[x:y]$ on $\Baj_{S_j}$ to $Ba(\gr)$ by $[x_j: y_j]$, and describe the space of global sections of $p_j^{*} \cO(1)$ as a module generated by $\{x_j, y_j\}$, and that of $p_j^{*} \cO(2)$ as a module generated by $\{x_j^{2}, x_j y_j, y_j^{2}\}$. We now describe a change of coordinates on $\{[x_j: y_j]\}_{j = i-k+1}^{i-1}$.

Let $\kappa_j \in \GL_2(B(\gr))$ be the pullback of the universal point of the copy of $\GL_2$ in $\tilBaj$.
We define matrices $C_j 
\in \GL_2(B(\gr))$ for each $j \in J \cup \{i-k+1\}$ inductively by setting $C_{i-k+1} = \id$ and for $j \in J$, $C_j = \kappa_{j-1}C_{j-1}$. Set $[x_{j}^{\text{new}} : y_j^{\text{new}}] := [x_j:y_j]C_j$. For each $j \in J$, the relation $$[x_{j-1}^{\text{new}} : y_{j-1}^{\text{new}}]C_{j-1}^{-1} \kappa_{j-1}^{-1} = [x_{j-1}: y_{j-1}]\kappa_{j-1}^{-1} = [x_j:y_j] = [x_{j}^{\text{new}} : y_{j}^{\text{new}}]C_{j}^{-1}$$ is thus equivalent to 
\begin{align*}
&[x_{j-1}^{\text{new}} : y_{j-1}^{\text{new}}] = [x_{j}^{\text{new}} : y_{j}^{\text{new}}].
\end{align*} Therefore, for each $j \in J$, we can (and do) take $\mathfrak{s}_j$ to be the section $x_{j-1}^{\text{new}} \otimes y_{j}^{\text{new}} - y_{j-1}^{\text{new}} \otimes x_{j}^{\text{new}}$.

Since cokernel commutes with base change, we consider the map (\ref{eqn:E1-map-simplified}) mod $N(\gr)$. Abusing notation, we rename $x_j^{\text{new}}$ and $y_j^{\text{new}}$ simply as $x_j$ and $y_j$ respectively. From the explicit description of $\cK(\underline{\epsilon})$ and Lemma \ref{lem:Rpr-dualizing}, one sees that the codomain of (\ref{eqn:E1-map-simplified}) mod $N(\gr)$ admits an isomorphism to 
$$\F\left[x_{i-k+1}, y_{i-k+1}\right] \otimes \left( \bigotimes_{j \in J \smallsetminus \{i-1\}} \F \left[x_{j}^2, x_{ j}y_{ j}, y_{j}^2\right] \right) \otimes \F\left[x_{i-1}, y_{i-1}\right].$$

As observed in the proof of (i), 
the image of (\ref{eqn:E1-map-simplified}) mod $N(\gr)$ intersects trivially with the two--dimensional $\F$--subspace generated by
\begin{center} $\{x_{i-k+1} \otimes (\bigotimes_{j \in J \smallsetminus \{i-1\}} x_j^{2}) \otimes x_{i-1}, \quad y_{i-k+1} \otimes (\bigotimes_{j \in J \smallsetminus \{i-1\}} y_j^{2}) \otimes y_{i-1}\}$. \end{center}
This finishes the proof.
\end{proof}

\begin{proposition}\label{prop:coh-str-sheaf}(Version of \cite[Prop.~3.3.14]{lhmm}) For $i>0$
    $$R^{i} \pr_{*} \O_{\tilY_S} = 0.$$
\end{proposition}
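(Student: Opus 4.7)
The plan is to reduce the vanishing to a calculation already established in Section \ref{subsec:Ba-product}, via a local-on-the-base argument. Because the schemes $\tilZz_S$ form a Zariski open cover of $\tilZ_S$ as $\tilz = (\tilz_j)_j$ varies, and their preimages $\tilYz_S$ give a corresponding open cover of $\tilY_S$, it suffices to prove $R^{i}(\widetilde{\pi}_S|_{\tilYz_S})_{*}\cO_{\tilYz_S}=0$ for each $\tilz$ and each $i>0$.

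Next, from diagram (\ref{diagram:define-covers-z}) the map $\iota\colon \tilZz_S \hookrightarrow \Uz$ is a closed immersion, hence $\iota_{*}$ is exact on quasicoherent sheaves. Thus the desired vanishing is equivalent to the vanishing of $R^{i}\pr_{*}\cO_{\tilYz_S}$, where $\pr = \iota \circ (\widetilde{\pi}_S|_{\tilYz_S})$ is the map of Lemma \ref{lem:structure-sh-coh-vs-I}. By Lemma \ref{lem:structure-sh-coh-vs-I}(iii), for $i>0$ we have a natural identification $R^{i}\pr_{*}\cO_{\tilYz_S} \cong R^{i+1}\pr_{\tilB *}\cI(\tilz)$, and so the problem further reduces to showing $R^{j}\pr_{\tilB *}\cI(\tilz)=0$ for $j\geq 2$.

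The key input at this final step is that $\Uz$ is affine and $\pr_{\tilB}$ is proper (so that $R^{j}\pr_{\tilB *}\cI(\tilz)$ is quasicoherent on the affine base $\Uz$). The Leray spectral sequence then collapses and gives $\Gamma(\Uz, R^{j}\pr_{\tilB *}\cI(\tilz)) \cong R^{j}\Gamma(\tilBa_S,\cI(\tilz))$. Corollary \ref{cor:support-coh}(i) explicitly records that $R^{j}\Gamma(\cI(\tilz))$ vanishes for $j\geq 2$, and since the quasicoherent sheaf $R^{j}\pr_{\tilB *}\cI(\tilz)$ on an affine scheme is determined by its global sections, the sheaf itself vanishes.

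There is no real obstacle intrinsic to this proposition: the genuinely substantive work is already absorbed into the Koszul resolution of Lemma \ref{lem:koszul}, the K\"unneth and Čech computations of Lemmas \ref{lem:Rpr-dualizing} and \ref{lem:coh-L1}, and the support/vanishing statements of Corollary \ref{cor:support-coh}. Once those are in place, the present assertion is a formal consequence, with the only bookkeeping being the two reductions sketched above (Zariski localization on $\tilZ_S$, followed by replacing the target $\tilZz_S$ with $\Uz$ via the affine closed immersion $\iota$).
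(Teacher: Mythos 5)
Your proposal is correct and takes essentially the same route as the paper: reduce via the closed immersion into $\Uz$, apply Lemma \ref{lem:structure-sh-coh-vs-I}(iii) to rewrite $R^{i}\pr_{*}\cO_{\tilYz_S}$ as $R^{i+1}\pr_{\tilB*}\cI(\tilz)$, use affineness of the target to identify the higher direct images with $R^{j}\Gamma(\cI(\tilz))$, and invoke Corollary \ref{cor:support-coh}(i) for the vanishing in degrees $j\geq 2$. The only cosmetic difference is your opening Zariski-localization step; the paper's statement (and its downstream use in the proof of Theorem \ref{thm:sing}) is already at the level of $\tilYz_S$, so that step is not actually needed, but it is harmless and would cover a reading of the proposition as a global statement about $\tilY_S$.
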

\begin{proof}
    Since the codomain of $\pr_{\tilB}$ is affine, $R \pr_{\tilB *} \cI(\tilz)$ is the quasicoherent sheaf associated to $R\Gamma( \cI(\tilz))$. The proof is then immediate from Lemma \ref{lem:structure-sh-coh-vs-I} and Corollary \ref{cor:support-coh}.
\end{proof}

\begin{proposition}\label{prop:coh-dualizing-sheaf}
Suppose $\dim \tilYz_S = \dim \tilZz_S$ and that $T_j \neq 3$ for some $j \in \Z/f\Z$.
Then, $R\pr_{*} \omega_{\tilYz_S}$ is concentrated in degree $0$ and is an invertible sheaf on $\tilZz_S$ if and only if for each $\gr \in \trn^{*}$, $l(\gr) = 3$.
The locus in $\tilZz_S$ where the rank of $R^0\pr_{*} \omega_{\tilYz_S}$ is $\geq 2$ is precisely $$\bigcup_{\substack{\{\gr \in \trn^{*} | l(\gr) > 3 \}}} V(N(\gr)).$$
\end{proposition}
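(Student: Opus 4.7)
The plan is to exploit the product decomposition $\tilYz_S \cong \prod_{\gr \in \trn} Y(\gr)$ from (\ref{eqn:Y-as-prod}) together with $\tilZz_S \cong \prod_{\gr \in \trn} \cZ(\gr)$, under which the map $\pr$ splits as $\prod_{\gr} \pr(\gr)$ (understood as the induced maps $Y(\gr)\to\cZ(\gr)$) and, since each $Y(\gr)$ is smooth so that $\omega_{\tilYz_S} = \boxtimes_{\gr} \omega_{Y(\gr)}$, a K\"unneth isomorphism yields
\begin{equation*}
R\pr_{*}\omega_{\tilYz_S} \;\cong\; \boxtimes_{\gr \in \trn} R\pr(\gr)_{*}\omega_{Y(\gr)}.
\end{equation*}
The problem thus reduces to analysing each factor separately. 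A preliminary observation is that the hypothesis $\dim \tilYz_S = \dim \tilZz_S$, combined with the fact (Lemma \ref{lem:bad-sequence}(ii)) that $\pr(\gr)$ is a $\PP^{1}$-torsor when $\gr \in \trn^{*}$ has $l(\gr)=2$, forces $l(\gr) \geq 3$ for every $\gr \in \trn^{*}$.

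For $\gr \in \trn \smallsetminus \trn^{*}$, Lemma \ref{lem:bad-sequence}(i) gives that $\pr(\gr)$ is an isomorphism onto $\cZ(\gr)$ (smooth by Lemma \ref{lem:smooth-irred} since $Y(\gr)$ is a factor of the smooth $\tilYz_S$), so $R\pr(\gr)_{*}\omega_{Y(\gr)} = \omega_{\cZ(\gr)}$ is invertible in degree $0$. For $\gr \in \trn^{*}$ with $l(\gr) \geq 3$, Corollary \ref{cor:support-coh}(iii) gives concentration in degree $0$ with support $\cZ(\gr)$, while Lemma \ref{lem:bad-sequence}(ii) says $\pr(\gr)|_{Y(\gr)}$ is birational, so $R^{0}\pr(\gr)_{*}\omega_{Y(\gr)}$ has generic rank $1$.

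The heart is the dichotomy between $l(\gr)=3$ and $l(\gr)\ge 4$ in $\trn^{*}$. When $l(\gr) \geq 4$, Proposition \ref{prop:non-gor}(ii) gives rank $\geq 2$ at every point of $V(N(\gr))$, so the sheaf fails to be invertible precisely there. When $l(\gr) = 3$, the range $0 \leq a \leq l(\gr)-3$ in Lemma \ref{lem:coh-L1}(iii) collapses to $a=0$, leaving only the single term $E_{1}^{-2,2} = R^{2}\Gamma(\omega_{B(\gr)})$ contributing to degree $0$ in the spectral sequence of Corollary \ref{cor:ss}; the potential differentials in and out vanish by Lemma \ref{lem:coh-L1}(iii), yielding
\begin{equation*}
R^{0}\Gamma(\omega_{Y(\gr)}) \;\cong\; R^{2}\Gamma(\omega_{B(\gr)}).
\end{equation*}
A K\"unneth computation on $B(\gr)$ from Definition \ref{defn:trn-schemes}, with $\omega_{B(\gr)} = p_{i-k}^{*}\cO(-2) \otimes q_{i}^{*}\cO(-2)$ by Lemma \ref{lem:Baj}(ii), using Lemma \ref{lem:Rpr-dualizing} (each outer $\PP^{1}$-factor of $\cO(-2)$ contributes a rank-$1$ module in degree $1$, while the class-$3$ middle factor contributes $\Gamma(Z_{i-k+1})$ in degree $0$), identifies $R^{2}\Gamma(\omega_{B(\gr)})$ as a free rank-$1$ module over $\Gamma(Z_{\tilB}(\gr))$. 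A dimension count showing $\dim Y(\gr) = \dim B(\gr)-2 = \dim Z_{\tilB}(\gr)$ in each of the sub-cases $(T_{i-k},T_{i}) \in \{(1,1),(1,5),(2,1),(2,5)\}$, together with irreducibility of $Z_{\tilB}(\gr)$ (which reduces to $\F[B,C,D]/(D^{2}+BC)$ being an integral domain), forces $\cZ(\gr) = Z_{\tilB}(\gr)$ scheme-theoretically; consequently $R^{0}\pr(\gr)_{*}\omega_{Y(\gr)}$ is invertible on $\cZ(\gr)$.

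Reassembling via K\"unneth, $R\pr_{*}\omega_{\tilYz_S}$ is always concentrated in degree $0$ under the stated hypothesis, and is invertible iff every factor is, which by the above holds iff $l(\gr) = 3$ for every $\gr \in \trn^{*}$. The rank at a point of $\tilZz_S$ factors as a product over $\gr$, so the rank-$\geq 2$ locus is the union, over $\gr \in \trn^{*}$ with $l(\gr) \geq 4$, of the preimage of $V(N(\gr))$ under the projection to the $\gr$-th factor, matching the stated description. The main obstacle will be making the derived K\"unneth reduction precise when the target $\tilZz_S$ is not smooth, and rigorously pinning down the scheme-theoretic identification $\cZ(\gr) = Z_{\tilB}(\gr)$ for $l(\gr)=3$ so that the free rank-$1$ description over $\Gamma(Z_{\tilB}(\gr))$ transfers faithfully to invertibility over $\Gamma(\cZ(\gr))$.
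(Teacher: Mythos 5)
Your proof is correct and takes essentially the same approach as the paper: decompose via the product (\ref{eqn:Y-as-prod}), invoke Corollary \ref{cor:support-coh}, Lemma \ref{lem:bad-sequence} and Proposition \ref{prop:non-gor} on the factors, and reassemble via the K\"unneth formula, with the K\"unneth computation carried out at the level of $R\Gamma$ over the affine target rather than over $\tilZz_S$ itself (which dissolves the concern you raise at the end). The only notable variation is in the $l(\gr)=3$ case, where you identify $\cZ(\gr)$ with $Z_{\tilB}(\gr)$ via a dimension count plus irreducibility and reducedness of $\F[B,C,D]/(D^{2}+BC)$; the paper's Proposition \ref{prop:non-gor-truncs} reaches the same conclusion by observing that the free rank--$1$ module $R^{0}\Gamma(\omega_{Y(\gr)})$ over $\Gamma(B(\gr))$ must have support contained in $\cZ(\gr)$, so $\cZ(\gr) = \Spec\Gamma(B(\gr))$ -- both arguments are valid.
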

\begin{proof}

 Since the codomain of $\pr$ is affine, $R \pr_{*} \omega_{\tilYz_S}$ is the sheaf associated to the module $R \Gamma (\omega_{\tilYz_S})$, with support necessarily contained in the scheme--theoretic image $\tilZz_S$ of $\tilYz_S$.
    By (\ref{eqn:Y-as-prod}) and the K\"unneth formula, we have
$$R^n \Gamma \left(\omega_{\tilYz_S} \right) = \bigoplus_{(m_{\gr})_{\gr}, \sum m_{\gr} = n} \left(\bigotimes_{\gr \in \trn} R^{m_\gr} \Gamma \left(\omega_{Y(\gr)} \right) \right).$$

When $\gr \in \trn \smallsetminus \trn^{*}$, Lemma \ref{lem:bad-sequence} shows that $Y(\gr)$ is isomorphic to its scheme--theoretic image under $\pr(\gr)$. Since $Y(\gr)$ is Gorenstein by Lemma \ref{lem:koszul}, $R \Gamma (\omega_{Y(\gr)})$ is concentrated in degree $0$ and of constant rank $1$ on $\cZ(\gr)$.

On the other hand, when $\gr \in \trn^{*}$, Lemma \ref{lem:bad-sequence} shows that $\dim \tilYz_S = \dim \tilZz_S$ implies that for each $\gr \in \trn^{*}$, $l(\gr) \geq 3$. Corollary \ref{cor:support-coh} thus shows that $R \Gamma(\omega_{Y(\gr)})$ is concentrated in degree $0$. The same argument as in the previous paragraph shows that $R \Gamma (\omega_{Y(\gr)})$ has constant rank $1$ on $\cZ(\gr) \smallsetminus V(N(\gr))$. If $l(\gr) = 3$, Lemmas \ref{lem:Rpr-dualizing} and \ref{lem:coh-L1} imply that $R^{0} \Gamma(\omega_{Y(\gr)})$ is free of rank $1$ over $\Gamma(B(\gr))$. An application of Proposition \ref{prop:non-gor} finishes the proof.
\end{proof}

\begin{prop}\label{prop:non-gor-truncs}
Suppose $\gr = (i-k, \dots, i) \in \trn^{*}$ with $l(\gr) = 3$. Then $\cZ(\gr)$ is a local complete intersection, with singular locus given precisely by $V(N(\gr))$.
\end{prop}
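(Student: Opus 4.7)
The plan is to identify $\cZ(\gr)$ explicitly with the ambient affine scheme $Z_{\tilB}(\gr)$, and then read off the claims from the known product structure of the latter.

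Write $\gr = (i-2,\, i-1,\, i)$ so that $T_{i-2}\in\{1,2\}$, $T_{i-1}=3$, and $T_i\in\{1,5\}$; hence by Definition~\ref{defn:trn-schemes},
\[
Z_{\tilB}(\gr) \;\cong\; \GL_2 \;\times\; \bigl(Z_{i-1}\times \GL_2\bigr) \;\times\; Z_i,
\]
where, by Lemma~\ref{lemma:shape-conditions}, $Z_{i-1}\cong\Spec\F[B,C,D]/(D^2+BC)$ is an integral quadric cone and $Z_i$ is either $\Spec\F$ or $\AA^1$.

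First I would verify that the proper, scheme--theoretically dominant map $\pr(\gr)\colon Y(\gr)\to Z_{\tilB}(\gr)$ is surjective on points. Unpacking the classification of Section~\ref{subsec:classfn}: since $T_{i-2}\in\{1,2\}$, the parameter $l_{i-2}\in\PP^1$ is free; since $T_i\in\{1,5\}$, the parameter $r_i\in\PP^1$ is free; and for the class~3 factor $\widetilde{Ba}_{i-1}(\widetilde{z}_{i-1})_{S_{i-1}}$ one has $p_{i-1}=q_{i-1}$, so $l_{i-1}=r_{i-1}$, a common element of $\PP^1$ that is determined by $(B,C,D)$ outside $V(N_{i-1})$ and free on $V(N_{i-1})$. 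Thus, given any point of $Z_{\tilB}(\gr)$, the two gluing relations $l_{i-2}=r_{i-1}$ and $l_{i-1}=r_i$ defining $Y(\gr)\subset B(\gr)$ can always be solved. Since $Z_{\tilB}(\gr)$ is reduced (a product of smooth schemes with the integral quadric cone $Z_{i-1}$), any scheme--theoretically dominant morphism to it which is surjective on points has scheme--theoretic image equal to all of $Z_{\tilB}(\gr)$. Hence $\cZ(\gr)=Z_{\tilB}(\gr)$.

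The remaining statements are then immediate from the explicit description. The factors $\GL_2$, $\GL_2$, and $Z_i$ are smooth, while the quadric cone $Z_{i-1}$ is a hypersurface in $\AA^3$, hence a global complete intersection; a product of locally complete intersections is itself lci, so $\cZ(\gr)$ is lci. Its singular locus equals the preimage of the singular locus of the unique non--smooth factor $Z_{i-1}$, which since $p>3$ is the isolated vertex $V(B,C,D)$ of the quadric cone. Therefore $\cZ(\gr)$ is singular precisely along $V(N(\gr))$.

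The only nontrivial step is the identification $\cZ(\gr)=Z_{\tilB}(\gr)$; as an alternative to the direct point--counting argument, one could extract it from Lemma~\ref{lem:coh-L1}(iii) applied with $l(\gr)=3$, which shows that $R^0\Gamma(\omega_{Y(\gr)})$ is free of rank one over $\Gamma(B(\gr))$, forcing the support of $\pr(\gr)_*\omega_{Y(\gr)}$ to be all of $Z_{\tilB}(\gr)$.
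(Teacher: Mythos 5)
Your proof is correct, and the key step—identifying $\cZ(\gr)$ with all of $Z_{\tilB}(\gr)$—is obtained by a genuinely different route than the paper's. The paper deduces this from the preceding cohomology computation: since $R^0\Gamma(\omega_{Y(\gr)})$ is free of rank one over $\Gamma(B(\gr))$, its support is all of $\Spec\Gamma(B(\gr))$, while that support is also contained in the scheme--theoretic image $\cZ(\gr)$, forcing equality. You instead argue directly at the level of moduli: the classification of Section~\ref{subsec:classfn} (or equivalently the bulleted observations in the proof of Lemma~\ref{lem:bad-sequence}) shows that the two gluing equations $l_{i-2}=r_{i-1}$ and $l_{i-1}=r_i$ cutting out $Y(\gr)$ inside $B(\gr)$ can always be solved over any point of $Z_{\tilB}(\gr)$, so the proper map $Y(\gr)\to Z_{\tilB}(\gr)$ is surjective; since $Y(\gr)$ is reduced (indeed smooth by Lemma~\ref{lem:koszul}) and $Z_{\tilB}(\gr)$ is reduced, the scheme--theoretic image must be everything. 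Your direct argument is more elementary and independent of the Koszul/dualizing-complex machinery, while the paper's argument is essentially free once Proposition~\ref{prop:coh-dualizing-sheaf} is in hand. The remainder of your proof (lci from the quadric-cone factorization, singular locus from the Jacobian) matches the paper's computation with the sheaf of differentials. Two small imprecisions worth noting, neither of which affects the argument: for the class-$3$ factor the internal relation is $l_{i-1}\kappa_{i-1}=r_{i-1}$ rather than $l_{i-1}=r_{i-1}$, since $p_{i-1}$ extracts the twisted coordinate $l_{i-1}\kappa_{i-1}$ under the decomposition~\eqref{eqn:Baj-decomp-std}; and the statement that the quadric cone is singular exactly at the vertex needs only $p\neq 2$, not $p>3$ (though of course $p>3$ is assumed throughout, so this is harmless).
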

\begin{proof}
  When $l(\gr) = 3$, since $R^{0} \Gamma(\omega_{Y(\gr)})$ is supported on $\Spec \Gamma (B(\gr))$ and the support must be contained in $\cZ(\gr) \subset \Spec \Gamma(B(\gr))$, we obtain an isomorphism
  $$\cZ(\gr) \cong \Spec \Gamma (B(\gr)) \cong \GL_2 \times (Z_{i-1} \times \GL_2) \times Z_i$$ where $Z_i$ is smooth and $Z_{i-1} \cong \F[B, C, D]/(D^2 + BC)$.
  The latter is a local complete intersection, with the sheaf of differentials locally free of rank $2$ away from the vanishing locus of $N(\gr)$ and of rank $3$ upon restriction to $V(N(\gr))$.
\end{proof}

Let $\cZ^{\tau, \mathrm{nm}}_S$ be the normalization of $\cZ^{\tau}$, as described in \cite[Appendix~A]{ascher2018moduli}. Since $\Y_S$ is smooth, the map $\Y_S \to \cZ^{\tau}_S$ factors through $\cZ^{\tau, \mathrm{nm}}_S$ and we have the following Cartesian diagram
\begin{equation}\label{diagram:nm}
\begin{tikzcd}
    \tilY_{S} \arrow[r] \arrow[d] & \tilZnm_S \arrow[d] \arrow[r] & \tilZ_S \arrow[d] \\
    \Y_S \arrow[r] & \cZ^{\tau, \mathrm{nm}}_S \arrow[r] & \cZ^{\tau}_S
\end{tikzcd}
\end{equation}
where the vertical arrows are $\GL_2^{\Z/f\Z}$--torsors and the scheme $\tilZnm_S$ is the normalization of $\tilZz_S$. It admits an open cover by the normalizations $\tilZnmz_S$ of $\tilZz_S$.
\begin{theorem}\label{thm:sing}
    Suppose $\tilY_S \xrightarrow{\widetilde{\pi}_S} \tilZ_S$ is birational. The following are true:
    \begin{enumerate}
        \item The scheme $\tilZz_S$ is normal if and only if there exists $j \in \Z/f\Z$ with $T_j \neq 3$.
        \item The scheme $\tilZnmz_S$ is resolution--rational. It is Gorenstein if and only if 
        \begin{itemize}
        \item there exists $j \in \Z/f\Z$ with $T_j \neq 3$, and
        \item $l(\gr) = 3$ for each $\gr \in \trn^{*}$.
        \end{itemize}
        When these conditions are met, it is in fact a local complete intersection.
    \end{enumerate}
\end{theorem}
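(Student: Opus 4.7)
I address the two parts in turn.

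For (i), Lemma~\ref{lem:structure-sh-coh-vs-I}(iii) identifies the cokernel of $\O_{\tilZz_S} \hookrightarrow \pr_*\O_{\tilYz_S}$ with $R^1\pr_{\tilB*}\cI(\tilz)$, and by Corollary~\ref{cor:support-coh}(1) this cokernel vanishes precisely when some $T_j \neq 3$. Since $\tilYz_S$ is smooth (Lemma~\ref{lem:smooth-irred}) and $\pr$ is proper and birational with reduced target, the finite $\O_{\tilZz_S}$--algebra $\pr_*\O_{\tilYz_S}$ is the normalization of $\O_{\tilZz_S}$; hence $\tilZz_S$ is normal iff some $T_j \neq 3$.

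For (ii), I verify that $\pi_S^{\mathrm{nm}}: \tilYz_S \to \tilZnmz_S$ is a cohomological equivalence in the sense of \cite[Defn.~9.1]{kovacs2017rational}. The $\O$--conditions come from the Stein factorization and from Proposition~\ref{prop:coh-str-sheaf}, transferred across the finite exact $g_*$ for the normalization $g: \tilZnmz_S \to \tilZz_S$. For the $\omega$--conditions, the K\"unneth decomposition \eqref{eqn:Y-as-prod} together with Corollary~\ref{cor:support-coh}(2),(3) and Lemma~\ref{lem:Baj} yields $R^i\pr_*\omega_{\tilYz_S}=0$ for $i>0$, which likewise transfers to $R^i(\pi_S^{\mathrm{nm}})_*\omega_{\tilYz_S}=0$; the identification $(\pi_S^{\mathrm{nm}})_*\omega_{\tilYz_S} = \omega_{\tilZnmz_S}$ is obtained by observing both sides are coherent $S_2$--sheaves on the normal scheme $\tilZnmz_S$ that agree under the Grothendieck trace over the smooth locus where $\pi_S^{\mathrm{nm}}$ is an isomorphism, and invoking $S_2$--extension.

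Resolution--rationality renders $\tilZnmz_S$ Cohen--Macaulay, so Gorenstein is equivalent to invertibility of $\omega_{\tilZnmz_S}$. When some $T_j \neq 3$, part (i) gives $\tilZnmz_S = \tilZz_S$, and invertibility of $\omega_{\tilZz_S} = R^0\pr_*\omega_{\tilYz_S}$ is the ``iff'' of Proposition~\ref{prop:coh-dualizing-sheaf}; the obstruction to invertibility when some $\gr \in \trn^*$ has $l(\gr) \geq 4$ is Proposition~\ref{prop:non-gor}(2). When all $T_j = 3$, the short exact sequence of Lemma~\ref{lem:structure-sh-coh-vs-I}(iii) combined with the rank--one freeness of $R^1\pr_{\tilB*}\cI(\tilz)$ over $\Gamma(\tilBa_S)/N$ from Corollary~\ref{cor:support-coh}(1) bounds the fiber dimension of $g_*\O_{\tilZnmz_S} = \pr_*\O_{\tilYz_S}$ at $(\tilz_j)_j$ by $2$; if $\omega_{\tilZnmz_S}$ were invertible then $g_*\omega_{\tilZnmz_S}$ and $g_*\O_{\tilZnmz_S}$ would have equal fiber dimensions at every point, contradicting the lower bound of $3$ supplied by Proposition~\ref{prop:non-gor}(1). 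Finally, under the Gorenstein hypotheses, the product isomorphism $\tilZz_S \cong \prod_{\gr \in \trn}\cZ(\gr)$, together with Proposition~\ref{prop:non-gor-truncs} (for $l(\gr)=3$ factors in $\trn^*$) and Lemma~\ref{lem:bad-sequence} (giving smoothness of the remaining factors), exhibits $\tilZnmz_S$ as a product of lci schemes.

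The main technical obstacle will be the $S_2$--extension identification $(\pi_S^{\mathrm{nm}})_*\omega_{\tilYz_S} = \omega_{\tilZnmz_S}$: one needs to know that the left--hand side is $S_2$ on $\tilZnmz_S$ (not just on the smooth locus), which requires a further duality or depth computation beyond what the preceding propositions directly give; the careful bookkeeping of fiber dimensions across the non--flat normalization $g$ in the all--$T_j=3$ case is a secondary subtlety.
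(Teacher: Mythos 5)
Your overall structure matches the paper's: part (i) via the cokernel $R^1\pr_{\tilB*}\cI(\tilz)$ and Corollary~\ref{cor:support-coh}(i), and the Gorenstein/lci dichotomy via the rank--two bound on $\pr_*\O_{\tilYz_S}$ against the rank $\ge 3$ lower bound in Proposition~\ref{prop:non-gor}(i) when $T_j=3$ for all $j$, versus Propositions~\ref{prop:coh-dualizing-sheaf} and~\ref{prop:non-gor-truncs} when some $T_j\neq 3$. The one genuine gap is exactly the one you flag yourself: the identification $(\widetilde{\pi}^{\mathrm{nm}})_*\omega_{\tilYz_S}\cong\omega_{\tilZnmz_S}$ cannot be obtained by ``agree on the smooth locus plus $S_2$--extension'' unless you first establish that the pushforward $(\widetilde{\pi}^{\mathrm{nm}})_*\omega_{\tilYz_S}$ is itself $S_2$ on the target, and that is precisely what you do not prove. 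It is not an incidental subtlety: in general $f_*\omega_X$ for proper birational $f$ to a normal $Y$ is only known to inject into $\omega_Y$, and equality is not a formal consequence of agreement on a big open.

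The paper avoids this entirely. Once $R\pr_*\O_{\tilYz_S}\cong\O_{\tilZnmz_S}$ is in hand (Proposition~\ref{prop:coh-str-sheaf}, equation~(\ref{eqn:pushforward-str})), one sets $\omega^{\bullet}_{\tilYz_S}:=(\widetilde{\pi}^{\mathrm{nm}})^{!}\omega^{\bullet}_{\tilZnmz_S}$ and applies Grothendieck duality to the Stein factorization:
\begin{equation*}
R\pr_*\omega^{\bullet}_{\tilYz_S}\;\cong\;R\sheafHom_{\O_{\tilZnmz_S}}\bigl(R\pr_*\O_{\tilYz_S},\,\omega^{\bullet}_{\tilZnmz_S}\bigr)\;\cong\;\omega^{\bullet}_{\tilZnmz_S}.
\end{equation*}
Taking bottom cohomology gives $\pr_*\omega_{\tilYz_S}\cong\omega_{\tilZnmz_S}$ unconditionally, with no $S_2$ argument needed, and combining with the higher--cohomology vanishing of Corollary~\ref{cor:support-coh} (as you note, obtained via K\"unneth over $\trn$) yields all four defining conditions of resolution--rationality simultaneously. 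I'd suggest replacing your $S_2$--extension step by this duality computation; your $\O$--conditions, Gorenstein obstruction, and lci deduction are then fine as stated.
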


\begin{proof}
By properness of $\widetilde{\pi}_S$, it admits a Stein factorization
\begin{align}\label{eqn:Stein-fac}
\tilY_S \xrightarrow{\widetilde{\pi}^{\mathrm{nm}}} \relSpec{\widetilde{\pi}_{*} \O_{\tilY_S}} \cong \tilZnm_S \to \tilZ_S\end{align} where $\relSpec{(\widetilde{\pi}_S)_{*} \O_{\tilY_S}} \cong \tilZnm_S$ because $\widetilde{\pi}_S$ is birational with smooth domain. Pulling back along the open immersion $$\tilZnmz_S \hookrightarrow \tilZnm_S$$ gives the Stein factorization of $\pr$:
$$\pr: \tilYz_S \xrightarrow{\widetilde{\pi}^{\mathrm{nm}}} \relSpec{\pr_{*} \O_{\tilYz_S}} \cong \tilZnmz_S \to \tilZz_S \xhookrightarrow{\iota} \Uz.$$
The corresponding map on sheaves is 
\begin{align}\label{eqn:map-on-functions}
    \cO_{\Uz} \onto \iota_{*}\cO_{\tilZz_S} \to \pr_{*} \O_{\tilYz_S}
\end{align}
with cokernel $R^{1}\pr_{\tilB*}\cI(\tilz)$ by Lemma \ref{lem:structure-sh-coh-vs-I}(iii). Thus, the map $\tilZnmz_S \to \tilZz_S$ fails to be an isomorphism if and only if $R^{1}\pr_{\tilB*} \cI(\tilz) \neq 0$. An application of Corollary \ref{cor:support-coh}(i) finishes the proof of the first part.


For the second statement, the proof proceeds in the same way as that of \cite[Thm.~4.6.6]{lhmm}. By
 Proposition \ref{prop:coh-str-sheaf},
   \begin{align}\label{eqn:pushforward-str}
    R\pr_{*} \O_{\tilYz_S} = \O_{\tilZnmz_S}.\end{align}
    Let $\omega^{\bullet}_{\tilZnmz_S}$ be a dualizing complex of $\tilZnmz_S$. Then, by \cite[\href{https://stacks.math.columbia.edu/tag/0BZL}{Tag 0BZL}]{stacks-project},
    $$\omega^{\bullet}_{\tilYz_S} := (\widetilde{\pi}^{\mathrm{nm}})^{!} \omega^{\bullet}_{\tilZnmz_S}$$ is a dualizing complex of $\tilYz_S$. 
    We have
    \begin{align*}
        R \pr_{*} \omega^{\bullet}_{\tilYz_S} &\cong R\pr_{*} R \sheafHom_{\O_{\tilYz_S}}(\O_{\tilYz_S}, \omega^{\bullet}_{\tilYz_S}) \\
        & \cong R\sheafHom_{\O_{\tilZnmz_S}}(R \pr_{*}\O_{\tilYz_S}, \omega^{\bullet}_{\tilZnmz_S}) \\
        & \cong R\sheafHom_{\O_{\tilZnmz_S}}(\O_{\tilZnmz_S}, \omega^{\bullet}_{\tilZnmz_S}) \\
        & \cong \omega^{\bullet}_{\tilZnmz_S}.
    \end{align*}
The first and last isomorphisms follow from the fact that sheaf homomorphisms from the structure sheaf to any sheaf $\cF$ are isomorphic to $\cF$, the second from Grothendieck duality (see for e.g. \cite[\href{https://stacks.math.columbia.edu/tag/0AU3}{Tag 0AU3}]{stacks-project}) and the third from (\ref{eqn:pushforward-str}). The scheme $\tilZnmz_S$ is thus resolution--rational by Corollary \ref{cor:support-coh} and Proposition \ref{prop:coh-str-sheaf}.

If $T_j=3$ for each $j \in \Z/f\Z$, the cokernel of (\ref{eqn:map-on-functions}) is free of rank $1$ over its support by Corollary \ref{cor:support-coh}(i). Therefore, the rank of the normalization of $\Gamma(\tilZz_S)$ is at most $2$ after specialization to any point of $\tilZz_S$. An identical argument as in the proof of Corollary \ref{cor:support-coh}(ii) shows that the dualizing sheaf of $\tilZnmz_S$ is supported everywhere. A consideration of ranks in Proposition \ref{prop:non-gor}(i) thus implies that $\tilZnmz_S$ is not Gorenstein.

On the other hand, if $T_j \neq 3$ for some $j$, then Lemma \ref{lem:bad-sequence} shows that $$\tilZz_S \smallsetminus \bigcup_{\gr \in \trn^{*}} V(N(\gr))$$ is smooth. Thus, it only remains to prove the desired statements when $\trn^{*} \neq \varnothing$. In this case, Propositions \ref{prop:coh-dualizing-sheaf} and \ref{prop:non-gor-truncs} settle the proof while showing that the singular locus is precisely $\bigcup_{\gr \in \trn^{*}} V(N(\gr))$.
\end{proof}

\begin{corollary}\label{cor:loc-non--normal}
    The versal ring at a point $\rhobar \in \cZ^{\tau}_S(\Fbar)$ is not normal if and only if it admits a lift to a point of $\tilZz_S$ for some $\tilz = (\tilz_j)$ with each $\tilz_j \in \{w_0 t_{\eta}, t_{w_0 (\eta)}\} s_j^{-1} v^{\mu_j}$, such that $T_j = 3$ for each $j \in \Z/f\Z$ and the lift lies in $V(N) \subset \tilZz_S$.
\end{corollary}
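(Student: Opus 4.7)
The plan is to reduce normality of the versal ring at $\rhobar$ to normality of the local rings at preimages on the smooth-local charts $\tilZz_S$, and then import the description of the non-normal locus that is implicit in the proof of Theorem~\ref{thm:sing}(i).

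First, I would set up the smooth cover. Diagram~(\ref{diagram:nm}) exhibits $\tilZ_S \to \cZ^{\tau}_S$ as a $\GL_2^{\Z/f\Z}$-torsor (pulled back from $\widetilde{\cZ}^{\tau, 1} \to \cZ^{\tau, 1}$), hence a smooth surjection. The Zariski open cover of $\tilZ$ by the $\tilZz$ stated just after Proposition~\ref{prop:irred-Gr}, restricted to the closed subscheme $\tilZ_S$, shows that the $\tilZz_S$ with $\tilz = (\tilz_j)_j$ ranging over tuples $\tilz_j \in \{w_0 t_{\eta}, t_{w_0(\eta)}\} s_j^{-1} v^{\mu_j}$ form a Zariski open cover of $\tilZ_S$. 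Consequently any $\rhobar \in \cZ^{\tau}_S(\Fbar)$ admits at least one lift into some $\tilZz_S$, and since smooth morphisms ascend and reflect normality of completed local rings, the versal ring at $\rhobar$ is normal if and only if the local ring at any (equivalently, every) such lift is normal.

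Next, I would pin down the non-normal locus of each chart. The proof of Theorem~\ref{thm:sing}(i) combines the Stein factorization of $\widetilde{\pi}_S$ with Lemma~\ref{lem:structure-sh-coh-vs-I}(iii) to identify the cokernel of $\cO_{\tilZz_S} \to \pr_{*}\cO_{\tilYz_S}$, which measures the failure of the normalization $\tilZnmz_S \to \tilZz_S$ to be an isomorphism, with $R^{1}\pr_{\tilB *}\cI(\tilz)$. By Corollary~\ref{cor:support-coh}(i) this sheaf vanishes unless $T_j = 3$ for every $j \in \Z/f\Z$, and when all $T_j = 3$ it is nonzero and supported precisely on $V(N) \subset \tilZz_S$. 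Thus the non-normal locus of $\tilZz_S$ is empty when some $T_j \neq 3$, and equals $V(N)$ when every $T_j = 3$.

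Assembling the two inputs yields the corollary in both directions. If some lift of $\rhobar$ lies in $V(N) \subset \tilZz_S$ for a $\tilz$ with $T_j = 3$ for all $j$, then the local ring at that lift is non-normal, so the versal ring at $\rhobar$ is non-normal. Conversely, if the versal ring at $\rhobar$ is non-normal, then for any lift $\tilde\rho$ in any $\tilZz_S$ the local ring at $\tilde\rho$ is non-normal, which by the locus description above forces $T_j = 3$ for every $j$ in that chart and $\tilde\rho \in V(N)$. The only delicate point is the standard smooth ascent/descent of normality for completed local rings, relating the versal ring at $\rhobar$ to the local rings at its lifts in the chart; once that is invoked, everything else is bookkeeping with the cover and the support computation.
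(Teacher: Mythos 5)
Your proposal is correct and follows essentially the same route as the paper's proof: reduce to the smooth-local charts $\tilZz_S$ via the $\GL_2^{\Z/f\Z}$-torsor and the Zariski cover, then identify the non-normal locus on each chart with the support of $R^1\pr_{\tilB*}\cI(\tilz)$ via the Stein factorization from Theorem~\ref{thm:sing}(i) together with Lemma~\ref{lem:structure-sh-coh-vs-I}(iii), and conclude by Corollary~\ref{cor:support-coh}(i). One point the paper makes explicit that you gloss over: Corollary~\ref{cor:support-coh}(i) only locates the support as $V(N) \subset \Spec\Gamma(\tilBa_S)$, so one needs the extra observation that $R^1\pr_{\tilB*}\cI(\tilz)$, being the cokernel of $\cO_{\Uz} \to \pr_*\cO_{\tilYz_S}$ and the latter supported on the scheme-theoretic image $\tilZz_S$, is in fact supported inside $\tilZz_S$; your assertion ``supported precisely on $V(N) \subset \tilZz_S$'' implicitly uses this but should cite the reason.
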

\begin{proof}
By the proof of Theorem \ref{thm:sing}, the versal ring at $\rhobar$ is not normal if and only if for some $\tilz = (\tilz_j)$ with each $\tilz_j \in \{w_0 t_{\eta}, t_{w_0 (\eta)}\} s_j^{-1} v^{\mu_j}$, $\rhobar$ lifts to a point of $\tilZz_S$ that lies in the support of $R^{1} \pr_{\tilB*} \cI(\tilz)$. The desired statement then follows from Corollary \ref{cor:support-coh} along with the additional observation that being the cokernel of the map in (\ref{eqn:map-on-functions}), $R^{1}\pr_{\tilB*}\cI(\tilz)$ is necessarily supported in $\tilZz_S \subset \widetilde{U}(\tilz)$.
\end{proof}

\begin{cor}\label{cor:loc-non-CM}
     Suppose $\dim \cZ^{\tau}_S = f$. The versal ring at a point $\rhobar \in \cZ^{\tau}_S(\Fbar)$ is normal but not smooth if and only if it admits a lift to a point of $\tilZz_S$ for some $\tilz = (\tilz_j)$ with each $\tilz_j \in \{w_0 t_{\eta}, t_{w_0 (\eta)}\} s_j^{-1} v^{\mu_j}$, such that $\trn^{*} \neq \varnothing$, and the lift lies in $$\bigcup_{\gr \in \trn^{*}}V(N(\gr)) \subset \tilZz_S.$$ If the lift further lies in $\bigcup_{\{\gr \in \trn^{*} | l(\gr) > 3\}}V(N(\gr))$, then the versal ring is Cohen--Macaulay but not Gorenstein, otherwise it is a local complete intersection.
\end{cor}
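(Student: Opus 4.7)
The plan is to adapt the argument of Corollary \ref{cor:loc-non--normal}, now detecting failure of smoothness in place of failure of normality and then sub-dividing the resulting singular locus into lci and merely Cohen--Macaulay points. Since the right-hand vertical arrow in (\ref{diagram:nm}) is a smooth $\GL_2^{\Z/f\Z}$-torsor and the open subschemes $\tilZz_S$ cover $\tilZ_S$ Zariski-locally, the versal ring at $\rhobar$ agrees, up to a smooth factor, with the versal ring at a suitable lift in some $\tilZz_S$; normality, smoothness, Cohen--Macaulayness, Gorensteinness, and the lci property are all preserved under smooth morphisms, so it suffices to analyze the local structure of the charts $\tilZz_S$.

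First I would verify that the hypothesis $\dim \cZ^\tau_S = f$ forces $\widetilde{\pi}_S$ to be birational, which makes Theorem \ref{thm:sing} available. The $\GL_2^{\Z/f\Z}$-torsor structure gives $\dim \tilZ_S = 5f$, matching $\dim \tilY_S$ as computed from the smooth cover of Lemma \ref{lem:smooth-irred}, and a proper scheme-theoretically dominant map between irreducible reduced stacks of the same dimension is birational. With birationality in place, the second half of the proof of Theorem \ref{thm:sing} identifies the singular locus of a normal chart $\tilZz_S$ as $\bigcup_{\gr \in \trn^{*}} V(N(\gr))$. Note that $\trn^* \neq \varnothing$ automatically forces some $T_j \neq 3$, since the endpoints of any $\gr \in \trn^*$ satisfy $T_{i-k}, T_i \neq 3$, so the normality condition of Theorem \ref{thm:sing}(i) is subsumed by $\trn^* \neq \varnothing$. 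Hence the versal ring at the lift is normal but not smooth precisely when $\trn^* \neq \varnothing$ and the lift lies in $\bigcup_{\gr \in \trn^*} V(N(\gr))$, which is exactly the first equivalence of the corollary.

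Finally, to refine into lci versus Cohen--Macaulay-not-Gorenstein, I would exploit the product decomposition $\tilZz_S \cong \prod_{\gr \in \trn} \cZ(\gr)$. For $\gr \in \trn \smallsetminus \trn^*$, Lemma \ref{lem:bad-sequence}(i) together with smoothness of $\tilYz_S$ shows that $\cZ(\gr)$ is smooth. For $\gr \in \trn^*$ with $l(\gr) = 3$, Proposition \ref{prop:non-gor-truncs} gives that $\cZ(\gr)$ is lci with singular locus precisely $V(N(\gr))$. Thus if the lift lies in $V(N(\gr))$ only for $\gr$ with $l(\gr) = 3$, the product is lci at the lift. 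On the other hand, if the lift lies in $V(N(\gr_0))$ for some $\gr_0 \in \trn^*$ with $l(\gr_0) > 3$, Proposition \ref{prop:non-gor}(ii) shows that the dualizing module $R^0 \Gamma(\omega_{Y(\gr_0)})$ has rank $\geq 2$ at the $\gr_0$-component of the lift, so $\cZ(\gr_0)$ fails to be Gorenstein there, and hence so does the full product; nevertheless the versal ring remains Cohen--Macaulay because $\tilZnmz_S$ is resolution-rational by Theorem \ref{thm:sing}(ii) and coincides with $\tilZz_S$ at normal points. The main delicacy I expect is tracking the product decomposition carefully enough to read off the Gorenstein/lci dichotomy factor-by-factor when the lift has singular behavior in several factors at once, together with pinning down birationality from the dimension hypothesis.
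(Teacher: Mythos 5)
Your proposal reconstructs the paper's argument along essentially the same lines — reduce to the affine charts $\tilZz_S$, establish birationality of $\widetilde{\pi}_S$ from the dimension hypothesis so that Theorem~\ref{thm:sing} applies, then split into the lci and CM-not-Gorenstein cases using the factor-wise decomposition $\tilZz_S \cong \prod_{\gr\in\trn}\cZ(\gr)$, Proposition~\ref{prop:non-gor-truncs} and Proposition~\ref{prop:non-gor}(ii). The observation that $\trn^*\neq\varnothing$ already forces some $T_j\neq 3$, so that the normality clause of Theorem~\ref{thm:sing}(i) is automatic, is correct and saves a redundant case split.

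There is, however, one genuine gap. You assert that ``a proper scheme-theoretically dominant map between irreducible reduced stacks of the same dimension is birational,'' and use this to deduce birationality of $\widetilde{\pi}_S$ from $\dim\cZ^\tau_S = f$. That general statement is false: $\PP^1\to\PP^1$, $z\mapsto z^d$ with $d>1$, is proper, scheme-theoretically dominant, equidimensional, and not birational. What actually carries the argument in this setting is the explicit fiber description in Lemma~\ref{lem:bad-sequence} (and Lemma~\ref{lem:birational-T3} when $T_j=3$ for all $j$): each factor map $Y(\gr)\to\cZ(\gr)$ is a monomorphism for $\gr\notin\trn^*$, a $\PP^1$-torsor for $\gr\in\trn^*$ with $l(\gr)=2$, and birational for $\gr\in\trn^*$ with $l(\gr)>2$. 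The dimension hypothesis (applied factor-by-factor, using $\dim\tilYz_S = \sum_\gr\dim Y(\gr)$ and $\dim\tilZz_S = \sum_\gr\dim\cZ(\gr)$) rules out the $\PP^1$-torsor case, after which each factor and hence the product is birational. With that substitution for your appeal to the false general principle, the rest of the argument goes through as you describe, including the point that $\tilZz_S$ remains Cohen--Macaulay at lci and non-Gorenstein points alike because it coincides with its resolution-rational normalization once $\trn^*\neq\varnothing$.
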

\begin{proof}
    Follows immediately from Propositions \ref{prop:coh-dualizing-sheaf} and \ref{prop:non-gor-truncs}, and the proof of Theorem \ref{thm:sing}(ii).
\end{proof}

\begin{cor}\label{cor:R1}
    The stack $\cZ^{\tau}_S$ is either normal or its non--normal locus is a closed substack of codimension $f$ whose preimage in $\Y_S$ also has codimension $f$. In the non--normal case, the complement of the non--normal locus is a smooth open substack isomorphic to its preimage in $\Y_S$.
\end{cor}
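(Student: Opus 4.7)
The plan is to combine Corollary \ref{cor:loc-non--normal} with a structural observation tying the shape tuple $S$ to the possible class tuples $T$, then perform an explicit dimension count on $V(N)$ inside $\tilYz_S$ and $\tilZz_S$, and descend through the torsor diagram (\ref{diagram:nm}).

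The key first observation is that if $\cZ^{\tau}_S$ is not normal, then by Corollary \ref{cor:loc-non--normal} there exists $\tilz$ with $T_j = 3$ for every $j \in \Z/f\Z$. The classification in Section \ref{subsec:classfn} then forces $S_j = R$ for every $j$. Since $T_j \in \{1, 2\}$ would require $S_j = L$, the set $\trn^{*}$ is empty for every choice of $\tilz$ in this setting. Together with Theorem \ref{thm:sing}(i) (normality whenever some $T_j \neq 3$) and Corollary \ref{cor:loc-non-CM} (which characterizes singular-but-normal points in terms of $\trn^{*}$), this shows that every point of $\cZ^{\tau}_S$ outside the non-normal locus has a smooth versal ring, so the complement of the non-normal locus is smooth.

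For the codimension, fix $\tilz$ with all $T_j = 3$. Each $\tilBaj_{S_j}$ has dimension $6$ and $V(N_j)$ has codimension $1$, so $V(N)$ has codimension $f$ in the $6f$-dimensional $\tilBa_S$. The preimage of $V(N)$ in the smooth $5f$-dimensional scheme $\tilYz_S$ (smoothness from Lemma \ref{lem:smooth-irred}) can be parametrized by tuples $([x:y]_j, \kappa_j)_{j}$ subject to the cyclic relation $[x:y]_{j-1} \kappa_{j-1}^{-1} = [x:y]_j$, which collapses to the single requirement that $\kappa_0 \kappa_1 \cdots \kappa_{f-1}$ projectively fix $[x:y]_0 \in \PP^{1}$. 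A direct count gives dimension $1 + 4(f-1) + 3 = 4f$, i.e.\ codimension $f$. Since any element of $\GL_2$ projectively stabilizes some point of $\PP^{1}$ (and generically exactly two), the proper map $\widetilde{\pi}_S$ sends $V(N) \cap \tilYz_S$ onto $V(N) \cap \tilZz_S$ with generically finite fibers, giving $V(N) \cap \tilZz_S$ the same dimension $4f$ and codimension $f$ in $\tilZz_S$.

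For the isomorphism on the complement, Lemma \ref{lem:birational-T3} implies $\widetilde{\pi}_S$ restricts to a proper monomorphism over $\tilZz_S \smallsetminus V(N)$, hence a closed immersion; scheme-theoretic dominance then forces surjectivity, so the restriction is an isomorphism. Smoothness of $\tilYz_S$ therefore propagates to $\tilZz_S \smallsetminus V(N)$. Descending the codimension, smoothness, and isomorphism statements through the smooth $\GL_2^{\Z/f\Z}$-torsors of diagram (\ref{diagram:nm}), together with normality of $\tilZz_S$ for $\tilz$'s in which some $T_j \neq 3$ (Theorem \ref{thm:sing}(i)), yields the claim for $\cZ^{\tau}_S$ and $\Y_S$. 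The main obstacle is pinning down the exact dimension of $V(N) \cap \tilZz_S$: properness and the count in $\tilYz_S$ yield the upper bound $\leq 4f$, while the matching lower bound requires exhibiting a section of $V(N) \cap \tilYz_S \to V(N) \cap \tilZz_S$ over a dense open, which follows from the generic two-eigenline property of invertible $2 \times 2$ matrices.
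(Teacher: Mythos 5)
Your proof is correct and follows the same skeleton as the paper's: Corollary \ref{cor:loc-non--normal} identifies the non-normal locus with $V(N)$ for the unique $\tilz$ with all $T_j = 3$; the explicit parametrization of $\widetilde{\pi}_S^{-1}(V(N))$ inside $\Proj \F[x,y] \times \prod_j \GL_2$ gives the preimage codimension $f$; and Lemma \ref{lem:birational-T3} supplies the isomorphism over the complement.

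The one genuinely different step is how you compute $\dim V(N)$ in $\tilZz_S$. You deduce it from the $4f$-dimensional preimage by observing that the fiber of $\widetilde{\pi}_S^{-1}(V(N)) \to V(N)$ over $(\kappa_j)_j \in \prod_j\GL_2$ is the fixed-point locus of the product matrix on $\PP^1$, generically two points, so the proper surjection is generically finite. The paper instead works directly in $\Gamma(\tilBa_S)$: the functions $B_j, C_j$ for all $j$ form a regular sequence of length $2f$ cutting out a nilpotent thickening of $V(N)$ in the $6f$-dimensional $\Spec\Gamma(\tilBa_S)$, hence $\dim V(N) = 4f$. Both are valid; the regular-sequence route is slightly more self-contained, while yours has the pleasant feature of explaining the codimension-$f$ drop via eigenlines. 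Your observation that $S_j = R$ for every $j$ forces $T_j \in \{3,4\}$ for every choice of $\tilz$, hence $\trn^{*}=\varnothing$ always, is a correct sanity check (via Corollary \ref{cor:loc-non-CM}) but is redundant here, since smoothness of the complement already follows once it is identified with its preimage in the smooth $\tilYz_S$. Finally, your descent step is a bit terse: to descend $V(N)$ from $\tilZz_S$ to a closed substack of $\cZ^{\tau}_S$ one needs it to be stable under shifted $\GL_2^{\Z/f\Z}$-conjugation. The paper verifies this explicitly (it is the locus of tuples $(\kappa_j \tilz_j)_j$ with each $\tilz_j$ central in $\LG$); alternatively you could argue intrinsically that the non-normal locus of a reduced finite type algebraic stack is a closed substack whose preimage in any smooth chart is the non-normal locus of that chart. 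Either way the appeal to ``descending through the torsors'' should be made precise.
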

\begin{proof}
Suppose $\tilz$ is such that $\tilZz_S$ is not normal.
By Corollary \ref{cor:loc-non--normal}, $T_j=3$ for each $j \in \Z/f\Z$ and the non--normal locus is $V(N) \subset \tilZz_S \subset \Spec \Gamma(\tilBa_S)$. Consider the nilpotent thickening of $V(N)$ in $\Spec \Gamma(\tilBa_S)$ cut out by the functions $B, C$ for each $j$. These functions give a regular sequence of length $2f$ in $\Gamma(\tilBa_S)$, where the latter has dimension $6f$. Therefore, $V(N)$ has dimension $4f$ and its codimension in $\tilZz_S$ is $f$.

The stability of $V(N)$ under shifted-conjugation by $\GL_2^{\Z/f\Z}$ follows from direct computation and implies the descent to a closed substack of $\cZ^{\tau}_S$. Precisely, the locus $V(N) \subset \Spec \Gamma(\tilBa_S) \subset \widetilde{U}(\tilz)$ is characterized by tuples of matrices $(\kappa_j \tilz_j)_j \in \widetilde{U}(\tilz)$ with $$(\kappa_j)_j \in \GL_2^{\Z/f\Z}$$ and $\tilz_j$ in the center of $\LG$ for each $j$. 
Such tuples are evidently stable under $\GL_2^{\Z/f\Z}$--action. 

Next, we note from the explicit description in Section \ref{subsec:classfn} that the vanishing locus of $N$ in $\tilBa_S$ is isomorphic to $$\prod_{j \in \Z/f\Z} \Proj \F[x_j, y_j] \times \prod_{j \in \Z/f\Z} \GL_2.$$ Let $\kappa_j$ denote the universal point of the copy of $\GL_2$ in the $j$-th factor. The preimage of $V(N)$ in $\tilYz_S$ is cut out by setting $$[x_j: y_j]= [x_{j-1}:y_{j-1}]\kappa_{j-1}^{-1}$$ for each $j$. Thus, we can describe this preimage exclusively in terms of $[x_0:y_0]$ and $\{\kappa_j\}_{j}$, and get rid of the variables $[x_j:y_j]$ for $j \neq 0$. More precisely, setting $[x:y] := [x_{0}: y_{0}]$ and $$\kappa := \prod_{j = 0}^{f-1}\kappa_{j}^{-1} = \begin{pmatrix}
    a & b \\
    c & d
\end{pmatrix}, 
$$ the preimage of $V(N)$ in $\tilYz_S$ is isomorphic to the closed subscheme of $\Proj \F[x, y] \times \prod_{j} \GL_2$ obtained by setting $[x:y] = [x:y]\kappa = [ax + cy: bx+dy]$. 
This is the same as setting $bx^{2} + (d-a)xy - cy^{2}=0$, 
which cuts out a closed subscheme of pure dimension $4f$. Since the dimension of $\tilYz_S$ is $5f$, the codimension of the preimage of $V(N)$ is $f$. 
Finally, the complement of $V(N)$ is isomorphic to its smooth preimage in $\tilYz_S$ by Lemma \ref{lem:birational-T3}.
\end{proof}

\begin{cor}\label{cor:smooth-away-from-nonCM}
Suppose $\dim \cZ^{\tau}_S = f$, and $\cZ^{\tau}_S$ is normal but not smooth. Writing $\trn$ as $\trn(\tilz)$ to indicate the dependence on $\tilz$, let $$d := \min_{\substack{\tilz \; \mathrm{ s.t.} \\ \trn(\tilz)^{*} \neq \varnothing}} \; \min_{\gr \in \trn(\tilz)^{*}} (l(\gr) - 1).$$ 
The singular locus in $\cZ^{\tau}_S$ is a closed substack of codimension $d$ with preimage in $\Y_S$ of codimension $d - 1$. The smooth locus is isomorphic to its preimage in $\Y_S$.
\end{cor}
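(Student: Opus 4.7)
The plan is to reduce to a local calculation in each chart $\tilZz_S$ via Corollary \ref{cor:loc-non-CM}, which identifies the singular locus of $\cZ^{\tau}_S$ smooth--locally with $\bigcup_{\gr \in \trn(\tilz)^{*}} V(N(\gr))$ inside $\tilZz_S \cong \prod_{\gr \in \trn(\tilz)} \cZ(\gr)$, and its preimage in $\Y_S$ smooth--locally with $\bigcup_{\gr \in \trn(\tilz)^{*}} \bigl( V(N(\gr)) \cap Y(\gr) \bigr)$ inside $\tilYz_S \cong \prod_{\gr \in \trn(\tilz)} Y(\gr)$. Since the maps $\tilZ_S \to \cZ^{\tau}_S$ and $\tilY_S \to \Y_S$ are $\GL_2^{\Z/f\Z}$--torsors, codimensions descend faithfully, so it suffices to compute the codimension of $V(N(\gr))$ in $\cZ(\gr)$ and of its preimage in $Y(\gr)$, and then take minima over $\gr$ and $\tilz$.

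A preliminary observation is that the hypothesis $\dim \cZ^{\tau}_S = f$ forces $\pi_S$ to be birational, which by Lemma \ref{lem:bad-sequence}(ii) precludes $l(\gr) = 2$ for any $\gr \in \trn(\tilz)^{*}$; otherwise $\pr(\gr)$ would be a $\PP^{1}$--torsor, making $\dim Y(\gr) > \dim \cZ(\gr)$ for that factor and hence $\dim \cZ^{\tau}_S < f$. Thus every relevant $\gr$ satisfies $l(\gr) \geq 3$, and both $d$ and $d-1$ are well-defined.

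For the main codimension computation, I would fix $\gr = (i-k, \dots, i) \in \trn(\tilz)^{*}$ and work on the smooth scheme $Y(\gr)$, which maps birationally onto $\cZ(\gr)$ via $\pr(\gr)$. The preimage $V(N(\gr)) \cap Y(\gr)$ is cut out by the functions $\{B_j, C_j, D_j\}$ for $j$ in the interior of $\gr$. A direct parameter count on $Y(\gr)$, using the factorization of $B(\gr)$ and the equations $l_{j-1} = r_j$ defining $Y(\gr)$ inside $B(\gr)$, yields the key geometric picture: imposing $B_j = C_j = D_j = 0$ trivializes the defining relations $Dx_j - Cy_j = Bx_j + Dy_j = 0$ of each interior factor $\widetilde{Ba}_j(\widetilde{z}_j)$, which frees up the $\PP^{1}$--coordinate $[x_j : y_j]$, while the chain of conditions $l_{j-1} = r_j$ identifies all these projective coordinates with a single $\PP^{1}$ via the $\kappa_j$'s. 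From this one reads off that $V(N(\gr)) \cap Y(\gr)$ has codimension $l(\gr) - 2$ in $Y(\gr)$, and that the fiber of the restriction of $\pr(\gr)$ over any point of $V(N(\gr)) \cap \cZ(\gr)$ is precisely this residual $\PP^{1}$. The fiber--dimension formula then gives codimension $(l(\gr) - 2) + 1 = l(\gr) - 1$ for $V(N(\gr))$ in $\cZ(\gr)$.

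Taking minima over $\gr$ and $\tilz$ yields the claimed codimensions $d$ and $d - 1$. For the smooth--locus statement, I would combine Lemma \ref{lem:bad-sequence}(i) (for $\gr \notin \trn^{*}$) with the observation that $\pr(\gr)$ restricted to the complement of $V(N(\gr))$ is a proper dominant monomorphism by Lemma \ref{lem:bad-sequence}(ii), hence a closed immersion, hence an isomorphism; taking products over $\gr$ and descending through the torsors then identifies the smooth locus of $\cZ^{\tau}_S$ with its preimage in $\Y_S$. The main technical obstacle will be the parameter count on $Y(\gr)$, in particular correctly tracking how the $\kappa_j$--action collapses the chain of freed $\PP^{1}$--coordinates to a single residual $\PP^{1}$, since this simultaneously controls the codimension of the preimage and the fiber dimension of $\pr(\gr)$.
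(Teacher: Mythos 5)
Your proof is correct and follows essentially the same route as the paper: reduce to a per-$\gr$ analysis on the charts $\tilZz_S$ via Corollary \ref{cor:loc-non-CM}, recognize the $\PP^1$-fiber of $\pr(\gr)$ over $V(N(\gr))$, obtain the codimensions $l(\gr)-1$ and $l(\gr)-2$, and then take minima over $\gr$ and $\tilz$ and descend. The paper makes the parameter count you flag as the main obstacle explicit by exhibiting $\{B_j,C_j\}_{j \text{ interior}}$ as a regular sequence of length $2(l(\gr)-2)$ cutting out a nilpotent thickening of $V(N(\gr))$ in $Z_{\tilB}(\gr) \cong \Spec\Gamma(B(\gr))$, comparing with $\dim \cZ(\gr) = \dim B(\gr) - (l(\gr)-1)$ to get codimension $l(\gr)-1$ directly rather than via the fiber-dimension formula, and it also spells out (by appeal to the proof of Corollary \ref{cor:R1}) that $V(N(\gr))$ is stable under shifted conjugation by $\GL_2^{\Z/f\Z}$, which your ``descending through the torsors'' step implicitly requires.
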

\begin{proof}
    Suppose $\tilz$ is such that $\tilZz_S$ is normal but not smooth.
Equivalently, by Corollary \ref{cor:loc-non-CM}, $\trn(\tilz)^{*} \neq \varnothing$. Let $\gr = (i-k, \dots, i) \in \trn(\tilz)^{*}$. We note that if $T_j \in \{1, 2\}$, $\dim \tilBaj_{S_j}^{\mathrm{I}} = \dim \GL_2 + 1$; if $T_j = 3$, $\dim \tilBaj_{S_j} = \dim (Z_j \times \GL_2)$; and if $T_j \in \{1, 5\}$, $\dim \tilBaj_{S_j}^{\mathrm{II}} = \dim Z_j + 1$. Therefore, $$\dim Z_{\tilB}(\gr) = \dim B(\gr)-2.$$
Since a nilpotent thickening of $V(N(\gr))$ in $Z_{\tilB}(\gr)$ is cut out by the functions $B, C$ for each $j \in \{i-k, \dots, i\} \smallsetminus \{i-k, i\}$, and these functions give a regular sequence of length $2(l(\gr) - 2)$ in $\Gamma(Z_{\tilB}(\gr))$, 
$$\dim V(N(\gr)) = \dim B(\gr) -2 - 2(l(\gr) - 2).$$ 

We also note that $$\dim \cZ(\gr) = \dim Y(\gr) = \dim B(\gr) - (l(\gr) - 1),$$ where the first equality follows from the assumption $\dim \tilZz_S = f$, and the second from Lemma \ref{lem:koszul}. Thus, the codimension of $V(N(\gr))$ in $\cZ(\gr)$ is $l(\gr) -1$. Viewing $N(\gr)$ as an ideal in $\Gamma(\tilBa_S)$ instead, the same is thus true for the codimension of $V(N(\gr))$ in $\tilZz_S$. The fiber in $\tilYz_S$ over each point of $V(N(\gr))$ is directly seen to be isomorphic to $\PP^1$ and so, the codimension of the preimage of $V(N(\gr))$ in $\tilYz_S$ is $l(\gr) -2$.

The closed scheme $V(N(\gr))$ descends to a closed substack of $\cZ^{\tau}_S$ of codimension $l(\gr) -1$ by the same argument as in Corollary \ref{cor:R1} and the complement of $$\bigcup_{\substack{\tilz \; \mathrm{ s.t.} \\ \trn(\tilz)^{*} \neq \varnothing}} \bigcup_{\gr \in \trn(\tilz)^{*}}V(N(\gr))$$ is isomorphic to its preimage by Lemma \ref{lem:bad-sequence}(ii). The desired statements follow immediately.
\end{proof}

\section{Combinatorics of Serre weights, tame types and shapes}\label{sec:combinatorics}
Let $\tau = \eta_1 \oplus \eta_2$ be a tame inertial type with implicit, fixed ordering of the two characters $(\eta_1, \eta_2)$. The paper \cite{cegsC} indexes irreducible components of the moduli stack of Breuil--Kisin modules with descent data of tame type $\tau^{\vee}$ by subsets $J \subset \Z/f\Z$. The irreducible component indexed by $J$ is denoted $\cC^{\tau}(J)$. In the following Lemma, we set up a dictionary between the notation of \cite{cegsC} and this article.

\begin{lemma}\label{lem:mu-combinatorics}
Let $\tau^{\vee} = \eta_1 \oplus \eta_2$ be a non--scalar principal series tame inertial type. Suppose $\eta_1 \eta_2^{-1} = \prod_{j \in \Z/f\Z} \omega_{j}^{\gamma_j}$ with $\gamma_j \in [0, p-1]$ for each $j \in \Z/f\Z$. Suppose further that either $\gamma_j \leq (p-1)/2$ for each $j \in \Z/f\Z$, or $\gamma_j < (p-1)/2$ for some $j \in \Z/f\Z$.

Let $M$ be the set, possibly empty, of maximal subsets $\{i-k, \dots, i\} \subset \Z/f\Z$ satisfying 
    \begin{enumerate}[label=(\alph*)]
        \item $\gamma_{i-k} < (p-1)/2$,
        \item $\gamma_{i-k+1}, \dots, \gamma_{i} \geq (p-1)/2$ if $k \neq 1$, and
        \item $\gamma_{i} > (p-1)/2$.
    \end{enumerate}
      Here, maximality is in the sense that the subset is not properly contained in another subset satisfying the three conditions. 
      Suppose either that $0 \in \Z/f\Z$ is not contained in any such subset, or $\gamma_0 < (p-1)/2$.  Then $\tau = \tau(s, \mu)$ where if $A = \{i-k, \dots, i\} \in M$, then $(\langle \mu_j, \alpha^{\vee} \rangle, s_j, s_{\ort, j})$ is
    \begin{align*}
        &\qquad \left(\gamma_j+1, w_0, w_0 \right) &&\text{ if } j=i-k, \nonumber \\
        &\qquad \left(p-1-\gamma_j, \id, w_0 \right) &&\text{ if } j \in A \smallsetminus \{i-k, i\}, \\
        &\qquad \left(p-\gamma_j, w_0, \id \right) &&\text{ if } j = i. \nonumber
    \end{align*}
   On the other hand, if $j$ is not in any subset contained in $M$, then 
    \begin{align*}
    (\langle \mu_j, \alpha^{\vee} \rangle, s_j, s_{\ort, j}) = \left(\gamma_j, \id, \id \right). \end{align*}
    In particular, $$p-2 > \max_{j\in \Z/f\Z} \<\mu_j, \alpha^{\vee}\>.$$
    Furthermore, $C^{\tau^{\vee}}(\Z/f\Z) = \Y_S$, where $S = (S_j)_{j \in \Z/f\Z}$ is as follows: If $A = \{i-k, \dots, i\} \in M$, then $S_{j} = L$ for $j \in A \smallsetminus \{i\}$ while $S_{i} = R$. If $j$ is not in any subset contained in $M$, then $S_j = R$.

\end{lemma}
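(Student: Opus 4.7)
The proof verifies three claims in sequence: (A) that $\tau(s, \mu) \cong \tau$ for the explicit $(s, \mu)$ in the statement, (B) that $\mu$ is small and satisfies $p - 2 > \max_j \langle \mu_j, \alpha^\vee\rangle$, and (C) that $\cC^{\tau^\vee}(\Z/f\Z) = \Y_S$ for the prescribed $S$.

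For (A), the principal-series relation $s_0 s_1 \cdots s_{f-1} = \id$ is immediate: the only non-identity $s_j$'s are the two $w_0$'s at the endpoints of each $A \in M$, hence they cancel in pairs. The plan is then to unpack $\mathbb{a}^{(0)} = \sum_i \alpha_i p^i$ via $\alpha_j = s_{f-1}^{-1} \cdots s_{f-j}^{-1}(\mu_{f-j})$ and to compare the coordinate difference $(\mathbb{a}^{(0)})_0 - (\mathbb{a}^{(0)})_1$, interpreted as an exponent of $\omega_0$ modulo $p^f - 1$, against the exponent in $\eta_1 \eta_2^{-1} = \prod_j \omega_j^{\gamma_j}$. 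I expect each subset $A = \{i-k, \dots, i\} \in M$ to manifest as a ``window'' bracketed by two $w_0$ swaps: the swap at $j = i$, combined with $\mu_i = (p-\gamma_i, 0)$, introduces a borrow of $-p$ in the appropriate digit of the $p$-adic expansion of the difference; this borrow telescopes through the middle positions, where $\mu_j = (p-1-\gamma_j, 0)$ fills each digit up to $p-1$, and is absorbed at $j = i-k$ by $\mu_{i-k} = (\gamma_{i-k}+1, 0)$ together with the second $w_0$ swap, producing exactly the digit $\gamma_j$ at each position modulo $p^f - 1$. Indices outside any subset contribute $\gamma_j$ directly via $\mu_j = (\gamma_j, 0)$ with no swap. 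The supplementary hypothesis on the index $0$ prevents a subset from straddling the cyclic seam in a way that would create an uncompensated carry. The orientations $s_{\ort, j}$ are then determined uniquely by the positivity condition $\langle s_{\ort, j}^{-1} \mathbb{a}^{(j)}, \alpha^\vee\rangle > 0$.

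For (B), a direct case-by-case check suffices: $\gamma_{i-k} + 1 \leq (p-1)/2$ from the strict bound $\gamma_{i-k} < (p-1)/2$; $p - 1 - \gamma_j \leq (p-1)/2$ from $\gamma_j \geq (p-1)/2$ in the middle; $p - \gamma_i \leq (p-1)/2$ from $\gamma_i \geq (p+1)/2$; and $\gamma_j \leq (p-1)/2$ for isolated indices (otherwise $j$ would extend some $A \in M$, using the disjunctive hypothesis). Thus $\max_j \langle \mu_j, \alpha^\vee\rangle \leq (p-1)/2 < p - 2$ since $p > 3$. The smallness condition $s_j = \id$ whenever $\langle \mu_j, \alpha^\vee\rangle = 0$ holds because the only vanishing case is the isolated one with $\gamma_j = 0$, where $s_j = \id$ by construction.

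For (C), I would invoke Theorem \ref{thm:irred-comp-Y}, which characterizes $\Y_S$ at a generic point by the divisibility by $v$ of the top-left (for $S_j = L$) or bottom-right (for $S_j = R$) entry of $A_{\gM, \beta}^{(j)}$. Using the identification $\YF = \cC^{\tau^\vee, \text{BT}, 1}$ from Remark \ref{rmk:compare-stacks}, the task reduces to taking a generic Breuil--Kisin module in $\cC^{\tau^\vee}(\Z/f\Z)$, computing $A_{\gM, \beta}^{(j)}$ from the shape data $(s_j, s_{\ort, j})$ produced in part (A) via Proposition \ref{prop:change-of-basis-BKmod}, and reading off $S_j$ at each embedding. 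I expect the pattern $S_j = L$ on $A \setminus \{i\}$ for each $A \in M$ and $S_j = R$ elsewhere to fall out by direct inspection of the resulting matrix forms. The main obstacle is bookkeeping the translation of conventions between \cite{cegsC} (components indexed by subsets $J \subset \Z/f\Z$) and the present article (components indexed by $\{L, R\}^{\Z/f\Z}$), and keeping track of the dualities relating $\tau$ and $\tau^\vee$; once such a dictionary is fixed, the identification reduces to checking a handful of standard matrix shapes.
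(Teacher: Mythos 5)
Your decomposition into (A) verifying $\tau \cong \tau(s,\mu)$, (B) bounding $\langle \mu_j, \alpha^\vee\rangle$, and (C) identifying $S$ is the right logical skeleton, and your bound-checking in (B) agrees with the paper. However, the route you take for (A) differs genuinely from the paper's, and your plan for (C) glosses over the one substantive step.

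For (A), you propose to unpack $\mathbb{a}^{(0)}$ coordinate by coordinate and track the borrows that telescope through each window $A \in M$. This is workable but computationally heavy. The paper avoids all carry bookkeeping by a clever choice of twist: it defines $\chi$ explicitly (the formula \eqref{eqn:chi-2}, using $M^{\mathrm{i}}$, $M^{\mathrm{o}}$, $M^{*}$) so that $\tau \otimes \chi^{-1}$ manifestly splits as $\bigl(\prod_j \omega_j^{a_j}\bigr) \oplus \bigl(\prod_j \omega_j^{b_j}\bigr)$ with $a_j, b_j \geq 0$, $\min\{a_j, b_j\} = 0$, and $\max\{a_j, b_j\} \leq (p+1)/2$, after which a previously recorded general algorithm reads off $s_j$ and $s_{\ort, j}$ digit by digit with no carries at all. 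The hypothesis about the index $0$ is used not to ``prevent an uncompensated carry,'' as you suggest, but to ensure $0 \notin M^{*}$ so that $a_0 > b_0$ holds, which is the normalization the algorithm needs.

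For (C), the paper's proof has a genuine fork that your ``once such a dictionary is fixed'' plan hides. The criterion from \cite[Cor.~3.17]{bellovin2024irregular} is phrased in terms of $C_{\gM,\beta}^{(j)}$ with an eigenbasis ordered by the fixed pair $(\eta_1, \eta_2)$, whereas $\Y_S$ is defined via $A_{\gM,\beta}^{(j)} = \mathrm{Ad}(s_{\ort,j}^{-1} u^{-\mathbb{a}^{(j)}})(C_{\gM,\beta}^{(j)})$. Passing between the two introduces a dependence both on $s_{\ort,j}$ \emph{and} on whether $\eta_1^{-1} = \chi \otimes \prod_j \omega_j^{a_j}$ or $\eta_1^{-1} = \chi \otimes \prod_j \omega_j^{b_j}$; the paper records all four cases in a small table and then observes (from its explicit $\chi$) that $\eta_1^{-1} = \chi \otimes \prod_j \omega_j^{b_j}$. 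Your step (A) as stated only compares the difference $\mathbb{a}^{(0)}_0 - \mathbb{a}^{(0)}_1$ against the exponent of $\eta_1\eta_2^{-1}$, which by itself does not pin down which factor is $\eta_1$; without that, the assignment of $L$ versus $R$ in (C) cannot be completed. You flag the need to ``keep track of dualities,'' but this is more than bookkeeping: it is the point on which $S$ actually depends. Also, Proposition \ref{prop:change-of-basis-BKmod} is not really the tool for this step; one applies the divisibility criterion directly to the $C$-matrices and unwinds the conjugation by $s_{\ort,j}^{-1} u^{-\mathbb{a}^{(j)}}$ by hand.
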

\begin{proof}
We first make some general observations for $\tau^{\vee} \cong \eta_1 \oplus \eta_2$. Suppose $$\tau \cong \chi \otimes \left(\prod_{j \in \Z/f\Z} \omega_{j}^{a_j} \oplus \prod_{j \in \Z/f\Z}\omega_{j}^{b_j} \right)$$ for some character $\chi = \prod_{j} \omega_j^{c_j}$ such that $a_0 > b_0$ and for each $j$, $\min\{a_j, b_j\} = 0$ and $\max\{a_j, b_j\} \in [0, (p+1)/2]$. Let 
$$\mu_j := (\max\{a_j, b_j\} + c_j, c_j).$$ One verifies immediately that $\tau = \tau(s, \mu)$ where $s_{f-1}, s_{f-2}, \dots, s_0$ are determined (successively and) uniquely by the following constraints:
\begin{align*}
    &s_{f-1}^{-1}s_{f-2}^{-1} \dots s_{f-j}^{-1} (\mu_{f-j}) = (a_{f-j} + c_{f-j}, b_{f-j} + c_{f-j}) \text{ for } 1 \leq j \leq f-1, \\
    &s_j = \id \text{ whenever } \mu_j = (c_j, c_j), \text{ and}\\
    &s_0 s_1 \dots s_{f-1} = \id.
\end{align*}
The permutation $s_{\ort,j}$ is $\id$ if and only if for some $k \in [1, f]$ (taking indices in $\Z$ instead of $\Z/f\Z$), $a_{j+i} = b_{j+i}$ for each $i \in [1, k-1]$ and $a_{j+k} > b_{j+k}$.

  
A criterion for a Breuil--Kisin module $\gM$ with $A$ coefficients to be a point of $\cC^{\tau^{\vee}}(J)$ is given in \cite[Cor.~3.17]{bellovin2024irregular} and is as follows: For each $j$, consider an ordered basis (Zariski locally on $A$) $\{e_j, f_j\}$ of $\gM_j$ with $I(K'/K)$ acting via $\eta_1$ on $e_j$ and via $\eta_2$ on $f_j$. Consider the matrix of the Frobenius map $\Phi_{\gM, j}: \varphi^{*} \gM_{j-1} \to \gM_{j}$ with respect to the basis $\{1 \otimes e_{j-1}, 1 \otimes f_{j-1}\}$ of the domain and $\{e_{j}, f_{j}\}$ of the codomain. Then $\gM \in \cC^{\tau^{\vee}}(J)(A)$ if and only if $v$ divides the top left entry (resp. bottom right entry) of the matrix for $\Phi_{\gM, j}$
whenever $j \in J$ (resp. $j \not\in J$).
Thus, we find that $\Y_S = C^{\tau^{\vee}}(J)$ where $j \in J$ if and only if 
\begin{align*}
    S_j = \begin{cases}
    \begin{aligned}
        L &\quad \text{ if } s_{\ort, j} = \id &\text{ and }\eta_1^{-1} = \chi \otimes \prod_j \omega_j^{a_j}, \\
        R &\quad \text{ if } s_{\ort, j} = w_0 &\text{ and }\eta_1^{-1} = \chi \otimes \prod_j \omega_j^{a_j}, \\
        R &\quad \text{ if } s_{\ort, j} = \id \quad &\text{ and }\eta_1^{-1} = \chi \otimes \prod_j \omega_j^{b_j}, \\
        L &\quad \text{ if } s_{\ort, j} = w_0 &\text{ and } \eta_1^{-1} = \chi \otimes \prod_j \omega_j^{b_j}.
        \end{aligned}
    \end{cases} 
\end{align*}

Now, if $A = \{i-k, \dots, i\} \in M$, let $$A^{\text{i}} \stackrel{\text{def}}{=} \{i-k\}, \qquad A^{\text{o}} \stackrel{\text{def}}{=} \{i\}, \qquad
A^{*} \stackrel{\text{def}}{=} A \smallsetminus \{i-k\},$$ 
and $$M^{\text{i}}\stackrel{\text{def}}{=} \bigcup_{A \in M} A^{\text{i}}, \qquad M^{\text{o}} \stackrel{\text{def}}{=} \bigcup_{A \in M} A^{\text{o}}, \qquad M^{*} \stackrel{\text{def}}{=} \bigcup_{A \in M} A^{*}.$$
By definition, 
\begin{itemize}
    \item if $j \not\in M^{*} \cup M^{\text{i}}$, then $\gamma_j \leq (p-1)/2$;
    \item if $j \in M^{\text{i}}$, then $\gamma_{j}+1 \leq (p-1)/2$;
    \item if $j \in M^{*} \smallsetminus M^{\text{o}}$, then $p-1-\gamma_j < (p-1)/2$;
    \item if $j \in M^{\text{o}}$, then $p-\gamma_j \leq (p-1)/2$.
\end{itemize}

Setting
\begin{align}\label{eqn:chi-2}
    \chi^{-1} = \eta_2 \otimes \prod_{j \not\in M^{*}} \omega_j^{\gamma_j} \otimes \prod_{j \in M^{*} \smallsetminus M^{\text{o}}} \omega_{j}^{p-1} \otimes \prod_{j \in M^{\text{o}}} \omega_j^{p},
\end{align} we have $\tau^{\vee} \otimes \chi \cong$ 
$$
\left( \prod_{j \in M^{*} \smallsetminus M^{\text{o}}} \omega_{j}^{-p+1+\gamma_j} \otimes 
\prod_{j \in M^{\text{o}}} \omega_{j}^{-p+\gamma_j} \right)
\oplus  
\left( \prod_{j \not\in M^{*} \cup M^{\text{i}}} \omega_j^{-\gamma_j} \otimes \prod_{j \in M^{\text{i}}} \omega_j^{-\gamma_j-1} \right)
$$
and therefore, $\tau \otimes \chi^{-1} \cong$
$$
\left( \prod_{j \in M^{*} \smallsetminus M^{\text{o}}} \omega_{j}^{p-1-\gamma_j} \otimes 
\prod_{j \in M^{\text{o}}} \omega_{j}^{p-\gamma_j} \right)
\oplus  
\left( \prod_{j \not\in M^{*} \cup M^{\text{i}}} \omega_j^{\gamma_j} \otimes \prod_{j \in M^{\text{i}}} \omega_j^{\gamma_j+1} \right).
$$
Since $0 \not\in M^{*}$ by hypothesis,  \begin{align*}
    &a_j = \gamma_j, &&b_j = 0 &&\text{ for } j \not\in M^{*} \cup M^{\text{i}}, \\
    &a_j = \gamma_j + 1, &&b_j = 0 &&\text{ for } j \in M^{\text{i}}, \\
    &a_j = 0, &&b_j = p-1-\gamma_j &&\text{ for } j \in M^{*} \smallsetminus M^{\text{o}}, \\
    &a_j = 0, &&b_j = p-\gamma_j &&\text{ for } j \in M^{\text{o}}.
\end{align*} In particular, $\eta_1^{-1} = \chi \otimes \prod_{j \in \Z/f\Z} \omega_{j}^{b_j}$. Following through the steps of the algorithm to compute $s_j$, $s_{\ort, j}$ and $S_j$ immediately produces the desired statement. 
\end{proof}

\begin{definition}
    A sequence of integers $(a_{1}, \dots, a_{n})$ is said to satisfy $\heart$ if $a_1 = p-1$, $a_2 = \dots = a_{n-1} = 1$ and $a_n=0$.
\end{definition}
\begin{lemma}\label{lem:type-combinatorics}
    Let $(\gamma_j)_j, s, \mu, S$ be as in Lemma \ref{lem:mu-combinatorics}. Let $$\tilz \in \prod_{j \in \Z/f\Z}\{w_0 t_{\eta}, t_{w_0 (\eta)}\} s_j^{-1} v^{\mu_j}.$$ Let $T=(T_j)_j$ be the class tuple associated to the data of $\tilz, s, \mu, S$. The following are true:
    \begin{enumerate}
\item If $A = \{i-k, \dots, i\} \in M$, then 
        \begin{align*}
            &T_{j} \in \{1,2\} &&\text{ if } j =i-k,\\
            &T_j =2 &&\text{ if } 
        j \in A \smallsetminus \{i-k, i\}, \text{ and} \\
            &T_j \in \{3,4\} &&\text{ if } j =i.
        \end{align*}
        If $j$ is not in any subset contained in $M$, then $T_j \in \{3,4,5\}$. 
\item If $M = \varnothing$ and $\<\mu_j, \alpha^{\vee}\> = 1$ for each $j \in \Z/f\Z$, then for each $T \in \{3, 4\}^{\Z/f\Z}$, there exists $\tilz$ so that the class tuple associated to $\tilBa_S$ is $T$.     
\item  There exists $\tilz$ so that the set $\trn^{*}$ (associated to the data of $\tilz, s, \mu, S$) is non--empty if and only if $(\gamma_{j})_{j \in \Z/f\Z}$ contains some subsequence $(\gamma_{i-k}, \dots, \gamma_{i})$ satisfying $\heart$. The number of such subsequences is $\geq |\trn^{*}|$. Suitable $\tilz$ can be chosen so that equality holds, and the lowest value of $l(\gr)-1$ attained over $\gr \in \trn^{*}$ as $\tilz$ varies is the smallest length of a sequence  $(\gamma_{i-k}, \dots, \gamma_{i})$ satisfying $\heart$.
    \end{enumerate}

\end{lemma}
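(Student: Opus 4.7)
The plan is to derive all three parts from the explicit classification of classes in Section \ref{subsec:classfn} applied to the triples $(\<\mu_j, \alpha^\vee\>, s_j, S_j)$ supplied by Lemma \ref{lem:mu-combinatorics}. For (i), a mechanical case analysis on whether $j$ is the first, middle, or last element of some $A \in M$, or outside every such subset, yields the claimed ranges of $T_j$. The only subtlety is that when $s_j = w_0$, the choice $\tilw_j = t_{w_0(\eta)}$ produces an empty scheme (visible from Tables \ref{table:shape-L} and \ref{table:shape-R}), so $\tilw_j = w_0 t_\eta$ is the only option and $T_j \in \{1, 2\}$ or $\{3, 4\}$ can be read off directly. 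Since each listed value of $\<\mu_j, \alpha^\vee\>$ lies in $[0, (p-1)/2]$, the uniform bound $p - 2 > \max_j \<\mu_j, \alpha^\vee\>$ is immediate. Part (ii) is then an immediate corollary: under the hypotheses $M = \varnothing$ and $\<\mu_j, \alpha^\vee\> = 1$, every $j$ has $s_j = \id$, $S_j = R$, $\gamma_j = 1$, and the two choices of $\tilw_j$ produce non-empty schemes of classes $3$ and $4$ respectively.

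For (iii), the central task is to pin down the combinatorial structure forced by $\gr = (i-k, \dots, i) \in \trn^*$. From (i), the condition $T_j = 3$ arises in exactly two ways: either (a) $j$ is the last element of some $A \in M$ with $\gamma_j = p-1$ and $\tilw_j = w_0 t_\eta$, or (b) $j$ lies outside every $A \in M$ with $\gamma_j = 1$ and $\tilw_j = t_{w_0(\eta)}$. I would then argue that $i-k$ must be the second-to-last element of some $A' \in M$ (coinciding with the first when $|A'| = 2$) and $i-k+1$ the last element of $A'$ with $\gamma_{i-k+1} = p-1$. The argument uses that $T_{i-k} \in \{1, 2\}$ places $i-k$ in $A' \smallsetminus \{i'\}$ for a unique $A' \in M$, that middle elements of $A'$ have $T = 2$, and that elements of $M$ are pairwise disjoint (an easy consequence of the incompatible inequalities at first and last elements together with $|A| \geq 2$). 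Beyond $i-k+1$ the sequence exits $A'$; any intermediate position sitting in another $A'' \in M$ would have to be first of $A''$ (since $A''$ is consecutive and disjoint from $A'$), forcing $T \in \{1, 2\}$ rather than $3$. Thus all remaining intermediate positions lie outside $M$ with $\gamma = 1$, and $T_i \in \{1, 5\}$ forces $\gamma_i = 0$. Hence $(\gamma_{i-k+1}, \dots, \gamma_i) = (p-1, 1, \dots, 1, 0)$ is a $\heart$-subsequence of length $l(\gr) - 1$, and the map $\gr \mapsto (\gamma_{i-k+1}, \dots, \gamma_i)$ is injective, giving $|\trn^*| \leq$ (number of $\heart$-subsequences).

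Conversely, any $\heart$-subsequence $(\gamma_{j_1}, \dots, \gamma_{j_n})$ canonically produces a valid $\gr \in \trn^*$: since $\gamma_{j_1} = p-1 > (p-1)/2$ places $j_1$ as the last element of a unique $A' \in M$, its second-to-last element provides the required $i-k$. Setting $\tilw_{i-k} = \tilw_{j_1} = w_0 t_\eta$, $\tilw_{j_l} = t_{w_0(\eta)}$ for $l \in \{2, \dots, n-1\}$, and choosing $\tilw_{j_n}$ appropriately depending on whether $j_n \in M$, yields this $\gr$. Distinct $\heart$-subsequences have disjoint positions, since their entries $p-1$, $1$, $0$ at start, middle, and end respectively are mutually exclusive for $p \geq 5$, so independent $\tilw$-choices realize all $\heart$-subsequences simultaneously, giving the equality case. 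The statement on the minimum of $l(\gr) - 1$ follows from the same bijection. The main obstacle is the rigid matching argument pinning down the $\gamma$-pattern of any $\gr \in \trn^*$, and particularly ruling out intermediate positions lying in elements of $M$ other than $A'$; this relies essentially on pairwise disjointness of $M$ and on the fact that $|A| \geq 2$ for every $A \in M$.
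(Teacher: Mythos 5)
Your proposal is correct and follows essentially the same approach as the paper's own (quite terse) proof: compare $(\langle \mu_j, \alpha^\vee\rangle, s_j, S_j)$ from Lemma~\ref{lem:mu-combinatorics} with the classification in Section~\ref{subsec:classfn} and Tables~\ref{table:shape-L}--\ref{table:shape-R} for (i)--(ii), then pin down the $\gamma$-pattern forced by $\gr \in \trn^*$ and run the correspondence with $\heart$-subsequences in both directions. You flesh out details the paper leaves implicit (notably why $i \in M^{\mathrm{o}}$, pairwise disjointness of $M$, the injectivity of $\gr \mapsto \heart$-subsequence, and the disjointness of distinct $\heart$-subsequences needed for the realizability claim), and these arguments are all sound.

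One minor imprecision worth flagging: your stated ``subtlety'' for part (i) --- that ``when $s_j = w_0$, the choice $\tilw_j = t_{w_0(\eta)}$ produces an empty scheme'' --- is not quite right as phrased. The empty cells in the tables occur when $S_j = L$ and $\langle\mu_j, \alpha^\vee\rangle \geq 1$ with $\tilw_j = t_{w_0(\eta)}$; this captures the case $j = i-k$ (first of $A$, where indeed $s_j = w_0$ and $S_j = L$), but for $j = i$ (last of $A$) one has $s_j = w_0$ with $S_j = R$, and by Table~\ref{table:shape-R} both choices of $\tilw_j$ give non-empty schemes, with $T_j \in \{3,4\}$. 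The conclusion is still exactly as claimed, but the correct criterion for emptiness is $S_j = L$ together with $\langle\mu_j, \alpha^\vee\rangle \geq 1$, not $s_j = w_0$. (It is this same criterion that forces $\tilw_j = w_0 t_\eta$ at middle elements of $A$ with $\gamma_j \leq p-2$, a case your phrasing also misses, though again without affecting the conclusion $T_j = 2$ there.)
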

\begin{proof}
The proofs of (i) and (ii) follow from simple comparison of $(\mu, s, S)$ in the statements of Lemma \ref{lem:mu-combinatorics} with the classification of $\tilBaj_{S_j}$ in Section \ref{subsec:classfn} and Tables \ref{table:shape-L} and \ref{table:shape-R}.

For (iii), we first make the following observation: If $\gr = (i-1, i, \dots, i+l) \in \trn^{*}$, then by definition of $\trn^{*}$ and the constraints on $T$, $i \in M^{\text{o}}$; $T_{j} = 3$ for each $j \in \{i, \dots, i+l-1\}$; and
either $i+l \in M^{\text{i}}$ and $T_{i+l} = 1$, or $i+l \not\in M^{*} \cup M^{\text{i}}$ and $T_{i+l} = 5$.

By comparison with Section \ref{subsec:classfn} and Tables \ref{table:shape-L} and \ref{table:shape-R}, $T_{i} = 3$ implies $\gamma_{i} = p-1$; $T_{j} = 3$ for $j \in\{i, \dots, i+l-1\} \smallsetminus \{i\}$ implies $\gamma_j = 1$; and $T_{i+l} \in \{1, 5\}$ implies $\gamma_{i+l} = 0$. On the other hand, if the sequence $(\gamma_{i}, \dots, \gamma_{i+l})$ satisfies $\heart$, then Tables \ref{table:shape-L} and \ref{table:shape-R} demonstrate the existence of suitable $(\tilz_{i}, \dots, \tilz_{i+l})$ so that we get $\gr = (i-1, i, \dots, i+l) \in \trn^{*}$. Note that $l(\gr) -1$ is the length of the sequence $(\gamma_{i}, \dots, \gamma_{i+l})$.
\end{proof}

Now, we are ready to prove the main theorem of this section, which provides an upgrade of \cite[Thm.~5.0.1]{gkksw}.
\begin{theorem}\label{thm:serre-weight-sing}
Let $\sigma = \sigma_{\mathbb{m}, \mathbb{n}}$ be a non--Steinberg Serre weight. The following are true:
\begin{enumerate}
    \item The component $\cX(\sigma)$ is not smooth if and only if one of the following two holds:
    \begin{enumerate}[label=(\alph*)]
        \item For each $j \in \Z/f\Z$, $n_j = p-2$.
        \item There exists a subset $\{i-k, \dots, i\} \subset \Z/f\Z$ with $n_{i-k} = 0$, $n_j = p-2$ if $j \in \{i-k, \dots, i\} \smallsetminus \{i-k, i\}$, and $n_{i} = p-1$.
    \end{enumerate}
    \item If (a) holds, $\cX(\sigma)$ is not normal and its normalization admits a smooth--local cover by a resolution--rational variety that is not Gorenstein. The non--normal locus on $\cX(\sigma)$ has codimension $f$ and its complement is smooth.
    \item If (b) holds, $\cX(\sigma)$ is normal and admits a smooth--local cover by a resolution--rational variety. It is additionally Gorenstein, even lci, if and only if every subset $\{i-k, \dots, i\} \subset \Z/f\Z$ as in (b) has cardinality $2$. The singular locus in $\cX(\sigma)$ has codimension $\geq 2$ and its complement is smooth.
\end{enumerate}
\end{theorem}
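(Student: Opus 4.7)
The plan is to reduce the theorem to the local geometric results of Section~\ref{sec:geometry} via the combinatorial dictionary set up in Lemmas~\ref{lem:mu-combinatorics} and~\ref{lem:type-combinatorics}. For each non--Steinberg Serre weight $\sigma_{\mathbb{m}, \mathbb{n}}$, I would first exhibit a non--scalar principal series tame type $\tau = \tau(s, \mu)$ and a shape $S \in \{L, R\}^{\Z/f\Z}$ so that $\cZ^{\tau}_S$ provides a smooth--local chart on $\cX(\sigma_{\mathbb{m}, \mathbb{n}})$. The $\gamma_j$ attached to $\tau$ (as in Lemma~\ref{lem:mu-combinatorics}) should be related to the Serre weight data by $\gamma_j = p-1-n_j$, up to reindexing and a twist by characters encoding $\mathbb{m}$; non--Steinbergness of $\sigma_{\mathbb{m}, \mathbb{n}}$ guarantees that the hypothesis $\gamma_j < (p-1)/2$ for some $j$ can always be arranged. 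Since a single type may miss some singular points, I will need to range over all such admissible $(\tau, S, \tilz)$ that together cover $\cX(\sigma_{\mathbb{m}, \mathbb{n}})$.

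Under this dictionary, condition~(a), $n_j = p-2$ for all $j$, becomes $\gamma_j = 1$ for all $j$ with $M = \varnothing$, which by Lemma~\ref{lem:type-combinatorics}(ii) is precisely when some choice of $\tilz$ gives the class tuple $T_j = 3$ for every $j$.  Condition~(b) translates to the existence of a cyclic subsequence $(\gamma_{i-k}, \ldots, \gamma_i) = (p-1, 1, \ldots, 1, 0)$ satisfying $\heart$, which by Lemma~\ref{lem:type-combinatorics}(iii) is equivalent to $\trn^{*} \neq \varnothing$ for some $\tilz$, with $l(\gr)-1$ matching the cardinality of the subset.  The converse direction of~(i), smoothness when neither~(a) nor~(b) holds, then amounts to checking that for \emph{every} admissible $(\tau, S, \tilz)$ the chart $\tilZz_S$ is smooth, i.e.\ $T_j \neq 3$ for some $j$ and $\trn^{*}(\tilz) = \varnothing$, which again follows from Lemma~\ref{lem:type-combinatorics}.

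With this translation in place, each statement follows from Section~\ref{sec:geometry}. Non--smoothness of $\cX(\sigma_{\mathbb{m}, \mathbb{n}})$ occurs exactly when Theorem~\ref{thm:sing} detects a singularity in some chart, giving~(i). If~(a) holds, Theorem~\ref{thm:sing}(ii) yields the non--normal, resolution--rational, non--Gorenstein cover of the normalization, while Corollary~\ref{cor:R1} provides the codimension--$f$ statement and smoothness of the complement, giving~(ii).  If~(b) holds, Theorem~\ref{thm:sing}(ii) gives a resolution--rational normal cover; the Gorenstein/lci criterion comes from the equivalence of $l(\gr) = 3$ for every $\gr \in \trn^{*}$ and every subset as in~(b) having cardinality $2$; Corollary~\ref{cor:smooth-away-from-nonCM} gives codimension $\geq 2$ for the singular locus and smoothness of its complement, yielding~(iii).

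The hardest step is the setup in the first paragraph: making the choice of $(\tau, S)$ precise for each $\sigma_{\mathbb{m}, \mathbb{n}}$ and checking that the resulting family of charts exhausts $\cX(\sigma_{\mathbb{m}, \mathbb{n}})$, so that the Serre weight invariants $(n_j)_j$ and the local combinatorics of $T$ and $\trn^{*}$ are genuinely equivalent. The book-keeping, in particular carrying $\gamma_j = p-1-n_j$ through the cyclic indexing on $\Z/f\Z$ and through the possible placement of $0$ relative to the subsets in $M$, requires care; but once this dictionary is in place, the geometric input has already been done in Section~\ref{sec:geometry}.
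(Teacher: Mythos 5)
Your framework matches the paper's: both reduce the statement to the local geometry of Section~\ref{sec:geometry} by identifying $\cX(\sigma)$ with $\cZ^{\tau}_S$ for a fixed $(\tau,S)$ dictated by $\gamma_j = p-1-n_j$ (via \cite[Prop.~4.1.2]{gkksw}), then translating conditions (a) and (b) into the class-tuple and $\heart$-sequence language of Lemma~\ref{lem:type-combinatorics}, and finally invoking Theorem~\ref{thm:sing} together with Corollaries~\ref{cor:R1} and~\ref{cor:smooth-away-from-nonCM}. One small imprecision: you speak of ranging over admissible $(\tau, S, \tilz)$, but $\tau$ and $S$ are determined once and for all by $\sigma$ (up to harmless choices built into Lemma~\ref{lem:mu-combinatorics}, such as the placement of $0 \in \Z/f\Z$); only $\tilz$ varies to produce the charts, and Corollaries~\ref{cor:loc-non--normal} and~\ref{cor:loc-non-CM} already package that variation.

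The real divergence is in the ``only if'' direction of (i), i.e.\ smoothness when neither (a) nor (b) holds. The paper simply cites the main result and Appendix of \cite{gkksw} and does not re-derive it combinatorially. Your proposed alternative---showing for every $\tilz$ that $T_j \neq 3$ for some $j$ and $\trn^*(\tilz) = \varnothing$---would in principle give a more self-contained argument, but as written there is a gap. You assert that non--Steinbergness lets one ``arrange'' the hypothesis $\gamma_j < (p-1)/2$ for some $j$. The actual hypothesis in Lemma~\ref{lem:mu-combinatorics} is the disjunction ``$\gamma_j \leq (p-1)/2$ for all $j$, or $\gamma_j < (p-1)/2$ for some $j$,'' and non--Steinbergness does not imply it: take $n_j = 1$ for all $j$, so $\gamma_j = p-2$ for all $j$, and $p-2 > (p-1)/2$ since $p > 3$. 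One can try to rescue this by swapping $\eta_1$ and $\eta_2$, which replaces $\gamma_j = p-1-n_j$ by $\gamma'_j = n_j$ (using $\prod_j \omega_j^{p-1} = 1$), and one of the two normalizations always satisfies the hypothesis; but the swap also interchanges $\cC^{\tau^{\vee}}(J)$ with $\cC^{\tau^{\vee}}(\Z/f\Z \smallsetminus J)$, so the identification $\cX(\sigma) = \cZ^{\tau}_S$ would then need to be re-derived for the complementary shape. None of this arises for cases (a) and (b), where the hypothesis of Lemma~\ref{lem:mu-combinatorics} is automatic (in (a) all $\gamma_j = 1$, and in (b) some $\gamma_j = 0$). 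The cleaner route for the converse, which is what the paper does, is simply to cite \cite{gkksw}.
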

\begin{proof}
When neither (a) nor (b) holds, this is the main result of \cite{gkksw} 
when $n_j \neq 0$ for some $j \in \Z/f\Z$ and follows, for e.g., from the Appendix of loc. cit. when $n_j=0$ for each $j \in \Z/f\Z$.

For the rest of the cases, \cite[Prop.~4.1.2]{gkksw} shows that $\cX(\sigma)$ is the scheme--theoretic image of $\cC^{\tau^{\vee}}(\Z/f\Z)$ for a principal series $\tau^{\vee} = \eta_1 \oplus \eta_2$ with $$\eta_1 \eta_2^{-1} = \prod_{j \in \Z/f\Z} \omega_j^{p-1-n_j}.$$ Thus, $\cX(\sigma)$ is the scheme--theoretic image of $\Y_S$ where, when (a) holds, $s, \mu, S$ are as in Lemma \ref{lem:mu-combinatorics} with the set $M = \varnothing$, and $\langle\mu_j, \alpha^{\vee}\rangle =1$ for each $j$. When (b) holds, then $s, \mu, S$ are as in Lemma \ref{lem:mu-combinatorics} with $M \neq \varnothing$ (after possibly relabelling $0 \in \Z/f\Z$ so as to guarantee that $0 \in M^{\text{o}}$).

An application of Theorem \ref{thm:sing} in conjunction with Lemma \ref{lem:type-combinatorics} shows that $\cX(\sigma) = \cZ^{\tau}_S$ is not smooth when (a) or (b) holds, as well as describes the singularities. The statements about the codimensions of the singular loci follow from Corollaries \ref{cor:R1} and \ref{cor:smooth-away-from-nonCM}.
\end{proof}

For the following theorem, the statement as well as the key argument in the proof was pointed out to us by Toby Gee.
\begin{theorem}\label{thm:steinberg}
    Let $\sigma$ be a Steinberg Serre weight. Then $$\cX(\sigma) \cong \left[\big(\Gm \times \AA^{f+1}\big) / \big(\Gm \times \Gm \big) \right].$$ In particular, $\cX(\sigma)$ is smooth.
\end{theorem}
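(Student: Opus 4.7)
The plan is to identify $\cX(\sigma)$ directly as a moduli stack of reducible Galois representations with a prescribed graded structure. For a Steinberg Serre weight $\sigma = \sigma_{\mathbb{m}, \mathbb{n}}$ (so $n_j = p-1$ for all $j$), I would first show that every $\rhobar \in \cX(\sigma)(\Fbar)$ sits in an extension
$$0 \to \chi_1 \to \rhobar \to \chi_2 \to 0$$
where $\chi_1/\chi_2 = \omega := \prod_j \omega_j$ is the mod $p$ cyclotomic character and $\chi_2 = \bigl(\prod_j \omega_j^{m_j}\bigr) \cdot \ur_\lambda$ for some $\lambda \in \Fbar^\times$. This comes from the Hodge--Tate weight characterization of $\cX(\sigma)$: the corresponding crystalline lifts have HT weights $\{-(p-1)-m_j, 1-m_j\}$ in the $j$-th embedding, and after twisting by $\prod_j \omega_j^{m_j-1}$ they are ordinary of HT weights $\{-p, 0\}$, forcing the stated reducible form mod $p$. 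The tame part of $\chi_2$ is pinned down by $\mathbb{m}$, while its unramified part $\ur_\lambda$ remains free.

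Given this description, the parameterization is immediate: $\lambda$ varies in $\Gm$, and the extension class lies in $H^1(G_K, \omega^{-1})$, which has $\F$-dimension $f+1$ by local Tate duality and the Euler--Poincar\'e formula (here one uses $H^0(G_K, \omega^{-1}) = 0$ and $H^2(G_K, \omega^{-1}) \cong \F$). Together these yield a surjective morphism
$$\Gm \times \AA^{f+1} \longrightarrow \cX(\sigma).$$
The fibers of this morphism encode the choices of ordered basis for $\rhobar$ compatible with the filtration, parameterized by the diagonal torus $\Gm \times \Gm \subset \GL_2$. The induced $\Gm \times \Gm$-action rescales the extension class by $ab^{-1}$ while fixing $\lambda$, yielding the stated stack presentation. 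Note that the central $\Gm$ inside $\Gm \times \Gm$ acts trivially, which is consistent with the generic inertia of $\cX(\sigma)$ being $\Gm$ (the scalars in $\GL_2$).

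The main obstacle is the first step: establishing the reducible-extension form in families rather than merely on $\Fbar$-points. I expect this to be cleanest on the Breuil--Kisin side, where one verifies that every Breuil--Kisin module mapping into the Steinberg component admits a canonical rank-one sub-Breuil--Kisin module with the required descent data whose quotient corresponds to $\chi_2$. This then descends to yield the reducible structure on $\rhobar$ at the stack level. Once the presentation is in hand, the smoothness assertion is immediate since $\Gm \times \AA^{f+1}$ and $\Gm \times \Gm$ are both smooth.
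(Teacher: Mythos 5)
Your high-level picture is right and matches the paper's: parameterize by the unramified twist $\lambda \in \Gm$ together with an extension class of dimension $f+1$, and expect a $\Gm \times \Gm$ groupoid structure from the choice of graded basis. However, there are two issues.

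First, a computational error: for an extension $0 \to \chi_1 \to \rhobar \to \chi_2 \to 0$ with $\chi_1 \chi_2^{-1} = \omega$, the extension class lives in $H^1(G_K, \omega)$, not $H^1(G_K, \omega^{-1})$. These are not interchangeable: by local Tate duality $H^2(G_K, \omega) \cong H^0(G_K, \F)^\vee \cong \F$ while $H^2(G_K, \omega^{-1}) \cong H^0(G_K, \omega^2)^\vee = 0$ for $p > 3$, so the Euler characteristic gives $\dim H^1(G_K, \omega) = f+1$ but $\dim H^1(G_K, \omega^{-1}) = f$ only. Your stated dimension $f+1$ comes from the module you \emph{should} be using, $\omega$, not the one you wrote down.

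Second and more substantively, you correctly identify the crux — showing that the reducible form persists over Artinian thickenings, not just at $\Fbar$-points — and suggest hunting for a canonical sub-Breuil--Kisin module. This is a genuinely different route than the paper takes, and it is not obviously viable: the Breuil--Kisin machinery developed in the body of the paper applies to non-scalar tame types, whereas the Steinberg weight is matched to a scalar type and lies outside that framework; this is precisely why the paper isolates the Steinberg case. The paper's actual argument instead invokes the scheme $V_{\epsilon,b}$ of \cite{kans22}, whose \emph{Artinian} points are already known to be unramified twists of extensions of the trivial character by the cyclotomic character, with the $\Gm \times \Gm$ groupoid structure built in. It then proves formal smoothness of $V_{\epsilon,b} \to \cX(\sigma)$ by a deformation-theoretic argument: any $\rho_A$ deforming a finite-type point of $\cX(\sigma)$ factors through the mod $\varpi$ fiber of a potentially semistable Barsotti--Tate deformation ring, whose $\cO$-flat non-crystalline component has closed fiber dominating $\cX(\sigma)$; by Bartlett's base-change lemma, $\rho_A$ is therefore obtained by specialization from a semistable non-crystalline BT lift, and all such lifts are (by \cite[Thm.~8.6.1]{emerton2022moduli}) extensions of the trivial character by the cyclotomic character. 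This semistable/crystalline dichotomy is exactly what your sketch is missing; without it (or a worked-out Breuil--Kisin substitute), your surjective map $\Gm \times \AA^{f+1} \to \cX(\sigma)$ is not shown to be smooth, and the quotient presentation does not yet follow.
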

\begin{proof}
    After twisiting by a $G_K$-character if necessary, we may assume that every finite type point of $\cX(\sigma)$ has crystalline lifts with Hodge--Tate weights $\{-p, 0\}$ in each embedding. Denote by $\cX(\text{triv})$ the irreducible component of $\cX_{2, \text{red}}$ whose finite type points have crystalline lifts with Hodge--Tate weights $\{-1, 0\}$ in each embedding. 
    
    By
    \cite[Sec.~3]{kans22}, for e.g., there exists a scheme 
    $V_{\epsilon, b}$ whose Artinian points parameterize unramified twists of extensions of the trivial character by the mod $p$ cyclotomic character. There also exists an obvious map $f_{\epsilon, b}: V_{\epsilon, b} \to \cX_{2, \text{red}}$ (see \textit{loc. cit.} for details). Combining the description of $V_{\epsilon, b}$ in  \textit{loc. cit.} with \cite[Thm.~5.1.29]{emerton2022moduli} and usual Galois cohomology computations, we find that $$V_{\epsilon, b} \cong \Gm \times \AA^{f+1}.$$

By \cite[(3.12)]{kans22}, there exists a monomorphism 
    \begin{align*} \Gm \times \Gm \times V_{\epsilon, b} \hookrightarrow V_{\epsilon, b} \times_{\cX_{2, \text{red}}} V_{\epsilon, b}.\end{align*}
    As in \cite[Lem.~3.15]{kans22}, it is immediate from the explicit description of this map that it induces a bijection on points valued in Artinian rings with finite residue field. Therefore, it is a smooth surjection, and in particular, an isomorphism. This further implies that it induces a monomorphism
\begin{align}\label{eqn:sm-presentation-St}
    [V_{\epsilon, b}/\Gm \times \Gm] \hookrightarrow \cX_{2, \text{red}}.
\end{align}    
Since $V_{\epsilon, b}$ is reduced and irreducible, the same is true for the scheme--theoretic image of the map above. The key point is that by \cite[Lem.~8.6.4]{emerton2022moduli}, the scheme--theoretic image is $\cX(\sigma)$, onto which (\ref{eqn:sm-presentation-St}) surjects.

We claim that (\ref{eqn:sm-presentation-St}) induces the desired isomorphism in the statement of the Theorem.
As before, it suffices to check that $f_{\epsilon, b}$ is formally smooth. This is equivalent to checking the following: if $A$ is an Artinian local ring over $\F$ with residue field $\F'$ finite over $\F$, and $\rho_A \in \cX(\sigma)(A)$ is a $G_K$--representation deforming $\rhobar \in \cX(\sigma)(\F')$, then $\rho_A$ is an extension of the trivial character by the mod $p$ cyclotomic character (base changed to $A$).

Let $R^{\square}_{\rhobar}$ be the universal lifting $\cO$-algebra of $\rhobar$. Let 
    $R^{\mathrm{BT}, \text{ss}}_{\rhobar}$ and $R^{\mathrm{BT}, \text{crys}}_{\rhobar}$ 
be the $\cO$-flat quotients of the universal lifting algebra such that if $B$ is a finite flat $E$-algebra, then an $\cO$-algebra map $$R^{\square}_{\rhobar} \to B$$ factors through $R^{\mathrm{BT}, \text{ss}}_{\rhobar}$, resp. $R^{\mathrm{BT}, \text{crys}}_{\rhobar}$, if and only if the corresponding $G_K$-representation is semistable, resp. crystalline, of Hodge type $\mathrm{BT}$, by which we mean it has Hodge--Tate weights $\{-1, 0\}$ for each embedding $K \hookrightarrow E$. The existence of these flat quotients follows from the main theorem of \cite{kisindefrings}. Moreover, these quotients are equidimensional and of the same dimension whenever non--zero, implying that for any fixed irreducible component in the generic fiber of $R^{\mathrm{BT}, \text{ss}}_{\rhobar}$, the finite flat points over $E$ are either all crystalline, or all semistable but not crystalline. Thus, it makes sense to call an irreducible component in the generic fiber crystalline or non--crystalline. 
Additionally, by \cite[Prop.~4.8.10, Thm.~8.6.2]{emerton2022moduli}, there exist versal maps
\begin{align*}
    &\Spec R^{\mathrm{BT}, \text{crys}}_{\rhobar}/\varpi \longrightarrow \cX(\text{triv}), \text{ and} \\
    &\Spec R^{\mathrm{BT}, \text{ss}}_{\rhobar}/\varpi \longrightarrow \cX(\sigma) \cup \cX(\text{triv}),
\end{align*} where $\cX(\sigma) \cup \cX(\text{triv})$ is a reduced union.

By flatness, $$\Spec R^{\mathrm{BT}, \text{ss}}_{\rhobar}/\varpi \times_{\cX_{2, \text{red}}} \cX(\sigma)$$ is in the closure of an irreducible component of the generic fiber of $\Spec R^{\mathrm{BT}, \text{ss}}_{\rhobar}$.
This irreducible component must be non--crystalline; otherwise, we would have an inclusion $$\Spec R^{\mathrm{BT}, \text{ss}}_{\rhobar}/\varpi \times_{\cX_{2, \text{red}}} \cX(\sigma) \subset \Spec R^{\mathrm{BT}, \text{crys}}_{\rhobar}/\varpi = \Spec R^{\mathrm{BT}, \text{ss}}_{\rhobar}/\varpi \times_{\cX_{2, \text{red}}} \cX(\text{triv}),$$ which is an impossibility.
Denote by $R$ the $\cO$-flat quotient of $R^{\mathrm{BT}, \text{ss}}_{\rhobar}$ obtained by taking the closure of this non--crystalline component. 

The closed immersion $$\Spec R^{\mathrm{BT}, \text{ss}}_{\rhobar}/\varpi \times_{\cX_{2, \text{red}}} \cX(\sigma) \hookrightarrow \Spec R/\varpi$$ implies that a quotient of $R/\varpi$ is versal to $\cX(\sigma)$ at $\rhobar$. In particular, by versality, $\rho_A$ lifts to an $A$-point of $R$. 
Thus, \cite[Lem.~4.1.2]{bartlett2020irreducible} shows that $\rho_A$ is obtained by a suitable base change from a semistable but not crystalline $G_K$-representation of Hodge type $\mathrm{BT}$. By the proof of \cite[Thm.~8.6.1]{emerton2022moduli}, all such representations are extensions of the trivial character by the cyclotomic character, proving the desired statement for $\rho_A$.
\end{proof}

\begin{corollary}\label{cor:max-CM-normal}
    Let $f>1$ and $\sigma$ a Serre weight. If $\cF$ is a finite type maximal Cohen--Macaulay sheaf on $\cX(\sigma)$, then $\cF$ is the pushforward of a maximal Cohen--Macaulay sheaf on the normalization of $\cX(\sigma)$. 
\end{corollary}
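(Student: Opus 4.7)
The plan is to first reduce to the non-normal case. By Theorems \ref{thm:steinberg} and \ref{thm:serre-weight-sing}, $\cX(\sigma)$ is normal unless $\sigma = \sigma_{\mathbb{m}, \mathbb{n}}$ satisfies case (a) of Theorem \ref{thm:serre-weight-sing}, namely $n_j = p-2$ for every $j \in \Z/f\Z$. If $\cX(\sigma)$ is normal, the normalization is an isomorphism and nothing is to be proved, so assume we are in case (a). Let $\nu \colon \cX(\sigma)^{\mathrm{nm}} \to \cX(\sigma)$ denote the normalization map. By Theorem \ref{thm:serre-weight-sing}(ii), the non-normal locus $Z \subset \cX(\sigma)$ has codimension $f$, which is $\geq 2$ because $f > 1$, and the complement $U := \cX(\sigma) \smallsetminus Z$ is smooth and isomorphic via $\nu$ to its preimage $\tilde{U} \subset \cX(\sigma)^{\mathrm{nm}}$. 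Since $\nu$ is finite and $\dim \cX(\sigma)^{\mathrm{nm}} = \dim \cX(\sigma)$, the complement $\cX(\sigma)^{\mathrm{nm}} \smallsetminus \tilde{U}$ likewise has codimension $\geq 2$ in the normalization.

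The core of the argument is an application of the MCM version of algebraic Hartogs. Let $j \colon U \hookrightarrow \cX(\sigma)$ be the open immersion. Since $\cF$ is maximal Cohen--Macaulay, the natural map $\cF \to j_* j^* \cF$ is an isomorphism: working smooth-locally, an MCM module over a Noetherian local ring has depth along any closed subscheme at least the codimension of that subscheme, so depth $\geq 2$ along $Z$ forces sections on $U$ to extend uniquely. Let $\tilde{j} \colon \tilde{U} \hookrightarrow \cX(\sigma)^{\mathrm{nm}}$. Then I would define $\tilde{\cF} := \tilde{j}_*(\cF|_U)$ on the normalization, viewing $\cF|_U$ as a sheaf on $\tilde{U}$ via the isomorphism $U \cong \tilde{U}$. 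The restriction $\cF|_U$ is locally free on the smooth $U$, hence reflexive, so the standard extension result across codimension-$\geq 2$ loci on a normal Noetherian stack guarantees that $\tilde{\cF}$ is a coherent (in fact reflexive) sheaf on $\cX(\sigma)^{\mathrm{nm}}$.

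The compatibility $\nu \circ \tilde{j} = j \circ (\nu|_{\tilde{U}})$, combined with functoriality of pushforwards and the two applications of Hartogs above, gives $\nu_* \tilde{\cF} = \nu_* \tilde{j}_*(\cF|_U) = j_*(\cF|_U) = \cF$. Finally, since $\nu$ is finite, $\nu_*$ preserves depth and dimension at corresponding points (a standard fact about finite morphisms, checked smooth-locally by noting that $\cO_{\cX(\sigma)^{\mathrm{nm}}, x}$ is finite over $\cO_{\cX(\sigma), \nu(x)}$), so the MCM condition on $\cF = \nu_* \tilde{\cF}$ transfers back to $\tilde{\cF}$.

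The main obstacle is careful bookkeeping: making sure the depth/codimension considerations translate cleanly from the stack setting to smooth-local affine charts where the classical MCM extension statements apply, and verifying that the complement of $\tilde{U}$ in $\cX(\sigma)^{\mathrm{nm}}$ genuinely has codimension $\geq 2$. Both issues hinge on Theorem \ref{thm:serre-weight-sing}(ii), which furnishes not only the codimension estimate but, crucially, the isomorphism $U \cong \tilde{U}$ that allows the sheaf to be transported across the normalization in the first place.
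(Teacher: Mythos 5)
Your proof is correct, but it takes a genuinely different route from the paper's. The paper factors the corollary through a self-contained commutative-algebra statement, Lemma \ref{lem:max-CM-end}: a finitely generated maximal Cohen--Macaulay module $M$ over a noetherian integral domain $A$ that is regular in codimension $1$ is automatically a module over the normalization of $A$. The proof of that lemma works directly in $M \otimes_A \mathrm{Frac}(A)$, letting $M'$ be the $A$-submodule generated by the normalization acting on $M$, and shows $M = M'$ by induction on $\dim A$ together with a depth argument on $0 \to M \to M' \to M'/M \to 0$. Corollary \ref{cor:max-CM-normal} then falls out in one line once the codimension-$\geq 2$ estimate on the singular locus is cited.

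Your approach is instead sheaf-theoretic: restrict $\cF$ to the smooth open complement $U$ of the non-normal locus, identify $U$ with its preimage $\tilde{U}$ in the normalization, push forward across the codimension-$\geq 2$ complement, and deduce the MCM property from finiteness of $\nu$. This is valid and is closely analogous to the Hartogs-style argument the paper later uses in Corollary \ref{cor:line-bundle-pushfwd}. What the paper's route buys is economy and generality: Lemma \ref{lem:max-CM-end} only needs R1 and needs no choice of an open set, no explicit identification $U \cong \tilde{U}$, and no detour through reflexive sheaves; the key object ``$M$ with its extended module structure'' is produced canonically. Your argument, by contrast, implicitly uses the stronger input that the complement of the non-normal locus is \emph{smooth} (to get local freeness of $\cF|_U$ and the identification with $\tilde{U}$), which is available here but is a heavier hypothesis than the lemma requires. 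Both routes hinge on the same codimension bound from Theorems \ref{thm:serre-weight-sing} and \ref{thm:steinberg}, and both are correct.
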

\begin{proof}
    The codimension of the singular locus is $\geq 2$ by Theorems \ref{thm:serre-weight-sing} and \ref{thm:steinberg}. An application of Lemma \ref{lem:max-CM-end} finishes the proof.
\end{proof}

\begin{lemma}\label{lem:max-CM-end}
    Let $M$ be a finitely generated maximal Cohen--Macaulay module over a noetherian integral domain $A$ that is regular in codimension $1$. Then $M$ is obtained by restriction of scalars from a maximal Cohen--Macaulay module defined over the normalization of $A$.
\end{lemma}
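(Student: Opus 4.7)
The plan is to show that the $A$-module structure on $M$ extends canonically to a structure over $\widetilde{A}$, the normalization of $A$ in its fraction field $K$. The essential local observation is that since $A$ is regular in codimension one, for every height one prime $\mathfrak{p} \subset A$ the local ring $A_\mathfrak{p}$ is a DVR, hence already integrally closed, so $\widetilde{A}_\mathfrak{p} = A_\mathfrak{p}$. Because $M$ is maximal Cohen--Macaulay over the domain $A$, $M$ is torsion--free and embeds into $M_K := M \otimes_A K$, on which $\widetilde{A}$ acts canonically extending the $A$-action.

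I would then fix $a \in \widetilde{A}$ and show that multiplication by $a$ preserves $M \subseteq M_K$. Consider the finitely generated $A$-submodule $N := M + aM \subseteq M_K$. By the observation above, $N_\mathfrak{p} = M_\mathfrak{p}$ for every height one prime $\mathfrak{p}$, so $Q := N/M$ is a finitely generated $A$-module whose support has codimension $\geq 2$. Since $M$ is maximal Cohen--Macaulay over a domain, $\mathrm{Ass}(M) = \{0\}$, so any nonzero element of $\mathrm{Ann}_A(Q)$ is a non--zerodivisor on $M$; iterating inside the height $\geq 2$ ideal $\mathrm{Ann}_A(Q)$ (using that the associated primes of $M/f_1 M$ are minimal over $(f_1)$, hence of height one) produces an $M$-regular sequence of length $2$ inside $\mathrm{Ann}_A(Q)$. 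Equivalently, $\mathrm{grade}_A(\mathrm{Ann}_A(Q), M) \geq 2$, so $\Ext^i_A(Q, M) = 0$ for $i \leq 1$.

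Applying $\Hom_A(-, M)$ to $0 \to M \to N \to Q \to 0$ then shows the sequence splits; since $N \subseteq M_K$ is torsion--free while $Q$ is torsion (its annihilator contains a nonzero element of the domain $A$), this forces $Q = 0$ and hence $aM \subseteq M$. This endows $M$ with a finitely generated $\widetilde{A}$-module structure recovering the original $A$-action by restriction of scalars, and $M$ remains maximal Cohen--Macaulay over $\widetilde{A}$ since $\widetilde{A}$ is integral over $A$, so dimensions and depths of $M$ agree when computed over either ring. The main subtlety is the $\Ext$-vanishing step above, which in turn rests on constructing the regular sequence of length $2$; the remainder of the argument is formal.
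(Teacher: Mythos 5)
Your proof is correct, and it takes a genuinely different route from the paper's. The paper works with the $A$-submodule $M'$ of $M\otimes_A\mathrm{Frac}(A)$ generated by the full $\widetilde{A}$-action all at once, localizes, and runs an induction on $\dim A$: by the inductive hypothesis $M'/M$ is supported only at the closed point, and the depth lemma applied to $0\to M\to M'\to M'/M\to 0$ then forces $\operatorname{depth} M = 1$, contradicting $\operatorname{depth} M = \dim A \geq 2$. You instead fix a single $a\in\widetilde A$, form $N = M + aM$ and $Q = N/M$, observe that $Q$ is supported in codimension $\geq 2$ (by $\widetilde A_\mathfrak{p}=A_\mathfrak{p}$ at height-one primes), build a length-two $M$-regular sequence in $\operatorname{Ann}_A(Q)$ via the unmixedness of $M/f_1M$ for MCM $M$, invoke the grade/Ext characterization to split $0\to M\to N\to Q\to 0$, and conclude $Q=0$ because $N$ is torsion-free while $Q$ is torsion. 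Both proofs exploit the same basic tension — depth of $M$ being $\geq 2$ against $Q$ (resp. $M'/M$) being supported in codimension $\geq 2$ — but your element-wise approach avoids the induction and also sidesteps the need to know a priori that $M'=\widetilde A M$ is a finitely generated $A$-module (which the paper implicitly assumes when it speaks of its depth; harmless in the applications but worth noting). The trade-off is that your argument invokes slightly heavier standard machinery: unmixedness of MCM modules to get the height-one claim for $\operatorname{Ass}(M/f_1M)$, and the equality $\operatorname{grade}(\operatorname{Ann}Q, M)=\inf\{i:\operatorname{Ext}^i_A(Q,M)\neq 0\}$, both of which are quoted rather than proved but are entirely standard. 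One small point you might flag: the claim that $\operatorname{Ass}(M/f_1M)$ consists of height-one primes uses that $M_\mathfrak{p}$ is MCM over $A_\mathfrak{p}$ at every prime $\mathfrak{p}$, not just at maximal ones; this is true but deserves a word (it follows from Bruns--Herzog 2.1.3 together with $\operatorname{Supp} M = \operatorname{Spec} A$, which you already established via $\operatorname{Ass}(M)=\{(0)\}$).
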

\begin{proof}
    Being maximal Cohen--Macaulay on an integral domain, the only associated prime of $M$ is the zero ideal. Hence, $M$ is torsion-free and admits an injection
    $$M \hookrightarrow M \otimes_A \mathrm{Frac}(A).$$
    We will show that $M$ equals the finitely generated $A$--submodule $M' \subset M \otimes_A \mathrm{Frac}(A)$ generated by the action of the normalization of $A$ on $M$.
    We can assume $A$ is local with maximal ideal $\mathfrak{m}$. 
    We prove the statement by induction on the dimension of $A$. The case when $\dim A \in \{0, 1\}$ is immediate since $A$ is normal in that case. Now, suppose $\dim A \geq 2$. By construction, $M'$ is torsion-free and hence, has depth $\geq 1$. By induction, if $M'/M$ is non--zero, then its support is $\{\mathfrak{m}\}$ and in particular, its depth is $0$. Consider the short exact sequence
    $$0 \to M \to M' \to M'/M \to 0$$ and the long exact sequence obtained from it by applying the functor $\Hom_{A}(A/\mathfrak{m}, \rule{0.2cm}{0.1pt})$. Since $\mathrm{depth} \; M' \geq 1$, nonzero $M'/M$ implies that $\mathrm{depth} \; M = 1$, a contradiction.
\end{proof}

\begin{cor}\label{cor:line-bundle-pushfwd}
    Let $f>1$, $\sigma  = \sigma_{\mathbb{m},\mathbb{n}}$ a Serre weight, and $\iota: \cU \hookrightarrow \cX(\sigma)$ the smooth open locus in $\cX(\sigma)$. Suppose $\cF$ is a finite type maximal Cohen--Macaulay sheaf on $\cX(\sigma)$ generically of rank $1$. The following are true:
    \begin{enumerate}
        \item The sheaf $\cF$ is isomorphic to the pushforward along $\iota$ of the invertible sheaf  $\iota^{*} \cF$ on $\cU$.
        \item If $\sigma$ is non--Steinberg and there does not exist $i$ such that $(n_{i-1}, n_i) = (0, p-1)$, then $\cF$ is the pushforward under $\pi_S$ of a unique invertible sheaf on $\Y_S$ for some suitable $\tau, S$.
    \end{enumerate}
\end{cor}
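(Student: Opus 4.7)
The plan is to first establish part~(i) via algebraic Hartog's on the normalization, and then for~(ii) to extend line bundles from $\pi_S^{-1}(\cU)$ up to $\Y_S$ using that $\Y_S$ is smooth (hence locally factorial), with the hypothesis in~(ii) ensuring the complement has codimension $\geq 2$.

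For~(i), first observe that $\cX(\sigma) \smallsetminus \cU$ has codimension $\geq 2$: in the Steinberg case $\cU = \cX(\sigma)$ by Theorem~\ref{thm:steinberg}, and otherwise this follows from Theorem~\ref{thm:serre-weight-sing}, where the assumption $f>1$ guarantees codimension $f \geq 2$ in case~(a). By Corollary~\ref{cor:max-CM-normal}, write $\cF = \nu_*\cF'$ with $\nu: \widetilde{\cX}(\sigma) \to \cX(\sigma)$ the normalization and $\cF'$ a finite type maximal Cohen--Macaulay sheaf. Since $\widetilde{\cX}(\sigma)$ is normal, $\cF'$ is reflexive by Serre's $S_2$ criterion; and $\cU$ still has complement of codimension $\geq 2$ in $\widetilde{\cX}(\sigma)$ because $\nu$ is finite and surjective. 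Algebraic Hartog's for reflexive sheaves then gives $\cF' \cong \iota'_*(\iota'^*\cF')$, where $\iota': \cU \hookrightarrow \widetilde{\cX}(\sigma)$, and pushing forward by $\nu$ yields $\cF \cong \iota_*(\iota^*\cF)$.

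For~(ii), choose $\tau, S$ as in the proof of Theorem~\ref{thm:serre-weight-sing} so that $\pi_S: \Y_S \to \cX(\sigma) = \cZ^\tau_S$ is proper and birational with $\Y_S$ smooth (Lemma~\ref{lem:smooth-irred}). The central claim is that under the hypothesis, $\pi_S$ restricts to an isomorphism $\pi_S^{-1}(\cU) \xrightarrow{\sim} \cU$ whose complement has codimension $\geq 2$ in $\Y_S$. In case~(a) of Theorem~\ref{thm:serre-weight-sing}, this is Corollary~\ref{cor:R1} combined with $f \geq 2$. In case~(b), the hypothesis excludes the cardinality-two subsets of Theorem~\ref{thm:serre-weight-sing}(iii), so by Lemma~\ref{lem:type-combinatorics}(iii) every $\heart$-subsequence of $(\gamma_j)_j$ has length $\geq 3$, forcing the invariant $d$ of Corollary~\ref{cor:smooth-away-from-nonCM} to satisfy $d \geq 3$; thus $\pi_S^{-1}(\cU)$ has codimension $d-1 \geq 2$ complement. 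When $\cX(\sigma)$ is smooth, the contrapositive of Corollary~\ref{cor:smooth-away-from-nonCM} forces $\trn(\tilz)^* = \varnothing$ for every $\tilz$, and Lemma~\ref{lem:bad-sequence}(i) then shows that each $Y(\gr) \to \cZ(\gr)$ is an isomorphism; after passing through the product decomposition of Section~\ref{subsec:Ba-product}, this yields that $\widetilde{\pi}_S$, and hence $\pi_S$, is itself an isomorphism.

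Granting the central claim, set $\cL_0 := \iota^*\cF$, a line bundle on $\cU$, and transport it through the isomorphism $\pi_S^{-1}(\cU) \cong \cU$ to a line bundle on $\pi_S^{-1}(\cU)$. Since $\Y_S$ is smooth (hence locally factorial), $\cL := j_*\cL_0$ with $j: \pi_S^{-1}(\cU) \hookrightarrow \Y_S$ is a line bundle on $\Y_S$ restricting to $\cL_0$. Then
\begin{align*}
\pi_{S*}\cL = (\pi_S \circ j)_*\cL_0 = \iota_*\cL_0 \cong \cF
\end{align*}
by part~(i). For uniqueness, any line bundle $\cL'$ on $\Y_S$ with $\pi_{S*}\cL' \cong \cF$ must satisfy $\cL'|_{\pi_S^{-1}(\cU)} \cong \cL_0$, and the restriction map $\mathrm{Pic}(\Y_S) \to \mathrm{Pic}(\pi_S^{-1}(\cU))$ is injective by local factoriality together with the codimension $\geq 2$ bound, so $\cL' \cong \cL$. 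The main obstacle will be the case-by-case verification of the central claim, especially the combinatorial bookkeeping in case~(b) that translates the exclusion of adjacent $(n_{i-1}, n_i) = (0, p-1)$ into the bound $d \geq 3$ via Lemma~\ref{lem:type-combinatorics}(iii).
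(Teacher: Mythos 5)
Your proposal is essentially correct and reaches the same conclusion by a parallel route, but differs from the paper in how the scheme-versus-stack layer is handled. The paper deliberately stays at the level of the scheme covers $\tilZ_S$ and $\tilY_S$ for the Hartog's-type arguments: it applies \cite[Thm.~3.5]{hassett2004reflexive} to $F = $ (pullback of $\cF$ to $\tilZ_S$), then identifies $j^*F$ with $j_Y^*\cO(D)$ for a Weil divisor $D$ on the smooth scheme $\tilY_S$, and finally extends the $\GL_2^{\Z/f\Z}$-descent data from $j^*F$ to $j_*j^*F$ and to $\cO(D)$ before descending back to the stacks. You instead work directly on $\cX(\sigma)$, its normalization, and $\Y_S$, invoking ``locally factorial,'' ``reflexive,'' and ``Pic injectivity'' at the stack level; this is morally fine since all of those notions are smooth-local and descend along smooth covers, but the paper's explicit passage to schemes is what makes the descent-data bookkeeping visible, and in your version that step is implicit. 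Your version of part (i) also substitutes Corollary \ref{cor:max-CM-normal} plus $S_2$-reflexivity on the normalization for the paper's direct appeal to \cite{hassett2004reflexive} on $\tilZ_S$ --- a reasonable alternative that leans on a corollary already in the paper. The third difference: for the case when $\cX(\sigma)$ is smooth the paper simply cites \cite[Prop.~4.2.1, Appendix]{gkksw}, while you argue from scratch that $\pi_S$ is an isomorphism by chaining the contrapositive of Corollary \ref{cor:smooth-away-from-nonCM} (and implicitly Corollary \ref{cor:loc-non--normal}, to rule out $T_j = 3$ for all $j$) with Lemma \ref{lem:bad-sequence}(i); this self-contained route is a genuine simplification over the citation, provided one also supplies the one-line check that the charts $\tilYz_S$ glue, and the combinatorial translation of the hypothesis ``no $(n_{i-1},n_i)=(0,p-1)$'' into ``every $\heart$-subsequence of $(\gamma_j)_j$ has length $\geq 3$'' via Lemma \ref{lem:type-combinatorics}(iii) is correct and matches what the paper uses implicitly.
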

\begin{proof} When $\cX(\sigma)$ is smooth, (i) is trivial and (ii) follows from \cite[Prop.~4.2.1]{gkksw} when $n_j \neq 0$ for some $j \in \Z/f\Z$ and from the Appendix of \text{loc. cit.} when $n_j = 0$ for each $j \in \Z/f\Z$. Otherwise, fix $\tau$ and $S$ such that $\cX(\sigma)= \cZ^{\tau}_S$. 


Denote by $F$ the pullback of $\cF$ to $\tilZ_S$ and let $j:U \hookrightarrow \tilZ_S$ be the pullback of $\iota$ along the map $\tilZ_S \to \cZ^{\tau}_S$. 
By Theorem \ref{thm:serre-weight-sing}, $\tilZ_S \smallsetminus j(U)$ has codimension $\geq 2$ in $\tilZ_S$. 
Applying \cite[Thm.~3.5]{hassett2004reflexive} to the structure map $\tilZ_S \to \Spec \F$, we infer the existence of an isomorphism $$F \xrightarrow{\sim} j_{*} j^{*} F.$$ 

By Corollaries \ref{cor:R1} and \ref{cor:smooth-away-from-nonCM}, $\widetilde{\pi}_S^{-1}U$ is isomorphic to $U$ under $\widetilde{\pi}_S$. We identify the two, and denote by $$j_Y: U \hookrightarrow \tilY_S$$ the pullback of $j$ along $\widetilde{\pi}_S$. We thus have $j = \widetilde{\pi}_S \circ j_Y. $ Since $U$ is smooth, the Auslander--Buchsbaum formula implies that $j^{*}F$ is invertible. Therefore, there exists a Weil divisor $D \subset \tilY_S$ such that $j^{*}F \cong j_Y^{*} \cO(D)$, where $\cO(D)$ is the (invertible) sheaf associated to $D$. 

 Now, suppose (ii) holds. Equivalently, the complement of $j_Y(U)$ in $\tilY_S$ has codimension $\geq 2$ by Corollaries \ref{cor:R1} and \ref{cor:smooth-away-from-nonCM}. By the algebraic Hartog's Lemma,  $\cO(D)$ is the unique invertible sheaf restricting to $j^{*} F$ on $j_{Y}(U)$. Additionally,
 $(j_Y)_{*} j^{*} F \cong \cO(D).$

Descent data on $F$ corresponding to the sheaf $\cF$ restricts to descent data on $j^{*}F$. Since $j(U)$ is a dense open subscheme of $\tilZ_S$ and $F$ has no embedded primes being maximal Cohen--Macaulay, descent data on $j^{*}F$ uniquely extends to descent data on $j_{*} j^{*} F$. When the complement of $j_Y(U)$ has codimension $2$, the descent data on $j^{*}F$ also extends uniquely to descent data on $\cO(D)$. Therefore, by descending, we finish the proofs of parts (i) and (ii).
\end{proof} 

\begin{remark}
    To be precise, the pushforward functors considered in Corollary \ref{cor:line-bundle-pushfwd} are the quasicoherent pushforwards defined in \cite[\href{https://stacks.math.columbia.edu/tag/077A}{Tag 077A}]{stacks-project}.
\end{remark}

\begin{theorem}\label{thm:non--normal-rep}
 Let $\sigma = \sigma_{\mathbb{m}, \mathbb{n}}$ be a Serre weight. The versal ring at $\rhobar \in \cX(\sigma)(\Fbar)$ is not normal if and only if $n_j = p-2$ for each $j$ and, viewing $\rhobar$ as a $G_K$--representation, 
 $$\rhobar \otimes \left( \prod_{j \in \Z/f\Z} \omega_j^{(1-m_j)} \right)$$ is unramified.
\end{theorem}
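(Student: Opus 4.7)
The plan is to reduce to the component-level statement via Theorem \ref{thm:serre-weight-sing}, then locate the non--normal locus using Corollary \ref{cor:loc-non--normal}, and finally extract the Galois representation at those points by unpacking the associated \'etale $\varphi$-module.

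By Theorem \ref{thm:serre-weight-sing}(ii), $\cX(\sigma_{\mathbb{m},\mathbb{n}})$ has non--empty non--normal locus if and only if $n_j = p-2$ for every $j \in \Z/f\Z$, so this condition is forced whenever some versal ring is non--normal. Assuming $n_j = p-2$ throughout, the proof of Theorem \ref{thm:serre-weight-sing} together with Lemmas \ref{lem:mu-combinatorics} and \ref{lem:type-combinatorics} identifies a principal series tame type $\tau = \tau(s,\mu)$ with $\langle \mu_j, \alpha^{\vee}\rangle = 1$, $s_j = \id$, $S_j = R$ for every $j$, such that $\cX(\sigma_{\mathbb{m},\mathbb{n}}) \cong \cZ^{\tau}_S$, and for each choice of $\tilz = (\tilz_j)_j$ with $\tilz_j = t_{w_0(\eta)}s_j^{-1}v^{\mu_j}$ the class tuple has $T_j = 3$ for every $j$.

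By Corollary \ref{cor:loc-non--normal}, the versal ring at $\rhobar$ is non--normal precisely when $\rhobar$ lifts to $V(N) \subset \tilZz_S$ for some such $\tilz$. Combining the explicit form of $X_j$ in case 6 of the proof of Lemma \ref{lemma:shape-conditions} with the description of $V(N)$ in the proof of Corollary \ref{cor:R1}, the locus $V(N)$ is exactly where $B_j = C_j = D_j = 0$, equivalently $X_j = \id$ for every $j$. At such a point, Proposition \ref{prop:phi-module-frob} yields a basis of the associated \'etale $\varphi$-module in which the Frobenius on the $j$-th piece is represented by $\kappa_j \, v^{\mu_{j,0}}\cdot \id$ for some constant $\kappa_j \in \GL_2(\Fbar)$, i.e.\ a scalar $v^{\mu_{j,0}}$ times a constant invertible matrix.

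Applying the functor $T$ and folding in the descent data of type $\tau$ then expresses $\rhobar$ as the tensor product of an unramified rank--$2$ representation (governed by the $\kappa_j$'s) with a rank--$1$ $G_K$--character whose restriction to $I_K$ is $\prod_{j} \omega_j^{-\mu_{j,0}}$. Hence $\rhobar|_{I_K}$ is scalar, equal to $\prod_j \omega_j^{-\mu_{j,0}}$. Finally, using equation (\ref{eqn:chi-2}) to track the central twist $\chi$ forced by matching the $(\mathbb{m},\mathbb{n})$--parametrization of $\sigma$ with the $(s,\mu)$--parametrization of $\tau$, we verify $\mu_{j,0} = m_j - 1$ for each $j$, so that $\rhobar \otimes \prod_j \omega_j^{1-m_j}$ is trivial on inertia. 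The converse is immediate: any $\rhobar$ satisfying the hypotheses has, after untwisting, constant Frobenius matrices, giving a lift to $V(N)$.

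The main obstacle is the bookkeeping in the last paragraph, where one must reconcile the Hodge--Tate weight normalization used to pin down $\cX(\sigma_{\mathbb{m},\mathbb{n}})$ with the twist conventions in Lemma \ref{lem:mu-combinatorics} and the field--of--norms dictionary between the scalar factor $v^{\mu_{j,0}}$ and the fundamental character $\omega_j^{-\mu_{j,0}}$. This requires careful sign tracking and explicit identification of the character $\chi$ in (\ref{eqn:chi-2}) on the nose; the result $\mu_{j,0} = m_j - 1$ is the essential input matching the exponent $1 - m_j$ appearing in the statement.
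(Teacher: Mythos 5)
Your overall strategy lines up with the paper's: reduce to Theorem \ref{thm:serre-weight-sing}(ii), use Corollary \ref{cor:loc-non--normal} to locate the non--normal locus as $V(N)$, identify $V(N)$ with the locus $X_j = \id$ for all $j$, and read off the Galois representation. However, there are three concrete problems.

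First, you have a sign error that is internally inconsistent. The paper's proof of Theorem \ref{thm:non--normal-rep} computes $\mu_j = (-m_j + 1, -m_j)$, so $\mu_{j,0} = 1 - m_j$, not $m_j - 1$. Your claimed value $\mu_{j,0} = m_j - 1$ combined with your formula $\prod_j \omega_j^{-\mu_{j,0}}$ for the inertial restriction would give $\rhobar|_{I_K} = \prod_j \omega_j^{1-m_j}$, so that $\rhobar \otimes \prod_j \omega_j^{m_j-1}$ (not $\rhobar \otimes \prod_j \omega_j^{1-m_j}$) would be unramified. Your stated conclusion doesn't follow from your intermediate claim.

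Second, and more seriously, the step from ``constant Frobenius matrix $\kappa_j$'' to ``unramified $G_K$--representation'' is not justified in your argument. A $\varphi$--module over $\gS^0[1/v]$ corresponds to a representation of $G_{K_\infty}$, and $I_{K_\infty}$ is strictly smaller than $I_K$. Triviality on $I_{K_\infty}$ does not by itself give triviality on $I_K$, and you give no argument (e.g., full faithfulness of restriction) to bridge this gap. The paper deals with this point very carefully: it classifies the non--normal points as extensions of rank--1 Breuil--Kisin modules, computes the extension class explicitly to show it lies in a specific one--dimensional space when $\lambda'_0 = \lambda''_0$ (and vanishes otherwise), and then proves the key claim that $T(\cM)$ splits after restriction to $G_{L_\infty}$ for $L/K$ unramified of degree $p$, concluding via \cite[Cor.~3.2]{DDR} that this one--dimensional space is precisely the space of extensions vanishing on $I_K$. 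This machinery is not decoration; it's what establishes both that the extensions arising from $V(N)$ are unramified after twist and that no other unramified--after--twist extensions are missed.

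Third, the converse is genuinely not ``immediate.'' Having $\rhobar \otimes \prod_j \omega_j^{1-m_j}$ unramified gives a $\varphi$--module with Frobenius $\kappa_j v^{1-m_j}$, but you must then produce a Breuil--Kisin model $\gM$ of the correct type and height whose $A_{\gM,\beta}^{(j)}$ land in the parametrization of $V(N)$. The paper handles this by explicitly showing that every $(b_j)_j \in B(\Fbar)^{\Z/f\Z}$ gives a point in the fiber over $V(N)$, then invoking the DDR dimension count to confirm that the resulting one--dimensional family of extensions exhausts the unramified extension classes. Your proposal asserts the conclusion without the supporting lattice construction or dimension count.
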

\begin{proof}
As noted in the proof of Theorem \ref{thm:serre-weight-sing}, $\cX(\sigma)$ is the scheme--theoretic image of $\cC^{\tau^{\vee}}(\Z/f\Z) = Y_{S}^{\eta, \tau}$, where $\tau^{\vee} \cong \eta_1 \oplus \eta_2$ with $$\eta_1 = \prod_{j \in \Z/f\Z} \omega_j^{m_j} \text{ and } \qquad \eta_2 = \prod_{j \in \Z/f\Z} \omega_j^{m_j - 1},$$
and
$s, \mu, S$ are as in Lemma \ref{lem:mu-combinatorics}.  
Therefore $\mu_j = (-m_j +1, -m_j)$ for each $j \in \Z/f\Z$.

By Corollary \ref{cor:loc-non--normal}, the versal ring at $\rhobar$ is not smooth, if and only if it is not normal, if and only if it lifts to a point in the vanishing locus of $N$ in $\tilZz_S$ for the unique (by Table \ref{table:shape-R}) $$\tilz \in \prod_{j \in \Z/f\Z}\{w_0 t_{\eta}, t_{w_0 (\eta)}\} s_j^{-1} v^{\mu_j}$$ such that the class tuple $(T_j)_j$ associated to $\tilBa_S$ satisfies $T_j = 3$ for each $j \in \Z/f\Z$. By the proof of Lemma \ref{lemma:shape-conditions}, $V(N)$ is the image of the closed locus in $\tilYz_S$ cut out by setting $X_j =\id$ for each $j$. 

Let $\gM$ be a Breuil--Kisin module with $\Fbar$--coefficients in the image of this locus under the map $\tilYz_S \to \Y_S$. The matrix $A_{\gM, \beta}^{(j)}$ 
is given by $W_j$ described in (\ref{eqn:W_j}). 
By \cite[Table~3]{lhmm}, $l_j \kappa_j = r_j$ for each $j$, or equivalently, $\widetilde{l}_j \kappa_j \widetilde{r}_j^{-1} \in B(\Fbar)$. 
Therefore, $$A_{\gM, \beta}^{(j)} \in 
B(\Fbar) \begin{pmatrix}
    1 & 0 \\
    0 & v
\end{pmatrix}.$$ 
On the other hand, one verifies immediately that for any $(b_j)_j \in B(\Fbar)^{\Z/f\Z}$,
$$([0:1], b_j, \id, [0:1])_j \in \tilBa_S$$ gives a point of  $\tilYz_S$ in the fiber over $V(N)$. Equivalently, the finite type points in the non--normal locus are precisely the \'etale $\varphi$--modules admitting Breuil--Kisin models (with descent data of type $\tau$) $\gM$ with $$A_{\gM, \beta}^{(j)} =  \begin{pmatrix}
    \lambda'_j & x_j v \\
    0 & \lambda''_j v
\end{pmatrix}$$ 
where $\lambda'_j, \lambda''_j$ are invertible while $x_j$ is an arbitrary scalar for each $j \in \Z/f\Z$. Following the classification of rank $1$ Breuil--Kisin modules with descent data in \cite[Lem.~4.1.1]{cegsC}, such Breuil--Kisin modules are certain extensions of 
$\gM'' \stackrel{\text{def}}{=} \gM(r'', (\lambda''_j)_j, c'')$ by $\gM' \stackrel{\text{def}}{=} \gM(r', (\lambda'_j)_j, c')$, where for each $j \in \Z/f\Z$, $r''_j = p^{f}-1$, $r'_j = 0$, $c''_j = \sum_{i \in \Z/f\Z} p^{i} m_{j-i}$, $c'_j = \sum_{i \in \Z/f\Z} p^{i} (m_{j-i} - 1)$. We claim that the extension class is $0$ whenever $\prod_j \lambda'_j \neq \prod_j \lambda''_j$. Otherwise, the space of extensions is $1$--dimensional. To see this, we simplify the matrices $A_{\gM, \beta}^{(j)}$ in the following way: First, by scaling the elements of $\beta$ appropriately, we can assume that $\lambda'_j = \lambda''_j =1$ for all $j \neq 0$. Let $$g_j = \begin{pmatrix}
    1 & \alpha_j \\
    0 & 1
\end{pmatrix}\in B(\Fbar).$$ The matrix $g_j A_{\gM, \beta}^{(j)} \left( \Ad\; s_j^{-1} v^{\mu_j} (g_{j-1}^{-1}) \right)$ is given by 
$$ \begin{pmatrix}
    \lambda'_j & v(x_j - \alpha_{j-1}\lambda'_j + \alpha_j \lambda''_j) \\
    0 & \lambda''_j v
\end{pmatrix}.$$ 
When $f=1$, if $\lambda'_0 \neq \lambda''_0$, take $\alpha_0 = x_0/(\lambda''_0-\lambda'_0)$ to kill $x_0$. On the other hand, if $\lambda'_0 = \lambda''_0$, then no choice of $\alpha_0$ can alter $x_0$. When $f \geq 2$, the extension class is seen to be vanishing if we can find $(\alpha_j)_j$ so that 
$$\underbrace{\begin{bmatrix}
   -1 & 1 & 0 & 0 & \dots& 0 & 0 & 0\\
   0 & -1 & 1 & 0 & \dots& 0 & 0 & 0\\
   \vdots & & & & \ddots& & & \vdots \\
   0 & 0 & 0 & 0 & \dots& -1 & 1 & 0\\
   0 & 0 & 0 & 0 & \dots& 0 &-1 & 1\\
   \lambda''_0 & 0 & 0 & 0 & \dots&0 &0 & -\lambda'_0
\end{bmatrix}}_{C} \begin{bmatrix}
    \alpha_{0} \\
    \alpha_1\\
    \vdots\\
    \alpha_{f-3} \\
    \alpha_{f-2}\\
    \alpha_{f-1}
\end{bmatrix} = - \begin{bmatrix}
    x_{1} \\
    x_2\\
    \vdots\\
    x_{f-2} \\
    x_{f-1}\\
    x_{0}
\end{bmatrix}.$$
Using cofactor expansion along the bottom row, we see that the determinant is $\pm(\lambda'_0 - \lambda''_0)$. Therefore, the extension class is vanishing when $\lambda'_0 \neq \lambda''_0$. Otherwise, we can choose any $\alpha_0$ and make successive choices of $\alpha_1, \dots, \alpha_{f-1}$ so that $x_j - \alpha_{j-1}\lambda'_j + \alpha_j \lambda''_j$ is $0$ for each $j \in \{1, \dots, f-1\}$. 

We need not consider any other types of change--of--basis matrices $(g_j)_j \in \Iw^{\Z/f\Z}$ because if each $x_j$ has to be killed, then using that $\< \mu_j, \alpha^{\vee} \> \leq p-2$, one checks immediately that it has to be killed by the constant part of $(g_j)_j$. Further, diagonal matrices cannot kill $x_j$. Following \cite[Defn.~4.2.4]{cegsC}, we additionally note that $(\gM'', \gM')$ has refined profile $(\Z/f\Z, r'')$. Therefore, by \cite[Prop.~5.1.8]{cegsC}, the extension class of $\gM[1/u]$ is non--vanishing if and only if that of $\gM$ is non--vanishing.

Computing the $G_{K}$--representations associated to $\gM'$ and $\gM''$ using \cite[Lem.~4.1.4]{cegsC}, we find that upto twist by a character, $\rhobar$ is an extension of $\ur_{\lambda''_0}$ by $\ur_{\lambda'_0}$ as desired, where the extension class is in a specific one-dimensional space of extensions when $\lambda'_0 = \lambda''_0$, and is $0$ when $\lambda'_0 \neq \lambda''_0$. (Note that the formula actually gives $G_{K_{\infty}}$--representations for $K_{\infty}$ a wildly ramified extension of $K$, but by \cite[Prop.~2.2.6]{cegsC}, this is enough.)

Finally, to characterize the $1$--dimensional extension class that survives when $\lambda'_0 = \lambda''_0$, let $\cM = \varepsilon_{\tau}(\gM)$. By Proposition \ref{prop:phi-module-frob}, for each $j$, there exists a basis of $\cM_j$ with respect to which the matrix of $\varphi_{\cM}^{(j)}: \cM_{j-1} \to \cM_j$  is given by $$A_{\gM, \beta}^{(j)} s_j^{-1} v^{\mu_j} = A_{\gM, \beta}^{(j)} \begin{pmatrix}
v & 0 \\
0 & 1
\end{pmatrix} v^{-m_j}.$$ We claim that the $G_{K_{\infty}}$--representation $T(\cM)$ splits after restriction to $G_{L_{\infty}}$, where $L$ is the unramified extension of $K$ of degree $p$ with residue field $l = \F_{p^{pf}}$ and $L_{\infty} = LK_{\infty}$. 
 Indeed, one can check easily that the \etale $\varphi$--module $\cM_L$ corresponding to $T(\cM)|_{L_{\infty}}$ is given by $l((v)) \otimes_{k((v))} \cM$. Thus, for $j \in \Z/pf\Z$, the matrix $F_j$ for the Frobenius map $\varphi_{\cM_L}^{(j)}: (\cM_{L})_{j-1} \to (\cM_L)_j$ with respect to the obvious basis is given by 

 \begin{align*}
    &\begin{pmatrix}
    1 & 0 \\
    0 &  1
    \end{pmatrix} v^{1-m_{j \text{ mod } f}} &&\text{ if } j \not\equiv 0 \mod f, \vspace{0.1cm}\\
    &\begin{pmatrix}
    \lambda'_0 & x_0 \\
    0 & \lambda'_0
    \end{pmatrix} v^{1-m_{j \text{ mod } f}} &&\text{ if } j \equiv 0 \mod f.
\end{align*}

Let
\begin{align*}
    g_j =
      \begin{pmatrix}
          1 & -d_j x_0/\lambda'_0 \\
          0 & 1
      \end{pmatrix}  
\end{align*}
where $d_j \in [0, p-1]$ is such that $j - d_j f \in \{0, 1, \dots, f-1\}$ mod $pf$. Then the change--of--basis that maps $F_j$ to $g_j F_j g_{j-1}^{-1}$ exhibits $\cM_L$ as a split extension of two rank one \etale $\varphi$--modules, as desired. Therefore, $\rho$ is split after restriction to $G_L$, and in particular to $I_L$.
By \cite[Cor.~3.2]{DDR}, the space of extensions that vanish after restriction to $I_K = I_L$ is $0$ when $\lambda'_0 \neq \lambda''_0$, and is $1$--dimensional when $\lambda'_0 = \lambda''_0$. We thus get the desired description of the $G_K$ representation associated to $\cM$.
\end{proof}

\begin{theorem}\label{thm:non-CM-rep}
    Let $\sigma = \sigma_{\mathbb{m}, \mathbb{n}}$ be a Serre weight with $n_{0} = p-1$, $n_{1} = 0$, 
and $n_j=p-2$ for $j \not\in \{0, 1\}$. The versal ring at $\rhobar \in \cX(\sigma)(\Fbar)$ is not smooth if and only if, viewing $\rhobar$ as a $G_K$--representation, 
 $$\rhobar \otimes \left(\omega_1^{-m_1} \otimes \prod_{j \neq 1} \omega_j^{1-m_j}\right)$$ is unramified.
In this case, the versal ring is normal and Cohen--Macaulay. It is Gorenstein, even lci, if and only if $f=2$.
\end{theorem}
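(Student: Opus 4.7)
The proof proceeds by mimicking the approach of Theorem \ref{thm:non--normal-rep}, now for this non--Steinberg Serre weight. Via \cite[Prop.~4.1.2]{gkksw} (as used in the proof of Theorem \ref{thm:serre-weight-sing}), I would fix a principal series $\tau^{\vee} \cong \eta_1 \oplus \eta_2$ with $\eta_1 \eta_2^{-1} = \prod_j \omega_j^{p-1-n_j}$ and representative $\eta_2 = \omega_1^{m_1} \prod_{j \neq 1} \omega_j^{m_j - 1}$, so that $\eta_2^{-1}$ matches the twist in the statement. With $\gamma_j = p-1-n_j$, one has $(\gamma_0, \gamma_1, \gamma_2, \ldots, \gamma_{f-1}) = (0, p-1, 1, \ldots, 1)$, and Lemma \ref{lem:mu-combinatorics} gives $M = \{\{0, 1\}\}$, $\langle \mu_j, \alpha^{\vee}\rangle = 1$ for each $j$, $s_0 = s_1 = w_0$ with $s_j = \id$ for $j \in \{2, \ldots, f-1\}$, and $S = (L, R, R, \ldots, R)$. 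By Lemma \ref{lem:type-combinatorics}(iii), the only cyclic $\heart$--subsequence of $(\gamma_j)$ is $(\gamma_1, \ldots, \gamma_{f-1}, \gamma_0)$ (of length $f$), and Tables \ref{table:shape-L}--\ref{table:shape-R} then isolate a unique $\tilz$ with $\tilw_0 = \tilw_1 = w_0 t_{\eta}$ and $\tilw_j = t_{w_0(\eta)}$ for $j \in \{2, \ldots, f-1\}$ such that $\trn(\tilz)^{*} = \{\mathfrak{r}\}$, where $\mathfrak{r} = (0, 1, \ldots, f-1, 0)$ has $l(\mathfrak{r}) = f+1$, $T_0 = 1$ and $T_j = 3$ for $j \in \{1, \ldots, f-1\}$. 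Corollary \ref{cor:loc-non-CM} then reduces the ``if and only if'' to determining which $\rhobar$ lift to $V(N(\mathfrak{r})) \subset \tilZz_S$.

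On $V(N(\mathfrak{r}))$, the variables $B_j, C_j, D_j$ vanish for $j \in \{1, \ldots, f-1\}$, forcing $X_j = \id$ there; the $(L, w_0 t_\eta, w_0, 1)$ cell of Table \ref{table:shape-L} shows $X_0 = \id$ as well. Imposing $l_{j-1} = r_j$ globally (the defining condition of $\tilY_S$) and choosing matching Borel lifts $\till_{j-1} = \tilr_j$, the Frobenius matrices $A_{\gM, \beta}^{(j)}$, and hence $C_{\gM, \beta}^{(j)}$ (after conjugating by $s_{\ort, j}^{-1} u^{-\mathbf{a}^{(j)}}$ as in Proposition \ref{prop:change-of-basis-BKmod}), become triangular with scalar diagonal entries $\lambda'_j, \lambda''_j$ multiplied by appropriate powers of $v$ and a single off--diagonal parameter $x_j$ per $j$. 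This exhibits $\gM$ as an extension of two rank--one Breuil--Kisin modules with descent data, classified by \cite[Lem.~4.1.1]{cegsC}, whose associated $G_K$--characters are read off via \cite[Lem.~4.1.4]{cegsC}. Adapting the change-of-eigenbasis calculation at the end of the proof of Theorem \ref{thm:non--normal-rep}---a $B(\Fbar)$--valued linear--algebra argument showing that the extension class is killed when $\prod_j \lambda'_j \neq \prod_j \lambda''_j$ and spans a one--dimensional space otherwise---and passing to the unramified extension $L/K$ of degree $p$ to conclude the associated \'etale $\varphi$--module splits over $I_L$, \cite[Cor.~3.2]{DDR} identifies $\rhobar$ as an unramified twist of $\eta_2$, that is, $\rhobar \otimes \eta_2^{-1}$ is unramified.

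The final sentence of the statement is then immediate from Corollary \ref{cor:loc-non-CM}: in this situation the versal ring is automatically normal and Cohen--Macaulay, and is lci (equivalently Gorenstein) if and only if $l(\mathfrak{r}) \leq 3$, i.e., if and only if $f = 2$. The main obstacle I expect is the precise bookkeeping of $(s_j, \tilw_j, \mu_j, S_j)$ in the Breuil--Kisin calculation so that the twist character emerges as the asymmetric $\omega_1^{-m_1} \prod_{j \neq 1} \omega_j^{1 - m_j}$ rather than the symmetric $\prod_j \omega_j^{1 - m_j}$ of the Steinberg case. The discrepancy at $j = 1$ should trace back to $(s_1, \tilw_1) = (w_0, w_0 t_\eta)$ producing an extra $v$--shift in the diagonal characters via Proposition \ref{prop:phi-module-frob}. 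In addition, the extension--class linear algebra has to be rerun for the triangularity pattern dictated by $S = (L, R, \ldots, R)$ rather than $S = (R, \ldots, R)$; I expect it nonetheless to yield the single splitting condition $\prod_j \lambda'_j = \prod_j \lambda''_j$ via cofactor expansion along the row corresponding to $j = 0$.
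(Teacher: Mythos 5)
Your combinatorics are correct: $M = \{\{0,1\}\}$, $(\langle\mu_j,\alpha^{\vee}\rangle, s_j, s_{\ort,j}, S_j)$, $T_0 = 1$, $T_j = 3$ for $j\neq 0$, the unique $\tilz$, and $\gr = (0,1,\dots,f-1,0) \in \trn^{*}$ with $l(\gr)=f+1$; and the closing sentence does indeed follow directly from Corollary \ref{cor:loc-non-CM}.

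However, there is a genuine gap in the middle of your argument. You assert that imposing $l_{j-1}=r_j$ and choosing matching Borel lifts makes all the matrices $A_{\gM,\beta}^{(j)}$ triangular. This is true for $j \neq 0$, where $T_j = 3$ forces $l_j\kappa_j = r_j$, i.e.\ $\widetilde l_j \kappa_j \widetilde r_j^{-1}\in B(\Fbar)$. But at $j=0$ you have $T_0 = 1$, and---as you yourself read off from the $(L, w_0 t_{\eta}, w_0, 1)$ cell of Table \ref{table:shape-L}---the scheme ${Ba}_0(\tilz_0)_L$ is $\PP^1\times\PP^1$, so $l_0$ and $r_0$ are \emph{independent} $\PP^1$-coordinates. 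Consequently $\widetilde l_0 \kappa_0 \widetilde r_0^{-1}$ is an arbitrary element $b_0 = \begin{psmallmatrix} U & V \\ W & Z\end{psmallmatrix}\in\GL_2(\Fbar)$, and when $W\neq 0$ the matrix $A_{\gM,\beta}^{(0)}$ is genuinely non-triangular. The lower-left entry $W$ cannot be killed by an Iwahori change of eigenbasis (which acts on $b_0$ through $B$-conjugation and hence preserves the vanishing or non-vanishing of $W$), so the locus $V(N(\gr))$ splits into two strata that must both be analyzed. The paper handles the $W\in\Fbar^{\times}$ stratum separately: a careful $B(\Fbar)$-change of basis normalizes $V = x_1 = \dots = x_{f-1} = 0$ (solving a quadratic in $\alpha_0$), after which the matrices of $\varphi_{\cM}$ become lower-triangular, and swapping the two basis vectors of each $\cM_j$ shows that the resulting $\rhobar$ already arise from the $W=0$ stratum. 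Your proof as written would only characterize the $W=0$ stratum, which suffices for the ``if'' direction (constructing a lift into $V(N(\gr))$) but leaves the ``only if'' direction unproven, since a non-smooth $\rhobar$ might a priori lift only to a $W\neq 0$ point of $V(N(\gr))$.

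A smaller bookkeeping point: your $\eta_2 = \omega_1^{m_1}\prod_{j\neq 1}\omega_j^{m_j-1}$ is not the $\eta_2$ the paper fixes (which is $\prod_j\omega_j^{m_j+n_j}$); they differ by $\omega_1$ via the relations $\omega_j^p=\omega_{j-1}$ and $\prod_j\omega_j^{p-1}=1$. Your choice is a legitimate representative and does give the right twist character on the nose, but it is worth checking that the rank-one Breuil--Kisin characters $(c'_j),(c''_j)$ computed from \cite[Lem.~4.1.4]{cegsC} in your normalization actually agree with the stated twist; in the paper's normalization this is where the asymmetry $c_1 = -m_1-1$ versus $c_j = -m_j$ enters.
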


\begin{proof}
    As noted in the proof of Theorem \ref{thm:serre-weight-sing}, $\cX(\sigma)$ is the scheme--theoretic image of $Y_{S}^{\eta, \tau}$ where, by Lemma \ref{lem:mu-combinatorics}, $(\<\mu_j, \alpha^{\vee}\>,  s_j, s_{\text{or}, j},  S_j)$ equals
    \begin{align*}
        &(1, w_0, w_0, L) &&\text{ if } j = 0, \\
         &(1, w_0, \id, R) &&\text{ if } j = 1, \\
       &(1, \id, \id, R) &&\text{ if } j \not\in \{0, 1\}
    \end{align*} and by \cite[Prop.~4.1.2]{gkksw}, $\tau^{\vee} \cong \eta_1 \oplus \eta_2$ where $$\eta_1 = \prod_{j \in \Z/f\Z} \omega_j^{m_j}, \qquad \eta_2 = \prod_{j \in \Z/f\Z} \omega_j^{m_j + n_j}.$$ Thus, $\mu_j = (c_j + 1, c_j)$ for each $j$ where, by (\ref{eqn:chi-2}), $c_j = -m_j$ if $j \neq 1$ and $c_{1} = -m_{1} - 1$.
    
By Corollary \ref{cor:loc-non-CM} and the proof of Lemma \ref{lem:type-combinatorics}
we are looking for $\rhobar$ admitting a lift to $\tilZz_S$ for some $\tilz = (\tilw_j s_j^{-1} v^{\mu_j})_j$ with $ \tilw_j \in \{w_0 t_{\eta}, t_{w_0 (\eta)}\} $ such that the associated type class tuple $(T_j)_j$ satisfies
$T_{1} = \dots = T_{f-1} = 3$ and $T_{0} = 1$, so that the lift lies in the vanishing locus of $N(\gr)$ where $\gr = (0, 1, \dots, f-1, 0) \in \trn^{*}$. By comparing $s, \mu$ to Tables \ref{table:shape-L} and \ref{table:shape-R}, we find that $\tilw_0 = \tilw_{1} = w_0 t_{\eta}$ while $\tilw_j = t_{w_0 (\eta)}$ for $j \not\in \{0, 1\}$.

The proof of Lemma \ref{lemma:shape-conditions} shows that $V(N(\gr))$ is the 
image of the closed locus in $\tilYz_S$ cut out by setting $X_j = \id$ for each $j \neq 0$, and that $X_0$ is necessarily $\id$.
Let $\gM$ be a Breuil--Kisin module with $\Fbar$--coefficients in the image of this locus under the map $\tilYz_S \to \Y_S$. The matrix $A_{\gM, \beta}^{(j)}$ 
is given by $W_j$ described in (\ref{eqn:W_j}). 
By \cite[Table~3]{lhmm}, $l_j \kappa_j = r_j$,  equivalently $\widetilde{l}_j \kappa_j \widetilde{r}_j^{-1} \in B(\Fbar)$, for each $j \neq 0$. 
Therefore, there exist $b_j \in B(\Fbar)$ for $j \neq 0$ and $b_{0} \in \GL_2(\Fbar)$ such that
\begin{align}\label{eqn:non-CM}
    A_{\gM, \beta}^{(j)} =
    \begin{cases}
    b_{0} \begin{pmatrix}
    0 & 1 \\
    v & 0
\end{pmatrix} &\text{ if } j = 0,
\vspace{0.1cm}\\
        b_1 \begin{pmatrix}
    0 & 1 \\
    v & 0
\end{pmatrix} &\text{ if } j = 1,
\vspace{0.1cm}\\
b_j \begin{pmatrix}
    1 & 0 \\
    0 & v
\end{pmatrix} &\text{ if } j \not\in \{0, 1\}.
    \end{cases}
\end{align}
On the other hand, one verifies immediately that for any $(b_j)_j$ with $b_j \in B(\Fbar)$ when $j \neq 0$ and $b_{0} \in \GL_2(\Fbar)$,
$$([0:1], b_j, \id, [0:1])_j \in \tilBa_S$$ gives a point of  $\tilYz_S$ in the fiber over $V(N(\gr))$. In other words, the finite type points in the singular locus are precisely the \'etale $\varphi$--modules admitting Breuil--Kisin models (with descent data of type $\tau$) $\gM$ with $A_{\gM, \beta}^{(j)}$ satisfying (\ref{eqn:non-CM}) for each $j$.
Let $$b_0 = \begin{pmatrix}
    U & V \\
    W & Z
\end{pmatrix} \in \GL_2(\Fbar)$$ and for each $j \neq 0$, $$b_j = \begin{pmatrix}
    \lambda'_j & x'_j \\
    0 & \lambda''_j
\end{pmatrix}.$$ Therefore, 
\begin{align}\label{eqn:C-matrices}
    C_{\gM, \beta}^{(j)} =
    \begin{cases}
\begin{pmatrix}
    W & Zu^{p^{f}-1-l_0} \\
    Uu^{l_0} & Vv 
\end{pmatrix}&\text{ if } j = 0,
\vspace{0.1cm}\\
 \begin{pmatrix}
    x_1 v & \lambda'_1 u^{l_1}\\
    \lambda''_1 u^{p^{f}-1-l_1} & 0
\end{pmatrix} &\text{ if } j = 1,
\vspace{0.1cm}\\
\begin{pmatrix}
    \lambda'_j & x'_j u^{l_j} \\
    0 & \lambda''_j v
\end{pmatrix} &\text{ if } j \not\in \{0,1\},
    \end{cases}
\end{align}
for suitable integers $l_j \in (0, p^f-1)$. Specifically, $l_0 = p^{f-1} - \sum_{i=0}^{f-2}p^{i} $ and $l_1 = -1 + \sum_{i=1}^{f-1} p^{i}$. 
Let $\beta_j = (e_j, f_j)$ (note that it follows from the proof of Lemma \ref{lem:mu-combinatorics} that $\beta_j$ is ordered with respect to the ordering $(\eta_2, \eta_1)$ of the eigenvalues). 

 Suppose first that $b_0 \in B(\Fbar)$, equivalently $W=0$.
We observe from the matrices that $f_0 + e_1 + \dots + e_{f-1}$ gives a free generator of a sub-Breuil Kisin module $\gM'$ of $\gM$ and $e_0 + f_1 + \dots + f_{f-1}$ lifts a basis of the quotient $\gM''$. 
Thus, in the sense of \cite[Sec.~4.2]{cegsC}, $\gM$ is an extension of $\gM'' \cong \gM(r'', (\lambda''_j)_j, c'')$ by $\gM' \cong \gM(r', (\lambda'_j)_j, c')$
where $\lambda'_0 = U$, $\lambda''_0 = Z$,
\begin{align*}
    &r''_j = p^f - 1 -l_0, \quad &&r'_j = l_0 \quad &&\text{ if } j = 0,\\
    &r''_j = p^{f}-1-l_1, \quad &&r'_j = l_1  \quad &&\text{ if } j = 1,\\
    &r''_j = p^{f}-1, \quad && r'_j = 0 \quad  &&\text{ if } j \not\in \{0, 1\},
\end{align*}
and $\{c'_j\}_j, \{c''_j\}_j$ satisfy
 $$\omega_j^{c''_j} = \eta_1, \quad \omega_j^{c'_j} = \eta_2 \text{ when } j \neq 0,$$ while $\omega_0^{c''_0} = \eta_2$, $\omega_0^{c'_0} = \eta_1$. Note that $(\gM'', \gM')$ has refined profile $(\Z/f\Z \smallsetminus \{0\}, r'')$.
 Using exactly the same arguments as in the proof of Theorem \ref{thm:non--normal-rep}, we find that the Galois representations associated to such $\gM[1/u]$ are precisely those of the form
described in the statement of the theorem.

Next, suppose $W \in \Fbar^{\times}$. Consider a change of basis on $\gM$ that transforms $b_j = A_{\gM, \beta}^{(j)} \tilw_j^{-1}$ to $b'_j = g_j A_{\gM, \beta}^{(j)} \left( \Ad \; s_j^{-1} v^{\mu_j} (g_{j-1}^{-1}) \right) \tilw_j^{-1}$ with $g_j \in B(\Fbar)$ for each $j$. We claim that by doing a careful change of basis, we can arrange for $V = x_1 = \dots = x_{f-1}=0$. First, one sees immediately that by taking $g_j = \id$ 
for $j \not\in \{f-1, 0\}$ and suitable $g_{f-1}, g_{0} \in B(\Fbar)$, we can arrange $V = 0$. Similarly, it is easy to see that after doing a change of basis with $g_{f-1} = g_0 = \id$ and suitable $g_j \in B(\Fbar)$ for each $j \not\in \{0, f-1\}$, we can take $x_j =0$ whenever $j \not\in \{0, 1\}$ and further, $\lambda'_j = \lambda''_j = 1$ when $j \neq 0$. Next, take $$g_j = \begin{pmatrix}
    1 & \alpha_j \\
    0 & 1
\end{pmatrix}$$ where $\alpha_0 \in \Fbar$ satisfies $$W \alpha_0^{2} + (U - Z - W x_1)\alpha_0 - U x_1 = 0,$$ and $\alpha_1 = \dots = \alpha_{f-1} = \alpha_0 - x_1$.
Thus
\begin{align*}
    b'_j = \begin{cases}
        \begin{pmatrix}
            U + \alpha_0 W & -U\alpha_{f-1} - W\alpha_0 \alpha_{f-1} + Z\alpha_0  \\
        W & Z
        \end{pmatrix} &\text{ if } j = 0, \\
        \begin{pmatrix}
            1 & x_1 - \alpha_{0} + \alpha_1 \\
        0 & 1
        \end{pmatrix} &\text{ if } j = 1, \\
        \begin{pmatrix}
            1 & - \alpha_{j-1} + \alpha_j \\
        0 & 1
        \end{pmatrix} &\text{ if } j \not\in \{0, 1\}.
    \end{cases}
\end{align*}
By choice of $\alpha_j$, the top right entry of $b'_j$ vanishes for each $j$.

Now, assume $V = x_1 = \dots = x_{f-1} = 0$ and let $\cM = \varepsilon_{\tau}(\gM)$. By Proposition \ref{prop:phi-module-frob}, one can choose a basis for each $j$ so that the matrix of $\varphi_{\cM}^{(j)}: \cM_{j-1} \to \cM_j$ is given by $A_{\gM, \beta}^{(j)} s_j^{-1}v^{\mu}$. We have
\begin{align*}
    A_{\gM, \beta}^{(j)} s_j^{-1}v^{\mu} =
    \begin{cases}
\begin{pmatrix}
    U & 0  \\
    W & Z
\end{pmatrix}v^{-m_0 + 1} &\text{ if } j = 0,
\vspace{0.1cm}\\
\begin{pmatrix}
    \lambda'_j & 0 \\
    0 & \lambda''_j
\end{pmatrix} v^{-m_1} &\text{ if } j =1,
\vspace{0.1cm}\\
 \begin{pmatrix}
    \lambda'_j & 0\\
    0 & \lambda''_j
\end{pmatrix} v^{-m_j +1} &\text{ if } j \not\in \{0,1\}.
    \end{cases}
\end{align*}

After a change of basis involving permuting the two basis elements of $\cM_j$ for each $j$, we find that $\cM$ could also have been obtained via the first case, that is, when we assumed $b_0 \in B(\Fbar)$. Therefore, we get no new representations upon assuming that $W$ is a unit.
This finishes the proof.
\end{proof}

\section*{Declarations} This work was done while K.K. was supported by the National Science Foundation under Grant No. DMS--1926686. On behalf of all authors, the corresponding author states that there is no conflict of interest. Further, this study has no associated data.

\bibliographystyle{math}
\bibliography{nongeneric}

\newcommand{\etalchar}[1]{$^{#1}$}
\renewcommand{\MR}[1]{}\providecommand{\noopsort}[1]{}\renewcommand{\MR}[1]{}\renewcommand{\MR}[1]{}
\begin{thebibliography}{LLHLM}

\bibitem[ABI]{ascher2018moduli}
Kenneth Ascher, Dori Bejleri, and Giovanni Inchiostro.
\newblock {Moduli of weighted stable elliptic surfaces and invariance of log
  plurigenera}, 2018.

\bibitem[Bar]{bartlett2020irreducible}
Robin Bartlett.
\newblock {On the irreducible components of some crystalline deformation
  rings}.
\newblock In {\em Forum of Mathematics, Sigma}, volume~8, page e22. Cambridge
  University Press, 2020.

\bibitem[BBH{\etalchar{+}}]{bellovin2024irregular}
Rebecca Bellovin, Neelima Borade, Anton Hilado, Kalyani Kansal, Heejong Lee,
  Brandon Levin, David Savitt, and Hanneke Wiersema.
\newblock {Irregular loci in the Emerton--Gee stack for $\mathrm{GL}_2$}.
\newblock {\em Journal f{\"u}r die reine und angewandte Mathematik (Crelles
  Journal)} {\bf 2024}(2024), 9--46.

\bibitem[Bre]{breuil2007representations}
Christophe Breuil.
\newblock {Representations of Galois and of GL2 in characteristic p}, 2007.

\bibitem[CEGSa]{cegsA}
{\noopsort{CaraianiEmertonGeeSavittA}}{Ana Caraiani, Matthew Emerton, Toby Gee,
  and David Savitt}.
\newblock {The geometric {B}reuil--{M}\'ezard conjecture for two-dimensional
  potentially {B}arsotti-{T}ate {G}alois representions}.
\newblock Algebra \& Number Theory (to appear), 2022.

\bibitem[CEGSb]{cegsB}
{\noopsort{CaraianiEmertonGeeSavittB}}{Ana Caraiani, Matthew Emerton, Toby Gee,
  and David Savitt}.
\newblock {Local geometry of moduli stacks of two-dimensional {G}alois
  representations}.
\newblock Proceedings of the International Colloquium on `Arithmetic Geometry',
  TIFR Mumbai, Jan. 6-10, 2020 (to appear), 2022.

\bibitem[CEGSc]{cegsC}
{\noopsort{CaraianiEmertonGeeSavittC}}{Ana Caraiani, Matthew Emerton, Toby Gee,
  and David Savitt}.
\newblock {Components of moduli stacks of two-dimensional Galois
  representations}.
\newblock In {\em Forum of Mathematics, Sigma}, volume~12, page e31. Cambridge
  University Press, 2024.

\bibitem[DDR]{DDR}
Lassina Demb\'el\'e, Fred Diamond, and David~P. Roberts.
\newblock {Serre weights and wild ramification in two-dimensional {G}alois
  representations}.
\newblock {\em Forum Math. Sigma} {\bf 4}(2016), e33, 49.

\bibitem[EG]{emerton2022moduli}
Matthew Emerton and Toby Gee.
\newblock {\em Moduli Stacks of {\'E}tale ($\varphi$, $\Gamma$)-Modules and the
  Existence of Crystalline Lifts:(AMS-215)}.
\newblock Number 215 in Annals of Mathematics Studies. Princeton University
  Press, 2022.

\bibitem[EGH]{EGHcategorical}
Matthew Emerton, Toby Gee, and Eugen Hellmann.
\newblock {An introduction to the categorical $p$-adic {L}anglands program}.
\newblock Notes from the I.H.E.S.\ Summer School on the {L}anglands program,
  2022.

\bibitem[GKK{\etalchar{+}}]{gkksw}
Anthony Guzman, Kalyani Kansal, Iason Kountouridis, Ben Savoie, and Xiyuan
  Wang.
\newblock {Smoothness of components of the Emerton-Gee stack for
  $\mathrm{GL}_2$}.
\newblock {\em Transactions of the American Mathematical Society} (2024).

\bibitem[HK]{hassett2004reflexive}
Brendan Hassett and S{\'a}ndor~J Kov{\'a}cs.
\newblock {Reflexive pull-backs and base extension}.
\newblock {\em Journal of Algebraic Geometry} {\bf 13}(2004), 233--248.

\bibitem[Kan]{kans22}
Kalyani Kansal.
\newblock {Codimension one intersections between components of the
  {E}merton-{G}ee stack for ${\mathrm{GL}}_2$}.
\newblock \url{https://arxiv.org/abs/2210.05002}, 2022.

\bibitem[Kis]{kisindefrings}
Mark Kisin.
\newblock {Potentially semi-stable deformation rings}.
\newblock {\em J. Amer. Math. Soc.} {\bf 21}(2008), 513--546.

\bibitem[Kov]{kovacs2017rational}
S{\'a}ndor~J Kov{\'a}cs.
\newblock {Rational singularities}, 2022.

\bibitem[LLHLM]{LLLM-models}
Daniel Le, Bao~V. Le~Hung, Brandon Levin, and Stefano Morra.
\newblock {Local models for {G}alois deformation rings and applications}.
\newblock {\em Invent. Math.} {\bf 231}(2023), 1277--1488.

\bibitem[LHMM]{lhmm}
Bao~V. Le~Hung, Ariane M{\'e}zard, and Stefano Morra.
\newblock {Local model theory for non-generic tame potentially Barsotti--Tate
  deformation rings}, 2024.

\bibitem[San]{sandermultiplicities}
Fabian Sander.
\newblock {Hilbert-{S}amuel multiplicities of certain deformation rings}.
\newblock {\em Math. Res. Lett.} {\bf 21}(2014), 605--615.

\bibitem[{Sta}]{stacks-project}
The {Stacks project authors}.
\newblock {The Stacks project}.
\newblock \url{http://stacks.math.columbia.edu}, 2018.

\bibitem[Zhu]{zhu2016introduction}
Xinwen Zhu.
\newblock {An introduction to affine Grassmannians and the geometric Satake
  equivalence}, 2016.

\end{thebibliography}
\end{document}